\title{Discrete analogues of Kakeya problems}
\author{Marina Iliopoulou}
\date{2013}
\newcommand{\C}{\mathbb{C}}
\newcommand{\R}{\mathbb{R}}
\newcommand{\N}{\mathbb{N}}
\newtheorem{theorem}{Theorem}[section]
\newtheorem{lemma}[theorem]{Lemma}
\newtheorem{proposition}[theorem]{Proposition}
\newtheorem{corollary}[theorem]{Corollary}
\newtheorem{definition}[theorem]{Definition}
\newtheorem{conjecture}[theorem]{Conjecture}
\newtheorem{claim}[theorem]{Claim}
\preto{\chapter}{}
\preto{\section}{}
\preto{\subsection}{}
\begin{document}

\maketitle

\pagenumbering{roman}
\declaration
\newpage
\thispagestyle{plain}
\cleardoublepage

\acknowledgements

First of all, I would like to deeply thank my supervisor, Anthony Carbery, for his continuous guidance and support throughout the last years. For searching for mathematical questions that would fit to my taste, such as the ones appearing in this thesis, for inspiring me with his mathematical insight, and for giving me strength with his good nature and modesty, perhaps without being aware of it.

I am also deeply grateful to my friends Jes\'us, Sanja, Paul, Spiros, Mairi, Tim, Rachel, George, Steven, Eric, Valerie, Hari, Zoe, Pamela, Patrick, Noel, Amy and Kevin, mostly from the School of Mathematics of the University of Edinburgh, whose friendship has given me a lot of happiness during my years as a student there. In particular, I would like to thank Jes\'us, for a lot of time-consuming discussions about real algebraic geometry that were very useful to me. Moreover, special thanks are due to Paul, for inspiring me with his love for mathematics, and, to a more practical level, for introducing me to many useful facts about finite fields that I am using in this thesis; also, for making this last year my happiest one in Edinburgh.

I especially thank my very close friends Myrto Manolaki, Georgia Christodoulou and Anda Diamandopoulou, who have really supported me through good, bad and some very bad times.

I am also very grateful to Vasileios Nestoridis, Apostolos Giannopoulos and Christos Athanasiadis, three of my undergraduate lecturers in the Department of Mathematics of the University of Athens, whose love for mathematics deeply inspired me and drove me to analysis and combinatorics, while they played a very important role in the fact that I am doing my PhD at the University of Edinburgh.

I would not have been able to even start my research in this thesis, if not for the financial support from the School of Mathematics of the University of Edinburgh; for this, and for the wonderful working environment, I am deeply grateful.

Last, but certainly not least, I want to thank my parents. I owe them more than I can express. In particular, to my mother, a mathematician herself, for the endless hours she has spent for and with me in her life, and for putting aside everything for me, I would like to dedicate this thesis.

\newpage
\thispagestyle{plain}
\cleardoublepage

\dedication{To my mother}

\newpage
\thispagestyle{plain}
\cleardoublepage

\chapter*{Lay Summary}
\addcontentsline{toc}{chapter}{Lay Summary}
The aim of this thesis is to shed some light on the geometric aspects of certain problems that lie in the heart of harmonic analysis. 

In particular, being interested in harmonic analytic problems that involve tubes, we shrink the tubes to lines, and thus formulate discrete versions of the original problems, avoiding analytical factors, such as volumes, and focusing mainly on the geometry of the problems, such as the directions of the lines.

More specifically, given a collection of lines in three-dimensional space, we are interested in the points at which at least three non-coplanar lines of our collection meet. The problem of controlling the number of such points by the number of the lines is a natural analogue of a very important, open harmonic analytic problem. This is the main question we are addressing in this thesis, together with certain variants of it.

\newpage
\thispagestyle{plain}
\cleardoublepage

\begin{abstract}
This thesis investigates two problems that are discrete analogues of two harmonic analytic problems which lie in the heart of research in the field.

More specifically, we consider discrete analogues of the maximal Kakeya operator conjecture and of the recently solved endpoint multilinear Kakeya problem, by effectively shrinking the tubes involved in these problems to lines, thus giving rise to the problems of counting joints and multijoints with multiplicities. In fact, we effectively show that, in $\R^3$, what we expect to hold due to the maximal Kakeya operator conjecture, as well as what we know in the continuous case due to the endpoint multilinear Kakeya theorem by Guth, still hold in the discrete case. 

In particular, let $\mathfrak{L}$ be a collection of $L$ lines in $\R^3$ and $J$ the set of joints formed by $\mathfrak{L}$, that is, the set of points each of which lies in at least three non-coplanar lines of $\mathfrak{L}$. It is known that $|J|=O(L^{3/2})$ (first proved by Guth and Katz). For each joint $x\in J$, let the multiplicity $N(x)$ of $x$ be the number of triples of non-coplanar lines through $x$. We prove here that
\begin{displaymath}\sum_{x\in J} N(x)^{1/2}=O(L^{3/2}),
\end{displaymath}
while we also extend this result to real algebraic curves in $\R^3$ of uniformly bounded degree, as well
as to curves in $\R^3$ parametrized by real univariate polynomials of uniformly bounded degree.

The multijoints problem is a variant of the joints problem, involving three finite collections of lines in $\R^3$; a multijoint formed by them is a point that lies in (at least) three non-coplanar lines, one from each collection.

We finally present some results regarding the joints problem in different field settings and higher dimensions.

\end{abstract}
\newpage
\thispagestyle{plain}
\cleardoublepage

\pagenumbering{arabic}

\tableofcontents
\addcontentsline{toc}{chapter}{Contents}
\newpage
\thispagestyle{plain}
\cleardoublepage

\chapter*{Notation}
\addcontentsline{toc}{chapter}{Notation}
Any expression of the form $A \lesssim B$ or $A = O(B)$ means that there exists a non-negative constant $M$, depending only on the dimension, such that $A \leq M \cdot B$, while any expression of the form $A\lesssim_{b_1,...,b_m} B$ means that there exists a non-negative constant $M_{b_1,...,b_m}$, depending only on the dimension and $b_1$, ..., $b_m$, such that $A \lesssim M_{b_1,...,b_m}\cdot B$. In addition, any expression of the
form $A \gtrsim B$ or $A \gtrsim_{b_1,...,b_m} B$ means that $B \lesssim A$ or $B \lesssim_{b_1,...,b_m} A$, respectively. Finally, any expression of the form $A \sim B$ means that $A \lesssim B$ and $A \gtrsim B$, while expression of the form $A \sim_{b_1,...,b_m} B$ means that $A \lesssim_{b_1,...,b_m} B$ and $A \gtrsim_{b_1,...,b_m} B$.
\newpage
\thispagestyle{plain}
\cleardoublepage

\chapter{Introduction} \label{1}

This chapter is a presentation of the two problems we will address in this thesis. Apart from their prominent geometric and combinatorial structure, the importance of these problems also lies in the fact that they all constitute discrete analogues of harmonic analytic problems. In fact, turning harmonic analytic questions into discrete ones, even in different field settings, is a tactic attracting a lot of attention recently. The reason for this is that it allows us to deprive our initial problems, some of which are still open, of their underlying analytical nature, and thus eliminate factors that could distract us from understanding their essential geometric structure.

More particularly, our questions mainly arise from the study of Kakeya sets in $\R^n$.

\begin{definition} \emph{\textbf{(Kakeya set)}} A compact subset $K$ of $\R^n$ is called a \emph{Kakeya set}, if it contains a unit line segment in each direction. \end{definition}

Even though it has been proved that there exist Kakeya sets in $\R^n$ with Lebesgue measure zero, it is conjectured that they are still quite large, in the following sense.

\begin{conjecture}\emph{(\textbf{Kakeya set conjecture})} The Hausdorff dimension of a Kakeya set in $\R^n$ is equal to $n$, for all $n \geq 2$. \end{conjecture}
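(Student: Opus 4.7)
The Kakeya set conjecture is open for every $n\geq 3$, so any "proof proposal" here is an outline of the main route of attack and an honest identification of where it stalls. My plan splits into two stages: (i) reducing the Hausdorff dimension statement to a discretised volume estimate, and (ii) attacking that estimate by polynomial and multilinear methods of the type this thesis develops for collections of lines.

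The first stage is the standard $\delta$-discretisation. Fix $\delta>0$, choose a maximal $\delta$-separated set of directions on $S^{n-1}$, and form a $\delta$-tube $T_\omega$ of length $1$ in each direction $\omega$. The Kakeya set conjecture is well known to be equivalent to the bound $|\bigcup_\omega T_\omega| \gtrsim_\varepsilon \delta^\varepsilon$ for every $\varepsilon>0$ and every sufficiently small $\delta$. This is the natural setting in which to \emph{shrink tubes to lines}, the philosophy driving this thesis, and to bring in incidence-geometric tools.

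The second stage is a polynomial-method / induction-on-scales bootstrap. Following Dvir's finite-field argument and the Guth--Katz joints bound, I would seek a low-degree polynomial vanishing sufficiently close to the axis of every tube, and couple this with an $n$-linear Kakeya inequality of Bennett--Carbery--Tao / Guth: whenever the direction set splits into $n$-fold transverse families the sharp bound then comes essentially for free. The refined joints inequality $\sum_{x\in J}N(x)^{1/2}=O(L^{3/2})$ proved in this thesis would enter as the quantitative control on how lines can cluster at common points in the critical configurations, playing the role of a line-incidence input at each scale of the induction.

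The decisive obstruction is the passage from the multilinear to the linear estimate. Extremal Kakeya-like configurations concentrate their directions near a lower-dimensional variety, exactly where transversality, and hence the multilinear input, degenerates. A genuine proof would have to establish a structural dichotomy---either the tube family is essentially transverse, and multilinear estimates apply, or else its directions cluster on a low-degree variety, forcing planiness in Wolff's sense and enabling an induction on scale---with losses at each step no worse than $\delta^\varepsilon$. Demonstrating this dichotomy in a form sharp enough to iterate is where the current approach stands or falls, and is the step I expect to be the main obstacle.
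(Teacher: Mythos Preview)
The statement you were given is a \emph{conjecture}, not a theorem: the paper states the Kakeya set conjecture as an open problem and offers no proof of it (indeed it notes that the conjecture, and the stronger maximal operator conjecture, are resolved only for $n=2$). So there is no ``paper's own proof'' against which to compare your proposal.

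You recognise this yourself in your first sentence, and what follows is a fair high-level sketch of the standard discretisation and the multilinear-to-linear obstruction, together with an honest account of where the argument breaks down. That is all one can reasonably do here. One small correction of emphasis: the joints-with-multiplicities inequality proved in this thesis is not known to feed back into the continuous Kakeya problem in any rigorous way; the implication runs in the other direction (the maximal Kakeya operator conjecture \emph{motivates} the discrete question). So while your instinct to look for incidence-geometric input at each scale is natural, citing $\sum_{x\in J}N(x)^{1/2}=O(L^{3/2})$ as a usable ingredient overstates what is currently available.
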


In fact, the truth of the Kakeya set conjecture would be implied by the truth of the following, harder conjecture.

\begin{conjecture} \emph{\textbf{(Maximal Kakeya operator conjecture)}} If $T_{\omega}$, for $\omega \in \Omega \subset S^{n-1}$, are tubes in $\R^n$ with length 1 and cross section an $(n-1)$-dimensional ball of radius $\delta$, such that their directions $\omega \in \Omega$ are $\delta$-separated, then
\begin{displaymath}\int_{x \in \R^n} \Bigg(\sum_{\omega \in \Omega} \chi_{T_{\omega}}(x)\Bigg)^{\frac{n}{n-1}}{\rm d} x \leq C_{n}\log\frac{1}{\delta}\; \sum_{\omega \in \Omega}|T_{\omega}|,\end{displaymath} 
for all $n \geq 2$, where $C_n$ is a constant depending only on $n$.
 \end{conjecture}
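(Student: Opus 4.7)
Since the full case $n \ge 3$ is a famous open conjecture that has resisted attack since Bourgain, any single plan will necessarily stall somewhere; the goal of this sketch is to indicate the standard framework and to flag where the obstruction lies. First I would reduce to the $L^{n/(n-1)}$ Kakeya maximal estimate in the usual way: by dyadic decomposition of the level sets
\[
E_\mu = \bigl\{x : \sum_{\omega \in \Omega} \chi_{T_\omega}(x) \sim \mu \bigr\},
\]
the conjecture is equivalent to proving $\mu^{n/(n-1)} |E_\mu| \lesssim \log(1/\delta)\, |\Omega|\, \delta^{n-1}$ uniformly in dyadic $\mu$, which by duality becomes a lower bound on the volume of any set meeting a $\mu$-proportion of each tube.

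Next I would attempt a combination of Wolff's hairbrush argument with the broad/narrow dichotomy of Bourgain--Guth. For each $\rho$-ball one splits the incident tubes into the broad case, in which at least $n$ essentially transverse tubes meet, and the narrow case, in which all the relevant tubes cluster near a lower-dimensional algebraic set. In the broad case, the endpoint multilinear Kakeya inequality of Guth gives the sharp $L^{n/(n-1)}$ bound with no loss, so the contribution is immediately acceptable. In the narrow case, one tries either to iterate at a smaller scale, picking up only an admissible logarithmic factor, or to pass to the concentrating variety via polynomial partitioning and induct on degree.

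The main obstacle, and the reason the conjecture remains open, is precisely the narrow case: when $\delta$-separated directions can concentrate on a lower-dimensional subvariety, each induction step loses a factor that the current partitioning technology cannot absorb at the conjectured $L^{n/(n-1)}$ endpoint. Wolff's hairbrush gives only Hausdorff dimension $(n+2)/2$ for Kakeya sets, falling strictly short of the conjectured $n$. This is exactly the analytic loss that motivates the discrete setting pursued in this thesis: by shrinking tubes to lines the $\log(1/\delta)$ factor disappears entirely, the volumes $|T_\omega|$ drop out of the statement, and the polynomial method is able to deliver sharp bounds through results such as the joints theorem recalled in the abstract, without the narrow-case losses that block progress on the continuous conjecture.
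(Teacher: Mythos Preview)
The statement you were given is labelled a \emph{Conjecture} in the paper, and the paper does not attempt to prove it. Immediately after stating it, the thesis notes explicitly that ``the maximal Kakeya operator conjecture, and therefore the Kakeya set conjecture, have been resolved in the case $n=2$, but not in any other.'' The conjecture appears only as background: it motivates the discrete joints-with-multiplicities problem that forms the actual content of the thesis.

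You correctly diagnosed this. Your write-up identifies the conjecture as open for $n\ge 3$, summarises the broad/narrow and multilinear framework, names the narrow-case obstruction, and then points to exactly the heuristic the thesis uses, namely that passing to the discrete setting removes the $\log(1/\delta)$ and the analytic losses, allowing polynomial-method arguments to give sharp bounds. That is precisely the r\^ole the conjecture plays in the paper. There is no paper proof to compare against, and nothing in your account to correct.
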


The maximal Kakeya operator conjecture, and therefore the Kakeya set conjecture, have been resolved in the case $n=2$, but not in any other. However, a variant of the maximal Kakeya operator conjecture, and in particular its multilinear version, has recently been proved by Guth in \cite {Guth_10} for all dimensions, with the use of profound algebraic techniques:

\begin{theorem} \emph{\textbf{(Endpoint multilinear Kakeya theorem)}} Let $\{e_1, ..., e_n\}$ be a set of orthonormal vectors in $\R^n$, $n\geq 2$, and $\mathbb{T}_1$, ..., $\mathbb{T}_n$ finite families of doubly infinite tubes in $\R^n$, with cross section an $(n-1)$-dimensional unit ball, such that, for each $i=1,...,n$, the direction of each of the tubes in $\mathbb{T}_i$ lies in a fixed $\frac{c}{n}$-cap around $e_i$, for some explicit constant $c$. Then, there exists some constant $C_{n}$, depending only on $n$, such that
\begin{displaymath}\int_{x \in \R^n}\prod_{i=1}^{n}\Bigg(\sum_{T_i \in \mathbb{T}_i}\chi_{T_i}(x)\Bigg)^{\frac{1}{n-1}}{\rm d}x \leq C_{n} \cdot (|\mathbb{T}_1| \cdots |\mathbb{T}_n|)^{\frac{1}{n-1}}. \end{displaymath} 
\end{theorem}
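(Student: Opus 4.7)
I would follow Guth's polynomial method approach. The strategy is to produce an algebraic hypersurface that is \emph{visible} from every tube in each family, and then combine the visibility data via a surface Loomis--Whitney inequality. Three ingredients carry the argument: a directed-volume functional on algebraic hypersurfaces, a topological existence lemma producing the hypersurface, and a Bezout-type surface-area bound.

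\textbf{Directed volumes.} For a smooth algebraic hypersurface $Z\subset\R^n$ with unit normal field $\nu$, and a unit vector $v$, define
\begin{displaymath}
V_v(Z) := \int_Z |v\cdot \nu(z)|\,d\mathcal{H}^{n-1}(z).
\end{displaymath}
Two elementary properties drive everything. First, for a tube $T$ of axial direction $v$, the one-dimensional length of $T\cap Z$ (counted with multiplicity of the projection) equals $V_v(Z\cap T)$. Second, by the co-area formula combined with AM--GM, for any orthonormal frame $\{e_1,\dots,e_n\}$ and smooth hypersurface $Z$,
\begin{displaymath}
\prod_{i=1}^n V_{e_i}(Z)^{1/(n-1)} \;\lesssim_n\; \mathcal{H}^{n-1}(Z)^{n/(n-1)}.
\end{displaymath}

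\textbf{The topological visibility lemma (main obstacle).} The heart of the argument is the following existence statement: given $d\in\N$ with $d^n \gtrsim_n \max_i |\mathbb{T}_i|$, there exists a nonzero polynomial $P$ of degree at most $d$ whose zero set is visible from every tube, meaning $V_{e_i}(Z(P)\cap T)\gtrsim 1$ for every $T\in\mathbb{T}_i$ and every $i=1,\dots,n$. The proof is a Borsuk--Ulam--style degree argument on the projective space of polynomials of degree at most $d$: after a generic perturbation, the set of polynomials invisible from a single tube has bounded Betti numbers, and a dimension count forces a single $P$ satisfying all the visibility conditions simultaneously. This algebraic--topological input is by far the most delicate step of the proof; the rest is bookkeeping.

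\textbf{Assembly.} A dyadic reduction replaces the original families by subfamilies of comparable sizes $N_i\sim N$, and an anisotropic rescaling confines the relevant tubes to a cube of side $\sim N^{1/n}$. Choose $d\sim N^{1/n}$ and produce $P$ by the visibility lemma. On each unit cube $Q$ meeting $Z(P)$, for $x\in Q$ and $T_i\in\mathbb{T}_i$ through $x$, visibility gives $V_{e_i}(Z(P)\cap T_i \cap Q')\gtrsim 1$ on a slight enlargement $Q'$ of $Q$, so $\sum_{T_i}\chi_{T_i}(x)\lesssim \sum_{T_i\ni x} V_{e_i}(Z(P)\cap T_i\cap Q')$. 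Multiplying the $n$ pointwise inequalities, integrating over $Q$, and invoking the surface Loomis--Whitney inequality of the previous paragraph together with the Bezout bound $\mathcal{H}^{n-1}(Z(P)\cap Q')\lesssim d$ give
\begin{displaymath}
\int_Q\prod_{i=1}^n\Bigl(\sum_{T_i\in\mathbb{T}_i}\chi_{T_i}(x)\Bigr)^{1/(n-1)}dx \;\lesssim_n\; d^{n/(n-1)}.
\end{displaymath}
Summing over the $\lesssim d^n$ relevant cubes yields a total of $\lesssim d^{n^2/(n-1)}\sim N^{n/(n-1)}\sim (|\mathbb{T}_1|\cdots|\mathbb{T}_n|)^{1/(n-1)}$, as desired. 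Tubes disjoint from $Z(P)$ contribute nothing to the relevant cubes and may be deleted, so the argument is iterable via induction on the total number of tubes to handle any residual configuration.
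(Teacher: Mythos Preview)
The paper does not prove this theorem. The Endpoint multilinear Kakeya theorem is stated in the introduction as a known result of Guth (with citation to \cite{Guth_10}) and serves only as motivation for the discrete multijoints problem that the thesis actually addresses; no proof or sketch of it appears anywhere in the paper. So there is no ``paper's own proof'' to compare your proposal against.

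That said, your outline does track the architecture of Guth's argument --- directed volumes, a Borsuk--Ulam--type visibility lemma producing a hypersurface seen by every tube, and a surface Loomis--Whitney inequality combined with a degree bound on surface area in unit cubes. A few of the details as written are loose: the displayed inequality $\prod_i V_{e_i}(Z)^{1/(n-1)} \lesssim_n \mathcal{H}^{n-1}(Z)^{n/(n-1)}$ is not the form Guth actually uses (the correct multilinear surface inequality is pointwise on the normal directions, not a global product of directed volumes), the ``Bezout bound'' on $\mathcal{H}^{n-1}(Z(P)\cap Q')$ is really the cylinder/Crofton estimate rather than B\'ezout, and the final sentence about deleting tubes disjoint from $Z(P)$ and iterating is not how Guth closes the argument. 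But since the paper itself offers nothing to compare to, these are remarks on your sketch of Guth's proof rather than a comparison with the thesis.
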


In our work, we consider the maximal Kakeya operator conjecture and the endpoint multilinear Kakeya problem, and effectively shrink the tubes to lines, eventually formulating the corresponding discrete questions in $\R^n$; namely, the problem of counting joints with multiplicities and the multijoints problem, respectively. In fact, we show that, for $n=3$, what we expect to be true due to the maximal Kakeya operator conjecture and what we know due to the endpoint multilinear Kakeya theorem in the continuous case also hold in the discrete case, constituting another indication that the conjecture holds. These two problems form the main part of the thesis, while we also investigate them in higher dimensions and different field settings, as well as in the case where the lines are replaced with more general, appropriate curves.

We are now ready to introduce our problems in more detail.

\section{The joints problem with multiplicities}

A point $x \in \R^n$ is a joint for a collection $\mathfrak{L}$ of lines in $\R^n$ if there exist at least $n$ lines in $\mathfrak{L}$ passing through $x$, whose directions span $\R^n$. 

The problem of bounding the number of joints by a power of the number of the lines forming them first appeared in \cite{Chazelle_Edelsbrunner_Guibas_Pollack_Seidel_Sharir_Snoeyink_1992}, where it was proved that if $J$ is the set of joints formed by a collection of $L$ lines in $\R^3$, then $|J| =O(L^{7/4})$. Successive progress was made in improving the upper bound of $|J|$ in three dimensions, by Sharir, Sharir and Welzl, and Feldman and Sharir (see \cite{MR1280600}, \cite{MR2047237}, \cite{MR2121298}). 

Wolff  had already observed in \cite{MR1660476} that there exists a connection between the joints problem and the Kakeya problem, and, using this fact, Bennett, Carbery and Tao found an improved upper bound for $|J|$, with a particular assumption on the angles between the lines forming each joint (see \cite{MR2275834}). 

Eventually, Guth and Katz provided a sharp upper bound in \cite{Guth_Katz_2008}; they showed that, in $\R^3$, $|J|=O( L^{3/2})$. The proof was an adaptation of Dvir's algebraic argument  in \cite{MR2525780} for the solution of the finite field Kakeya problem, which involves working with the zero set of a polynomial. Dvir, Guth and Katz induced dramatic developments with this work, because they used for the first time the polynomial method to approach problems in incidence geometry. Further work was done by Elekes, Kaplan and Sharir in \cite{MR2763049}, and finally, a little later, Kaplan, Sharir and Shustin (in \cite{MR2728035}) and Quilodr\'{a}n (in \cite{MR2594983}) independently solved the joints problem in $n$ dimensions, using again algebraic techniques, simpler than in \cite{Guth_Katz_2008}.

In particular, Quilodr\'{a}n and Kaplan, Sharir and Shustin showed that, if $\mathfrak{L}$ is a collection of $L$ lines in $\R^n$, $n \geq 2$, and $J$ is the set of joints formed by $\mathfrak{L}$, then 

\begin{equation} |J| \leq c_n \cdot L^{\frac{n}{n-1}}, \label{eq:basic} \end{equation} 

where $c_n$ is a constant depending only on the dimension $n$. 

In this setting, we define the multiplicity $N(x)$ of a joint $x$ as the number of $n$-tuples of lines of $\mathfrak{L}$ through $x$, whose directions span $\R^n$; we mention here that we consider the $n$-tuples to be unordered, although considering them ordered would not cause any substantial change in what follows. 

From \eqref{eq:basic} we know that $ \sum_{x \in J}1 \leq c_n \cdot L^{\frac{n}{n-1}}$. A question by Anthony Carbery is if one can improve this to get

\begin{equation} \sum_{x \in J}N(x)^{\frac{1}{n-1}} \leq c_n' \cdot L^{\frac{n}{n-1}}, \label{eq:unsolved} \end{equation} 

where $c_n'$ is, again, a constant depending only on $n$. We clarify here that the choice of $\frac{1}{n-1}$ as the power of the multiplicities $N(x)$ on the left-hand side of \eqref{eq:unsolved} does not affect the truth of \eqref{eq:unsolved} when each joint has multiplicity $1$, while it is the largest power of $N(x)$ that one can hope for, since it is the largest power of $N(x)$ that makes \eqref{eq:unsolved} true when all the lines of $\mathfrak{L}$ are passing through the same point and each $n$ of them are linearly independent (in which case the point is a joint of multiplicity $\binom{L}{n} \sim L^n$). Also, \eqref{eq:unsolved} obviously holds when $n=2$; in that case, the left-hand side is smaller than the number of all the pairs of the $L$ lines, i.e. than $\binom{L}{2}\sim L^2$.

In fact, as we have already mentioned, the above question can also be seen from a harmonic analytic point of view (again, see \cite{MR1660476}). Specifically, if $T_{\omega}$, for $\omega \in \Omega \subset S^{n-1}$, are tubes in $\R^n$ with length 1 and cross section an $(n-1)$-dimensional ball of radius $\delta$, such that their directions $\omega \in \Omega$ are $\delta$-separated, then the maximal Kakeya operator conjecture asks for a sharp upper bound of the quantity
\begin{displaymath}\int_{x \in \R^n} \Bigg(\sum_{\omega \in \Omega} \chi_{T_{\omega}}(x)\Bigg)^{\frac{n}{n-1}}{\rm d}x= \int_{x \in \R^n} \#\{ \text{tubes }T_{\omega}\text{ through }x\}^{\frac{n}{n-1}}{\rm d}x.\end{displaymath}
On the other hand, in the case where a collection $\mathfrak{L}$ of lines in $\R^n$ has the property that, whenever $n$ of the lines meet at a point, they form a joint there, then, for all $x \in J$, $N(x)\sim \#\{$lines of $\mathfrak{L}$ through $x\}^n$, and thus the left-hand side of \eqref{eq:unsolved} is
\begin{displaymath}\sim \sum_{x \in J}\#\{\text{lines of }\mathfrak{L}\text{ through }x\}^{\frac{n}{n-1}}.\end{displaymath}
Therefore, in both cases, the problem lies in bounding analogous quantities, and thus the problem of counting joints with multiplicities is a discrete analogue of the maximal Kakeya operator conjecture.

We will indeed show that \eqref{eq:unsolved} holds in $\R^3$: 

\begin{theorem} \emph{\textbf{(Iliopoulou, \cite[Theorem 1.1]{Iliopoulou_12})}}\label{1.1} Let $\mathfrak{L}$ be a collection of $L$ lines in $\R^3$, forming a set $J$ of joints. Then,
 \begin{displaymath} \sum_{x \in J}N(x)^{1/2} \leq c \cdot L^{3/2}, \end{displaymath}
where $c$ is a constant independent of $\mathfrak{L}$. \end{theorem}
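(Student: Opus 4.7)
The natural approach is to extend the polynomial method of Quilodr\'an and Kaplan--Sharir--Shustin (which proves $|J|\lesssim L^{3/2}$) to a weighted version that takes multiplicities into account. The crucial structural observation is that at a joint $x$ with multiplicity $N(x)$, the number $k_x$ of distinct lines of $\mathfrak{L}$ passing through $x$ satisfies $\binom{k_x}{3}\ge N(x)$, hence $k_x\gtrsim N(x)^{1/3}$. This cubic-to-linear relationship between ``lines through $x$'' and ``multiplicity at $x$'' is exactly what should drive the exponent $1/2$ on $N(x)$ in the desired inequality.

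Set $S:=\sum_{x\in J}N(x)^{1/2}$ and let $m_x:=\lceil N(x)^{1/6}\rceil$. The plan is to choose $D\sim S^{1/3}$ with a constant large enough that the space of polynomials on $\R^3$ of degree $\le D$ (of dimension $\sim D^3\sim S$) strictly exceeds the number of linear conditions required to impose vanishing to order $m_x$ at every joint $x$, which is $\sum_{x\in J}\binom{m_x+2}{3}\lesssim\sum_{x\in J}N(x)^{1/2}=S$. A nonzero polynomial $p$ of degree $\le D$ vanishing to order $m_x$ at each $x\in J$ then exists. For any line $\ell\in\mathfrak{L}$, the restriction $p|_\ell$ is a univariate polynomial of degree $\le D$ vanishing to total order $\sum_{x\in\ell\cap J}m_x$ on $\ell\cap J$; if this total exceeds $D$ then $p|_\ell\equiv 0$ and $\ell$ is contained in the zero set $Z(p)$.

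\textbf{Case A (no line of $\mathfrak{L}$ lies in $Z(p)$).} Then $\sum_{x\in\ell\cap J}m_x\le D$ for every $\ell\in\mathfrak{L}$, so summing over all lines,
\begin{displaymath}
\sum_{x\in J}k_x\, m_x\;=\;\sum_{\ell\in\mathfrak{L}}\sum_{x\in\ell\cap J}m_x\;\le\;L\cdot D.
\end{displaymath}
Using $k_x\gtrsim N(x)^{1/3}$ and $m_x\ge N(x)^{1/6}$, the product $k_x m_x\gtrsim N(x)^{1/2}$, so the left side is $\gtrsim S$. Combining with $D\sim S^{1/3}$ yields $S\lesssim L\cdot S^{1/3}$, that is, $S\lesssim L^{3/2}$, as desired.

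\textbf{Case B (some lines of $\mathfrak{L}$ lie in $Z(p)$).} This is where I expect the main obstacle. A naive counting argument no longer closes, and one must exploit the algebraic structure of $Z(p)$. Two natural routes suggest themselves: either choose $p$ of \emph{minimal} degree subject to the vanishing conditions and argue, via a suitable derivative of $p$ that still vanishes on most joints, that the gradient-based argument used in the classical Quilodr\'an / Kaplan--Sharir--Shustin proof can be promoted to the multiplicity-weighted setting; or iterate the argument on the sub-collection of lines contained in $Z(p)$, using the bounded-degree structure of $Z(p)$ to control how many lines and joints it can carry, and carefully tracking how the multiplicity weights $N(x)^{1/2}$ transfer through the induction. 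Preserving the exact exponent $1/2$ on $N(x)$ through this step, without incurring a logarithmic loss, is where I expect the genuinely new ideas of the proof to be required.
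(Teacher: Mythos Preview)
Your Case~A is clean and correct, but Case~B is where the entire difficulty lies, and you have not resolved it; you yourself flag this. Neither of the two routes you sketch is known to close. For the ``minimal degree'' route: if every line of $\mathfrak{L}$ lies in $Z(p)$, then at a joint $x$ the three independent tangent directions force $\nabla p(x)=0$, but the components of $\nabla p$ vanish only to order $m_x-1$ at $x$, not $m_x$; so the degree-reduction does not reproduce the vanishing hypotheses and the induction on degree does not close. For the ``iterate on lines in $Z(p)$'' route: a priori $S$ could be as large as $L^3$, so your $D\sim S^{1/3}$ could be $\sim L$, and the $\le D^2$ bound on critical lines (which only controls the \emph{singular} lines of $Z(p)$, not all lines in $Z(p)$) gives no useful reduction. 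The logarithmic loss you worry about is real under this scheme and, as far as is known, cannot be removed by these methods alone.

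The paper takes a completely different route, with no high-order vanishing at all. It dyadically stratifies the joints into sets $J_N^k=\{x:N\le N(x)<2N,\ k\le k_x<2k\}$ and proves, by induction on $L$, the refined estimate
\[
|J_N^k|\cdot N^{1/2}\ \lesssim\ \frac{L^{3/2}}{k^{1/2}}+\frac{L}{k}\,N^{1/2}.
\]
The proof of this uses the Guth--Katz \emph{polynomial partitioning} (cell decomposition), not a single vanishing polynomial: one chooses $d\sim L^2 S^{-1}k^{-3}$ (a scale dictated by Szemer\'edi--Trotter) and splits into a cellular case (handled by Szemer\'edi--Trotter inside cells plus an incidence count with $Z$) and an algebraic case. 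In the algebraic case one exploits two genuinely $3$-dimensional facts: regular points of $Z$ have a tangent \emph{plane}, so Lemma~3.2 produces $\gtrsim N/k^2$ lines through each such joint that must exit $Z$; and $Z$ has at most $d^2$ critical lines. A further dichotomy on whether the ``rich'' lines $\mathfrak{L}'$ form a large or small fraction of $\mathfrak{L}$ either closes directly or feeds the induction hypothesis on $|\mathfrak{L}'|<L/100$. Theorem~1.1 then follows by summing the refined estimate over dyadic $N$ and $k$ (using $k\gtrsim N^{1/3}$ for the lower limit), and the two terms in the bound for $|J_N^k|$ are exactly calibrated so that both geometric series converge without a logarithm.
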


In Chapter \ref{6} we generalise the statement of Theorem \ref{1.1}, for joints formed by real algebraic curves in $\R^3$ of uniformly bounded degree, as well as curves in $\R^3$ parametrised by real univariate polynomials of uniformly bounded degree (Theorem \ref{4.2.1} and Corollary \ref{jointsforcurves}, respectively).

The basic tool for the proof of Theorem \ref{1.1}, as well as its generalisation, will be the Guth-Katz polynomial method, developed by Guth and Katz in \cite{Guth_Katz_2010}, which we present in Chapter \ref{2}. Note that the proof of \eqref{eq:unsolved} for three dimensions that we are providing cannot be applied for higher dimensions, as a crucial ingredient of it is that the number of critical lines of a real algebraic hypersurface in $\R^3$ is bounded, a fact which we do not know if is true in higher dimensions.

Finally, in the somewhat independent Chapter \ref{7} we consider the joints problem in different field settings and higher dimensions. In particular, we investigate the extent to which an application, in different field settings, of the techniques we use in euclidean space to tackle the problem, seems possible. We also prove that \eqref{eq:basic} holds in $\mathbb{F}^n$ as well, for any field $\mathbb{F}$ and any $n \in \N$ (Theorem \ref{carberyjoints}), while we provide a further result, hoping to shed some light on the way the lines forming a particular set of joints are distributed in $\mathbb{F}^n$. Finally, the combination of these two results gives us an estimate on the number of joints counted with multiplicities in $\mathbb{F}^n$, that is, however, weaker than the one in Theorem \ref{1.1}.

\section{The multijoints problem}

Let $\mathfrak{L}_1$, ..., $\mathfrak{L}_n$ be finite collections of $L_1$, ..., $L_n$, respectively, lines in $\R^n$. We say that a point $x \in \R^n$ is a multijoint for these collections of lines if, for all $i=1,...,n$, there exists a line $l_i\in \mathfrak{L}_i$, such that $x \in l_i$ and the directions of the lines $l_1$, ..., $l_n$ span $\R^n$ (in other words, such that $x$ is a joint for the collection $\{l_1,...,l_n\}$ of lines). 

The multijoints problem lies in bounding the number of multijoints by a number depending only on the cardinalities of the collections of lines forming them. 

More specifically, let $J$ be the set of multijoints formed by collections $\mathfrak{L}_1$, ..., $\mathfrak{L}_n$ of lines in $\R^n$. For each multijoint $x \in J$, we define $N'(x):=\{(l_1,...,l_n) \in \mathfrak{L}_1\times\cdots\times\mathfrak{L}_n$: $x \in l_i$ for all $i=1,...,n$, and the directions of the lines $l_1$, ..., $l_n$ span $\R^n\}$.

Anthony Carbery has conjectured that, for all $n \geq 3$ (for $n=2$ it is obvious),
\begin{equation} \label{eq:mult1}|J| \lesssim_n (L_1\cdots L_n)^{1/(n-1)}, \end{equation}
as well as that
\begin{equation} \label{eq:mult1000}\sum_{x \in J}N'(x)^{1/(n-1)} \lesssim_n (L_1\cdots L_n)^{1/(n-1)}. \end{equation}

In fact, in the particular case where the collections $\mathfrak{L}_1$, ..., $\mathfrak{L}_n$ of lines have the property that, whenever a point $x \in \mathbb{R}^n$ lies on the intersection of at least one line $l_i$ from each collection $\mathfrak{L}_i$, $i=1,2,...,n$, the directions of the lines $l_1$, ..., $l_n$ span $\mathbb{R}^n$, then the multijoints problem gives rise to a discrete analogue of the endpoint multilinear Kakeya problem. Indeed, if we denote by $J$ the set of multijoints formed by such collections of lines (a subset of the set of joints formed by the collection $\mathfrak{L}_1\cup...\cup\mathfrak{L}_n$), while, for every $x \in J$ and $i=1,...,n$, $N_i(x)$ denotes the number of lines of $\mathfrak{L}_i$ passing through $x$, then, under the above additional assumption on the transversality properties of the collections $\mathfrak{L}_1$, ..., $\mathfrak{L}_n$, \eqref{eq:mult1000} becomes
\begin{equation} \label{eq:mult2}\sum_{x \in J}(N_1(x) \cdots N_n(x))^{1/(n-1)} \lesssim_n (L_1\cdots L_n)^{1/(n-1)}. \end{equation}
Note that \eqref{eq:mult2} clearly shows the connection between the multijoints problem and the endpoint multilinear Kakeya problem. Indeed, Guth's work on the latter demonstrated that, whenever $\mathbb{T}_1$, ..., $\mathbb{T}_n$ are $n$ essentially transverse families of doubly-infinite tubes in $\mathbb{R}^n$, with cross section an $(n-1)$-dimensional unit ball (where, by the expression ``essentially transverse", we mean that, for all $i=1,...,n$, the direction of each tube in the family $\mathbb{T}_i$ lies in a fixed $\frac{c}{n}$-cap around the vector $e_i \in \R^n$, where the vectors $e_1$, ..., $e_n$ are orthonormal), it holds that
\begin{displaymath}\int_{x \in \R^n}\prod_{i=1}^{n}\Bigg(\sum_{T_i \in \mathbb{T}_i}\chi_{T_i}(x)\Bigg)^{\frac{1}{n-1}}{\rm d} x \lesssim_{n} (|\mathbb{T}_1| \cdots |\mathbb{T}_n|)^{\frac{1}{n-1}}, \end{displaymath} i.e.
\begin{displaymath}\int_{x \in \R^n}\big(\#\{\text{tubes of }\mathbb{T}_1\text{ through }x\}\cdots \#\{\text{tubes of }\mathbb{T}_n\text{ through }x\}\big)^{\frac{1}{n-1}}{\rm d}x \lesssim_{n} \end{displaymath}
\begin{displaymath}
\lesssim_n (|\mathbb{T}_1| \cdots |\mathbb{T}_n|)^{\frac{1}{n-1}},\end{displaymath}
an expression whose discrete analogue is \eqref{eq:mult2}.

Using, as for the solution of the joints problem, algebraic methods similar to the ones developed by Guth and Katz in \cite{Guth_Katz_2010}, we have proved that \eqref{eq:mult1} holds for $n=3$, i.e. that the following is true.

\begin{theorem}\label{theoremmult2} Let $\mathfrak{L}_1$, $\mathfrak{L}_2$, $\mathfrak{L}_3$ be finite collections of $L_1$, $L_2$ and $L_3$, respectively, lines in $\R^3$. Let $J$ be the set of multijoints formed by the collections $\mathfrak{L}_1$, $\mathfrak{L}_2$ and $\mathfrak{L}_3$. Then,
\begin{equation} \label{eq:proved}|J|\leq c\cdot  (L_1L_2L_3)^{1/2}, \end{equation}
where $c$ is a constant independent of $\mathfrak{L}_1$, $\mathfrak{L}_2$ and $\mathfrak{L}_3$.
\end{theorem}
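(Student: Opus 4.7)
The proof adapts the algebraic, polynomial-method framework pioneered by Dvir, Guth, and Katz in their treatment of the joints problem to the multijoint setting. Proceed by induction on $L_1 + L_2 + L_3$; the base case in which some $L_i = 0$ is trivial, since then $J = \emptyset$. For the inductive step, assume for contradiction that $|J| > c\cdot(L_1 L_2 L_3)^{1/2}$, where $c$ is a constant to be chosen large at the end.

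By a parameter count, polynomials on $\R^3$ of degree at most $D$ form a vector space of dimension $\binom{D+3}{3}\sim D^3$, so one can find a non-zero polynomial $p$ of \emph{minimal} degree $d$ vanishing on $J$, with $d \lesssim |J|^{1/3}$. The decisive dichotomy is that every line $l \subseteq \R^3$ either lies entirely in $Z(p)$, or meets $Z(p)$---and hence $J$---in at most $d$ points.

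The main geometric claim is that some line in $\mathfrak{L}_1 \cup \mathfrak{L}_2 \cup \mathfrak{L}_3$ carries at most $d$ multijoints. Indeed, if every line in all three collections contained strictly more than $d$ multijoints, then every such line would lie in $Z(p)$; at each multijoint $x$, the three non-coplanar lines of its defining triple would then lie in $Z(p)$, forcing $\nabla p(x) = 0$. Any component of $\nabla p$ that is not identically zero would then be a polynomial of degree $d - 1$ vanishing on $J$, contradicting the minimality of $d$. Locate such a line $l^* \in \mathfrak{L}_i$, delete it, and invoke the inductive hypothesis on the reduced system to obtain
\begin{displaymath}
|J| \;\leq\; |J'| + d \;\leq\; c\bigl(L_1 L_2 L_3 \cdot (L_i-1)/L_i\bigr)^{1/2} + d.
\end{displaymath}

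For the induction to close, one needs $d \lesssim c(L_1 L_2 L_3)^{1/2}/L_i$, i.e., $L_i \lesssim c^{2/3}(L_1 L_2 L_3)^{1/3}$; this is automatic only if $l^*$ lies in a collection whose size is at most the geometric mean of $L_1, L_2, L_3$. This is the main obstacle: the general polynomial-method argument only provides $l^*$ in \emph{some} $\mathfrak{L}_i$, and if it happens to belong to the largest collection the induction fails to close directly. I would attempt to resolve this by replacing $p$ with an auxiliary polynomial of degree $\lesssim L_{\min}^{1/2}$ vanishing on every line of the smallest collection (via a second parameter count, exploiting that $L_{\min}$ lines impose $\sim L_{\min}\cdot d$ linear conditions on polynomials of degree $d$), and then running the dichotomy against the other two collections; the contribution of the generic lines of $\mathfrak{L}_2$ and $\mathfrak{L}_3$ is then bounded by $L_{\min}^{1/2}(L_j+L_k)$, which can be shown to match $(L_1L_2L_3)^{1/2}$ up to constants after a careful accounting for the lines already contained in $Z(p)$. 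Iterating this idea, or combining it with the gradient contradiction above, should rebalance the induction in favour of the smallest $L_i$ and complete the proof.
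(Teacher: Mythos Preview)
Your proposal correctly identifies the central difficulty in adapting the Quilodr\'an argument to multijoints, but does not resolve it. The proposed fix is arithmetically wrong: with $L_{\min}=L_1$ and an auxiliary polynomial of degree $\sim L_1^{1/2}$ vanishing on all of $\mathfrak{L}_1$, the lines of $\mathfrak{L}_2$ and $\mathfrak{L}_3$ not lying in $Z(p)$ contribute at most $L_1^{1/2}(L_2+L_3)$ multijoints, but this is \emph{not} bounded by $(L_1L_2L_3)^{1/2}$ in general---by AM--GM, $L_2+L_3 \geq 2(L_2L_3)^{1/2}$, so the inequality goes the wrong way. Moreover, you have not explained how to handle the multijoints lying on lines of $\mathfrak{L}_2\cup\mathfrak{L}_3$ that are contained in $Z(p)$; the gradient trick no longer forces a contradiction, because now only the $\mathfrak{L}_1$-line through a multijoint is guaranteed to lie in $Z(p)$, not all three. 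The phrase ``iterating this idea\ldots should rebalance the induction'' is where the actual content would have to be, and it is missing.

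The paper does not attempt a Quilodr\'an-style deletion argument at all. Instead it proves the stronger Proposition~\ref{multsimple} by induction on $(L_1,L_2,L_3)$, using Guth--Katz polynomial partitioning with a deliberately \emph{asymmetric} degree $d\sim L_1L_2/(S N_1 N_2)$ (here $N_1=N_2=N_3=1$), chosen so that the two collections with the smallest ratios $L_i/N_i$ control the cell structure. In the cellular case, a Cauchy--Schwarz argument over the full cells produces a contradiction for $d$ too large; in the algebraic case, one passes to sub-collections of ``rich'' lines, uses the $\lesssim d^2$ bound on critical lines, and closes the induction by showing that some $\mathfrak{L}_{j,1}$ has size at most $L_j/10^{1000}$. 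The asymmetric choice of $d$ is precisely what replaces the ``rebalancing'' you are looking for.
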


In fact, Theorem \ref{theoremmult2} is an immediate corollary of the following, stronger proposition.

\begin{proposition} \label{multsimple} Let $\mathfrak{L}_1$, $\mathfrak{L}_2$, $\mathfrak{L}_3$ be finite collections of $L_1$, $L_2$ and $L_3$, respectively, lines in $\R^3$. For all $(N_1,N_2,N_3) \in \R_+^3$, let $J'_{N_1,N_2,N_3}$ be the set of those multijoints formed by $\mathfrak{L}_1$, $\mathfrak{L}_2$ and $\mathfrak{L}_3$, with the property that, if $x \in J'_{N_1,N_2,N_3}$, then there exist collections $\mathfrak{L}_1(x) \subseteq \mathfrak{L}_1$, $\mathfrak{L}_2(x) \subseteq \mathfrak{L}_2$ and $\mathfrak{L}_3(x) \subseteq \mathfrak{L}_3$ of lines passing through $x$, such that $|\mathfrak{L}_1(x)|\geq N_1$, $|\mathfrak{L}_2(x)|\geq N_2$ and $|\mathfrak{L}_3(x)|\geq N_3$, and, if $l_1 \in \mathfrak{L}_1(x)$, $l_2 \in \mathfrak{L}_2(x)$ and $l_3 \in \mathfrak{L}_3(x)$, then the directions of the lines $l_1$, $l_2$ and $l_3$ span $\R^3$. Then,
\begin{displaymath} |J'_{N_1,N_2,N_3}|\leq c \cdot \frac{(L_1L_2L_3)^{1/2}}{(N_1N_2N_3)^{1/2}}, \; \forall\;(N_1,N_2,N_3) \in \R_{+}^3, \end{displaymath}
where $c$ is a constant independent of $\mathfrak{L}_1$, $\mathfrak{L}_2$ and $\mathfrak{L}_3$.
\end{proposition}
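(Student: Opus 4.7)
The plan is to apply the Guth-Katz polynomial method to the set of multijoints $J' := J'_{N_1,N_2,N_3}$ and extract the square-root saving by running three separate incidence counts, one per family, then combining them multiplicatively. Writing $M = |J'|$ and using the dimension count $\binom{D+3}{3} > M$, I produce a nonzero polynomial $P \in \R[x_1,x_2,x_3]$ of degree $D \lesssim M^{1/3}$ vanishing on every point of $J'$. For each $i=1,2,3$ split $\mathfrak{L}_i = \mathfrak{L}_i^P \sqcup \mathfrak{L}_i^{\mathrm{tr}}$, where $\mathfrak{L}_i^P$ consists of lines contained in the surface $\{P=0\}$ and $\mathfrak{L}_i^{\mathrm{tr}}$ of lines meeting it properly; a line in $\mathfrak{L}_i^{\mathrm{tr}}$ contains at most $D$ zeros of $P$ and hence at most $D$ points of $J'$.

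The principal case is when at least half of the multijoints $x$ carry, for every $i$, at least $N_i/2$ transverse lines from $\mathfrak{L}_i$ through them (which by a pigeonhole on the index of failure follows, up to constants, from the negation of the second case below). In that case, incidence counting on each family gives
\begin{displaymath} \frac{M N_i}{4} \;\leq\; \sum_{l \in \mathfrak{L}_i^{\mathrm{tr}}} |l \cap J'| \;\leq\; D L_i, \qquad i = 1,2,3. \end{displaymath}
Multiplying the three inequalities and using $D \lesssim M^{1/3}$ yields $M^3 N_1 N_2 N_3 \lesssim D^3 L_1 L_2 L_3 \lesssim M\, L_1 L_2 L_3$, and therefore $M \lesssim (L_1 L_2 L_3)^{1/2}/(N_1 N_2 N_3)^{1/2}$, which is exactly the claimed estimate.

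The remaining case is that for some $i$, say $i=1$, a constant fraction $\tilde J \subset J'$ of the multijoints has at least $N_1/2$ lines of $\mathfrak{L}_1^P$ passing through each point. My plan here is an iteration: remove from $\mathfrak{L}_1$ the lines lying in $\{P=0\}$ (strictly reducing $L_1$) and re-apply the polynomial method to the residual configuration, parametrising the induction on a quantity such as $L_1 L_2 L_3/(N_1 N_2 N_3)$ so that the principal case must eventually fire. At every iteration the transversality condition (every triple from $\mathfrak{L}_1(x) \times \mathfrak{L}_2(x) \times \mathfrak{L}_3(x)$ spans $\R^3$) ensures that the lines of $\mathfrak{L}_2,\mathfrak{L}_3$ through points of $\tilde J$ are not forced to lie in $\{P=0\}$, so the transverse incidence bound $|l \cap J'| \leq D$ remains available for those two families throughout.

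The principal obstacle is precisely this second case: an algebraic surface in $\R^3$ may contain arbitrarily many lines (any ruled surface does), so there is no a priori cap on $|\mathfrak{L}_1^P|$, and one cannot conclude directly from $P$. The heart of the argument will be showing that the iteration terminates with the principal case firing, and that the constants can be propagated without blow-up. A natural alternative to explore is to produce three distinct polynomials $P_1, P_2, P_3$ of comparable degree, one tailored to each family, and close via a Bezout-type bound on $\{P_1 = P_2 = P_3 = 0\}$, using the transversality at each multijoint to argue that the common zero set is zero-dimensional near $J'$; I would pursue both the iterative and the three-polynomial routes in parallel, expecting the iterative version to be the more robust.
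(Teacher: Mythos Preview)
Your principal case is correct and is actually a pleasantly direct argument: a single vanishing polynomial of degree $D\lesssim M^{1/3}$ plus three incidence counts multiplied together. The paper does not do this; it uses polynomial \emph{partitioning} at a carefully chosen degree $d\sim L_1L_2/(SN_1N_2)$, shows the cellular case is impossible for large enough constant $A$, and is thereby forced into the situation where almost all multijoints lie on the zero set. So the two routes diverge in how the main dichotomy is set up, but both land in the same hard place: what to do when the lines lie in the surface.

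That is where your proposal has a genuine gap. Two concrete problems with the ``remaining case'':

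\textbf{(a)} The transversality claim is false. Saying that every triple from $\mathfrak{L}_1(x)\times\mathfrak{L}_2(x)\times\mathfrak{L}_3(x)$ spans $\R^3$ imposes no constraint whatsoever on whether lines of $\mathfrak{L}_2,\mathfrak{L}_3$ lie in $\{P=0\}$. A one-sheeted hyperboloid already carries two transverse rulings; nothing stops a degree-$D$ surface from containing all of $\mathfrak{L}_1\cup\mathfrak{L}_2\cup\mathfrak{L}_3$. So the transverse incidence bound $|l\cap J'|\le D$ need not survive for families $2$ and $3$.

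\textbf{(b)} The iteration does not bound the points you care about. If you delete $\mathfrak{L}_1^P$ from $\mathfrak{L}_1$, the points of $\tilde J$ --- precisely the ones you are trying to control --- may lose all their $\mathfrak{L}_1$-lines and drop out of the new $J'_{N_1,N_2,N_3}$ entirely. Re-running the argument on the residual configuration then says nothing about them. And since $|\mathfrak{L}_1^P|$ can be all of $L_1$, you also have no quantitative decrease to drive the induction.

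The missing idea is the one the paper supplies. With its degree choice, once the rich lines from all three families lie in $Z$, every point of the refined set $\mathfrak{G}'$ has three non-coplanar lines of $Z$ through it and is therefore a \emph{critical} point of $Z$. Lines of $\mathfrak{L}_{i_0}$ carrying many such critical points become critical lines, and a degree-$d$ surface in $\R^3$ has at most $d^2$ critical lines (Guth--Katz). Either this bound finishes directly, or it forces the set $\mathfrak{L}_{j,1}$ of surviving rich lines to satisfy $|\mathfrak{L}_{j,1}|<L_j/10^{1000}$, and then the induction hypothesis on $|\mathfrak{L}_j|$ applied to the triple $(\mathfrak{L}_{j,1},\mathfrak{L}_i',\mathfrak{L}_k')$ closes the argument. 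Your proposal has no substitute for this critical-line mechanism, and the three-polynomial B\'ezout alternative you float has the same defect: without ruling out shared components you cannot conclude the common zero locus is zero-dimensional near $J'$.
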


Note that Theorem \ref{theoremmult2} follows from Proposition \ref{multsimple}, as, if $J$ is the set of multijoints formed by finite collections $\mathfrak{L}_1$, $\mathfrak{L}_2$ and $\mathfrak{L}_3$ of lines in $\R^3$, then, for each $x \in J$, there exist lines $l_1(x) \in \mathfrak{L}_1$, $l_2(x) \in \mathfrak{L}_2$ and $l_3(x) \in \mathfrak{L}_3$, passing through $x$, with the property that their directions span $\R^3$. Therefore, with the notation of Proposition \ref{multsimple}, $J=J'_{1,1,1}$, and thus $|J| \lesssim \frac{(L_1L_2L_3)^{1/2}}{(1 \cdot 1 \cdot 1)^{1/2}}\sim (L_1L_2L_3)^{1/2}$.

Even though it may be possible, we have not yet managed to take advantage of Proposition \ref{multsimple} to show \eqref{eq:mult1000} for $n=3$. However, we have effectively shown that \eqref{eq:mult2} holds for $n=3$, in the following sense.

\begin{theorem} \label{theoremmult1} Let $\mathfrak{L}_1$, $\mathfrak{L}_2$, $\mathfrak{L}_3$ be finite collections of $L_1$, $L_2$ and $L_3$, respectively, lines in $\R^3$, such that, whenever a line of $\mathfrak{L}_1$, a line of $\mathfrak{L}_2$ and a line of $\mathfrak{L}_3$ meet at a point, they form a joint there. Let $J$ be the set of multijoints formed by the collections $\mathfrak{L}_1$, $\mathfrak{L}_2$ and $\mathfrak{L}_3$. Then,

\begin{equation} \label{eq:proved}\sum_{\{x \in J:N_m(x)>10^{12}\}}(N_1(x)N_2(x)N_3(x))^{1/2} \leq c \cdot (L_1L_2L_3)^{1/2}, \end{equation}

where $m \in \{1,2,3\}$ is such that $L_m=\min\{L_1,L_2,L_3\}$, and $c$ is a constant independent of $\mathfrak{L}_1$, $\mathfrak{L}_2$ and $\mathfrak{L}_3$. \end{theorem}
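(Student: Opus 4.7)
The plan is to deduce Theorem \ref{theoremmult1} from Proposition \ref{multsimple} by a dyadic pigeonholing in the three multiplicities $N_1(x), N_2(x), N_3(x)$. After relabelling so that $L_1 = \min(L_1,L_2,L_3)$ (i.e., $m=1$), for non-negative integer triples $(k_1,k_2,k_3)$ with $2^{k_1} \geq 10^{12}$, define
\begin{displaymath}
J_{k_1,k_2,k_3} := \bigl\{ x \in J : 2^{k_i} \leq N_i(x) < 2^{k_i+1}, \; i=1,2,3\bigr\}.
\end{displaymath}
These classes partition the region of summation, so
\begin{displaymath}
\sum_{x \in J,\,N_1(x) > 10^{12}}(N_1 N_2 N_3)^{1/2} \lesssim \sum_{(k_1,k_2,k_3)} 2^{(k_1+k_2+k_3)/2}\,|J_{k_1,k_2,k_3}|.
\end{displaymath}

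The transversality hypothesis on $\mathfrak{L}_1, \mathfrak{L}_2, \mathfrak{L}_3$ is what makes Proposition \ref{multsimple} applicable at every dyadic scale: for any multijoint $x$, if one takes $\mathfrak{L}_i(x)$ to be the full set of lines of $\mathfrak{L}_i$ through $x$, then the hypothesis guarantees that every triple drawn from $\mathfrak{L}_1(x) \times \mathfrak{L}_2(x) \times \mathfrak{L}_3(x)$ has spanning directions. Hence $J_{k_1,k_2,k_3} \subseteq J'_{2^{k_1},2^{k_2},2^{k_3}}$, and Proposition \ref{multsimple} gives $|J_{k_1,k_2,k_3}| \leq c\,(L_1L_2L_3)^{1/2}/2^{(k_1+k_2+k_3)/2}$. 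Plugging in, each dyadic term contributes at most $c'\,(L_1L_2L_3)^{1/2}$, so the theorem reduces to controlling the number of non-trivially-contributing triples $(k_1,k_2,k_3)$.

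The main obstacle, and the technical core of the theorem, is precisely this final count: a naive sum over all $(k_1,k_2,k_3)$ with $k_i \leq \log_2 L_i$ produces three spurious logarithmic factors, while the asserted constant is absolute. To eliminate them I would supplement the Proposition \ref{multsimple} bound with the elementary pairwise-incidence estimates $\sum_{x \in J} N_i(x)N_j(x) \leq L_iL_j$ for distinct $i,j$ (since two lines from different collections meet in at most one point), which on the dyadic class $J_{k_1,k_2,k_3}$ furnish the complementary bounds $|J_{k_1,k_2,k_3}| \leq L_iL_j/2^{k_i+k_j}$. The three pairwise bounds and the Proposition \ref{multsimple} bound are each sharp in different regimes; interpolating between them, one expects that only dyadic triples in a bounded neighbourhood of the balanced regime $2^{k_i} \sim (L_jL_k/L_i)^{1/2}$ contribute substantially, with the remaining triples producing geometrically-decaying tails. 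The explicit threshold $10^{12}$ arises from this interpolation, and simultaneously discards the small-multiplicity multijoints whose contribution is in any case dominated by $|J| \leq c\,(L_1L_2L_3)^{1/2}$ from Theorem \ref{theoremmult2}. Implementing this interpolation carefully is the crux, and the step where I would expect most of the analysis to concentrate.
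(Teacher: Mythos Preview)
Your reduction has a genuine gap: the interpolation you describe cannot remove the logarithms. Consider the symmetric situation $L_1=L_2=L_3=L$ and the diagonal dyadic classes $N_1=N_2=N_3=N$. Proposition~\ref{multsimple} gives $|J_{k_1,k_2,k_3}|\,(N_1N_2N_3)^{1/2}\lesssim L^{3/2}$ with no decay in $N$, while each pairwise bound $|J_{k_1,k_2,k_3}|\le L_iL_j/(N_iN_j)$ yields $|J_{k_1,k_2,k_3}|\,(N_1N_2N_3)^{1/2}\lesssim L^2/N^{1/2}$, which is \emph{worse} than $L^{3/2}$ throughout the relevant range $N\le L$. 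So on the diagonal your minimum is always $L^{3/2}$, and summing over $\sim\log L$ diagonal scales produces an unavoidable $\log L$. Even replacing the pairwise bounds by the Szemer\'edi--Trotter bound $|J_{k_1,k_2,k_3}|\lesssim L_i^2/N_i^3$ does not save you: that bound beats Proposition~\ref{multsimple} only once $N\gtrsim L^{1/3}$, leaving $\sim\log L$ small-$N$ diagonal scales each contributing $L^{3/2}$.

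The paper does \emph{not} deduce Theorem~\ref{theoremmult1} by summing Proposition~\ref{multsimple} over dyadic classes. Instead it runs a fresh Guth--Katz polynomial partitioning directly on each $J_{N_1,N_2,N_3}$, with degree chosen via the Szemer\'edi--Trotter bound $S\lesssim L_{i_0}^2/N_{i_0}^3$ (for a carefully selected index $i_0$), and through a lengthy case analysis (cellular case, algebraic case, critical lines, flat lines, planes in the zero set) establishes the strictly stronger estimate
\[
|J_{N_1,N_2,N_3}|\lesssim \frac{(L_1L_2L_3)^{1/2}}{(N_1N_2N_3)^{1/2+10^{-4}}}
\]
for most triples $(N_1,N_2,N_3)$; the extra $10^{-4}$ in the exponent is exactly what makes the dyadic sum geometric. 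The residual triples are handled by a separate structural lemma about lines lying on a disjoint union of planes. Proposition~\ref{multsimple} does enter, but only as one ingredient inside this case analysis (to bound certain sub-configurations once the number of available lines has been cut to $\lesssim d^2$), not as the top-level bound. Finally, the threshold $10^{12}$ is not a byproduct of any interpolation: it arises in the algebraic case because one needs $10^{-12}N_m>1$ to guarantee at least three lines of the relevant subfamilies through each point, so that those points are forced to be critical or flat on the partitioning surface.
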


\textbf{Remark.} Note that we prove the statement of Theorem \ref{theoremmult2} under the assumption that, for all $i=1,2,3$, $\mathfrak{L}_i$ contains only one copy of each line $l \in \mathfrak{L}_i$; the reason is that the proof we are providing for the theorem takes advantage of the Szemer\'edi-Trotter theorem, which is not scale invariant. However, we have no reason to expect that Theorem \ref{theoremmult1} and, in fact, the more general inequality \eqref{eq:mult1000}, for all $n \geq 2$, should not hold when the finite collections of lines forming the multijoints contain more than one copy of the same line.

$\;\;\;\;\;\;\;\;\;\;\;\;\;\;\;\;\;\;\;\;\;\;\;\;\;\;\;\;\;\;\;\;\;\;\;\;\;\;\;\;\;\;\;\;\;\;\;\;\;\;\;\;\;\;\;\;\;\;\;\;\;\;\;\;\;\;\;\;\;\;\;\;\;\;\;\;\;\;\;\;\;\;\;\;\;\;\;\;\;\;\;\;\;\;\;\;\;\;\;\;\;\;\;\;\;\;\;\;\;\;\;\;\;\;\;\;\;\;\;\;\;\;\;\;\;\;\;\;\;\;\;\;\;\blacksquare$

We would like to emphasise here that the constant $10^{12}$ which we consider as a lower bound on $N_m(x)$ for the joints $x \in J$ that contribute to the sum in \eqref{eq:proved} above is, actually, the smallest possible constant with that property that we can acquire from the proof we are providing. However, we believe that \eqref{eq:proved} is actually true when all joints in $J$ are contributing to the sum, i.e that \eqref{eq:mult2} is true for $n=3$. In fact, as will be demonstrated from our proof, the only thing missing to conclude that \eqref{eq:mult2} is true for $n=3$ (of course in the case where, whenever three lines, one from each of our three collections, meet at a point, they form a joint there) is to show that 
\begin{equation}\label{eq:dream}\sum_{\{x \in J:N_m(x)=1\}}(N_1(x)N_2(x)N_3(x))^{1/2} \lesssim (L_1L_2L_3)^{1/2}, \end{equation}
for any collections $\mathfrak{L}_1$, $\mathfrak{L}_2$ and $\mathfrak{L}_3$ of $L_1$, $L_2$ and $L_3$, respectively, lines in $\R^3$, such that, whenever a line of $\mathfrak{L}_1$, a line of $\mathfrak{L}_2$ and a line of $\mathfrak{L}_3$ meet at a point, they form a joint there, and $L_m=\min\{L_1,L_2,L_3\}$.

Note that, although \eqref{eq:dream} seems simpler than \eqref{eq:proved}, it cannot be proved using the same reasoning. Even though this will be explained later in detail, we would like to point out that the main reason for this is that, if $\mathfrak{L}$ is a finite collection of lines in $\R^n$, and $\mathcal{P}$ a collection of points in $\R^n$ such that at least two lines of $\mathfrak{L}$ pass through each point of $\mathcal{P}$, then we can bound $|\mathcal{P}|$ from above by the number of the pairs of the lines of $\mathfrak{L}$, i.e. by $\binom{|\mathfrak{L}|}{2} \sim |\mathfrak{L}|^2$. However, if at most one line of $\mathfrak{L}$ passes through each point of $\mathcal{P}$, then there exists no upper bound on $|\mathcal{P}|$ that depends on the number of the lines of $\mathfrak{L}$; for example, we can have arbitrarily many points of $\mathcal{P}$ lying on one of the lines. And this, essentially, is the obstruction in extending the proof of \eqref{eq:proved} to the case where exactly one line of $\mathfrak{L}_m$ passes through each multijoint.

\textbf{Remark.} We have proved Theorem \ref{theoremmult1} in the case where the collections $\mathfrak{L}_1$, $\mathfrak{L}_2$ and $\mathfrak{L}_3$ in $\R^3$ have the property that whenever three lines, one from each collection, meet at a point, they form a joint there. However, it is not hard to see that, by adapting our arguments in the lines of those in the proof of Proposition \ref{multsimple}, we can drop this extra assumption on the transversality properties of the collections $\mathfrak{L}_1$, $\mathfrak{L}_2$ and $\mathfrak{L}_3$, to deduce that, if $\mathfrak{L}_1$, $\mathfrak{L}_2$ and $\mathfrak{L}_3$ are any collections of $L_1$, $L_2$ and $L_3$, respectively, lines in $\R^3$, then, with the notation of Proposition \ref{multsimple},
\begin{displaymath}\sum_{(N_1,N_2,N_3) \in \mathcal{C}}|J'_{N_1,N_2,N_3}|(N_1N_2N_3)^{1/2} \lesssim (L_1L_2L_3)^{1/2},
\end{displaymath}
where $\mathcal{C}:=\{(N_1,N_2,N_3) \in \R_{\geq 1}^3:N_1=(1+10^{-8})^{\lambda_1}, N_2=(1+10^{-8})^{\lambda_2}$ and $N_3=(1+10^{-8})^{\lambda_3}$, for some $\lambda_1$, $\lambda_2$, $\lambda_3 \in \N$ such that $N_m >10^{12}\}$, where $m \in \{1,2,3\}$ is such that $L_m=\min\{L_1,L_2,L_3\}$.

We will not analyse this more in this thesis, as it is achieved by a small perturbation of the somewhat already complicated arguments appearing in the proof of Theorem \ref{theoremmult1}, while we have not yet been able to use it to prove \eqref{eq:mult1000} in the case where $n=3$.

$\;\;\;\;\;\;\;\;\;\;\;\;\;\;\;\;\;\;\;\;\;\;\;\;\;\;\;\;\;\;\;\;\;\;\;\;\;\;\;\;\;\;\;\;\;\;\;\;\;\;\;\;\;\;\;\;\;\;\;\;\;\;\;\;\;\;\;\;\;\;\;\;\;\;\;\;\;\;\;\;\;\;\;\;\;\;\;\;\;\;\;\;\;\;\;\;\;\;\;\;\;\;\;\;\;\;\;\;\;\;\;\;\;\;\;\;\;\;\;\;\;\;\;\;\;\;\;\;\;\;\;\;\;\blacksquare$

Finally, in Chapter \ref{6} we generalise the statement of Theorem \ref{theoremmult2}, for multijoints formed by real algebraic curves in $\R^3$ of uniformly bounded degree, as well as curves in $\R^3$ parametrised by real univariate polynomials of uniformly bounded degree (Theorem \ref{theoremmult3} and Corollary \ref{multijointsforcurves}, respectively).

Note that, as in the case of the joints problem, the proofs of \eqref{eq:mult1} and \eqref{eq:mult2} for three dimensions that we are providing cannot be applied for higher dimensions, as a crucial ingredient of them is that the number of critical lines of a real algebraic hypersurface in $\R^3$ is bounded, a fact which we do not know if is true in higher dimensions.

\newpage
\thispagestyle{plain}
\cleardoublepage

\chapter{The geometric background} \label{2}

As we have already mentioned, our basic tool for the solution of the joints and multijoints problems in $\R^3$ will be the Guth-Katz polynomial method, as it appears in \cite{Guth_Katz_2010}. We therefore go on to present this method, together with certain other facts which will prove useful to our goal.

\section{The Guth-Katz polynomial method} \label{section2.1}

Given a finite set of points $\mathfrak{G}$ in $\R^n$ and a quantity $d>1$, the Guth-Katz polynomial method results in a decomposition of $\R^n$, and consequently of the set $\mathfrak{G}$, by the zero set of a polynomial. Such a decomposition enriches our setting with extra structure, allowing us to derive information about the set $\mathfrak{G}$. The method is fully explained in \cite{Guth_Katz_2010}, but we are presenting here the basic result and the theorems leading to it.

In particular, the Guth-Katz polynomial method is based on the Borsuk-Ulam theorem.

\begin{theorem} \emph{\textbf{(Borsuk-Ulam)}} Let $f:S^n\rightarrow \R^n$ be an odd and continuous map, for $n \in \N$. Then, there exists $x \in S^n$, such that $f(x)=0$.
\end{theorem}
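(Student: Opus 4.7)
The plan is to argue by contradiction: suppose $f: S^n \to \R^n$ is continuous and odd but nowhere vanishing. Then the normalization $g(x) := f(x)/\|f(x)\|$ is a well-defined continuous odd map from $S^n$ to the unit sphere $S^{n-1} \subset \R^n$, so it suffices to show that no such map $g$ can exist. This reformulation is the classical \emph{Borsuk antipode theorem}, equivalent to Borsuk-Ulam.

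For the base case $n=1$, observe that $S^0 = \{-1,+1\}$ is disconnected, so any continuous $g: S^1 \to S^0$ must be constant on the connected domain $S^1$; this immediately contradicts oddness $g(-x) = -g(x)$.

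For $n \geq 2$ I would proceed cohomologically. Since $g$ is odd, it descends to a continuous map $\bar g: \mathbb{R}P^n \to \mathbb{R}P^{n-1}$ of the antipodal quotients. A direct pullback computation shows that the double cover $S^{n-1} \to \mathbb{R}P^{n-1}$ pulls back along $\bar g$ to the double cover $S^n \to \mathbb{R}P^n$, with $g$ itself furnishing the equivariant bundle map. Consequently, $\bar g^*$ sends the generator $\alpha$ of $H^1(\mathbb{R}P^{n-1}; \mathbb{Z}/2)$ to the generator $\beta$ of $H^1(\mathbb{R}P^n; \mathbb{Z}/2)$. Using the well-known ring structure $H^*(\mathbb{R}P^k; \mathbb{Z}/2) \cong \mathbb{Z}/2[\gamma]/(\gamma^{k+1})$, the induced ring homomorphism $\bar g^*$ then carries $\alpha^n$ to $\beta^n$. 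But $\alpha^n = 0$ in the truncated ring $H^n(\mathbb{R}P^{n-1}; \mathbb{Z}/2) = 0$, while $\beta^n \neq 0$ in $H^n(\mathbb{R}P^n; \mathbb{Z}/2)$, yielding the desired contradiction.

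The main obstacle is precisely the identification of the pulled-back bundle, which rests on viewing $g$ as an equivariant lift of $\bar g$ along the two universal double covers; all the subsequent cohomology computations are then formal. A more hands-on alternative would combine induction on $n$ with Borsuk's theorem that every odd continuous self-map of $S^n$ has odd degree (composing $g$ with the inclusion $S^{n-1} \hookrightarrow S^n$ as the equator gives a non-surjective, hence degree-zero, odd self-map of $S^n$). That route trades algebraic topology for smooth approximation and signed counting of regular preimages. Either way, once the antipode theorem is established, applying it to $g = f/\|f\|$ closes the argument.
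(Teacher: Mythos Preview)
Your argument is a correct and standard proof of Borsuk--Ulam via the $\mathbb{Z}/2$-cohomology rings of real projective spaces, with the key step being that an odd map $S^n\to S^{n-1}$ descends to $\bar g:\mathbb{R}P^n\to\mathbb{R}P^{n-1}$ inducing an isomorphism on $H^1(\,\cdot\,;\mathbb{Z}/2)$, which then contradicts the truncated polynomial ring structure. The base case and the alternative degree-theoretic route you mention are also fine.

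However, there is nothing to compare against: the paper does not prove the Borsuk--Ulam theorem at all. It is stated as a classical background result and then immediately applied to derive the Stone--Tukey polynomial ham sandwich theorem (Theorem~\ref{2.1.1}), which in turn underpins the Guth--Katz polynomial partitioning. So your proposal supplies a proof where the paper deliberately omits one; any correct textbook proof would be acceptable here, and yours is one of the cleanest.
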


Indeed, the Guth-Katz polynomial method is the discrete version of the following result by Stone and Tukey, known as the polynomial ham sandwich theorem, which, in turn, is a consequence of the Borsuk-Ulam theorem. In particular, we say that the zero set of a polynomial $p \in \R[x_1,...,x_n]$ bisects a Lebesgue-measurable set $U \subset \R^n$ of finite, positive volume, when the sets $U \cap \{p>0\}$ and $U \cap \{p<0\}$ have the same volume.

\begin{theorem}\emph{\textbf{(Stone, Tukey, \cite{MR0007036})}}\label{2.1.1} Let $d \in \N^*$, and $U_1,$ ..., $U_M$ be Lebesgue-measurable subsets of $\R^n$ of finite, positive volume, where $M=\binom{d+n}{n}-1$. Then, there exists a non-zero polynomial in $\R[x_1,...,x_n]$, of degree $\leq d$, whose zero set bisects each $U_i$.\end{theorem}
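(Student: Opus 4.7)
The plan is to derive Theorem \ref{2.1.1} directly from the Borsuk--Ulam theorem, by parametrising polynomials of degree at most $d$ by the unit sphere in their coefficient space and recovering the desired bisection as a zero of a suitable odd continuous map.

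First I would fix the vector space $V \subset \R[x_1,\ldots,x_n]$ of polynomials of degree at most $d$. The monomials $x^\alpha$ with $|\alpha|\leq d$ form a basis, so $\dim V = \binom{d+n}{n} = M+1$; fixing such a basis identifies the unit sphere of $V$ with $S^M$. Writing $p_c$ for the polynomial corresponding to $c \in S^M$, the map $c \mapsto p_c$ is linear, so $p_c$ is non-zero whenever $c \neq 0$, and $p_{-c}=-p_c$.

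I would then define $f \colon S^M \to \R^M$ by
\[
f(c) := \bigl(\,|U_i \cap \{p_c>0\}| - |U_i \cap \{p_c<0\}|\,\bigr)_{i=1}^{M},
\]
where $|\cdot|$ denotes Lebesgue measure. Oddness $f(-c)=-f(c)$ is immediate from $p_{-c}=-p_c$, which swaps $\{p_c>0\}$ and $\{p_c<0\}$. For continuity, if $c_k \to c$ in $S^M$ then $p_{c_k} \to p_c$ uniformly on compacta; the zero locus $\{p_c=0\}$ of the non-zero polynomial $p_c$ is Lebesgue-null in $\R^n$, and off it the signs of $p_{c_k}$ eventually match that of $p_c$. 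Hence $\chi_{\{p_{c_k}>0\}} \to \chi_{\{p_c>0\}}$ almost everywhere in each $U_i$, and dominated convergence (using $|U_i|<\infty$) yields $f(c_k)\to f(c)$.

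Applying the Borsuk--Ulam theorem to $f$ then produces some $c \in S^M$ with $f(c)=0$, i.e.\ $|U_i \cap \{p_c>0\}| = |U_i \cap \{p_c<0\}|$ for every $i=1,\ldots,M$, so the zero set of the non-zero polynomial $p_c$ bisects each $U_i$ as required. The essential ingredient is the dimensional coincidence $\dim S^M = \binom{d+n}{n}-1 = M$, which matches the number of sets to be bisected to the codomain of the map to which Borsuk--Ulam is applied; the antipodal symmetry is built in automatically through the linearity of $c\mapsto p_c$. I do not anticipate any serious obstacle beyond cleanly checking continuity of the volume functional, which is the only analytic (as opposed to linear-algebraic) step.
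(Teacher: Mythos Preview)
Your proposal is correct and follows essentially the same route as the paper: identify polynomials of degree $\le d$ with $\R^{M+1}$ via coefficients, define the same odd map $f:S^M\to\R^M$ measuring the signed-volume imbalance on each $U_i$, and apply Borsuk--Ulam. The only difference is that you actually justify continuity (via dominated convergence and the fact that the zero set of a non-zero polynomial is null), whereas the paper simply asserts it.
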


\begin{proof} We associate each polynomial in $\R[x_1,...,x_n]$, of degree at most $d$, with its sequence of $\binom{d+n}{n}$ coefficients, thus identifying it with an element of $\R^{M+1}$ (where $M=\binom{d+n}{n}-1$). So, we can view the set of polynomials in $\R[x_1,...,x_n]$ of degree at most $d$ as the space $\R^{M+1}$ with the usual metric, and thus define the map
\begin{displaymath} f:S^{M}\rightarrow \R^M,
\end{displaymath}
such that, for any polynomial $p \in S^M$,

\begin{displaymath}f(p)=\Bigg(\int_{U_1\cap\{p>0\}}1-\int_{U_1\cap\{p<0\}}1,\;... \;,\;\int_{U_M\cap\{p>0\}}1-\int_{U_M\cap\{p<0\}}1\Bigg).
\end{displaymath}

Now, $f$ is an odd and continuous map, and therefore, by the Borsuk-Ulam theorem, there exists a polynomial $p\in S^M$ such that $f(p)=0$, i.e. such that $vol(U_i \cap \{p>0\})=vol(U_i\cap \{p<0\})$, for all $i=1,...,M$. Thus, $p$ is a non-zero polynomial in $\R[x_1,...,x_n]$, of degree at most $d$, whose zero set bisects $U_i$, for all $i=1,...,M$. 

\end{proof}

In analogy to the above, if $S$ is a finite set of points in $\R^n$, we say that the zero set of a polynomial $p \in \R[x_1,...,x_n]$ bisects $S$ if each of the sets $S \cap \{p>0\}$ and $S \cap \{p<0\}$ contains at most half of the points of $S$. Now, using the polynomial ham sandwich theorem above, Guth and Katz proved the following.

\begin{corollary}\emph{\textbf{(Guth, Katz, \cite[Corollary 4.4]{Guth_Katz_2010})}}\label{2.1.2} Let $d \in \N^*$, and $S_1,$ ..., $S_M$ be disjoint, finite sets of points in $\R^n$, where $M=\binom{d+n}{n}-1$. Then, there exists a non-zero polynomial in $\R[x_1,...,x_n]$, of degree $\leq d$, whose zero set bisects each $S_i$. \end{corollary}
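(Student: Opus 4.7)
The plan is to deduce the discrete statement from the continuous polynomial ham sandwich theorem (Theorem~\ref{2.1.1}) by a thickening-and-limiting argument. The key observation is that finite point sets can be approximated by tiny open balls, on which Theorem~\ref{2.1.1} applies, and that the resulting polynomials live in a compact parameter space (the unit sphere in the $(M+1)$-dimensional space of coefficients), so we can extract a convergent subsequence as the thickening shrinks.

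More precisely, for each $\epsilon>0$ small enough that the balls $B(s,\epsilon)$, as $s$ ranges over $S_1\cup\cdots\cup S_M$, are pairwise disjoint, I would set
\begin{displaymath}
U_i^{\epsilon}:=\bigcup_{s\in S_i}B(s,\epsilon),\qquad i=1,\ldots,M.
\end{displaymath}
These are disjoint, Lebesgue-measurable sets of finite positive volume, so Theorem~\ref{2.1.1} furnishes a non-zero polynomial $p_{\epsilon}\in\R[x_1,\ldots,x_n]$ of degree at most $d$ whose zero set bisects each $U_i^{\epsilon}$ in the volume sense. Identifying each polynomial of degree at most $d$ with its vector of $M+1$ coefficients and then normalising, I may assume $p_{\epsilon}$ lies on the unit sphere $S^M$.

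Now I take a sequence $\epsilon_k\downarrow 0$; by compactness of $S^M$, a subsequence of $p_{\epsilon_k}$ converges to some $p\in S^M$, in particular $p$ is a non-zero polynomial in $\R[x_1,\ldots,x_n]$ of degree $\leq d$. It remains to check that $p$ bisects each $S_i$ in the discrete sense. Fix $i$ and split $S_i=A_i\cup B_i\cup Z_i$ according to whether $p$ is positive, negative, or zero at the point. For each $s\in A_i$ one has $p>0$ on a neighbourhood of $s$; by the uniform convergence of $p_{\epsilon_k}$ to $p$ on a fixed compact neighbourhood of $S_i$, and by shrinking $\epsilon_k$ further if necessary, the ball $B(s,\epsilon_k)$ is entirely contained in $\{p_{\epsilon_k}>0\}$ for all sufficiently large $k$, and analogously for $s\in B_i$. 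Writing $V_k:=\mathrm{vol}(B(0,\epsilon_k))$ and equating the volumes of $U_i^{\epsilon_k}\cap\{p_{\epsilon_k}>0\}$ and $U_i^{\epsilon_k}\cap\{p_{\epsilon_k}<0\}$ then gives
\begin{displaymath}
|A_i|V_k+\sum_{s\in Z_i}x_s^{(k)}\;=\;|B_i|V_k+\sum_{s\in Z_i}y_s^{(k)},
\end{displaymath}
where $x_s^{(k)},y_s^{(k)}\geq 0$ and $x_s^{(k)}+y_s^{(k)}\leq V_k$. Dividing by $V_k$ and taking absolute values yields $\bigl||A_i|-|B_i|\bigr|\leq|Z_i|$, whence $|A_i|\leq|B_i|+|Z_i|=|S_i|-|A_i|$ and similarly $|B_i|\leq|S_i|-|B_i|$, i.e.\ each of $S_i\cap\{p>0\}$ and $S_i\cap\{p<0\}$ contains at most half of the points of $S_i$.

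The only delicate point is the handling of those points of $S_i$ that happen to land on the zero set of the limiting polynomial $p$: the volume bisection on $U_i^{\epsilon_k}$ does not directly control how each such ball is split by $\{p_{\epsilon_k}=0\}$. The bookkeeping in the previous paragraph is precisely what is needed to absorb these points into the definition of bisection, since they are counted on neither side. No other step is genuinely difficult; the argument is essentially a compactness-plus-continuity reduction of the discrete problem to the measure-theoretic one already supplied by Theorem~\ref{2.1.1}.
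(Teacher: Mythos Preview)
Your proof is correct and follows essentially the same approach as the paper: thicken each point set to a union of small balls, apply the continuous polynomial ham sandwich theorem (Theorem~\ref{2.1.1}), normalise the resulting polynomials to the unit sphere in coefficient space, extract a convergent subsequence by compactness, and verify that the limit bisects each $S_i$. The only cosmetic difference is in the final verification, where the paper argues by contradiction (assuming $p>0$ on more than half of $S_i$ and deriving that $p_{\delta_n}$ fails to bisect $U_{i,\delta_n}$), while you give an equivalent direct quantitative argument via the inequality $\bigl||A_i|-|B_i|\bigr|\leq|Z_i|$.
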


\begin{proof} For each $\delta > 0$, let $U_{i,\delta}$ be the union of the $\delta$-balls centred at
the points of $S_i$, $i=1,...,M$.  By the polynomial ham sandwich theorem, for each $\delta >0$ there exists a non-zero
polynomial $p_{\delta}\in \R[x_1,...,x_n]$, of degree $\leq d$, whose zero set bisects $U_{i, \delta}$, for all $i=1,...,M$. 

Now, identifying each polynomial in $\R[x_1,...,x_n]$, of degree at most $d$, with its sequence of $\binom{d+n}{n}$ coefficients, which is an element of $\R^{M+1}$ (where $M=\binom{d+n}{n}-1$) we can view the set of polynomials in $\R[x_1,...,x_n]$ of degree at most $d$ as the space $\R^{M+1}$ with the usual norm. In particular, for all $\delta >0$, we normalise $p_{\delta}$ with respect to this norm, so that it belongs to $S^M$ (note that such a process does not inflict any change on the zero set of $p_{\delta}$).

However, $S^M$ is a compact subset of $\R^{M+1}$ with respect to the usual metric, therefore there exists an $n$-variate real polynomial $p \in S^M$ (which is therefore non-zero), such that $p_{\delta _n} \rightarrow p$ in the usual metric as $n \rightarrow \infty$, for a sequence $(\delta_n)_{n \in \N}$ of positive numbers that converges to 0. This, in turn, means that the coefficients of $p_{\delta _n}$ converge to the coefficients of $p$ as $n \rightarrow \infty$, which implies that $p_{\delta _n}$ converges to $p$ uniformly on bounded subsets of $\R^n$.

We will now show that the zero set of $p$ bisects $S_i$, for all $i=1,...,M$.

Indeed, let us assume that the zero set of $p$ does not bisect $S_i$, for some $i \in \{1,...,M\}$. Then, either $p>0$ on more than half of the points of $S_i$, or $p<0$ on more than half of the points of $S_i$. Suppose that $p > 0$ on more than half of the points of $S_i$. Let $S_i^+$ be the subset of $S_i$ on which $p$ is positive. Since $p$ is a continuous function on $\R^n$ and $S_i^+$ is a finite set of points, there exists some $\epsilon >0$, such that $p>0$ on the union of the $\epsilon-$balls centred at $S_i^+$. Now, since $p_{\delta_n}$ converges to $p$ uniformly on compact sets, there exists $n \in \N$, such that $p_{\delta _n}>0$ on the union of the $\delta _n$-balls centred at $S_{i}^+$ and the $\delta _n$-balls centred at $S_i^+$ are disjoint. However, $S_i^+$ contains more than half of the points of $S_i$, and thus the zero set of $p_{\delta _n}$ does not bisect the union of the $\delta _n$-balls centred at $S_{i}$, i.e. the set $U_{i, \delta _n}$, which is a contradiction. Similarly, we are led to a contradiction if we assume that $p<0$ on more than half of the points of $S_i$. Therefore, the zero set of $p$ bisects $S_i$, for all $i=1,...,M$.

\end{proof}

Another proof of Corollary \ref{2.1.2} appears in \cite{KMS}, using \cite{MR1988723}.

\textbf{Remark.} Note that, for $d=1$, the quantity $\binom{d+n}{n}-1$ equals $n$. Therefore, it follows from Corollary \ref{2.1.2} that for any finite, disjoint sets of points $S_1$, ..., $S_k$ in $\R^n$, where $k \leq n$, there exists a polynomial of degree 1, whose zero set bisects each $S_i$; in other words, there exists a hyperplane of $\R^n$ that bisects each $S_i$.

$\;\;\;\;\;\;\;\;\;\;\;\;\;\;\;\;\;\;\;\;\;\;\;\;\;\;\;\;\;\;\;\;\;\;\;\;\;\;\;\;\;\;\;\;\;\;\;\;\;\;\;\;\;\;\;\;\;\;\;\;\;\;\;\;\;\;\;\;\;\;\;\;\;\;\;\;\;\;\;\;\;\;\;\;\;\;\;\;\;\;\;\;\;\;\;\;\;\;\;\;\;\;\;\;\;\;\;\;\;\;\;\;\;\;\;\;\;\;\;\;\;\;\;\;\;\;\;\;\;\;\;\;\;\blacksquare$

The Guth-Katz polynomial method consists of successive applications of this last corollary. We now state the result of the application of the method, while its proof is the method itself.

\begin{theorem}\emph{\textbf{(Guth, Katz, \cite[Theorem 4.1]{Guth_Katz_2010})}}\label{2.1.3} Let $\mathfrak{G}$ be a finite set of $S$ points in $\R^n$, and $d>1$. Then, there exists a non-zero polynomial $p \in \R[x_1,...,x_n]$, of degree $\leq d$, whose zero set decomposes $\R^n$ in $\sim d^n$ cells, each of which contains $\lesssim S/d^n$ points of $\mathfrak{G}$.\end{theorem}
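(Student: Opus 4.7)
The plan is to iterate Corollary \ref{2.1.2} in a binary-partition style, producing at each stage a bisecting polynomial whose product with previous ones forms the final $p$. I would begin with $\mathfrak{G}$ itself as the single starting set, and at step $j \geq 1$, supposing inductively that after step $j-1$ the points of $\mathfrak{G}$ (minus those lying on the zero set of the polynomial built so far) have been partitioned into at most $2^{j-1}$ disjoint subsets, each of cardinality $\leq S/2^{j-1}$, I would choose an integer $d_j$ with $\binom{d_j+n}{n}-1 \geq 2^{j-1}$ and apply Corollary \ref{2.1.2} to produce a nonzero polynomial $p_j$ of degree $\leq d_j$ whose zero set bisects each of these $\leq 2^{j-1}$ sets simultaneously. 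The resulting sign-pattern refinement then yields at most $2^j$ disjoint subsets, each of size $\leq S/2^j$, which carries the induction forward.

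The next step is to balance the degree budget. A crude estimate gives $d_j \lesssim 2^{(j-1)/n}$, so setting $p = p_1 \cdots p_k$ one has
\begin{displaymath}
\deg p \;\leq\; \sum_{j=1}^{k} d_j \;\lesssim\; \sum_{j=1}^{k} 2^{(j-1)/n} \;\lesssim\; 2^{k/n}.
\end{displaymath}
Choosing $k$ to be the largest integer for which the right-hand side stays $\leq d$ yields $2^k \gtrsim d^n$, and hence after $k$ iterations each of the $\leq 2^k$ surviving subsets of $\mathfrak{G}$ has cardinality $\leq S/2^k \lesssim S/d^n$. These subsets are exactly the intersections of $\mathfrak{G}$ with the sign-pattern cells of $(p_1,\dots,p_k)$, which refine the connected components of $\R^n \setminus Z(p)$; in particular each component contains $\lesssim S/d^n$ points of $\mathfrak{G}$.

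To bound the number of cells from above I would invoke the classical Milnor--Thom / Oleinik--Petrovsky estimate, which gives that $\R^n \setminus Z(p)$ has $O((\deg p)^n) = O(d^n)$ connected components; combined with the trivial lower bound coming from the fact that the construction genuinely creates $\sim 2^k \sim d^n$ nonempty sign cells (whenever the bisections are strict), this yields the $\sim d^n$ count claimed.

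The main obstacle, as I see it, is not conceptual but bookkeeping: one must verify that the sets being bisected at each stage really are disjoint (so that Corollary \ref{2.1.2} applies), which is automatic because they sit in distinct connected components of the complement of $Z(p_1 \cdots p_{j-1})$, and one must ensure the degree accounting $\sum d_j \leq d$ can be achieved with $2^k$ comparable to $d^n$ up to a dimensional constant; this is where the implicit constants in $\lesssim$ and $\sim$ (depending on $n$) are absorbed. The appeal to Milnor--Thom for the cell count is the only non-elementary ingredient beyond the iteration itself.
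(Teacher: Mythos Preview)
Your iterative bisection argument is exactly the paper's proof: apply Corollary \ref{2.1.2} repeatedly, producing $p_j$ of degree $\lesssim 2^{(j-1)/n}$ at step $j$, and take $p = p_1\cdots p_J$ with $J$ chosen so that $\sum_j 2^{(j-1)/n} \lesssim d$ and $2^J \sim d^n$.

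The one point where you diverge is the appeal to Milnor--Thom. The paper does not need it, because its ``cells'' are not the connected components of $\R^n\setminus Z(p)$ but the $2^J$ sign-pattern regions $\{p_1 \gtrless 0\}\cap\cdots\cap\{p_J\gtrless 0\}$ themselves (see the Remark following the theorem). The count $\sim d^n$ is then immediate from $2^J\sim d^n$, with no topology required. Your version, interpreting cells as connected components, is also correct (each component lies in a single sign cell, hence contains $\lesssim S/d^n$ points; Milnor--Thom bounds their number by $O(d^n)$), and arguably yields a slightly finer decomposition, but the extra ingredient is unnecessary for the statement as the paper intends it. Incidentally, your phrase ``sign-pattern cells \dots\ refine the connected components'' has the direction reversed: it is the connected components that refine the sign cells, since a sign cell may be disconnected; your conclusion is unaffected.
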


\begin{proof}We find polynomials $p_1,$ ..., $p_J \in \R[x_1,...,x_n]$, in the following way.

By Corollary \ref{2.1.2} applied to the finite set of points $\mathfrak{G}$, there exists a non-zero polynomial $p_1 \in \R[x_1,...,x_n]$, of degree $\lesssim 1^{1/n}$, whose zero set $Z_1$ bisects $\mathfrak{G}$. Thus, $\R^n\setminus Z_1$ consists of $2^1$ disjoint cells (the cell $\{p_1>0\}$ and the cell $\{p_1<0\}$), each of which contains $\lesssim S/2^1$ points of $\mathfrak{G}$.

By Corollary \ref{2.1.2} applied to the disjoint, finite sets of points $\mathfrak{G} \cap \{p_1>0\}$, $\mathfrak{G} \cap \{p_1<0\}$, there exists a non-zero polynomial $p_2 \in \R[x_1,...,x_n]$, of degree $\lesssim 2^{1/n}$, whose zero set $Z_2$ bisects $\mathfrak{G} \cap \{p_1>0\}$ and $\mathfrak{G} \cap \{p_1<0\}$. Thus, $\R^n\setminus (Z_1 \cup Z_2)$ consists of $2^2$ disjoint cells (the cells $\{p_1>0\}\cap\{p_2>0\}, \{p_1>0\}\cap\{p_2<0\}, \{p_1<0\}\cap\{p_2>0\}$ and $\{p_1<0\}\cap\{p_2<0\}$), each of which contains $\lesssim S/2^2$ points of $\mathfrak{G}$.

We continue in a similar way; by the end of the $j$-th step, we have produced non-zero polynomials $p_1,$ ..., $p_j$ in $\R[x_1,...,x_n]$, of degrees $\lesssim 2^{(1-1)/n},$ ..., $\lesssim 2^{(j-1)/n}$, respectively, such that  $\R^n \setminus (Z_1 \cup ...\cup Z_j)$ consists of $2^j$ disjoint cells, each of which contains $\lesssim S/2^j$ points of $\mathfrak{G}$.

We stop this procedure at the $J$-th step, where $J$ is such that the polynomial $p:=p_1 \cdots p_J$ has degree $\leq d$ and the number of cells in which $\R^n \setminus (Z_1 \cup...\cup Z_J)$ is decomposed is $\sim d^n$ (in other words, we stop when $2^{(1-1)/n}+2^{(2-1)/n}+...+2^{(J-1)/n} \lesssim d$ and $2^J \sim d^n$, for appropriate constants hiding behind the $\lesssim$ and $\sim$ symbols). The polynomial $p$ has the properties that we want (note that its zero set is the set $Z_1 \cup...\cup Z_J$). 

\end{proof}

\textbf{Remark.} Due to the Remark following Corollary \ref{2.1.2}, the polynomials $p_1$ and $p_2$ that correspond to the first two steps of the Guth-Katz polynomial method in $\R^n$ can be taken to be linear. In fact, we assume that this is the case whenever we apply the Guth-Katz polynomial method from now on. It is also easy to see that then the zero sets of $p_1$ and $p_2$ can be considered to be two intersecting hyperplanes, which will of course be contained in the zero set of the polynomial $p$ we end up with after the application of the Guth-Katz polynomial method. This means that either the zero set of $p$ is a single hyperplane, or each line in $\R^n$ that does not lie in the zero set of $p$  certainly intersects the zero set of $p$.

$\;\;\;\;\;\;\;\;\;\;\;\;\;\;\;\;\;\;\;\;\;\;\;\;\;\;\;\;\;\;\;\;\;\;\;\;\;\;\;\;\;\;\;\;\;\;\;\;\;\;\;\;\;\;\;\;\;\;\;\;\;\;\;\;\;\;\;\;\;\;\;\;\;\;\;\;\;\;\;\;\;\;\;\;\;\;\;\;\;\;\;\;\;\;\;\;\;\;\;\;\;\;\;\;\;\;\;\;\;\;\;\;\;\;\;\;\;\;\;\;\;\;\;\;\;\;\;\;\;\;\;\;\;\blacksquare$

\textbf{Remark.} Note that the cells we refer to in Theorem \ref{2.1.3}, and in which the zero set of the polynomial arising from the application of the Guth-Katz polynomial method decomposes $\R^n$, are open subsets of $\R^n$, as they can be defined as subsets of $\R^n$ where finitely many polynomials are positive. Moreover, their union is the complement, in $\R^n$, of the zero set of the polynomial.

Therefore, when we say from now on that, after applying the Guth-Katz polynomial technique, the zero set of a polynomial decomposes $\R^n$ in cells, we mean that $\R^n$ is the union of those open cells and the zero set of the polynomial. Moreover, in some cases we will refer to points as being in the interiors of such cells; we would like to clarify here that, since the cells are open sets, by such expressions we aim to emphasise that the points do not lie on the zero set of the polynomial arising from the application of the Guth-Katz polynomial method.

$\;\;\;\;\;\;\;\;\;\;\;\;\;\;\;\;\;\;\;\;\;\;\;\;\;\;\;\;\;\;\;\;\;\;\;\;\;\;\;\;\;\;\;\;\;\;\;\;\;\;\;\;\;\;\;\;\;\;\;\;\;\;\;\;\;\;\;\;\;\;\;\;\;\;\;\;\;\;\;\;\;\;\;\;\;\;\;\;\;\;\;\;\;\;\;\;\;\;\;\;\;\;\;\;\;\;\;\;\;\;\;\;\;\;\;\;\;\;\;\;\;\;\;\;\;\;\;\;\;\;\;\;\;\blacksquare$

\section{Computational results on algebraic hypersurfaces}

The great advantage of applying the Guth-Katz polynomial method to decompose $\R^n$ and, at the same time, a finite set of points $\mathfrak{G}$ in $\R^n$, does not only lie in the fact that it allows us to have a control over the number of points of $\mathfrak{G}$ in the interior of each cell; it lies in the fact that the surface that decomposes $\R^n$ is the zero set of a polynomial. This immediately gives us a control over many quantities, especially in three dimensions. In particular, the following holds.

\begin{theorem}\emph{\textbf{(Guth, Katz, \cite[Corollary 2.5]{Guth_Katz_2008})}}\label{2.2.1} \emph{(Corollary of B\'ezout's theorem)} Let $p_1$, $p_2 \in \R[x,y,z]$. If $p_1$, $p_2$ do not have a common factor, then there exist at most $\deg p_1 \cdot \deg p_2$ lines simultaneously contained in the zero set of $p_1$ and the zero set of $p_2$. \end{theorem}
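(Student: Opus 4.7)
The plan is to reduce the statement to the classical B\'ezout theorem for plane curves by slicing with a well-chosen plane. Write $d_i := \deg p_i$ and set $V := Z(p_1) \cap Z(p_2) \subset \R^3$. Since $p_1$ and $p_2$ share no common factor, $V$ has dimension at most one, so it has only finitely many one-dimensional irreducible components; in particular only finitely many lines $\ell_1, \ldots, \ell_N$ are contained in $V$, and the goal is to show $N \leq d_1 d_2$.

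The crucial step is to choose a plane $H \subset \R^3$ satisfying: (i) $H$ is not contained in $Z(p_1)$ or $Z(p_2)$, so that $p_i|_H$ is a nonzero polynomial of degree $d_i$; (ii) no $\ell_j$ lies in $H$, so each intersection $\ell_j \cap H$ is a single point $q_j$; (iii) the points $q_1, \ldots, q_N$ are pairwise distinct; (iv) $H$ contains no one-dimensional complex component of $V_{\C} := Z_{\C}(p_1) \cap Z_{\C}(p_2)$. Each of (i)--(iv) rules out at most a proper subfamily of the three-parameter family of planes in $\R^3$: (i) fails only for hyperplane factors of $p_i$; (ii) fails on a one-parameter family per line; (iii) fails only if $H$ passes through one of finitely many intersection points $\ell_i \cap \ell_j$; and (iv) fails on a finite set, since $V_\C$ has finitely many components and each one-dimensional component either lies in a unique plane or in none. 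Hence a common $H$ exists.

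Having fixed such $H$ and identified it with $\R^2 \subset \C^2$, I would argue that $p_1|_H$ and $p_2|_H$ share no common factor in $\C[x,y]$. Indeed, a common irreducible complex factor $q$ would give $Z_{\C}(q) \subset H \cap V_{\C}$, and $Z_{\C}(q)$ is one-dimensional; but by (iv), every one-dimensional component of $V_{\C}$ meets $H$ in at most finitely many points, so $H \cap V_{\C}$ is zero-dimensional, a contradiction. The classical B\'ezout theorem for plane curves in $\C^2$ of degrees $d_1, d_2$ without common factor then yields at most $d_1 d_2$ common zeros. Since $\{q_1, \ldots, q_N\} \subset H \cap V \subset H \cap V_{\C}$ consists of $N$ distinct points by (ii) and (iii), we conclude $N \leq d_1 d_2$.

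The main obstacle is property (iv) and the deduction that the restrictions have no common complex factor; this is the step that requires passing to $\C$ and using that an irreducible bivariate polynomial cuts out a one-dimensional complex variety. The remaining genericity conditions (i)--(iii) are comparatively routine dimension counts on the space of planes.
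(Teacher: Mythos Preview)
Your argument is correct and takes a genuinely different route from the paper's. The paper does not prove Theorem~\ref{2.2.1} directly (it is cited from Guth--Katz), but its proof of the generalisation to arbitrary irreducible curves (Lemma~\ref{4.1.12}) follows the original Guth--Katz method: one takes a one-parameter family of parallel planes $\Pi$, observes that if more than $d_1 d_2$ curves lie in $Z(p_1)\cap Z(p_2)$ then each $\Pi$ meets them in more than $d_1 d_2$ points with distinct $y$-coordinates, deduces via Lemma~\ref{emergency} and Lemma~\ref{rootsresultant} that $Res(p_1|_\Pi, p_2|_\Pi; x)\equiv 0$ for every $\Pi$ in an interval, and hence that $Res(p_1,p_2;x)$ vanishes identically, forcing a common factor. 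You instead fix a \emph{single} generic plane $H$, show that the restrictions $p_1|_H$, $p_2|_H$ remain coprime over $\C$ (via the dimension argument behind your condition (iv)), and apply the planar B\'ezout count directly to the distinct points $q_j$. Your approach is cleaner for the case of lines and invokes less explicit machinery; the resultant approach is more computational and, as the paper shows, adapts with essentially no change to the curve case treated in Chapter~\ref{5}.

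Two small corrections that do not affect the conclusion. In (i), restriction to $H$ only gives $\deg(p_i|_H)\le d_i$, not equality in general (the top homogeneous part of $p_i$ can drop on $H$); but B\'ezout only needs the upper bound. In (iv), a one-dimensional component of $V_\C$ that is itself a (complex) line lies in a one-parameter family of planes rather than a unique one, so the bad set of planes for (iv) is not literally finite; it is, however, still a proper lower-dimensional subfamily of the three-parameter family of planes, so the genericity argument goes through unchanged.
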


An application of this result enables us to bound the number of critical lines of a real algebraic hypersurface in $\R^3$.

\begin{definition} Let $p \in \R[x,y,z]$ be a non-zero polynomial of degree $\leq d$. Let $Z$ be the zero set of $p$. 

We denote by $p_{sf}$ the square-free polynomial we end up with, after eliminating all the squares appearing in the expression of $p$ as a product of irreducible polynomials in $\R[x,y,z]$.

A \emph{critical point} $x$ of $Z$ is a point of $Z$ for which $\nabla{p_{sf}}(x)=0$. Any other point of $Z$ is called a \emph{regular point} of $Z$. A line contained in $Z$ is called a \emph{critical line} if each point of the line is a critical point of $Z$.
\end{definition}

Note that, for any $p \in \R[x,y,z]$, the polynomials $p$ and $p_{sf}$ have the same zero set.

Moreover, if $x$ is a regular point of the zero set $Z$ of a polynomial $p \in \R[x,y,z]$, then, by the implicit function theorem, $Z$ is a manifold locally around $x$ and the tangent space to $Z$ at $x$ is well-defined; it is, in fact, the plane perpendicular to $\nabla{p_{sf}}(x)$ that passes through $x$.

An immediate corollary of Theorem \ref{2.2.1} is the following.

\begin{proposition}\emph{\textbf{(Guth, Katz, \cite[Proposition 3.1]{Guth_Katz_2008})}}\label{2.2.3} Let $p \in \R[x,y,z]$ be a non-zero polynomial of degree $\leq d$. Let $Z$ be the zero set of $p$. Then, $Z$ contains at most $d^2$ critical lines. \end{proposition}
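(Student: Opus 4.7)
The plan is to reduce the problem to an application of Theorem \ref{2.2.1}. First I would replace $p$ by its square-free version $p_{sf}$, which has the same zero set and degree at most $d$, so I may assume $p$ itself is square-free. The key observation is that every critical line $\ell$ lies in both the zero set of $p$ and the zero set of $\partial p/\partial x$, since $\nabla p$ vanishes identically along $\ell$. If I can arrange that $p$ and $\partial p/\partial x$ share no common factor, then Theorem \ref{2.2.1} immediately bounds the number of critical lines by $\deg(p)\cdot\deg(\partial p/\partial x) \leq d(d-1) \leq d^2$, and since linear changes of coordinates preserve the count of critical lines (and preserve square-freeness), I am free to pick coordinates that make this hypothesis hold.

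The main obstacle is thus controlling $\gcd(p, \partial p/\partial x)$. Writing $p = p_1\cdots p_k$ as a product of distinct irreducibles and applying the product rule, a short calculation modulo each $p_i$, together with pairwise coprimality of the $p_i$, shows that an irreducible factor $p_i$ divides $\partial p/\partial x$ if and only if $\partial p_i/\partial x = 0$, that is, $p_i$ is independent of the variable $x$. Hence $\gcd(p, \partial p/\partial x)$ equals the product of those irreducible factors of $p$ that depend only on $y$ and $z$, and this GCD equals $1$ precisely when no irreducible factor of $p$ is independent of the first coordinate.

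It remains to arrange this last condition by a linear change of coordinates. For each irreducible factor $p_i$, the set of vectors $v \in \R^3$ along which $p_i$ is constant, equivalently along which the directional derivative $v\cdot\nabla p_i$ vanishes identically, forms a linear subspace of $\R^3$, and this subspace is proper: otherwise $\nabla p_i \equiv 0$, forcing $p_i$ to be a non-zero constant and contradicting irreducibility. A finite union of proper linear subspaces cannot exhaust $\R^3$, so I can pick a direction $v$ avoiding all of them and take $v$ as the new $x$-axis. In the new coordinates, no irreducible factor of $p$ is independent of $x$, so $\gcd(p, \partial p/\partial x) = 1$, and Theorem \ref{2.2.1} delivers the desired bound of at most $d^2$ critical lines.
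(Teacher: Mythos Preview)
Your proof is correct. Both your argument and the paper's reduce to Theorem~\ref{2.2.1} applied to $p_{sf}$ and one of its partial derivatives; the only difference lies in how the coprimality hypothesis of that theorem is secured. The paper leaves the coordinates fixed and, for each irreducible factor $p_i$ of $p_{sf}$, selects whichever of $\partial p_{sf}/\partial x$, $\partial p_{sf}/\partial y$, $\partial p_{sf}/\partial z$ happens not to be divisible by $p_i$; it then bounds the critical lines landing in the zero set of $p_i$ by $\deg p_i \cdot d$ and sums over $i$. You instead make a single generic linear change of coordinates so that one direction works uniformly for every $p_i$, allowing a single invocation of Theorem~\ref{2.2.1} with the pair $(p_{sf},\partial p_{sf}/\partial x)$. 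Your route is marginally cleaner (one application instead of a sum) and even yields the slightly sharper bound $d(d-1)$; the paper's route avoids the coordinate-change step and the verification that critical lines and square-freeness are preserved under it. The underlying computation---that $p_i$ divides $\partial p_{sf}/\partial x$ if and only if $\partial p_i/\partial x=0$---is the same in both.
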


\begin{proof} Since there are no squares in the expansion of $p_{sf}$ as a product of irreducible polynomials in $\R[x,y,z]$, it follows that $p_{sf}$ and $\nabla{p_{sf}}$ have no common factor. In other words, if $p_{sf}=p_1\cdots p_k$, where, for all $i \in \{1,...,k\}$, $p_i$ is an irreducible polynomial in $\R[x,y,z]$, then, for all $i\in\{1,...,k\}$, there exists some $g_i \in \Big\{\frac{\partial p_{sf}}{\partial x},\frac{\partial p_{sf}}{\partial y},\frac{\partial p_{sf}}{\partial z}\Big\}$, such that $p_i$ is not a factor of $g_i$. 

Now, let $l$ be a critical line of $Z$. It follows that $l$ lies in the zero set of $p_{sf}$, and therefore in the union of the zero sets of $p_1$, ..., $p_k \in \R[x,y,z]$; so, there exists $j \in \{1,...,k\}$, such that $l$ lies in the zero set of $p_j$. However, since $l$ is a critical line of $Z$, it is also contained in the zero set of $\nabla{p_{sf}}$, and thus in the zero set of $g_j$ as well. Therefore, $l$ lies simultaneously in the zero sets of the polynomials $p_j$ and $g_j\in \R[x,y,z]$.

It follows from the above that the number of critical lines of $Z$ is equal to at most $\sum_{i=1,...,k}L_i$, where, for all $i \in \{1,...,k\}$, $L_i$ is the number of lines simultaneously contained in the zero set of $p_i$ and the zero set of $g_i$ in $\R^3$. And since the polynomials $p_i$ and $g_i \in \R[x,y,z]$ do not have a common factor, Theorem \ref{2.2.1} implies that $L_i\leq \deg p_i \cdot \deg g_i \leq \deg p_i \cdot d$, for all $i \in \{1,...,k\}$. Thus, the number of critical lines of $Z$ is $\leq \sum_{i=1,...,k}\deg p_i \cdot d\leq \deg p \cdot d=d^2$.

\end{proof}

What is more, due to Theorem \ref{2.2.1}, we have some control on the number of flat lines of a real algebraic hypersurface in $\R^3$.

\begin{definition} Let $Z$ be the zero set of a polynomial $p \in \R[x,y,z]$. A point $x \in \R^3$ is a \emph{flat point} of $Z$ if it is a regular point of $Z$, lying in at least three co-planar lines of $Z$.
\end{definition}

Now, the second fundamental form of the zero set $Z$ of a polynomial $p\in \R[x,y,z]$ at a regular
point $x$ of $Z$ is defined as $Adu^2 + 2Bdudv + Cdv^2$, where $r = r(u, v)$ is a parametrization of Z locally around $x$, and $$A = r_{uu} \cdot n\text{, }B = r_{uv}\cdot n\text{, }C = r_{vv} \cdot n,$$ where $n =\frac{\nabla{p}(x)}{\|\nabla{p}(x)\|}$ is the unit normal to Z at x.

\begin{proposition} \label{secondfundamentalform} \emph{\textbf{(Elekes, Kaplan, Sharir, \cite{MR2763049})}} Let $Z$ be the zero set of a polynomial $p \in \R[x,y,z]$. If $x \in \R^3$ is a \emph{flat point} of $Z$, then the second fundamental form of $Z$ at $x$ is 0.
\end{proposition}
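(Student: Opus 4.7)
The plan is to show that each line through $x$ contained in $Z$ forces the second fundamental form to vanish in the direction of that line; three such lines with distinct directions will then force the whole quadratic form to vanish.

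First I would observe that any line $\ell \subset Z$ passing through the regular point $x$ must lie in the tangent plane $T_x Z$. Indeed, parametrising $\ell$ as $t \mapsto x + tv$ and differentiating the identity $p(x+tv)=0$ at $t=0$ gives $\nabla p(x)\cdot v = 0$, so $v$ is tangent to $Z$ at $x$. In particular, the coplanarity assumption is automatic (all three lines lie in the $2$-plane $T_x Z$), and the three lines give three pairwise distinct directions $v_1,v_2,v_3$ in $T_x Z$, no two of which are parallel.

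Next I would use the geometric interpretation of the second fundamental form: for a smooth curve $\gamma$ on $Z$ with $\gamma(0)=x$, one has $II(\gamma'(0),\gamma'(0)) = \gamma''(0)\cdot n$, where $n$ is the unit normal to $Z$ at $x$. For each $\ell_i$ parametrise it by $\gamma_i(t)=x+tv_i$; then $\gamma_i''(t)\equiv 0$, so in the local coordinates $(u,v)$ given by $r(u,v)$ the tangent direction $(u_i'(0),v_i'(0))\in T_x Z$ corresponding to $v_i$ satisfies
\begin{displaymath}
A\, u_i'(0)^2 + 2B\, u_i'(0)v_i'(0) + C\, v_i'(0)^2 = 0.
\end{displaymath}

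Finally, the form $Q(s,t)=As^2+2Bst+Ct^2$ is a homogeneous quadratic in two variables, which has at most two projective zeros unless it is identically zero. Since it vanishes on three pairwise non-parallel tangent vectors, we must have $A=B=C=0$, which is exactly the statement that the second fundamental form of $Z$ at $x$ is zero. There is no real obstacle here: the only subtlety is verifying that lines in $Z$ through a regular point are automatically tangent to $Z$, which follows immediately from differentiating $p\equiv 0$ along the line.
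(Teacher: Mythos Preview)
Your proof is correct. The paper does not supply its own proof of this proposition---it is stated with attribution to Elekes, Kaplan, and Sharir---but your argument is precisely the standard one: a line in $Z$ through a regular point has zero normal curvature (equivalently, $II(v,v)=\gamma''\cdot n=0$ since $\gamma''\equiv 0$), so each such line gives an asymptotic direction, and a binary quadratic form with three distinct projective zeros vanishes identically. Your observation that the coplanarity hypothesis is automatic (all three lines are forced into $T_xZ$) is also correct.
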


\begin{definition} Let $Z$ be the zero set of a polynomial $p \in \R[x,y,z]$. A line $l$ in $\R^3$ is a \emph{flat line} of $Z$ if all the points of $l$, except perhaps for finitely many, are regular points of $Z$ on which the second fundamental form of $Z$ vanishes.
\end{definition}

It is obvious from the above that a critical point of the zero set $Z$ of a polynomial $p \in \R[x,y,z]$ cannot simultaneously be a flat point of $Z$, while also a critical line of $Z$ cannot simultaneously be a flat line of $Z$.

In their paper \cite{MR2763049}, Elekes, Kaplan and Sharir explain that the second fundamental form of the zero set $Z$ of a polynomial $p \in \R[x,y,z]$ vanishes at a regular point $a$ of $Z$ if and only if $\Pi_j(p)(a)=0$ for all $j=1,2,3$, where, for all $j=1,2,3$, $\Pi_j(p) \in \R[x,y,z]$ is the polynomial defined by
$$\Pi_j(p)(u) := (\nabla{p}(u) \times e_j)^TH_p(u)(\nabla{p}(u) \times e_j),
$$
where $e_1=(1,0,0)$, $e_2=(0,1,0)$ and $e_3=(0,0,1)$, while 
$$H_p=\begin{pmatrix}
   p_{xx} & p_{xy} & p_{xz} \\
   p_{yx} & p_{yy} & p_{yz} \\
   p_{zx} & p_{zy} & p_{zz}
  \end{pmatrix}.
$$

Note that, for all $j=1,2,3$, the degree of the polynomial $\Pi_j(p)$ is $\leq (\deg p-1)+(\deg p-2)+(\deg p-1)=3\deg p -4$.

\begin{proposition} \emph{\textbf{(Elekes, Kaplan, Sharir, \cite{MR2763049})}} Let $Z$ be the zero set of a polynomial $p \in \R[x,y,z]$. If a line $l$ in $\R^3$ contains at least $3d-3$ points of $Z$ on which the second fundamental form of $Z$ vanishes, then $l$ is a flat line of $Z$. 
\end{proposition}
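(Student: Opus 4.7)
The plan is to reduce the problem to a one-variable polynomial degree count, using the criterion (stated just before the proposition) that at a regular point $a$ of $Z$, the second fundamental form vanishes if and only if $\Pi_1(p)(a)=\Pi_2(p)(a)=\Pi_3(p)(a)=0$, together with the degree bound $\deg\Pi_j(p)\le 3d-4$.

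First I would parametrise the line $l$ by $t\mapsto a_0+tv$ and restrict the polynomials $p,\Pi_1(p),\Pi_2(p),\Pi_3(p)$ to $l$, producing univariate polynomials of degrees at most $d,\,3d-4,\,3d-4,\,3d-4$ respectively. By hypothesis there are at least $3d-3$ values of $t$ at which the corresponding point of $l$ lies on $Z$ and is a regular point where the second fundamental form vanishes. At each such $t$, the restrictions of $p$ and of every $\Pi_j(p)$ all vanish. Since $3d-3>d$ (assuming $d\ge 2$; the case $d=1$ is trivial, as then $Z$ is a plane and the statement is vacuous) and $3d-3>3d-4$, each of these univariate restrictions must be the zero polynomial. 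Hence $p\equiv 0$ on $l$, which shows $l\subset Z$, and $\Pi_j(p)\equiv 0$ on $l$ for $j=1,2,3$.

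Next I would check that only finitely many points of $l$ can be critical points of $Z$, so as to fit the definition of a flat line. Since the $3d-3$ points provided by the hypothesis are regular points of $Z$ lying on $l$, the line $l$ is not a critical line of $Z$. Hence there exists at least one partial derivative of $p_{sf}$ which does not vanish identically on $l$; its restriction to $l$ is a nonzero univariate polynomial of degree at most $\deg p_{sf}-1\le d-1$, so it has only finitely many zeros on $l$. The critical points of $Z$ on $l$ are contained in the zero set of this restriction, and therefore form a finite set.

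Finally, every point of $l$ outside this finite set is a regular point of $Z$ at which $\Pi_1(p),\Pi_2(p),\Pi_3(p)$ all vanish (because they vanish identically on $l$), so by the Elekes--Kaplan--Sharir criterion the second fundamental form of $Z$ vanishes there. This is exactly the definition of $l$ being a flat line of $Z$. There is no real obstacle here beyond carefully handling the (finite) critical locus on $l$; the key quantitative input is simply that the threshold $3d-3$ exceeds the degree of the restriction of each $\Pi_j(p)$ to a line, forcing identical vanishing.
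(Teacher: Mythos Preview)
Your proof is correct and follows essentially the same approach as the paper: restrict $p$ and the $\Pi_j(p)$ to $l$, use the degree bounds $d$ and $3d-4$ against the $3d-3$ vanishing points to force identical vanishing on $l$, and then argue that only finitely many points of $l$ are critical (the paper phrases this last step as ``at most $d$ critical points, else $l$ would be a critical line,'' which is the same idea as your partial-derivative argument). Your write-up is in fact slightly more explicit than the paper's, since you state that $l\subset Z$ follows from $3d-3>d$, whereas the paper leaves this implicit.
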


\begin{proof} Let $l$ be a line in $\R^3$, containing more than $3d-3$ points of $Z$ on which the second fundamental form of $Z$ vanishes. 

It follows that $l$ contains more than $3d-3$ points of $Z$ on which the polynomials $\Pi_j(p)$, $j=1,2,3$, vanish. However, for all $j=1,2,3$, $\Pi_j(p)$ is a polynomial in $\R[x,y,z]$, of degree at most $3d-4$. Therefore, the restriction of $\Pi_j(p)$ on the line $l$ is a univariate real polynomial, of degree at most $3d-4$, vanishing at at least $3d-3$ points, i.e. more times than its degree. Thus, $\Pi_j(p)$ vanishes on the whole line $l$, for all $j=1,2,3$.

Moreover, all the points of $l$, except perhaps for finitely many, are regular points of $Z$. Indeed, $l$ contains at most $d$ critical points of $Z$, as otherwise it would be a critical line, i.e. all of its points would be critical, which is not true, since $l$ contains at least $3d-3$ flat points.

Therefore, all the points of $l$, except perhaps for finitely many, are regular points of $Z$, on which $\Pi_1(p)$, $\Pi_2(p)$ and $\Pi_3(p)$ vanish, i.e. on which the second fundamental form of $Z$ vanishes. So, $l$ is a flat line of $Z$.

\end{proof}

It immediately follows that, if $Z$ is the zero set of a polynomial $p \in \R[x,y,z]$, and a line $l$ in $\R^3$ contains more than $3d-3$ flat points of $Z$, then $l$ is a flat line of $Z$. 

Now, in \cite{MR2763049}, Elekes, Kaplan and Sharir use Theorem \ref{2.2.1} to bound the number of flat lines contained in the zero set of a real trivariate polynomial with no linear factors.

\begin{proposition}\emph{\textbf{(Elekes, Kaplan, Sharir, \cite[Proposition 7]{MR2763049})}} \label{elekes} Let $p \in \R[x,y,z]$ be a non-zero polynomial, of degree $\leq d$, that has no linear factors. Let $Z$ be the zero set of $p$. Then, $Z$ contains at most $3d^2-4d$ flat lines.
\end{proposition}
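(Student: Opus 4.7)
The plan is to adapt the argument used for Proposition \ref{2.2.3} on critical lines, now substituting the trio of polynomials $\Pi_1(p), \Pi_2(p), \Pi_3(p)$ (whose simultaneous vanishing at regular points characterises the vanishing of the second fundamental form) for the gradient polynomials. First I would reduce to the square-free case: since $Z=Z(p_{sf})$ and $p_{sf}$ is still of degree $\leq d$ and still has no linear factors, I can factor $p_{sf}=p_1\cdots p_k$ into distinct irreducible polynomials with $\deg p_i\geq 2$ for every $i$.

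Next I would show that any flat line $l$ of $Z$ is contained in the zero set of each $\Pi_j(p_{sf})$. By definition, $l$ contains infinitely many points at which the second fundamental form vanishes, hence infinitely many zeros of $\Pi_j(p_{sf})$. But $\Pi_j(p_{sf})$ restricted to $l$ is a univariate real polynomial of degree $\leq 3d-4$, so having infinitely many zeros forces it to vanish identically on $l$. Moreover, $l\subset Z$ together with the pigeonhole principle applied to its infinitely many points forces $l\subset Z(p_i)$ for some index $i$.

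The main step is to prove that for every $i\in\{1,\ldots,k\}$ there exists $j_i\in\{1,2,3\}$ such that $p_i$ does not divide $\Pi_{j_i}(p_{sf})$. Suppose for contradiction that $p_i$ divides all three $\Pi_j(p_{sf})$. At a point $a\in Z(p_i)$ lying outside the other irreducible components and with $\nabla p_i(a)\neq 0$, we have $\nabla p_{sf}(a)=\big(\prod_{i'\neq i}p_{i'}(a)\big)\nabla p_i(a)\neq 0$, so $a$ is a regular point of $Z$, and locally $Z=Z(p_i)$. The hypothesis then forces all three $\Pi_j(p_{sf})(a)=0$, i.e.\ the second fundamental form of $Z(p_i)$ vanishes on a Euclidean-open subset of $Z(p_i)$. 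By the classical fact that a smooth surface in $\R^3$ with identically vanishing second fundamental form is contained in a plane, this open piece of $Z(p_i)$ lies in an affine plane; irreducibility of $p_i$ over $\R$ then forces $p_i$ to divide the linear defining polynomial of that plane, contradicting $\deg p_i\geq 2$.

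With this in hand, $p_i$ and $\Pi_{j_i}(p_{sf})$ share no common factor, so Theorem \ref{2.2.1} gives at most $\deg p_i\cdot\deg\Pi_{j_i}(p_{sf})\leq \deg p_i\cdot(3d-4)$ lines in the intersection of their zero sets. Since every flat line $l$ of $Z$ falls into this situation for some $i$, summing yields
\[
\#\{\text{flat lines of }Z\}\;\leq\;(3d-4)\sum_{i=1}^{k}\deg p_i\;\leq\;(3d-4)\deg p_{sf}\;\leq\;(3d-4)d\;=\;3d^2-4d.
\]
The main obstacle is the key step above: one needs to argue cleanly, in an algebraic setting, that an irreducible polynomial $p_i$ of degree $\geq 2$ cannot divide all three of the $\Pi_j$. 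The cleanest route is via the differential-geometric fact about surfaces with vanishing second fundamental form, applied to a generic open piece of $Z(p_i)$; the rest of the proof is a direct book-keeping application of the Bézout-type Theorem \ref{2.2.1}.
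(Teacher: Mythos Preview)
The paper does not prove this proposition; it merely cites it from Elekes--Kaplan--Sharir \cite{MR2763049}. So there is no in-paper proof to compare against.

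Your argument is essentially correct and is the standard approach (and presumably close to what the cited source does). Two small points deserve care.

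First, in the key step you write that irreducibility ``forces $p_i$ to divide the linear defining polynomial of that plane''. The divisibility goes the other way: if a Euclidean-open $2$-dimensional piece of $Z(p_i)$ lies in $\{L=0\}$, then the restriction $p_i|_{\{L=0\}}$ is a bivariate polynomial vanishing on an open set, hence identically zero, so $L\mid p_i$; irreducibility then gives $p_i=cL$, contradicting $\deg p_i\geq 2$. The conclusion is right, only the direction of the divisibility was misstated.

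Second, your contradiction argument for the key step requires picking a real point $a\in Z(p_i)$ with $\nabla p_i(a)\neq 0$, but an irreducible real polynomial need not have any such point (e.g.\ $p_i=x^2+y^2$, whose real zero set is a line of singular points). This is harmless for the actual count: you only need the claim for those indices $i$ such that $Z(p_i)$ contains a flat line of $Z$. Any flat line $l\subset Z(p_i)$ automatically furnishes infinitely many smooth points $a$, since all but finitely many points of $l$ are regular for $Z$, and at a point of $l$ lying only on $Z(p_i)$ regularity of $Z$ forces $\nabla p_i\neq 0$. So simply restrict the key step to the relevant $i$'s and the B\'ezout count still yields $\sum_i \deg p_i\cdot(3d-4)\leq d(3d-4)$.
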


We therefore easily obtain the following.

\begin{corollary} \label{elekes'} Let $p \in \R[x,y,z]$ be a non-zero polynomial, of degree $\leq d$. Let $Z$ be the zero set of $p$, and $\Pi$ the union of the planes contained in $Z$. Then, there exist $\lesssim d^2$ flat lines of $Z$ not lying in $\Pi$.\end{corollary}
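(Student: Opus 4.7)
The plan is to strip off the planar components of $Z$ and reduce to Proposition \ref{elekes}. I would write $p_{sf}=L\cdot q$, where $L\in\R[x,y,z]$ is the product of all linear irreducible factors of $p_{sf}$ (with $L=1$ if there are none) and $q\in\R[x,y,z]$ is the product of the non-linear irreducible factors (with $q=1$ if there are none), so that $q$ is square-free, has no linear factors, and $\deg q\leq d$. Since the zero sets of $p$ and $p_{sf}$ agree, the union $\Pi$ of planes contained in $Z$ is precisely the zero set of $L$, and $Z$ is the union of $\Pi$ and the zero set $Z(q)$ of $q$.

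The key step is to show that every flat line $l$ of $Z$ not lying in $\Pi$ is itself a flat line of $Z(q)$; the corollary then follows by applying Proposition \ref{elekes} to $q$, which produces at most $3(\deg q)^2-4\deg q\lesssim d^2$ flat lines. I would first observe that $l\subset Z(q)$: a line contained in a finite union of planes must lie in one of them (otherwise it meets each plane in at most one point), so $l\not\subset\Pi$ forces $l\cap\Pi$ to be finite, and consequently all but finitely many points of $l$ lie in $Z(q)$, giving $l\subset Z(q)$.

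Next, I would match the two notions of flatness. Fix a point $x\in l\setminus\Pi$ which is regular for $Z$; all but finitely many points of $l$ are of this form. Then $L(x)\neq 0$ and $q(x)=0$, so by the product rule $\nabla p_{sf}(x)=L(x)\nabla q(x)+q(x)\nabla L(x)=L(x)\nabla q(x)\neq 0$, meaning $\nabla q(x)\neq 0$ and $x$ is regular for $Z(q)$, with the same unit normal direction as for $Z$. Since $L$ is non-zero in a neighbourhood of $x$, in that neighbourhood $Z$ and $Z(q)$ coincide as sets, hence their second fundamental forms at $x$ agree. It follows that all but finitely many points of $l$ are regular points of $Z(q)$ on which its second fundamental form vanishes, so $l$ is a flat line of $Z(q)$, and the bound is immediate.

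The one mildly subtle point, which I view as the main obstacle, is the reconciliation of the second fundamental form of $Z$ with that of $Z(q)$ at regular points off $\Pi$; this rests on the local identification $Z=Z(q)$ wherever $L\neq 0$, after which everything reduces to bookkeeping and a direct appeal to Proposition \ref{elekes}.
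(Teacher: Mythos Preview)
Your proof is correct and follows essentially the same approach as the paper's own proof: factor off the linear (planar) part, observe that away from $\Pi$ the zero sets of $p$ and of the non-linear factor coincide locally, conclude that any flat line of $Z$ outside $\Pi$ is a flat line of the non-linear factor, and invoke Proposition~\ref{elekes}. The only cosmetic difference is that you work with $p_{sf}=L\cdot q$ while the paper factors $p=p_1\cdot p_2$ directly, but the argument is the same.
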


\begin{proof} We write $p=p_1\cdot p_2$, where $p_1$, $p_2 \in \R[x,y,z]$ and $p_2$ is the product of the linear factors of $p$. Note that $\Pi$ is the zero set of $p_2$, while $p_1$ has no linear factors. 

Now, let $l$ be a flat line of $Z$ that does not lie in $\Pi$. We will show that $l$ is a flat line of the zero set of $p_1$. 

Indeed, since $l$ does not lie in the finite union $\Pi$ of planes, only finitely many points of $l$ may lie on $\Pi$. Therefore, if $\mathcal{P}$ denotes the union of this finite set of points with the finite set of points of $l$ on which the second fundamental form of $Z$ is non-zero, then, for every $x \in l\setminus \mathcal{P}$, the polynomial $p_2$ does not vanish locally around $x$, and thus, locally around $x$, $Z$ is the zero set of $p_1$. However, $x$ is a regular point of $Z$, on which the second fundamental form of $Z$ vanishes; therefore, $x$ is a regular point of the zero set of $p_1$, on which the second fundamental form of the zero set of $p_1$ vanishes. So, $l$ is a flat line of the zero set of $p_1$.

It follows that the number of flat lines of $Z$ not lying in $\Pi$ is equal to at most the number of flat lines of the zero set of $p_1$, which is $\lesssim (\deg p_1)^2$ by \cite[Proposition 7]{MR2763049} (Corollary \ref{elekes} above), and thus $\lesssim d^2$.

\end{proof}

\section{The Szemer\'edi-Trotter theorem}

The Szemer\'edi-Trotter theorem plays a very important role in our proofs of the joints and multijoints problems with multiplicities, we therefore dedicate this section to stating and proving it.

\begin{definition} Let $\mathcal{P}$ be a collection of points and $\mathfrak{L}$ a collection of lines in $\R^n$. We say that the pair $(p, l)$, where $p \in \mathcal{P}$ and $l\in \mathfrak{L}$, is an \emph{incidence between $\mathcal{P}$ and $\mathfrak{L}$}, if $p \in l$. We denote by $I_{\mathcal{P}, \mathfrak{L}}$ the number of all the incidences between $\mathcal{P}$ and $\mathfrak{L}$.
\end{definition}

\begin{theorem}\emph{\textbf{(Szemer\'edi, Trotter, \cite{MR729791})}}\label{2.2.5} Let $\mathfrak{L}$ be a collection of $L$ lines in $\R^2$ and $\mathcal{P}$ a collection of $P$ points in $\R^2$. Then,
\begin{displaymath}I_{\mathcal{P}, \mathfrak{L}} \leq C \cdot ( P^{2/3}L^{2/3}+P+L), \end{displaymath}
where $C$ is a constant independent of $\mathfrak{L}$ and $\mathcal{P}$. \end{theorem}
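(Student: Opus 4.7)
The plan is to deduce the theorem from the celebrated crossing number inequality of Ajtai, Chv\'atal, Newborn, Szemer\'edi (and, independently, Leighton), following Sz\'ekely's elegant approach. Recall that the crossing number $\operatorname{cr}(G)$ of a simple graph $G$ is the minimum, over all drawings of $G$ in the plane, of the number of edge-crossings. The first ingredient is the weak bound $\operatorname{cr}(G) \geq e - 3n + 6$ for every simple graph with $n$ vertices and $e \geq 3n$ edges, which follows directly from Euler's formula applied to an optimal drawing. The second ingredient is a standard probabilistic amplification, in which each vertex is retained independently with probability $p \sim n/e$; taking expectations in the weak bound and optimising in $p$ upgrades it to
\begin{displaymath}
\operatorname{cr}(G) \geq \frac{e^3}{64\, n^2}, \quad \text{whenever } e \geq 4n.
\end{displaymath}

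With this tool in hand, I would construct from the pair $(\mathcal{P}, \mathfrak{L})$ a drawing of a graph $G$ as follows. Take the vertex set to be $\mathcal{P}$, so $n = P$. For each line $\ell \in \mathfrak{L}$ containing $k_\ell \geq 2$ points of $\mathcal{P}$, add $k_\ell - 1$ edges along $\ell$ connecting consecutive points of $\mathcal{P} \cap \ell$. Lines carrying $0$ or $1$ incidence contribute at most $L$ to the total incidence count and may be discarded; the remaining lines produce a set of $e \geq I_{\mathcal{P},\mathfrak{L}} - L$ edges. Since every edge of $G$ lies on some line of $\mathfrak{L}$, each crossing in this drawing corresponds to an intersection of two distinct lines of $\mathfrak{L}$, and therefore $\operatorname{cr}(G) \leq \binom{L}{2} \leq L^2/2$.

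To conclude, I would split into two cases. If $e < 4P$, then $I_{\mathcal{P},\mathfrak{L}} \leq 4P + L$, which is stronger than the asserted bound. Otherwise the crossing number inequality yields $L^2/2 \geq e^3/(64\, P^2)$, which rearranges to $e \lesssim P^{2/3} L^{2/3}$, so that $I_{\mathcal{P},\mathfrak{L}} \lesssim P^{2/3} L^{2/3} + L$. Combining the two cases gives the claimed $O(P^{2/3} L^{2/3} + P + L)$ bound. The main conceptual obstacle is the derivation of the crossing number inequality itself from the base Euler bound via the probabilistic deletion trick; once that inequality is granted, the remainder of the argument is essentially a clean counting exercise exploiting the fact that two distinct lines meet in at most one point. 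An alternative route, more in the spirit of the polynomial techniques used elsewhere in this thesis, would be to apply Theorem \ref{2.1.3} with the optimal choice $d \sim (P/L)^{1/3}$ and bound incidences cell-by-cell using B\'ezout to control how many cells each line can cross; but the Sz\'ekely argument is markedly cleaner and yields the theorem directly.
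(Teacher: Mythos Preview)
Your argument via Sz\'ekely's crossing-number method is correct and complete, up to a harmless arithmetic slip: after discarding the light lines and then summing $k_\ell-1$ over the remaining ones you obtain $e \geq I_{\mathcal{P},\mathfrak{L}} - 2L$ rather than $I_{\mathcal{P},\mathfrak{L}} - L$, but this is absorbed in the final $O(L)$ term. The paper, however, takes the Guth--Katz polynomial partitioning route: it applies Theorem~\ref{2.1.3} with $d=\sqrt{r}$, $r=P^{4/3}/L^{2/3}$ (so $d\sim P^{2/3}L^{-1/3}$, not $(P/L)^{1/3}$ as in your closing aside), decomposes $\R^2$ into $\sim r$ cells each holding $\lesssim P/r$ points, and then bounds separately the incidences inside $Z$, the incidences between $\mathcal{P}\cap Z$ and lines not in $Z$, and the cellular incidences via the trivial estimates $I\lesssim P^2+L$ and $I\lesssim L^2+P$. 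Your approach is shorter and rests only on Euler's formula plus a probabilistic deletion, whereas the paper's requires the polynomial ham-sandwich machinery; the paper explicitly chooses the partitioning proof because the same cell-decomposition scheme is reused verbatim in the joints and multijoints arguments later in the thesis, and because it extends more directly to the algebraic-curve version (Theorem~\ref{4.1.8}).
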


This theorem first appeared in \cite{MR729791}; other, less complicated proofs have appeared since (see \cite{MR1464571} and \cite{KMS}). In particular, in \cite{KMS} the Szemer\'edi-Trotter theorem is proved  with the use of the Guth-Katz polynomial method. In fact, we would like to demonstrate this proof now, as it will be referenced in later parts of the thesis.

\textit{Proof of the Szemer\'edi-Trotter theorem in \cite{KMS} via the Guth-Katz polynomial method.} 

There are $\leq (P-1)P \leq P^2$ incidences between $\mathcal{P}$ and the lines of $\mathfrak{L}$ each of which contains at least 2 points of $\mathcal{P}$; the reason for this is that, through any fixed point of $P$, there are at most $P-1$ lines in $\mathfrak{L}$ with at least $2$ incidences with $\mathcal{P}$, as, for every $2$ distinct points in $\R^2$, there exists at most 1 line passing through both of them. On the other hand, there are $\leq L$ incidences between $\mathcal{P}$ and the lines of $\mathfrak{L}$ each of which contains at most 1 point of $\mathcal{P}$, as there exist at most $L$ lines in $\mathfrak{L}$, each containing at most 1 point of $\mathcal{P}$. Therefore, $ I_{\mathcal{P}, \mathfrak{L}} \lesssim P^2+L$. 

In addition, there are $\leq P$ incidences between $\mathfrak{L}$ and the points of $\mathcal{P}$ that each lie in at most 1 line of $\mathfrak{L}$, as there exist at most $P$ lines in $\mathcal{P}$, each lying in at most 1 line of $\mathfrak{L}$. On the other hand, there are $\lesssim \binom{L}{2} \sim L^2$ incidences between $\mathfrak{L}$ and the points of $\mathcal{P}$ that each lie on at least 2 lines of $\mathfrak{L}$. Therefore, $I_{\mathcal{P}, \mathfrak{L}} \lesssim P+L^2$.

Therefore, we may assume that $P < L^2$ and $L < P^2$, as otherwise $I_{\mathcal{P}, \mathfrak{L}} \lesssim P+L$. We thus set $r:= P^{4/3}/L^{2/3}$ ($ > 1$), and applying the Guth-Katz polynomial method we deduce that there exists some non-zero polynomial $p \in \R[x,y]$, of degree $\leq \sqrt{r}$, whose zero set $Z$ decomposes $\R^2$ in $\sim r$ cells, each containing $\lesssim P/r$ points of $\mathcal{P}$. 

Let $\mathcal{P}_0$ be the set of points of $\mathcal{P}$ which lie in $Z$, $\mathfrak{L} _0$ the set of lines of $\mathfrak{L}$ which lie in $Z$, $C_1,...,C_s$ the interiors of the cells in which $Z$ decomposes $\R^2$, $\mathcal{P}_i$ the set of points of $\mathcal{P}$ which lie in $C_i$, and $\mathfrak{L}_i$ the set of lines in $\mathfrak{L}$ intersecting the interior of $C_i$, $i =1,...,s$.

Since $ I_{\mathcal{P}, \mathfrak{L}} = I_{\mathcal{P}_0, \mathfrak{L}_0} + I_{\mathcal{P}_0, \mathfrak{L} \setminus \mathfrak{L} _0}+\sum_{i=1}^{s}{I_{\mathcal{P}_i, \mathfrak{L}_i}}$, it suffices to bound each of these three quantities. 

Indeed, $|\mathfrak{L}_0| \leq \sqrt{r}+1$. The reason for this is that a generic line in $\R^2$ intersects all the lines of $\mathfrak{L}_0$, and thus intersects the zero set $Z$ of the polynomial $p$ at least $|\mathfrak{L}_0|$ times. So, if the cardinality of $\mathfrak{L}_0$ was larger than $\sqrt{r}+1$, a generic line of $\R^2$ would intersect $Z$ more times than the degree of $p$, and thus it would itself lie in $Z$, which means that $Z$ would be the whole of $\R^2$, and $p$ would be the zero polynomial. Therefore,
\begin{displaymath} I_{P_0, \mathfrak{L} _0} \lesssim |\mathcal{P}_0| + |\mathfrak{L}_0|^2\lesssim P + \sqrt{r}^2 \sim P+r \lesssim P. \end{displaymath}
Moreover, it holds that
\begin{displaymath}I_{\mathcal{P}_0, \mathfrak{L}\setminus \mathfrak{L}_0}\leq L \cdot \sqrt{r}=P^{2/3}L^{2/3},\end{displaymath}
as each line of $\mathfrak{L} \setminus \mathfrak{L}_0$ does not lie in the zero set $Z$ of the polynomial $p$, and thus it intersects $Z$ at most $\deg p \leq \sqrt{r}$ times. 

Finally, for all $i \in \{1,...,s\}$, a line in $\mathfrak{L}_i$ can intersect at most $\deg p +1\lesssim \sqrt{r}$ of the sets $C_1$, ..., $C_s$, as otherwise it would intersect $Z$ more than $\deg p$ times, and would thus lie in $Z$. Therefore, $\sum_{i=1}^{s}|\mathfrak{L}_i| \lesssim L \cdot \sqrt{r}\sim P^{2/3}L^{2/3}$. On the other hand, $\sum_{i=1}^{s}|\mathcal{P}_i|^2\leq \max\big\{|\mathcal{P}_i|:i \in \{1,...,s\}\big\}\cdot \sum_{i=1}^{s}|\mathcal{P}_i|\leq P/r \cdot P=P^2/r=P^{2/3}L^{2/3}$, and thus
\begin{displaymath}\sum_{i=1}^{s}{I_{\mathcal{P}_i, \mathfrak{L}_i}}\lesssim \sum_{i=1}^{s}(|\mathfrak{L}_i|+|\mathcal{P}_i|^2) \sim \sum_{i=1}^{s}|\mathfrak{L}_i| +\sum_{i=1}^{s}|\mathcal{P}_i|^2\lesssim P^{2/3}L^{2/3}.
\end{displaymath}
It has therefore been proved that $I_{\mathcal{P}, \mathfrak{L}} \lesssim P^{2/3}L^{2/3}+P+L$.

\qed

An immediate consequence of the Szemer\'edi-Trotter theorem is the following.

\begin{corollary}\emph{\textbf{(Szemer\'edi, Trotter, \cite{MR729791})}}\label{2.2.6} Let $\mathfrak{L}$ be a collection of $L$ lines in $\R^2$ and $\mathcal{\mathfrak{G}}$ a collection of $S$ points in $\R^2$, such that each of them intersects at least $k$ lines of $\mathfrak{L}$, for $k \geq 2$. Then,
\begin{displaymath}S\leq c_0 \cdot (L^2 k^{-3}+Lk^{-1}), \end{displaymath}
where $c_0$ is a constant independent of $\mathfrak{L}$, $\mathfrak{G}$ and $k$. \end{corollary}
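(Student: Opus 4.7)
The plan is to count the incidences $I_{\mathfrak{G}, \mathfrak{L}}$ in two different ways and combine the resulting inequalities. From below, the hypothesis that each point of $\mathfrak{G}$ lies on at least $k$ lines of $\mathfrak{L}$ gives $I_{\mathfrak{G}, \mathfrak{L}} \geq k S$. From above, the Szemer\'edi--Trotter theorem (Theorem \ref{2.2.5}) yields $I_{\mathfrak{G}, \mathfrak{L}} \leq C(S^{2/3} L^{2/3} + S + L)$ for the absolute constant $C$ appearing there. Chaining these produces the master inequality
\begin{displaymath}
 k S \leq C \bigl( S^{2/3} L^{2/3} + S + L \bigr).
\end{displaymath}

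Next, I would extract the desired bound on $S$ by a pigeonhole case analysis: at least one of the three terms on the right must be at least $kS/3$. If $C S^{2/3} L^{2/3} \geq k S / 3$, then rearranging gives $S \lesssim L^{2} k^{-3}$. If $C L \geq k S / 3$, then $S \lesssim L k^{-1}$. Both of these conclusions feed directly into the right-hand side of the desired inequality.

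The only delicate case is the middle one, $C S \geq k S/3$, which on its own yields only $k \leq 3C$, i.e.\ that $k$ is bounded by an absolute constant. To close this gap, I would invoke the trivial observation that, since $k \geq 2$, every point of $\mathfrak{G}$ is the intersection of at least two lines of $\mathfrak{L}$; therefore each point of $\mathfrak{G}$ is determined by a choice of a pair of lines from $\mathfrak{L}$, giving the crude estimate $S \leq \binom{L}{2} \lesssim L^{2}$. In the regime $k \leq 3C$, the quantity $L^{2} k^{-3}$ is comparable to $L^{2}$ up to a constant depending only on $C$, so $S \lesssim L^{2} \lesssim L^{2} k^{-3}$, and this case too is absorbed into the first branch after choosing $c_{0}$ sufficiently large.

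There is no serious obstacle here; the only subtle point is ensuring that the $S$ term in the Szemer\'edi--Trotter bound does not spoil the argument, which is resolved by the trivial pair-counting observation above. Putting the three cases together and picking $c_{0}$ to exceed $(3C)^{3} + 3C$ (say) completes the proof of the stated inequality.
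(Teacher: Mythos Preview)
Your proposal is correct and follows essentially the same approach as the paper: bound incidences from below by $kS$ and from above by Szemer\'edi--Trotter, pigeonhole on the three terms, and in the residual case $k \lesssim 1$ invoke the trivial bound $S \leq \binom{L}{2} \lesssim L^2$ coming from $k \geq 2$.
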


\begin{proof} We denote by $I_{\mathfrak{G}, \mathfrak{L}}$ the number of incidences between $\mathfrak{G}$ and $\mathfrak{L}$. Since at least $k$ lines of $\mathfrak{L}$ are passing through each point of $\mathfrak{G}$, it follows that
\begin{displaymath} I_{\mathfrak{G}, \mathfrak{L}} \geq Sk. \end{displaymath}
On the other hand, by the Szemer\'edi-Trotter theorem,
\begin{displaymath} I_{\mathfrak{G}, \mathfrak{L}} \leq C \cdot (S^{2/3}L^{2/3}+S+L), \end{displaymath}
for some constant $C$, independent of $\mathfrak{L}$ and $\mathfrak{G}$ (and, of course, $k$).

Therefore, it holds that $Sk \leq C \cdot(S^{2/3}L^{2/3}+S+L)$. We thus have that either $Sk \leq \frac{C}{3} \cdot S^{2/3}L^{2/3}$, or $Sk \leq \frac{C}{3} \cdot L$, or $Sk \leq \frac{C}{3} \cdot S$, which means that either $S \leq \frac{C^3}{3^3}\cdot L^2k^{-3}$, or $S \leq \frac{C}{3} \cdot Lk^{-1}$, or $k \leq \frac{C}{3}$, in which last case we have that $S \lesssim L^2k^{-3}$, as $S \leq L^2$, since at least 2 lines of $\mathfrak{L}$ are passing through each point of $\mathfrak{G}$.

Therefore,
\begin{displaymath}S\leq c_0 \cdot (L^2 k^{-3}+Lk^{-1}), \end{displaymath}

where $c_0$ is a constant independent of $\mathfrak{L}$, $\mathfrak{G}$ and $k$.

\end{proof}

Note that Theorem \ref{2.2.5} and Corollary \ref{2.2.6} hold not only in $\R^2$, but in $\R^n$ as well, for all $n \in \N$, $n >2$, by projecting $\R^n$ on a generic plane.

\newpage
\thispagestyle{plain}
\cleardoublepage

\chapter{Counting joints with multiplicities} \label{3}

In this chapter we prove Theorem \ref{1.1}.

\textbf{Theorem \ref{1.1}.} \textit{Let $\mathfrak{L}$ be a collection of $L$ lines in $\R^3$, forming a set $J$ of joints. Then,}
\begin{displaymath} \sum_{x \in J}N(x)^{1/2} \leq c \cdot L^{3/2}, \end{displaymath}
\textit{where $c$ is a constant independent of $\mathfrak{L}$.}

We start by making certain observations.

\begin{lemma}\label{3.1} Let $x$ be a joint of multiplicity $N$ for a collection $\mathfrak{L}$ of lines in $\R^3$, such that $x$ lies in $\leq 2k$ of the lines. If, in addition, $x$ is a joint of multiplicity $\leq \frac{N}{2}$ for a subcollection $\mathfrak{L}'$ of the lines, or if it is not a joint at all for the subcollection $\mathfrak{L}'$, then there exist $\geq \frac{N}{1000 \cdot k^2}$ lines of $\mathfrak{L} \setminus \mathfrak{L}'$ passing through $x$. \end{lemma}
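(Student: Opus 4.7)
The plan is to set up a straightforward double-counting on the unordered triples of non-coplanar lines through $x$, separating those that contribute to the multiplicity in $\mathfrak{L}'$ from those that do not.

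First I would fix notation: let $M \subseteq \mathfrak{L}$ be the set of lines through $x$, so $|M| \leq 2k$; let $M' = M \cap \mathfrak{L}'$; and let $a = |M \setminus M'|$, which is precisely the number of lines of $\mathfrak{L} \setminus \mathfrak{L}'$ through $x$. The quantity I want to bound from below is $a$. Write $N = N(x)$ for the multiplicity in $\mathfrak{L}$ and $N'$ for the multiplicity in $\mathfrak{L}'$ (set $N' = 0$ if $x$ is not a joint of $\mathfrak{L}'$); in either hypothesis of the lemma, $N' \leq N/2$, so the number of unordered spanning triples in $M$ that use at least one line from $M \setminus M'$ is exactly $N - N' \geq N/2$.

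The next step is to upper bound the same count. Any such triple contains at least one line $\ell \in M \setminus M'$, and once $\ell$ is fixed there are at most $\binom{|M|-1}{2}$ ways to choose the other two lines from $M$. Summing over the $a$ choices of $\ell$ overcounts each triple at most three times (once per line of $M \setminus M'$ it contains), but even the crude bound
\[
N - N' \;\leq\; a \cdot \binom{|M|-1}{2} \;\leq\; a \cdot \binom{2k-1}{2} \;\leq\; 2k^2 \cdot a
\]
suffices. Combining with the lower bound from the previous step yields $a \geq N/(4k^2)$, which comfortably implies $a \geq N/(1000 k^2)$.

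There is really no obstacle here: the argument is pure counting, and the generous constant $1000$ in the statement allows for any mild slack in the estimate of $\binom{|M|-1}{2}$. The only thing to be careful about is handling both clauses of the hypothesis (whether $x$ is a low-multiplicity joint of $\mathfrak{L}'$ or not a joint of $\mathfrak{L}'$ at all) uniformly by the convention $N' = 0$ in the latter case, which makes the inequality $N - N' \geq N/2$ automatic.
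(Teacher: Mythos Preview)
Your argument is correct. Both proofs rest on the same idea---count the spanning triples through $x$ that use at least one line outside $\mathfrak{L}'$---but the executions differ. The paper argues by contradiction: assuming $A < N/(1000k^2)$, it decomposes $N$ into four terms according to how many of the three lines lie in $\mathfrak{L}'$ versus $\mathfrak{L}\setminus\mathfrak{L}'$, bounds each term separately (using in particular the preliminary observation $N \le \binom{2k}{3} \le 8k^3$), and sums to get $N < N$. Your version is more direct: you bound the number of ``mixed'' triples above by $a\cdot\binom{|M|-1}{2}$ in one stroke, which avoids the case split entirely and, notably, does not need the auxiliary bound $N \le 8k^3$. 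The payoff is a cleaner argument and a sharper constant ($a \ge N/(4k^2)$ rather than $N/(1000k^2)$), at no cost.
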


\begin{proof} Since the joint $x$ lies in $\leq 2k$ lines of $\mathfrak{L}$, its multiplicity $N$ is $\leq \binom{2k}{3}\leq 8k^3$.

Now, let $A$ be the number of lines of $\mathfrak{L} \setminus \mathfrak{L}'$ that are passing through $x$. We will show that $A \geq \frac{N}{1000 \cdot k^2}$. Indeed, suppose that $A \lneq \frac{N}{1000 \cdot k^2}$. Then,

$N=\Big|\Big\{\{l_1, l_2, l_3\}:$ $l_1$, $l_2$, $l_3$ $\in \mathfrak{L}$ are passing through $x$, and their directions span $\R^3 \Big\}\Big|=$\newline
$=\Big|\Big\{\{l_1, l_2, l_3\}:$ $l_1$, $l_2$, $l_3$ $\in \mathfrak{L}'$ are passing through $x$, and their directions span $\R^3 \Big\}\Big|+$\newline
$+\Big|\Big\{\{l_1, l_2, l_3\}:$ $l_1$, $l_2$, $l_3$ $\in \mathfrak{L} \setminus \mathfrak{L}'$ are passing through $x$, and their directions span $\R^3 \Big\}\Big|+$\newline
$+\Big|\Big\{\{l_1, l_2, l_3\}:$ the lines $l_1$, $l_2$ $\in \mathfrak{L}'$, $l_3$ $\in \mathfrak{L} \setminus \mathfrak{L}'$ are passing through $x$, and their directions span $\R^3 \Big\}\Big|+\Big|\Big\{\{l_1, l_2, l_3\}:$ the lines $l_1$, $l_2$ $\in \mathfrak{L} \setminus \mathfrak{L}'$, $l_3$ $\in \mathfrak{L}'$ are passing through $x$, and their directions span $\R^3 \Big\}\Big|\leq$\newline
$\leq \frac{N}{2}+\binom{A}{3} + \binom{A}{2} \cdot 2k + \binom{2k}{2} \cdot A\leq$\newline
$\leq \frac{N}{2} + A^3 + A^2 \cdot 2k + (2k)^2 \cdot A\leq$\newline
$\leq \frac{N}{2}+\big(\frac{N}{1000 \cdot k^2}\big)^3 + \big(\frac{N}{1000 \cdot k^2}\big)^2 \cdot 2k + (2k)^2 \cdot \frac{N}{1000 \cdot k^2}\leq$\newline
$\leq \frac{N}{2} + \frac{N}{8} + \frac{N}{8} + \frac{N}{8} \lneq N$ (what we use here is the fact that $N \leq 8k^3$). 

So, we are led to a contradiction, which means that $A \geq \frac{N}{1000 \cdot k^2}$.

\end{proof}

\begin{lemma}\label{3.2} Let $x$ be a joint of multiplicity $N$ for a collection $\mathfrak{L}$ of lines in $\R^3$, such that $x$ lies in $\leq 2k$ of the lines. Then, for every plane containing $x$, there exist $\geq \frac{N}{1000 \cdot k^2}$ lines of $\mathfrak{L}$ passing through $x$, which are not lying in the plane. \end{lemma}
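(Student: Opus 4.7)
The plan is to apply Lemma \ref{3.1} directly, using the plane $\pi$ to define a natural subcollection $\mathfrak{L}'$ for which $x$ fails to be a joint. Specifically, given a plane $\pi$ through $x$, I would set
\[
\mathfrak{L}' := \{\, l \in \mathfrak{L} : l \subset \pi \,\}.
\]
The point of this choice is that every line of $\mathfrak{L}'$ passing through $x$ has its direction vector lying in the $2$-dimensional linear subspace parallel to $\pi$. Consequently, no three lines of $\mathfrak{L}'$ through $x$ can have directions spanning $\R^3$, so $x$ is \emph{not} a joint for the subcollection $\mathfrak{L}'$ (its multiplicity with respect to $\mathfrak{L}'$ is $0$).

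With this established, Lemma \ref{3.1} applies verbatim (the hypothesis that $x$ lies in at most $2k$ lines of $\mathfrak{L}$ is inherited), and it yields
\[
\#\{\, l \in \mathfrak{L}\setminus \mathfrak{L}' : x \in l \,\} \geq \frac{N}{1000 \cdot k^2}.
\]
Finally, one observes that a line of $\mathfrak{L}$ through $x$ belongs to $\mathfrak{L} \setminus \mathfrak{L}'$ precisely when it is not contained in $\pi$, so these are exactly the lines of $\mathfrak{L}$ through $x$ not lying in the given plane. This completes the argument.

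I do not anticipate any obstacle here: the only substantive content is the linear-algebraic observation that coplanar lines through a common point have coplanar directions, and everything else is a direct invocation of the previous lemma. The lemma is essentially a packaged corollary of Lemma \ref{3.1}, tailored for the plane-avoidance arguments that will be used later in the joint-counting proofs.
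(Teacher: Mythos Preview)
Your proof is correct and follows essentially the same approach as the paper: define $\mathfrak{L}'$ to be the lines of $\mathfrak{L}$ lying in the given plane (the paper restricts further to those also passing through $x$, an immaterial difference), observe that $x$ is not a joint for $\mathfrak{L}'$ since coplanar lines through $x$ have coplanar directions, and invoke Lemma~\ref{3.1} directly.
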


\begin{proof} Let $\mathfrak{L}'$ be the set of lines in $\mathfrak{L}$ passing through $x$ and lying in some fixed plane. By Lemma \ref{3.1}, we know that there exist $\geq \frac{N}{1000 \cdot k^2}$ lines of $\mathfrak{L} \setminus \mathfrak{L}'$ passing through $x$, and, by the definition of $\mathfrak{L}'$, these lines do not lie in the plane. Therefore, there indeed exist at least $\frac{N}{1000 \cdot k^2}$ lines of $\mathfrak{L}$ passing through $x$ and lying outside the plane.

\end{proof}

Now, for a set $J$ of joints formed by a collection of lines in $\R^3$, we consider, for all $N \in \N$, the subset $J_N$ of $J$, defined as follows:

$J_N:=\{x \in J: N \leq N(x) <2N\}$.

In addition, we define, for all $N$ and $k \in \N$, the following subset of $J_N$:

$J_N^{k}:=\{x \in J_N$: $x$ intersects at least $k$ and fewer than $2k$ lines of $\mathfrak{L}\}$. 

Now, Theorem \ref{1.1} will follow from Proposition \ref{1.2} that follows (details will be explained after the proof of the proposition).

\begin{proposition}\label{1.2} If $\mathfrak{L}$ is a collection of $L$ lines in $\R^3$ and $N$, $k\in \N$, then 
\begin{displaymath}|J^{k}_{N}| \cdot N^{1/2} \leq c \cdot \bigg(\frac{L^{3/2}}{k^{1/2}} + \frac{L}{k}\cdot N^{1/2}\bigg), \end{displaymath}
where $c$ is a constant independent of $\mathfrak{L}$, $N$ and $k$.\end{proposition}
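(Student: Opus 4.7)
My plan is to apply the Guth--Katz polynomial method (Theorem \ref{2.1.3}) to $J^{k}_{N}$ with a degree parameter $d \geq 1$ to be chosen at the end: this produces a non-zero $p \in \R[x,y,z]$ of degree $\leq d$ whose zero set $Z$ decomposes $\R^3$ into $\sim d^3$ open cells, each containing $\lesssim |J^{k}_{N}|/d^3$ points of $J^{k}_{N}$. I would then split $J^{k}_{N}$ into joints lying in the interior of some cell and joints lying on $Z$, and handle the two pieces separately.

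For joints inside cells, fix a cell $C_i$ and let $\mathfrak{L}_i$ denote the lines of $\mathfrak{L}$ that meet the interior of $C_i$. Any joint $x\in J^{k}_{N}\cap C_i$ lies on at least $k$ lines of $\mathfrak{L}$, and since $x$ is in the interior all of these lines must belong to $\mathfrak{L}_i$. Projecting onto a generic plane and applying Corollary \ref{2.2.6} inside each cell yields $|J^{k}_{N} \cap C_i|\lesssim |\mathfrak{L}_i|^2/k^3 + |\mathfrak{L}_i|/k$. Summing and using that any line of $\mathfrak{L}$ enters at most $d+1$ cells gives $\sum_i |\mathfrak{L}_i|\lesssim Ld$, and combined with the crude bound $\sum_i |\mathfrak{L}_i|^2 \leq L \sum_i |\mathfrak{L}_i|$ the total cellular contribution is $\lesssim L^2 d/k^3 + Ld/k$.

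For joints on $Z$ I would distinguish regular from critical joints. At a regular joint $x \in J^{k}_{N} \cap Z$ every line of $Z$ through $x$ is tangent to $Z$, hence contained in the tangent plane $\pi_x$; Lemma \ref{3.2} applied with the plane $\pi_x$ then supplies at least $\gtrsim N/k^2$ lines of $\mathfrak{L}$ through $x$ that avoid $\pi_x$ and therefore do not lie in $Z$. Each of these lines meets $Z$ in at most $d$ points, so counting joint--line incidences gives $|J^{k}_{N} \cap Z_{\text{reg}}| \cdot N/k^2 \lesssim Ld$, i.e.\ $|J^{k}_{N} \cap Z_{\text{reg}}|\lesssim Ldk^2/N$. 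For critical joints I would partition $\mathfrak{L}$ into three classes: (a) lines of $\mathfrak{L}$ that are critical lines of $Z$, of which there are at most $d^2$ by Proposition \ref{2.2.3} and each carries $\lesssim L/k$ joints of $J^{k}_{N}$ (since a single line meets the others in at most $L-1$ points, while each of its $J^{k}_{N}$-joints consumes $\geq k-1$ of these incidences); (b) non-critical lines of $\mathfrak{L}$ lying in $Z$, each with at most $d$ critical points of $Z$; and (c) lines of $\mathfrak{L}$ outside $Z$, each meeting $Z$ in at most $d$ points. Summing these incidences and dividing by $k$ yields $|J^{k}_{N} \cap Z_{\text{crit}}|\lesssim d^2 L/k^2 + Ld/k$.

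Adding the three bounds gives
$$|J^{k}_{N}|\lesssim L^2 d/k^3 + Ldk^2/N + d^2 L/k^2 + Ld/k.$$
Multiplying by $N^{1/2}$ and optimising $d$ in different regimes---choosing $d$ to balance the cellular term with the target when possible, and falling back on the raw Szemer\'edi--Trotter estimate of Theorem \ref{2.2.5} when the optimum would force $d<1$---should deliver the stated bound. The main obstacle I anticipate is the critical-joint contribution: the elementary count above produces an extra $d^2 L/k^2$ term that can dominate in an intermediate parameter range, and I expect that removing it will require refining the regular/critical dichotomy into a flat/non-flat/critical trichotomy using Proposition \ref{secondfundamentalform} and Corollary \ref{elekes'}, exploiting the fact that at a non-flat regular point at most two lines of $Z$ can pass through, so that Lemma \ref{3.2} already forces $\gtrsim k$ lines not in $Z$ at the joint and the incidence count becomes cleaner.
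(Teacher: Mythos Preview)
Your computations are correct, but the scheme cannot close: every one of your four terms is \emph{increasing} in $d$, so there is no optimisation to perform. Taking $d$ as small as possible just returns the global Szemer\'edi--Trotter bound for the cellular piece, and your regular-joint term $Ldk^2/N$ (after multiplying by $N^{1/2}$) becomes $Lk^2/N^{1/2}$, which already exceeds the target unless $N\sim k^3$. The loss is in the cellular step: bounding $\sum_i|\mathfrak L_i|^2\le L\sum_i|\mathfrak L_i|\lesssim L^2d$ throws away exactly the gain from partitioning. The paper runs the cellular argument in the opposite direction: in a full cell with $\ge k$ points one has $L_{cell}\gtrsim k^2$, so Szemer\'edi--Trotter gives $L_{cell}\gtrsim S_{cell}^{1/2}k^{3/2}\gtrsim (S/d^3)^{1/2}k^{3/2}$; summing over $\gtrsim d^3$ full cells and comparing with $\sum L_{cell}\lesssim Ld$ yields $S\lesssim L^2/(dk^3)$, which \emph{decreases} in $d$.

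This forces the second structural feature you are missing: the degree is not optimised at the end but chosen as $d=AL^2S^{-1}k^{-3}$ (which is $>1$ by Szemer\'edi--Trotter), so that the cellular case becomes a dichotomy (either some full cell has $\le k$ points, giving $S\lesssim L^{3/2}k^{-2}$ directly, or one reaches a contradiction for $A$ large). With this $d$ your regular-joint inequality $S\cdot N/k^2\lesssim Ld$ already gives $SN^{1/2}\lesssim L^{3/2}k^{-1/2}$. The critical case, however, is not handled by your line-count or by a flat/non-flat refinement (the paper does not use flat lines in this proposition at all): what is needed is an \emph{induction on $L$}. One isolates the subfamily $\mathfrak L'\subset\mathfrak L$ of ``popular'' lines (those containing $\gtrsim Sk/L$ points of $J_N^k\cap Z$); if $|\mathfrak L'|\ge L/100$ then enough of these are critical lines and the bound $|\mathfrak L_{crit}|\le d^2$ closes the argument, while if $|\mathfrak L'|<L/100$ then $\gtrsim S$ of the joints remain joints of multiplicity $\sim N$ for the strictly smaller family $\mathfrak L'$, and one invokes the induction hypothesis.
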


\begin{proof} Our argument will be based on the Guth-Katz polynomial method, and also, to a large extent, on the proof of \cite[Theorem 4.7]{Guth_Katz_2010}. The following presentation, though, is self-contained, and it will be made clear whenever the techniques of \cite{Guth_Katz_2010} are being repeated.

The proof will be done by induction on the number of lines of $\mathfrak{L}$. Indeed, let $L \in \N$. For $c$ a (non-negative) constant which will be specified later:

- For any collection of lines in $\R^3$ that consists of 1 line, \begin{displaymath} |J^k_{N}|\cdot N^{1/2} \leq c \cdot \bigg(\frac{1^{3/2}}{k^{1/2}} + \frac{1}{k}\cdot N^{1/2}\bigg), \; \forall \; N, \;k \in \N\end{displaymath} (this is obvious, in fact, for any $c \geq 0$, as in this case $J_{N}=\emptyset$, $\forall \; N \in \N$). 

- We assume that \begin{displaymath} |J_N^{k}| \cdot N^{1/2} \leq c \cdot \bigg(\frac{L'^{3/2}}{k^{1/2}} + \frac{L'}{k}\cdot N^{1/2}\bigg), \; \forall \; N,\; k \in \N,\end{displaymath} for any collection of $L'$ lines in $\R^3$, for any $L' \lneq L$.

- We will now prove that  \begin{equation} |J_N^{k}| \cdot N^{1/2} \leq c \cdot \bigg(\frac{L^{3/2}}{k^{1/2}} + \frac{L}{k}\cdot N^{1/2}\bigg),\; \forall \; N, \; k \in \N \label{eq:final}\end{equation}
for any collection of $L$ lines in $\R^3$.

We emphasise here that this last claim should and will be proved for the same constant $c$ as the one appearing in the first two steps of the induction process, provided that that constant is chosen to be sufficiently large.

Indeed, let $\mathfrak{L}$ be a collection of $L$ lines in $\R^3$, and fix $N$ and $k$ in $\N$. Also, for simplicity, let \begin{displaymath}\mathfrak{G}:=J_{N}^k \end{displaymath} and \begin{displaymath} S:=|J_{N}^k|\end{displaymath} for this collection of lines.

We now proceed in effectively the same way as in the proof of \cite[Theorem 4.7]{Guth_Katz_2010}.

Each point of $\mathfrak{G}$ has at least $k$ lines of $\mathfrak{L}$ passing through it, so, by the Szemer\'edi-Trotter theorem, $S\leq c_0 \cdot (L^2 k^{-3}+Lk^{-1})$, where $c_0$ is a constant independent of $\mathfrak{L}$, $N$ and $k$. Therefore:

If $\frac{S}{2}\leq c_0\cdot  Lk^{-1}$, then $S \cdot N^{1/2} \leq 2c_0 \cdot \frac{L}{k}\cdot N^{1/2}$ (where $2c_0$ is independent of $\mathfrak{L}$, $N$ and $k$). 

Otherwise, $\frac{S}{2}> c_0 \cdot Lk^{-1}$, so, by the Szemer\'edi-Trotter theorem, $\frac{S}{2}< c_0 \cdot L^2 k^{-3}$, which gives $S < 2c_0 \cdot  L^2 k^{-3}$. 

Therefore, $d:=AL^2S^{-1}k^{-3}$ is a quantity $> 1$ whenever $A\geq 2c_0$; we thus choose $A$ to be large enough for this to hold, and we will specify its value later. Now, applying the Guth-Katz polynomial method for this $d>1$ and the finite set of points $\mathfrak{G}$, we deduce that there exists a non-zero polynomial $p$, of degree $\leq d$, whose zero set $Z$ decomposes $\R^3$ in $\sim d^3$ cells, each of which contains $\lesssim Sd^{-3}$ points of $\mathfrak{G}$. We can assume that this polynomial is square-free, as eliminating the squares of $p$ does not inflict any change on its zero set. 

If there are $\geq 10^{-8}S$ points of $\mathfrak{G}$ in the union of the interiors of the cells, we are in the cellular case. Otherwise, we are in the algebraic case.

\textbf{Cellular case:} We follow the arguments in the proof of \cite[Lemma 4.8]{Guth_Katz_2010}, to fix $A$ and deduce that $S \cdot N^{1/2} \lesssim L^{3/2}k^{-1/2}$. More particularly:

There are $\gtrsim S$ points of $\mathfrak{G}$ in the union of the interiors of the cells. However, we also know that there exist $\sim d^3$ cells in total, each containing $\lesssim Sd^{-3}$ points of $\mathfrak{G}$. Therefore, there exist $\gtrsim d^3$ cells, with $\gtrsim Sd^{-3}$ points of $\mathfrak{G}$ in the interior of each. We call the cells with this property ``full cells". Now:

$\bullet$ If the interior of some full cell contains $\leq k$ points of $\mathfrak{G}$, then $Sd^{-3} \lesssim k$, so $S \lesssim L^{3/2}k^{-2}$,  and since $N \lesssim k^3$, we have that $S \cdot N^{1/2} \lesssim L^{3/2}k^{-1/2}$.

$\bullet$ If the interior of each full cell contains $\geq k$ points of $\mathfrak{G}$, then we will be led to a contradiction by choosing $A$ so large, that there will be too many intersections between the zero set $Z$ of $p$ and the lines of $\mathfrak{L}$ which do not lie in $Z$. Indeed:

Let $\mathfrak{L}_Z$ be the set of lines of $\mathfrak{L}$ which are lying in $Z$. Consider a full cell and let $S_{cell}$ be the number of points of $\mathfrak{G}$ in the interior of the cell, $\mathfrak{L}_{cell}$ the set of lines of $\mathfrak{L}$ that intersect the interior of the cell and $L_{cell}$ the number of these lines. Obviously, $\mathfrak{L}_{cell} \subset \mathfrak{L} \setminus \mathfrak{L}_Z$.

Now, each point of $\mathfrak{G}$ has at least $k$ lines of $\mathfrak{L}$ passing through it, therefore each point of $\mathfrak{G}$ lying in the interior of the cell has at least $k$ lines of $\mathfrak{L}_{cell}$ passing through it. Thus, since $S_{cell} \geq k$, we get that $ L_{cell} \geq k + (k-1) + (k-2) +...+1 \gtrsim k^2$, so 
\begin{displaymath} L^2_{cell}k^{-3} \gtrsim L_{cell} k^{-1}.\end{displaymath} 
But $k \geq 3$, so, by the Szemer\'edi-Trotter theorem,
\begin{displaymath} S_{cell} \lesssim L^2_{cell}k^{-3} + L_{cell} k^{-1}.\end{displaymath}
Therefore, $S_{cell} \lesssim L^2_{cell}k^{-3}$, so, since we are working in a full cell, $Sd^{-3} \lesssim L^2_{cell}k^{-3}$, and rearranging we see that 
\begin{displaymath} L_{cell} \gtrsim S^{1/2}d^{-3/2}k^{3/2}.\end{displaymath}

But each of the lines of $\mathfrak{L}_{cell}$ intersects the boundary of the cell at at least one point $x$, with the property that the induced topology from $\R^3$ to the intersection of the line with the closure of the cell contains an open neighbourhood of $x$; therefore, there are $ \gtrsim S^{1/2}d^{-3/2}k^{3/2}$ incidences of this form between $\mathfrak{L}_{cell}$ and the boundary of the cell (essentially, if a line $l$ intersects the interior of a cell, we can choose one arbitrary point of the intersection of the line with the interior of the cell and move along the line starting from that point until we reach the boundary of the cell for the first time; if $x$ is the point of the boundary that we reach through this procedure,
then the pair $(x, l)$ can be the incidence between the line and the boundary of the particular cell that we take into account; we do not count incidences between this line and the boundary of the particular cell, with the property that locally around the intersection point the line lies outside the cell). 

On the other hand, if $x$ is a point of $Z$ which belongs to a line intersecting the interior of a cell, such that the induced topology from $\R^3$ to the intersection of the line with the closure of the cell contains an open neighbourhood of $x$,  then there exists at most one other cell whose interior is also intersected by the line and whose boundary contains $x$, such that the induced topology from $\R^3$ to the intersection of the line with the closure of that cell contains an open neighbourhood of $x$. This, in fact, is the reason why we only considered a particular type of incidences. More particularly, we are not, in general, able to bound nicely the number of all the cells whose boundaries all contain a point $x$ and whose interiors are all intersected by a line $l$ containing $x$, as the line could enter the interior of each of the cells only far from the point $x$. We know, however, that there exist at most two cells whose boundaries contain $x$ and such that $l$ lies in both their interiors locally around $x$. And the union of the boundaries of all the cells is the zero set $Z$ of the polynomial $p$. 

So, if $I$ is the number of incidences between $Z$ and $\mathfrak{L} \setminus \mathfrak{L}_Z$, $I_{cell}$ is the number of incidences between $\mathfrak{L}_{cell}$ and the boundary of the cell, and $\mathcal{C}$ is the set of all the full cells (which, in our case, has cardinality $\gtrsim d^3$), then the above imply that
\begin{displaymath} I \gtrsim \sum_{cell \in \mathcal{C}} I_{cell} \gtrsim (S^{1/2}d^{-3/2}k^{3/2})\cdot d^3 \sim S^{1/2}d^{3/2}k^{3/2}.\end{displaymath}
On the other hand, if a line does not lie in the zero set $Z$ of $p$, then it intersects $Z$ at $\leq d$ points. Thus,
\begin{displaymath} I \leq L \cdot d. \end{displaymath}
This means that 
\begin{displaymath} S^{1/2}d^{3/2}k^{3/2} \lesssim L \cdot d, \end{displaymath}
which in turn gives $A\lesssim 1$. In other words, there exists some constant $C$, independent of $\mathfrak{L}$, $N$ and $k$, such that $A \leq C$. By fixing $A$ to be a number larger than $C$ (and of course $\geq 2c_0$, so that $d> 1$), we have a contradiction.

Therefore, in the cellular case there exists some constant $c_1$, independent of $\mathfrak{L}$, $N$ and $k$, such that  \begin{displaymath} S \cdot N^{1/2} \leq c_1 \cdot \frac{L^{3/2}}{k^{1/2}}.\end{displaymath}

\textbf{Algebraic case:} Let $\mathfrak{G}_1$ denote the set of points in $\mathfrak{G}$ which lie in $Z$. Here, $|\mathfrak{G}_1|>(1-10^{-8})S$. We now analyse the situation.

Since each point of $\mathfrak{G}_1$ intersects at least $k$ lines of $\mathfrak{L}$, \begin{displaymath} I_{\mathfrak{G}_1,\mathfrak{L}} > (1-10^{-8})Sk. \end{displaymath}
Now, let $\mathfrak{L}'$ be the set of lines in $\mathfrak{L}$ each of which contains $\geq \frac{1}{100}SkL^{-1}$ points of $\mathfrak{G}_1$. Each line of $\mathfrak{L} \setminus \mathfrak{L}'$ intersects fewer than $\frac{1}{100}SkL^{-1}$ points of $\mathfrak{G}_1$, thus
\begin{displaymath} I_{\mathfrak{G}_1,\mathfrak{L}\setminus \mathfrak{L}'} \leq |\mathfrak{L}\setminus \mathfrak{L}'| \cdot \frac{Sk}{100L} \leq \frac{1}{100}Sk.\end{displaymath}
Therefore, since $I_{\mathfrak{G}_1, \mathfrak{L}}=I_{\mathfrak{G}_1,\mathfrak{L} \setminus \mathfrak{L}'}+I_{\mathfrak{G}_1,\mathfrak{L}'}$, it follows that 
\begin{displaymath}I_{\mathfrak{G}_1,\mathfrak{L}'}> (1-10^{-8}-10^{-2})Sk. \end{displaymath}
Thus, there are $\gtrsim Sk$ incidences between $\mathfrak{G}_1$ and $\mathfrak{L}'$; this, combined with the fact that there exist $\leq S$ points of $\mathfrak{G}$ in total, each intersecting $\leq 2k$ lines of $\mathfrak{L}$, implies that there exist $\gtrsim S$ points of $\mathfrak{G}_1$, each intersecting $\gtrsim k$ lines of $\mathfrak{L}'$. 

Let us now take a moment to look for a practical meaning of this: $\sim S$ of our initial points each lie in $\sim k$ lines of $\mathfrak{L}'$, which is a subset of our initial set of lines $\mathfrak{L}$. Thus, if $\mathfrak{L}'$ is a strict subset of $\mathfrak{L}$, and if many of these points are joints for $\mathfrak{L}'$ with multiplicity $\sim N$, we can use our induction hypothesis for $\mathfrak{L}'$ and solve the problem if $|\mathfrak{L}'|$ is significantly smaller than $L$; however, before being able to tackle the problem in the rest of the cases, we need to extract more information. 

To that end, we will need to use appropriate, explicit constants now hiding behing the $\gtrsim$ symbols, which we therefore go ahead and find.

More particularly, let $\mathfrak{G}'$ be the set of points of $\mathfrak{G}_1$ each of which intersects $\geq \frac{1-10^{-8}-10^{-2}}{2}k$ lines of $\mathfrak{L}'$.

Then, \begin{displaymath}I_{\mathfrak{G}_1 \setminus \mathfrak{G}', \mathfrak{L}'} \leq |\mathfrak{G}_1 \setminus \mathfrak{G}'|\cdot  \frac{1-10^{-8}-10^{-2}}{2}k \leq  \frac{1-10^{-8}-10^{-2}}{2}Sk,\end{displaymath}

therefore, since $I_{\mathfrak{G}_1, \mathfrak{L}'}= I_{\mathfrak{G}_1 \setminus \mathfrak{G}', \mathfrak{L}'}+I_{\mathfrak{G}', \mathfrak{L}'}$, it follows that
\begin{displaymath}I_{\mathfrak{G}', \mathfrak{L}'} > \frac{1-10^{-8}-10^{-2}}{2}Sk. \end{displaymath}
And obviously,  $I_{\mathfrak{G}', \mathfrak{L}'} \leq |\mathfrak{G}'|\cdot 2k$. Therefore, $ \frac{1-10^{-8}-10^{-2}}{2}Sk< |\mathfrak{G}'|\cdot 2k$, and thus
\begin{displaymath} |\mathfrak{G}'|\geq \frac{1-10^{-8}-10^{-2}}{4}S; \end{displaymath}
in other words, there exist at least $\frac{1-10^{-8}-10^{-2}}{4}S$ points of $\mathfrak{G}_1$, each intersecting $\geq \frac{1-10^{-8}-10^{-2}}{2}k$ lines of $\mathfrak{L}'$. 

Now, each point of $\mathfrak{G}_1$ lies in $Z$, so it is either a regular or a critical point of $Z$. Let $\mathfrak{G}_{crit}$ be the set of points of $\mathfrak{G}_1$ that are critical points of $Z$, and  $\mathfrak{G}_{reg}$ the set of points of $\mathfrak{G}_1$ that are regular points of $Z$; then, $\mathfrak{G}_1=\mathfrak{G}_{crit} \sqcup \mathfrak{G}_{reg}$.

We are in one of the following two subcases.

\textbf{The regular subcase:} At least $\frac{10^{-8}}{4}S$ points of $\mathfrak{G}_1$ are regular points of $Z$ ($|\mathfrak{G}_{reg}| \geq \frac{10^{-8}}{4}S$).

What we actually need to continue is that $Z$ contains $\gtrsim S$ points of $\mathfrak{G}$ that are regular. Now, if $x \in \mathfrak{G}$ is a regular point of $Z$, there exists a plane through it, containing all those lines through the point that are lying in $Z$ (otherwise, the point would be a critical point of $Z$). And, since $x$ is a joint for $\mathfrak{L}$, of multiplicity $\geq N$, lying in $\leq 2k$ lines of $\mathfrak{L}$, by Lemma \ref{3.2} there exist $\gtrsim \frac{N}{k^2}$ lines of $\mathfrak{L}$ passing through $x$, which are not lying on the plane; this means that these lines are not lying in $Z$, and thus each of them contains $\leq d$ points of $\mathfrak{G}_1$. Therefore, the number of incidences between $\mathfrak{G}_1$ and $\mathfrak{L} \setminus \mathfrak{L}_{Z}$ is $\gtrsim S \cdot \frac{N}{k^2}$, but also $\leq |\mathfrak{L} \setminus \mathfrak{L}_{Z}| \cdot d \leq L \cdot d$. Thus, $S \cdot \frac{N}{k^2} \lesssim L\cdot d$, which implies that $S \cdot N^{1/2} \lesssim L^{3/2}k^{-1/2}$.

Therefore, \begin{displaymath} S \cdot N^{1/2} \leq c_2 \cdot \frac{L^{3/2}}{k^{1/2}}, \end{displaymath} 
for some constant $c_2$ independent of $\mathfrak{L}$, $N$ and $k$.

\textbf{The critical subcase:} Fewer than $\frac{10^{-8}}{4}S$ points of $\mathfrak{G}_1$ are regular points of $Z$ ($|\mathfrak{G}_{reg}| < \frac{10^{-8}}{4}S$). Now, either $|\mathfrak{L'}| \geq \frac{L}{100}$ or $|\mathfrak{L'}| < \frac{L}{100}$.

$\bullet$ \textbf{Suppose that} $\boldsymbol{|\mathfrak{L'}| \geq \frac{L}{100}}$ \textbf{.} 

(The basic arguments for the proof of this case appear in the proof of \cite[Proposition 4.7]{Guth_Katz_2010}.)

We notice that, if $\frac{1}{200}SkL^{-1} \leq d$, then we obtain $S \lesssim L^{3/2}k^{-2}$ by rearranging, so $S \cdot  N^{1/2} \lesssim L^{3/2}k^{-1/2}$ (as $N \lesssim k^3$). 

Therefore, we assume from now on that $\frac{1}{200}SkL^{-1} \geq d+1$. Then, each line of $\mathfrak{L}'$ contains at least $d+1\geq \deg p+1$ points of the zero set $Z$ of $p$ (as $\mathfrak{G}_1$ lies in $Z$), and thus each line of $\mathfrak{L}'$ lies in $Z$.

Now, we know that each line of $\mathfrak{L}'$ contains $\geq \frac{1}{100} SkL^{-1}$ points of $\mathfrak{G}_1$. Therefore, it either contains $\geq \frac{1}{200} SkL^{-1}$ points of $\mathfrak{G}_{crit}$ or $\geq \frac{1}{200} SkL^{-1}$ points of $\mathfrak{G}_{reg}$. But $|\mathfrak{L}'| \geq \frac{L}{100}$, so, if $\mathfrak{L}_{crit}$ is the set of lines in $\mathfrak{L}'$ each containing $\geq \frac{1}{200} SkL^{-1}$ points of $\mathfrak{G}_{crit}$ and $\mathfrak{L}_{reg}$ is the set of lines in $\mathfrak{L}'$ each containing $\geq \frac{1}{200} SkL^{-1}$  points of $\mathfrak{G}_{reg}$, then either $|\mathfrak{L}_{crit}| \geq \frac{L}{200}$ or $|\mathfrak{L}_{reg}|\geq \frac{L}{200}$.

Let us suppose that, in fact, $|\mathfrak{L}_{reg}|\geq \frac{L}{200}$. This means that the incidences between $\mathfrak{L}$ and the points in $\mathfrak{G}$ which are regular points of $Z$ number at least $\frac{L}{200} \cdot \frac{1}{200} SkL^{-1} = \frac{1}{4 \cdot 10^{4}}  Sk$. However, there exist fewer than $\frac{10^{-8}}{4}S$ points of $\mathfrak{G}$ which are regular points of $Z$, and therefore they contribute fewer than $\frac{10^{-8}}{4}S \cdot 2k=\frac{1}{2 \cdot 10^{8}}Sk \lneq \frac{1}{4 \cdot 10^{4}}  Sk$ incidences with $\mathfrak{L}$; so, we are led to a contradiction. Therefore, $|\mathfrak{L}_{reg}|\lneq \frac{L}{200}$.

Thus, $|\mathfrak{L}_{crit}| \geq \frac{L}{200}$. Now, each line of $\mathfrak{L}_{crit}$ contains $\geq \frac{1}{200} SkL^{-1} \gneq d$ critical points of $Z$, i.e. $\gneq d$ points where $p$ and $\nabla{p}$ are zero. However, both $p$ and $\nabla{p}$ have degrees $\leq d$. Therefore, if $l \in \mathfrak{L}_{crit}$, then $p$ and $\nabla{p}$ are zero across the whole line $l$, so each point of $l$ is a critical point of $Z$; in other words, $l$ is a critical line of $Z$. So, the number of critical lines of $Z$ is $\geq |\mathfrak{L}_{crit}| \geq \frac{L}{200}$. On the other hand, the number of critical lines of $Z$ is $\leq d^2$ (Proposition \ref{2.2.3}). Therefore,
\begin{displaymath} \frac{L}{200} \leq d^2, \end{displaymath} which gives $S \lesssim L^{3/2}k^{-3}$ after rearranging. Thus, $S N^{1/2} \lesssim L^{3/2}k^{-3/2}$ ($\lesssim  L^{3/2}k^{-1/2}$).

In other words, \begin{displaymath} S N^{1/2} \leq c_3 \cdot \frac{L^{3/2}}{k^{1/2}}, \end{displaymath} 
for some constant $c_3$ independent of $\mathfrak{L}$, $N$ and $k$.

$\bullet$ \textbf{Suppose that} $\boldsymbol{|\mathfrak{L'}| < \frac{L}{100}}$ \textbf{.}

Since fewer than $\frac{10^{-8}}{4}$ points of $\mathfrak{G}_1$ are regular points of $Z$, the same holds for the subset $\mathfrak{G}'$ of $\mathfrak{G}_1$. So, at least $\frac{1-2 \cdot 10^{-8}-10^{-2}}{4}S$ points of $\mathfrak{G}'$ are critical points of $Z$.

Now, each of the points of $\mathfrak{G}'$ is a joint for $\mathfrak{L}$ with multiplicity in the interval $[N,2N)$, so it is either a joint for $\mathfrak{L}'$ with multiplicity in the interval  $[N/2, 2N)$, or it is a joint for $\mathfrak{L}'$ with multiplicity $< N/2$, or it is not a joint for $\mathfrak{L}'$. Therefore, one of the following two subcases holds.

\textbf{1st subcase:} There exist at least $\frac{1-2 \cdot 10^{-8}-10^{-2}}{8}S$ critical points in $\mathfrak{G}'$ each of which is either a joint for $\mathfrak{L}'$ with multiplicity $<N/2$ or not a joint at all for $\mathfrak{L}'$. Let $\mathfrak{G}_2$ be the set of those points.

By Lemma \ref{3.1}, for each point $x \in \mathfrak{G}_2$ there exist $\geq \frac{N}{1000 \cdot k^2}$ lines of $\mathfrak{L} \setminus \mathfrak{L}'$ passing through $x$. 

Now, let $\mathfrak{L}_3$ be the set of lines in $\mathfrak{L} \setminus \mathfrak{L}'$, such that each of them contains $\leq d$ critical points of $Z$. Then, one of the following two holds.

\textbf{(1)} There exist $\geq \frac{1-2 \cdot 10^{-8}-10^{-2}}{16}S$ points of $\mathfrak{G}_2$ such that each of them has $\geq \frac{N}{2000 \cdot k^2}$ lines of $\mathfrak{L}_3$ passing through it. Then, 
\begin{displaymath}S \cdot \frac{N}{k^2} \lesssim I_{\mathfrak{G}_2, \mathfrak{L}_3} \leq |\mathfrak{L}_3| \cdot d \leq L \cdot d.  \end{displaymath}  
Rearranging, obtain $S \cdot N^{1/2} \lesssim L^{3/2}k^{-1/2}$.

\textbf{(2)} There exist $\geq \frac{1-2 \cdot 10^{-8}-10^{-2}}{16}S$ points of $\mathfrak{G}_2$ such that each of them has $\geq \frac{N}{2000 \cdot k^2}$ lines of $\big(\mathfrak{L} \setminus  \mathfrak{L}'\big) \setminus \mathfrak{L}_3$ passing through it. Each line of $\big(\mathfrak{L} \setminus  \mathfrak{L}'\big) \setminus \mathfrak{L}_3$ contains $<\frac{1}{100}SkL^{-1}$ points of $\mathfrak{G}_1$. Also, it contains $>d$ critical points of $Z$, so it is a critical line. But $Z$ contains $\leq d^2$ critical lines in total (by Proposition \ref{2.2.3}). Therefore,
\begin{displaymath}S \cdot \frac{N}{k^2} \lesssim I_{\mathfrak{G}_2, \big(\mathfrak{L} \setminus  \mathfrak{L}'\big) \setminus \mathfrak{L}_3} \leq d^2 \cdot \frac{1}{100}SkL^{-1}, \end{displaymath}
so $S\cdot  N^{1/2} \lesssim L^{3/2}k^{-1/2}$, by rearranging.

Thus, in this 1st subcase,
\begin{displaymath} S \cdot N^{1/2} \leq c_4 \cdot \frac{L^{3/2}}{k^{1/2}},  \end{displaymath} 
where $c_4$ is a constant independent of $\mathfrak{L}$, $N$ and $k$.

We are now able to define the constant $c$ appearing in our induction process; we let $c:=\max\{2c_0, c_1, c_2, c_3, c_4\}$. Note that, in any case that has been dealt with so far, 
\begin{displaymath} S \cdot N^{1/2} \leq c \cdot \bigg(\frac{L^{3/2}}{k^{1/2}}+\frac{L}{k}\cdot N^{1/2}\bigg),\end{displaymath}
and $c$ is, indeed, an explicit, non-negative constant, independent of $\mathfrak{L}$, $N$ and $k$.

\textbf{2nd subcase:} At least $\frac{1-2 \cdot 10^{-8}-10^{-2}}{8}S$ points of $\mathfrak{G}'$ are joints for $\mathfrak{L}'$ with multiplicity in the interval $[\frac{N}{2},2N)$. Then, either \textbf{(1)} or \textbf{(2)} hold.

\textbf{(1)} At least $\frac{1-2 \cdot 10^{-8}-10^{-2}}{16}S$ points of $\mathfrak{G}'$ are joints for $\mathfrak{L}'$ with multiplicity in the interval $[N, 2N)$. However, each point of $\mathfrak{G}'$ intersects at least $\frac{1-10^{-8}-10^{-2}}{2} k$ and fewer than $2k$ lines of $\mathfrak{L}'$. Therefore, either ($1i$), ($1ii$) or ($1iii$) hold.

($1i$) At least $\frac{1-2 \cdot 10^{-8}-10^{-2}}{48}S$ points of $\mathfrak{G}'$ are joints for $\mathfrak{L}'$ with multiplicity in the interval $[N,2N)$, such that each of them lies in at least $k$ and fewer than $2k$ lines of $\mathfrak{L'}$. Then, since $|\mathfrak{L}' |< \frac{L}{100}\lneq L$, it follows from our induction hypothesis that
\begin{displaymath} \frac{1-2 \cdot 10^{-8}-10^{-2}}{48}S \cdot N^{1/2} \leq c \cdot \bigg( \frac{|\mathfrak{L}'|^{3/2}}{k^{1/2}} + \frac{|\mathfrak{L}'|}{k} \cdot N^{1/2} \bigg) \leq \end{displaymath}
\begin{displaymath}\leq  c \cdot \bigg( \frac{(L/100)^{3/2}}{k^{1/2}} + \frac{(L/100)}{k} \cdot N^{1/2} \bigg).  \end{displaymath}
However, \begin{displaymath} \frac{48}{1-2 \cdot 10^{-8}-10^{-2}} \cdot \frac{1}{100^{3/2}} <1\end{displaymath} and
\begin{displaymath}\frac{48}{1-2 \cdot 10^{-8}-10^{-2}} \cdot \frac{1}{100}<1, \end{displaymath}

therefore \begin{displaymath} S \cdot N^{1/2} \leq  c \cdot \bigg( \frac{L^{3/2}}{k^{1/2}} + \frac{L}{k} \cdot N^{1/2} \bigg).\end{displaymath}

($1ii$) At least $\frac{1-2 \cdot 10^{-8}-10^{-2}}{48}S$ points of $\mathfrak{G}'$ are joints for $\mathfrak{L}'$ with multiplicity in the interval $[N,2N)$, such that each of them lies in at least $\frac{1-10^{-8}-10^{-2}}{2}k$ and fewer than $(1-10^{-8}-10^{-2})k$ lines of $\mathfrak{L'}$. So, since $|\mathfrak{L}' |< \frac{L}{100}\lneq L$, it follows from our induction hypothesis that
\begin{displaymath} \frac{1-2 \cdot 10^{-8}-10^{-2}}{48}S \cdot N^{1/2} \leq c \cdot \Bigg( \frac{|\mathfrak{L}'|^{3/2}}{\big(\frac{1-10^{-8}-10^{-2}}{2}k \big)^{1/2}} + \frac{|\mathfrak{L}'|}{\big(\frac{1-10^{-8}-10^{-2}}{2}k \big)} \cdot N)^{1/2} \Bigg) \leq \end{displaymath}
\begin{displaymath}\leq  c \cdot \Bigg( \frac{(L/100)^{3/2}}{\big(\frac{1-10^{-8}-10^{-2}}{2}k \big)^{1/2}} + \frac{(L/100)}{\big(\frac{1-10^{-8}-10^{-2}}{2}k \big)} \cdot N^{1/2} \Bigg) . \end{displaymath}
However,
\begin{displaymath} \frac{48}{1-2 \cdot 10^{-8}-10^{-2}} \cdot \frac{1}{100^{3/2}} \cdot \frac{2^{1/2}}{(1-10^{-8}-10^{-2})^{1/2}} <1 \end{displaymath} and
\begin{displaymath}\frac{48}{1-2 \cdot 10^{-8}-10^{-2}} \cdot \frac{1}{100}\cdot \frac{2}{1-10^{-8}-10^{-2}}<1, \end{displaymath}
therefore 
\begin{displaymath} S \cdot  N^{1/2} \leq  c \cdot \bigg( \frac{L^{3/2}}{k^{1/2}} + \frac{L}{k} \cdot N^{1/2} \bigg).\end{displaymath}

($1iii$) At least $\frac{1-2 \cdot 10^{-8}-10^{-2}}{48}S$ points of $\mathfrak{G}'$ are joints for $\mathfrak{L}'$ with multiplicity in the interval $[N,2N)$, such that each of them lies in between $(1-10^{-8}-10^{-2})k$ and $2 \cdot (1-10^{-8}-10^{-2})k$ lines of $\mathfrak{L'}$. So, since $|\mathfrak{L}' |< \frac{L}{100}\lneq L$, it follows from our induction hypothesis that
\begin{displaymath} \frac{1-2 \cdot 10^{-8}-10^{-2}}{48}S \cdot N^{1/2} \leq c \cdot \Bigg( \frac{|\mathfrak{L}'|^{3/2}}{\big((1-10^{-8}-10^{-2})k)^{1/2}} +\end{displaymath}
\begin{displaymath}+ \frac{|\mathfrak{L}'|}{(1-10^{-8}-10^{-2})k} \cdot N^{1/2} \Bigg) \leq \end{displaymath}
\begin{displaymath}\leq  c \cdot \Bigg( \frac{(L/100)^{3/2}}{\big((1-10^{-8}-10^{-2})k \big)^{1/2}} + \frac{(L/100)}{(1-10^{-8}-10^{-2})k} \cdot N^{1/2} \Bigg) . \end{displaymath}
However,
\begin{displaymath} \frac{48}{1-2 \cdot 10^{-8}-10^{-2}} \cdot \frac{1}{100^{3/2}} \cdot \frac{1}{(1-10^{-8}-10^{-2})^{1/2}} <1 \end{displaymath} and
\begin{displaymath}\frac{48}{1-2 \cdot 10^{-8}-10^{-2}} \cdot \frac{1}{100}\cdot \frac{1}{1-10^{-8}-10^{-2}}<1, \end{displaymath}
therefore 
\begin{displaymath} S \cdot N^{1/2} \leq  c \cdot \bigg( \frac{L^{3/2}}{k^{1/2}} + \frac{L}{k} \cdot N^{1/2} \bigg).\end{displaymath}

\textbf{(2)} At least $\frac{1-2 \cdot 10^{-8}-10^{-2}}{16}S$ points of $\mathfrak{G}'$ are joints for $\mathfrak{L}'$ with multiplicity in the interval $[\frac{N}{2},N)$. However, each point of $\mathfrak{G}'$ intersects at least $\frac{1-10^{-8}-10^{-2}}{2}\cdot k$ and fewer than $2k$ lines of $\mathfrak{L}'$. Therefore, either ($2i$), ($2ii$) or ($2iii$) hold.

($2i$)  At least $\frac{1-2 \cdot 10^{-8}-10^{-2}}{48}S$ points of $\mathfrak{G}'$ are joints for $\mathfrak{L}'$ with multiplicity in the interval $[\frac{N}{2},N)$, such that each of them lies in at least $k$ and fewer than $2k$ lines of $\mathfrak{L'}$. Then, since $|\mathfrak{L}' |< \frac{L}{100}\lneq L$, it follows from our induction hypothesis that
\begin{displaymath} \frac{1-2 \cdot 10^{-8}-10^{-2}}{48}S \cdot \bigg(\frac{N}{2}\bigg)^{1/2} \leq c \cdot \Bigg( \frac{|\mathfrak{L}'|^{3/2}}{k^{1/2}} + \frac{|\mathfrak{L}'|}{k} \cdot \bigg(\frac{N}{2}\bigg)^{1/2} \Bigg) \leq \end{displaymath}
\begin{displaymath}\leq  c \cdot \Bigg( \frac{(L/100)^{3/2}}{k^{1/2}} + \frac{(L/100)}{k} \cdot \bigg(\frac{N}{2}\bigg)^{1/2} \Bigg).  \end{displaymath}
However, \begin{displaymath} \frac{48}{1-2 \cdot 10^{-8}-10^{-2}} \cdot \frac{1}{100^{3/2}} \cdot 2^{1/2} <1\end{displaymath} and
\begin{displaymath}\frac{48}{1-2 \cdot 10^{-8}-10^{-2}} \cdot \frac{1}{100}<1, \end{displaymath}
therefore 
\begin{displaymath} S \cdot N^{1/2} \leq  c \cdot \bigg( \frac{L^{3/2}}{k^{1/2}} + \frac{L}{k} \cdot N^{1/2} \bigg).\end{displaymath}

($2ii$) At least $\frac{1-2 \cdot 10^{-8}-10^{-2}}{48}S$ points of $\mathfrak{G}'$ are joints for $\mathfrak{L}'$ with multiplicity in the interval $[\frac{N}{2},N)$, such that each of them lies in at least $\frac{1-10^{-8}-10^{-2}}{2}\cdot k$ and fewer than $(1-10^{-8}-10^{-2})k$ lines of $\mathfrak{L'}$. So, since $|\mathfrak{L}' |< \frac{L}{100}\lneq L$, it follows from our induction hypothesis that
\begin{displaymath} \frac{1-2 \cdot 10^{-8}-10^{-2}}{48}S \cdot \bigg(\frac{N}{2}\bigg)^{1/2} \leq c \cdot \Bigg( \frac{|\mathfrak{L}'|^{3/2}}{\big(\frac{1-10^{-8}-10^{-2}}{2}k \big)^{1/2}} + \frac{|\mathfrak{L}'|}{\big(\frac{1-10^{-8}-10^{-2}}{2}k \big)} \cdot \bigg(\frac{N}{2}\bigg)^{1/2} \Bigg) \leq \end{displaymath}
\begin{displaymath}\leq  c \cdot \Bigg( \frac{(L/100)^{3/2}}{\big(\frac{1-10^{-8}-10^{-2}}{2}k \big)^{1/2}} + \frac{(L/100)}{\big(\frac{1-10^{-8}-10^{-2}}{2}k \big)} \cdot \bigg(\frac{N}{2}\bigg)^{1/2} \Bigg) . \end{displaymath}
However,
\begin{displaymath} \frac{48}{1-2 \cdot 10^{-8}-10^{-2}} \cdot \frac{1}{100^{3/2}} \cdot \frac{2^{1/2}}{(1-10^{-8}-10^{-2})^{1/2}} \cdot 2^{1/2} <1 \end{displaymath} and
\begin{displaymath}\frac{48}{1-2 \cdot 10^{-8}-10^{-2}} \cdot \frac{1}{100}\cdot \frac{2}{1-10^{-8}-10^{-2}}<1, \end{displaymath}
therefore 
\begin{displaymath} S \cdot N^{1/2} \leq  c \cdot \bigg( \frac{L^{3/2}}{k^{1/2}} + \frac{L}{k} \cdot N^{1/2} \bigg).\end{displaymath}

($2iii$) At least $\frac{1-2 \cdot 10^{-8}-10^{-2}}{48}S$ points of $\mathfrak{G}'$ are joints for $\mathfrak{L}'$ with multiplicity in the interval $[\frac{N}{2},N)$, such that each of them lies in at least $(1-10^{-8}-10^{-2})k$ and fewer than $2 \cdot (1-10^{-8}-10^{-2})k$ lines of $\mathfrak{L'}$. So, since $|\mathfrak{L}' |< \frac{L}{100}\lneq L$, it follows from our induction hypothesis that
\begin{displaymath} \frac{1-2 \cdot 10^{-8}-10^{-2}}{48}S \cdot \bigg(\frac{N}{2}\bigg)^{1/2} \leq c \cdot \Bigg( \frac{|\mathfrak{L}'|^{3/2}}{\big((1-10^{-8}-10^{-2})k)^{1/2}} + \end{displaymath}
\begin{displaymath}+\frac{|\mathfrak{L}'|}{(1-10^{-8}-10^{-2})k} \cdot \bigg(\frac{N}{2}\bigg)^{1/2} \Bigg) \leq \end{displaymath}
\begin{displaymath}\leq  c \cdot \Bigg( \frac{(L/100)^{3/2}}{\big((1-10^{-8}-10^{-2})k \big)^{1/2}} + \frac{(L/100)}{(1-10^{-8}-10^{-2})k} \cdot \bigg(\frac{N}{2}\bigg)^{1/2} \Bigg) . \end{displaymath}
However,
\begin{displaymath} \frac{48}{1-2 \cdot 10^{-8}-10^{-2}} \cdot \frac{1}{100^{3/2}} \cdot \frac{2^{1/2}}{(1-10^{-8}-10^{-2})^{1/2}} <1 \end{displaymath} and
\begin{displaymath}\frac{48}{1-2 \cdot 10^{-8}-10^{-2}} \cdot \frac{1}{100}\cdot \frac{1}{1-10^{-8}-10^{-2}}<1, \end{displaymath}
therefore 
\begin{displaymath} S \cdot N^{1/2} \leq  c \cdot \bigg( \frac{L^{3/2}}{k^{1/2}} + \frac{L}{k} \cdot N^{1/2} \bigg).\end{displaymath}

We have by now exhausted all the possible cases; in each one,

\begin{displaymath} S \cdot N^{1/2} \leq  c \cdot \bigg( \frac{L^{3/2}}{k^{1/2}} + \frac{L}{k} \cdot N^{1/2} \bigg), \end{displaymath}

where $c$ is, by its definition, a constant independent of $\mathfrak{L}$, $N$ and $k$.

Therefore, as $N$ and $k$ were arbitrary, \eqref{eq:final} holds for this collection $\mathfrak{L}$ of lines in $\R^3$. And since $\mathfrak{L}$ was an arbitrary collection of $L$ lines, \eqref{eq:final} holds for any collection $\mathfrak{L}$ of $L$ lines in $\R^3$.

Consequently, the proposition is proved.

\end{proof}

Now, Theorem \ref{1.1} will easily follow.

\textbf{Theorem \ref{1.1}.} \textit{Let $\mathfrak{L}$ be a collection of $L$ lines in $\R^3$, forming a set $J$ of joints. Then,}
\begin{displaymath} \sum_{x \in J}N(x)^{1/2} \leq c \cdot L^{3/2},\end{displaymath} 
\textit{where $c$ is a constant independent of $\mathfrak{L}$.}

\begin{proof}

The multiplicity of each joint in $J$ can be at most $ \binom{L}{3}\leq L^3$. Therefore,
\begin{displaymath} \sum_{x \in J}N(x)^{{1/2}}\leq 2 \cdot \sum_{\{\lambda \in \N:\; 2^{\lambda} \leq L^3\}}|J_{2^{\lambda}}| \cdot (2^{\lambda})^{1/2}.\end{displaymath}
However, if $x$ is a joint for $\mathfrak{L}$ with multiplicity $N$, such that fewer than $2k$ lines of $\mathfrak{L}$ are passing through $x$, then $N \leq \binom{2k}{3} \leq (2k)^3$, and thus $k \geq \frac{1}{2}N^{1/3}$. Therefore, for all $\lambda \in \N$ such that $2^{\lambda} \leq L^3$, 
\begin{displaymath} |J_{2^{\lambda}}|=\sum_{\big\{\mu \in \N:\; 2^{\mu}\geq \frac{1}{2}(2^{\lambda})^{1/3} \big\}}|J^{2^\mu}_{2^{\lambda}}|,\end{displaymath} thus
\begin{displaymath} |J_{2^{\lambda}}|\cdot (2^{\lambda})^{1/2} =\sum_{\big\{\mu \in \N:\; 2^{\mu}\geq \frac{1}{2}(2^{\lambda})^{1/3}\big\}}|J^{2^\mu}_{2^{\lambda}}|\cdot (2^{\lambda})^{1/2},\end{displaymath}
a quantity which, by Proposition \ref{1.2}, is 
\begin{displaymath} \leq \sum_{\big\{\mu \in \N:\; 2^{\mu}\geq \frac{1}{2}(2^{\lambda})^{1/3}\big\}} c \cdot \Bigg(\frac{L^{3/2}}{(2^{\mu}) ^{1/2}} + \frac{L}{2^{\mu}}\cdot (2^{\lambda})^{1/2}\Bigg) \leq \end{displaymath}
\begin{displaymath} \leq c' \cdot \Bigg(\frac{L^{3/2}}{\big((2^{\lambda})^{1/3}\big)^{1/2}} + \frac{L}{(2^{\lambda})^{1/3}}\cdot (2^{\lambda})^{1/2}\Bigg), \end{displaymath} 
where $c'$ is a constant independent of $\mathfrak{L}$, $k$ and $\lambda$.

Therefore, 
\begin{displaymath} \sum_{x \in J} N(x)^{1/2}\leq 2c'\cdot \sum_{\{\lambda \in \N:\; 2^{\lambda}\leq L^3\}} \Bigg(\frac{L^{3/2}}{(2^{\lambda})^{1/6}} + L\cdot (2^{\lambda})^{1/6}\Bigg) \leq c'' \cdot \big(L^{3/2} + L\cdot L^{1/2}\big)= c''\cdot L^{3/2}, \end{displaymath}
where $c''$ is a constant independent of $\mathfrak{L}$.

The proof of Theorem \ref{1.1} is now complete.

\end{proof}

\newpage
\thispagestyle{plain}
\cleardoublepage

\chapter{Counting multijoints} \label{4}

In this chapter we prove Theorem \ref{theoremmult2} and Theorem \ref{theoremmult1}. Let us remember their statements.

\textbf{Theorem \ref{theoremmult2}.} \textit{Let }$\mathfrak{L}_1$, $\mathfrak{L}_2$, $\mathfrak{L}_3$\textit{ be finite collections of lines of $L_1$, $L_2$ and $L_3$, respectively, lines in }$\R^3$\textit{. Let }$J$\textit{ be the set of multijoints formed by the collections }$\mathfrak{L}_1$, $\mathfrak{L}_2$\textit{ and }$\mathfrak{L}_3$\textit{. Then,}
\begin{displaymath}|J| \leq c \cdot (L_1L_2L_3)^{1/2}, \end{displaymath}
\textit{where }$c$\textit{ is a constant independent of }$\mathfrak{L}_1$, $\mathfrak{L}_2$\textit{ and }$\mathfrak{L}_3$.

\textbf{Theorem \ref{theoremmult1}.} \textit{Let }$\mathfrak{L}_1$, $\mathfrak{L}_2$, $\mathfrak{L}_3$\textit{ be finite collections of $L_1$, $L_2$ and $L_3$, respectively, lines in }$\R^3$\textit{, such that, whenever a line of }$\mathfrak{L}_1$\textit{, a line of }$\mathfrak{L}_2$\textit{ and a line of }$\mathfrak{L}_3$\textit{ meet at a point, they form a joint there. Let }$J$\textit{ be the set of multijoints formed by the collections }$\mathfrak{L}_1$, $\mathfrak{L}_2$\textit{ and }$\mathfrak{L}_3$\textit{. Then,}
\begin{displaymath}\sum_{\{x \in J:\; N_m(x)> 
10^{12}\}}(N_1(x)N_2(x)N_3(x))^{1/2} \leq c\cdot (L_1L_2L_3)^{1/2}, \end{displaymath}
\textit{where }$m \in \{1,2,3\}$\textit{ is such that }$L_m=\min\{L_1,L_2,L_3\}$\textit{, and }$c$\textit{ is a constant independent of $\mathfrak{L}_1$, $\mathfrak{L}_2$ and $\mathfrak{L}_3$.}

Our proof of Theorem \ref{theoremmult1} is achieved in three steps, the first of which ensures the truth of Theorem \ref{theoremmult2}.

In the first step, we prove the following proposition, by induction on $L_1$, $L_2$ and $L_3$.

\textbf{Proposition \ref{multsimple}.} \textit{Let $\mathfrak{L}_1$, $\mathfrak{L}_2$, $\mathfrak{L}_3$ be finite collections of $L_1$, $L_2$, and $L_3$, respectively, lines in $\R^3$. For all $(N_1,N_2,N_3) \in \R_+^3$, let $J'_{N_1,N_2,N_3}$ be the set of those multijoints formed by $\mathfrak{L}_1$, $\mathfrak{L}_2$ and $\mathfrak{L}_3$, with the property that, if $x \in J'_{N_1,N_2,N_3}$, then there exist collections $\mathfrak{L}_1(x) \subseteq \mathfrak{L}_1$, $\mathfrak{L}_2(x) \subseteq \mathfrak{L}_2$ and $\mathfrak{L}_3(x) \subseteq \mathfrak{L}_3$ of lines passing through $x$, such that $|\mathfrak{L}_1(x)|\geq N_1$, $|\mathfrak{L}_2(x)|\geq N_2$ and $|\mathfrak{L}_3(x)|\geq N_3$, and, if $l_1 \in \mathfrak{L}_1(x)$, $l_2 \in \mathfrak{L}_2(x)$ and $l_3 \in \mathfrak{L}_3(x)$, then the directions of the lines $l_1$, $l_2$ and $l_3$ span $\R^3$. Then,}
\begin{displaymath} |J'_{N_1,N_2,N_3}|\leq c \cdot \frac{(L_1L_2L_3)^{1/2}}{(N_1N_2N_3)^{1/2}}, \; \forall\;(N_1,N_2,N_3) \in \R_{+}^3, \end{displaymath}
\textit{where $c$ is a constant independent of $\mathfrak{L}_1$, $\mathfrak{L}_2$ and $\mathfrak{L}_3$.}

Theorem \ref{theoremmult2} obviously follows (as we have already mentioned, it is an application of Proposition \ref{multsimple} for $(N_1,N_2,N_3)=(1,1,1)$).

In the second step, independently of the first step, we show the following.

\begin{proposition} \label{pointslarge} Let $\mathfrak{L}_1$, $\mathfrak{L}_2$, $\mathfrak{L}_3$ be finite collections of $L_1$, $L_2$ and $L_3$, respectively, lines in $\R^3$. For all $x \in \R^3$ and $i=1,2,3$, we denote by $N_i(x)$ the number of lines of $\mathfrak{L}_i$ passing through $x$. Also, for each $k \in \{1,2,3\}$, let $c_k$ be a constant such that $c_k \cdot L_k^{1/2}\gneq 1$. Then,

\begin{displaymath} \sum_{\big\{x \in \R^3:\;N_k(x)\geq c_kL_k^{1/2},\text{ for some }k \in \{1,2,3\}\big\}} (N_1(x)N_2(x)N_3(x))^{1/2}\leq c \cdot (L_1L_2L_3)^{1/2}, \end{displaymath}
where $c$ is a constant depending on $c_1$, $c_2$ and $c_3$, but independent of $\mathfrak{L}_1$, $\mathfrak{L}_2$ and $\mathfrak{L}_3$.
\end{proposition}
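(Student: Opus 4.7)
The plan is to partition the sum by which index $k$ satisfies the lower bound on $N_k(x)$, and then bound each piece by a single Cauchy-Schwarz step supported by two input estimates. Writing $X_k := \{x \in \R^3 : N_k(x) \geq c_k L_k^{1/2}\}$, the union of the indexing sets is $X_1 \cup X_2 \cup X_3$, and by the symmetry of the claim in the three collections it suffices to bound $\sum_{x \in X_1} (N_1(x)N_2(x)N_3(x))^{1/2}$ by $C(L_1L_2L_3)^{1/2}$.

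The first step is to apply Cauchy-Schwarz in the form
\begin{displaymath}
\sum_{x \in X_1} (N_1 N_2 N_3)^{1/2} \leq \bigg(\sum_{x \in X_1} N_1(x)\bigg)^{1/2}\bigg(\sum_{x \in X_1} N_2(x) N_3(x)\bigg)^{1/2}.
\end{displaymath}
For the second factor I would use only the elementary pair-counting observation: $\sum_{x}N_2(x)N_3(x)$ counts triples $(x,l_2,l_3)$ with $l_i \in \mathfrak{L}_i$ and $x \in l_2 \cap l_3$, and since two distinct lines in $\R^3$ share at most one point, this sum is $\leq L_2 L_3$ (any coincidences $l_2 = l_3$ contribute only lower-order terms absorbed into the constant).

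The first factor is $I(\mathfrak{L}_1, X_1)$, the number of incidences between $\mathfrak{L}_1$ and $X_1$, and here I would use Szemer\'edi-Trotter twice. First, Corollary \ref{2.2.6} applied with $k = c_1 L_1^{1/2}$ gives
\begin{displaymath}
|X_1| \lesssim \frac{L_1^2}{(c_1 L_1^{1/2})^3} + \frac{L_1}{c_1 L_1^{1/2}} \lesssim_{c_1} L_1^{1/2}.
\end{displaymath}
Second, Theorem \ref{2.2.5} applied to $\mathfrak{L}_1$ and $X_1$ yields
\begin{displaymath}
I(\mathfrak{L}_1, X_1) \lesssim L_1^{2/3} |X_1|^{2/3} + L_1 + |X_1| \lesssim_{c_1} L_1^{2/3} \cdot L_1^{1/3} + L_1 + L_1^{1/2} \lesssim_{c_1} L_1.
\end{displaymath}
Combining the two estimates through the Cauchy-Schwarz inequality above yields $\sum_{x \in X_1}(N_1 N_2 N_3)^{1/2} \lesssim_{c_1} L_1^{1/2}(L_2 L_3)^{1/2} = (L_1 L_2 L_3)^{1/2}$, and summing the three symmetric bounds completes the proof.

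The main conceptual point (and the only non-routine input) is recognising that the rich-point hypothesis $N_1(x) \geq c_1 L_1^{1/2}$ is itself strong enough to force $|X_1| \lesssim L_1^{1/2}$, which is exactly the regime in which Szemer\'edi-Trotter upgrades the trivial $O(L_1^{3/2})$ incidence bound into the sharp $O(L_1)$; this is precisely what Cauchy-Schwarz requires in order to match the target $(L_1L_2L_3)^{1/2}$. The only technicalities are ensuring $c_k L_k^{1/2} \geq 2$ so that Corollary \ref{2.2.6} applies directly (when $c_k L_k^{1/2}$ is close to $1$ the quantity $L_k$ is bounded by a constant depending on $c_k$ and the inequality is trivial), and handling the possibility of a line appearing in more than one of the collections $\mathfrak{L}_i$, which again contributes only a bounded number of degenerate pairs.
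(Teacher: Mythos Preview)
Your argument is essentially identical to the paper's: the paper isolates the pair-counting bound, the double Szemer\'edi--Trotter step, and the Cauchy--Schwarz combination as three separate claims (Claims~\ref{p1}, \ref{p2}, \ref{p3}) and then assembles them, but the content and logical flow match yours exactly. The one place where your treatment is looser than you suggest is the handling of a line lying in two of the collections---the diagonal contribution to $\sum_{X_1} N_2 N_3$ is not literally ``a bounded number of degenerate pairs''---but the paper's Claim~\ref{p1} glosses over precisely the same point.
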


\textbf{Remark.} Note that, in the statement of Proposition \ref{pointslarge}, it is crucial that the points $x \in \R^3$ contributing to the sum are such that $N_k(x)$ is large; it is easy to see that, with the notation of Proposition \ref{pointslarge}, it does not hold in general that
\begin{equation} \label{eq:lattice} \sum_{x \in \R^3}(N_1(x)N_2(x)N_3(x))^{1/2}\leq c \cdot (L_1L_2L_3)^{1/2}.
\end{equation}
An example is the case where $\mathfrak{L}_1$, $\mathfrak{L}_2$ and $\mathfrak{L}_3$ are the sets of lines lying on the plane $z=0$ in $\R^3$, such that $\mathfrak{L}_1$ consists of $L$ lines, each passing through a point of the form $(i,0,0)$, for $i \in \{1,...,L\}$, and each parallel to the $y$--axis, $\mathfrak{L}_2$ consists of $L$ lines, each passing through a point of the form $(0,j,0)$, for $j \in \{1,...,L\}$, and each parallel to the $x$--axis, and $\mathfrak{L}_3$ consists of $\sim L$ lines, each parallel to the line $y=x$ on the plane $z=0$, such that each of the $L^2$ points of the set $\{(i,j,0):(i,j) \in \{1,...,L\}^2\}$ is contained in a line of $\mathfrak{L}_3$. Then, \eqref{eq:lattice} becomes
$$L^2 \lesssim (L \cdot L\cdot L)^{1/2},
$$
which does not hold for large $L$.

Let us also emphasise that Proposition \ref{pointslarge} does not hold in general in the case where, for some $i=1,2,3$, the collection $\mathfrak{L}_i$ contains more than one copy of the same line, not even if only multijoints of the collections of $\mathfrak{L}_1$, $\mathfrak{L}_2$ and $\mathfrak{L}_3$ contribute to the sum in \eqref{eq:lattice}. 

Indeed, if, in the above example, for all $i=1,2,3$ and all $l \in \mathfrak{L}_i$, we add another $k-1$ copies of the line $l$ in $\mathfrak{L}_i$, for some $k \geq L$, and, moreover, we assume that $\mathfrak{L}_3$ also contains one line through each point of the set $\{(i,j,0):(i,j) \in \{1,...,L\}^2\}$ that is perpendicular to the plane $z=0$, then we end up with collections $\mathfrak{L}_1$, $\mathfrak{L}_2$ and $\mathfrak{L}_3$ of $kL$, $kL$ and $kL+L^2 \sim kL$, respectively, lines in $\R^3$, such that, for all $x \in \{(i,j,0):(i,j) \in \{1,...,L\}^2\}$, $N_1(x)=k$, $N_2(x)=k$ and $N_3(x)=k+1\sim k$, while $N_1(x)N_2(x)N_3(x)=0$ for all $x \in \R^3 \setminus \{(i,j,0):(i,j) \in \{1,...,L\}^2\}$. Therefore, \eqref{eq:lattice} becomes
$$L^2\cdot (k \cdot k \cdot k)^{1/2} \lesssim (kL \cdot kL \cdot kL)^{1/2},
$$i.e.
$$L^2k^{3/2} \lesssim L^{3/2}k^{3/2},
$$
which does not hold for large $L$.

The fact that Proposition \ref{pointslarge} does not hold in general in the case where, for some $i=1,2,3$, the collection $\mathfrak{L}_i$ contains more than one copy of the same line, is mirrored in the proof we will provide by the fact that our arguments involve use of the Szemer\'edi-Trotter theorem, which is not scale invariant.

$\;\;\;\;\;\;\;\;\;\;\;\;\;\;\;\;\;\;\;\;\;\;\;\;\;\;\;\;\;\;\;\;\;\;\;\;\;\;\;\;\;\;\;\;\;\;\;\;\;\;\;\;\;\;\;\;\;\;\;\;\;\;\;\;\;\;\;\;\;\;\;\;\;\;\;\;\;\;\;\;\;\;\;\;\;\;\;\;\;\;\;\;\;\;\;\;\;\;\;\;\;\;\;\;\;\;\;\;\;\;\;\;\;\;\;\;\;\;\;\;\;\;\;\;\;\;\;\;\;\;\;\;\;\blacksquare$

In fact, the reason we are interested in Proposition \ref{pointslarge} is that, applying it for a set of multijoints formed by three collections of lines in $\R^3$, we immediately establish the following result.

\begin{corollary} \label{multlarge} Let $\mathfrak{L}_1$, $\mathfrak{L}_2$, $\mathfrak{L}_3$ be finite collections of $L_1$, $L_2$ and $L_3$, respectively, lines in $\R^3$. Also, for each $k \in \{1,2,3\}$, let $c_k$ be a constant such that $c_k \cdot L_k^{1/2}\gneq 1$. If $J$ denotes the set of multijoints formed by the collections $\mathfrak{L}_1$, $\mathfrak{L}_2$ and $\mathfrak{L}_3$, then

\begin{equation} \label{eq:big}\sum_{\big\{x \in J:\;N_k(x)\geq c_k L_k^{1/2},\text{ for some }k \in \{1,2,3\}\big\}} (N_1(x)N_2(x)N_3(x))^{1/2}\leq c \cdot  (L_1L_2L_3)^{1/2},\end{equation}
where $c$ is a constant depending on $c_1$, $c_2$ and $c_3$, but independent of $\mathfrak{L}_1$, $\mathfrak{L}_2$ and $\mathfrak{L}_3$.
\end{corollary}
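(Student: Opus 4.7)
The plan is essentially to observe that Corollary \ref{multlarge} is a direct specialisation of Proposition \ref{pointslarge}, obtained by restricting the range of summation from all of $\R^3$ to the subset $J$ of multijoints. First, I would note that $J$ is a (finite) subset of $\R^3$, and that for every multijoint $x \in J$ the definition forces $N_1(x), N_2(x), N_3(x) \geq 1$, so each term $(N_1(x)N_2(x)N_3(x))^{1/2}$ appearing in the left-hand side of \eqref{eq:big} is strictly positive.

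Next, I would verify that the summation condition on the left-hand side of \eqref{eq:big} is precisely the same condition that appears on the left-hand side of the inequality in Proposition \ref{pointslarge}, namely that $N_k(x)\geq c_k L_k^{1/2}$ for some $k \in \{1,2,3\}$. Consequently, restricting the set $\{x \in \R^3 : N_k(x) \geq c_k L_k^{1/2} \text{ for some } k\}$ to its intersection with $J$ yields a sub-sum of the one controlled by Proposition \ref{pointslarge}. Since all summands in Proposition \ref{pointslarge} are nonnegative (because square roots are being taken), dropping any collection of terms can only decrease the sum.

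Finally, therefore,
\begin{displaymath}
\sum_{\{x \in J : N_k(x) \geq c_k L_k^{1/2} \text{ for some } k\}} (N_1(x)N_2(x)N_3(x))^{1/2} \leq \sum_{\{x \in \R^3 : N_k(x) \geq c_k L_k^{1/2} \text{ for some } k\}} (N_1(x)N_2(x)N_3(x))^{1/2},
\end{displaymath}
and the right-hand side is bounded by $c\cdot (L_1 L_2 L_3)^{1/2}$ by Proposition \ref{pointslarge}, with $c$ depending only on $c_1$, $c_2$ and $c_3$. This yields \eqref{eq:big}. There is no real obstacle here; the only thing worth stressing is that the passage from $\R^3$ to $J$ is free of charge precisely because Proposition \ref{pointslarge} controls the sum over a superset, and the substance of the result lies entirely in that proposition, whose proof (presented separately) is the genuinely difficult step.
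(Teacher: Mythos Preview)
Your proposal is correct and matches the paper's own treatment: the paper explicitly states that Corollary \ref{multlarge} is obtained by ``applying [Proposition \ref{pointslarge}] for a set of multijoints,'' i.e., by restricting the sum over $\R^3$ to the subset $J$, exactly as you argue. There is nothing to add.
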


Finally, in the third step we prove Theorem \ref{theoremmult1}, using Proposition \ref{multsimple} and Corollary \ref{multlarge}. 

Let us now proceed with the first step.

\textbf{Proposition \ref{multsimple}.} \textit{Let $\mathfrak{L}_1$, $\mathfrak{L}_2$, $\mathfrak{L}_3$ be finite collections of $L_1$, $L_2$, and $L_3$, respectively, lines in $\R^3$. For all $(N_1,N_2,N_3) \in \R_+^3$, let $J'_{N_1,N_2,N_3}$ be the set of those multijoints formed by $\mathfrak{L}_1$, $\mathfrak{L}_2$ and $\mathfrak{L}_3$, with the property that, if $x \in J'_{N_1,N_2,N_3}$, then there exist collections $\mathfrak{L}_1(x) \subseteq \mathfrak{L}_1$, $\mathfrak{L}_2(x) \subseteq \mathfrak{L}_2$ and $\mathfrak{L}_3(x) \subseteq \mathfrak{L}_3$ of lines passing through $x$, such that $|\mathfrak{L}_1(x)|\geq N_1$, $|\mathfrak{L}_2(x)|\geq N_2$ and $|\mathfrak{L}_3(x)|\geq N_3$, and, if $l_1 \in \mathfrak{L}_1(x)$, $l_2 \in \mathfrak{L}_2(x)$ and $l_3 \in \mathfrak{L}_3(x)$, then the directions of the lines $l_1$, $l_2$ and $l_3$ span $\R^3$. Then,}
\begin{displaymath} |J'_{N_1,N_2,N_3}|\leq c \cdot \frac{(L_1L_2L_3)^{1/2}}{(N_1N_2N_3)^{1/2}}, \; \forall\;(N_1,N_2,N_3) \in \R_{+}^3, \end{displaymath}
\textit{where $c$ is a constant independent of $\mathfrak{L}_1$, $\mathfrak{L}_2$ and $\mathfrak{L}_3$.}

\textbf{Remark.} The proof of Proposition \ref{multsimple} is based on counting incidences between multijoints $x \in J'_{N_1,N_2,N_3}$ and the lines of $\mathfrak{L}_i(x)$ passing through $x$, for $i=1,2,3$, rather than incidences between $x$ and all the lines of $\mathfrak{L}_i$ passing through $x$, for $i=1,2,3$ (which may not have the nice property that, whenever three lines, one of each collection, meet at $x$, they form a multijoint there). Thus, the proof of the Proposition is simpler to state in the case where all the lines of $\mathfrak{L}_1$, $\mathfrak{L}_2$ and $\mathfrak{L}_3$ have the property that, whenever three lines, one of each collection, meet at a point, they form a multijoint there. Indeed, in that case, for each multijoint $x$ formed by $\mathfrak{L}_1$, $\mathfrak{L}_2$ and $\mathfrak{L}_3$, $\mathfrak{L}_i(x)$ can be considered to be the whole set of lines of $\mathfrak{L}_i$ passing through $x$, for all $i=1,2,3$; therefore, in the proof we may consider incidences between $x$ and all the lines of $\mathfrak{L}_i$ passing through $x$, for all $i=1,2,3$. Proposition \ref{multsimple}, in its generality, follows from the observation that we can restrict our attention to incidences between $x\in J'_{N_1,N_2,N_3}$ and the lines of $\mathfrak{L}_i(x)\subseteq \mathfrak{L}_i$, $i=1,2,3$, which, by their definition, have nice transversality properties.

$\;\;\;\;\;\;\;\;\;\;\;\;\;\;\;\;\;\;\;\;\;\;\;\;\;\;\;\;\;\;\;\;\;\;\;\;\;\;\;\;\;\;\;\;\;\;\;\;\;\;\;\;\;\;\;\;\;\;\;\;\;\;\;\;\;\;\;\;\;\;\;\;\;\;\;\;\;\;\;\;\;\;\;\;\;\;\;\;\;\;\;\;\;\;\;\;\;\;\;\;\;\;\;\;\;\;\;\;\;\;\;\;\;\;\;\;\;\;\;\;\;\;\;\;\;\;\;\;\;\;\;\;\;\blacksquare$

\textit{Proof of Proposition \ref{multsimple}.} The proof will be achieved by induction on $L_1$, $L_2$ and $L_3$. Indeed, fix $(M_1,M_2,M_3)$ $\in {\N^*}^3$. For a constant $c \geq 1$ that will be specified later:

(i) It holds that \begin{displaymath} |J'_{N_1,N_2,N_3}|  \leq c \cdot \frac{(L_1L_2L_3)^{1/2}}{(N_1N_2N_3)^{1/2}}, \; \forall\;(N_1,N_2,N_3) \in \R_{+}^3,\end{displaymath} for any collections $\mathfrak{L}_1$, $\mathfrak{L}_2$, $\mathfrak{L}_3$ of $L_1$, $L_2$ and $L_3$, respectively, lines in $\R^3$, such that $L_1=L_2=L_3=1$. This is obvious, in fact, for any $c\geq 1$, as in this case $|J'_{N_1,N_2,N_3}|=0$ for all $(N_1,N_2,N_3)$ in $\R^3_+$ such that $N_i \gneq 1$ for some $i \in \{1,2,3\}$, while, for $(N_1,N_2,N_3)$ in $\R^3_+$ such that $N_i \leq 1$ for all $i \in \{1,2,3\}$, $|J'_{N_1,N_2,N_3}|$ is equal to at most 1.

(ii) Suppose that \begin{displaymath} |J'_{N_1,N_2,N_3}|  \leq c \cdot \frac{(L_1L_2L_3)^{1/2}}{(N_1N_2N_3)^{1/2}}, \; \forall\;(N_1,N_2,N_3) \in \R_{+}^3,\end{displaymath} for any collections $\mathfrak{L}_1$, $\mathfrak{L}_2$, $\mathfrak{L}_3$ of $L_1$, $L_2$ and $L_3$, respectively, lines in $\R^3$, such that $L_1\lneq M_1$, $L_2 \lneq M_2$ and $L_3 \lneq M_3$.

(iii) We will prove that  \begin{displaymath} |J'_{N_1,N_2,N_3}|  \leq c \cdot \frac{(L_1L_2L_3)^{1/2}}{(N_1N_2N_3)^{1/2}}, \; \forall\;(N_1,N_2,N_3) \in \R_{+}^3,\end{displaymath} for any collections $\mathfrak{L}_1$, $\mathfrak{L}_2$, $\mathfrak{L}_3$ of $L_1$, $L_2$ and $L_3$, respectively, lines in $\R^3$, such that $L_j= M_j$ for some $j \in \{1,2,3\}$ and $L_i \lneq M_i$, $L_k \lneq M_k$ for $\{i,k\}=\{1,2,3\}\setminus \{j\}$.

Indeed, fix such collections $\mathfrak{L}_1$, $\mathfrak{L}_2$ and $\mathfrak{L}_3$ of lines and let $(N_1,N_2,N_3) \in \R_{+}^3$.

For simplicity, let $$\mathfrak{G}:= J'_{N_1,N_2,N_3}$$and$$S:=|J'_{N_1,N_2,N_3}|.$$

We assume that \begin{displaymath} \frac{L_1}{\lceil N_1 \rceil} \leq \frac{L_2}{\lceil N_2 \rceil} \leq \frac{L_3}{\lceil N_3 \rceil}. \end{displaymath}

By the definition of $\mathfrak{G}$, if $x \in \mathfrak{G}$, then there exist at least $\lceil N_1 \rceil$ lines of $\mathfrak{L}_1$ and at least $\lceil N_2 \rceil$ lines of $\mathfrak{L}_2$ passing through $x$. Thus, the quantity $S\lceil N_1 \rceil \lceil N_2 \rceil$ is equal to at most the number of pairs of the form $(l_1,l_2)$, where $l_1 \in \mathfrak{L}_1$, $l_2 \in \mathfrak{L}_2$ and the lines $l_1$ and $l_2$ pass through the same point of $\mathfrak{G}$. Therefore, $S\lceil N_1 \rceil \lceil N_2 \rceil$ is equal to at most the number of all the pairs of the form $(l_1,l_2)$, where $l_1 \in \mathfrak{L}_1$ and $l_2 \in \mathfrak{L}_2$, i.e. to at most $L_1L_2$. So,
\begin{displaymath}S\lceil N_1\rceil \lceil N_2 \rceil \leq L_1L_2,\end{displaymath}
and therefore
\begin{displaymath} \frac{L_1L_2}{S\lceil N_1\rceil\lceil N_2\rceil}\geq 1. \end{displaymath}

Thus, $d:=A\frac{L_1L_2}{S\lceil N_1\rceil\lceil N_2\rceil}$ is a quantity $>1$ for $A>1$. We therefore assume that $A>1$, and we will specify its value later. Now, applying the Guth-Katz polynomial method for this $d>1$ and the finite set of points $\mathfrak{G}$, we deduce that there exists a non-zero polynomial $p\in \R[x,y,z]$, of degree $\leq d$, whose zero set $Z$ decomposes $\R^3$ in $\sim d^3$ cells, each of which contains $\lesssim Sd^{-3}$ points of $\mathfrak{G}$. We can assume that this polynomial is square-free, as eliminating the squares of $p$ does not inflict any change on its zero set. 

Let us assume that there are $\geq 10^{-8}S$ points of $\mathfrak{G}$ in the union of the interiors of the cells; by choosing $A$ to be a sufficiently large constant, we will be led to a contradiction.

Indeed, there are $\gtrsim S$ points of $\mathfrak{G}$ in the union of the interiors of the cells. However, there exist $\sim d^3$ cells in total, each containing $\lesssim Sd^{-3}$ points of $\mathfrak{G}$. Therefore, there exist $\gtrsim d^3$ cells, with $\gtrsim Sd^{-3}$ points of $\mathfrak{G}$ in the interior of each. We call the cells with this property ``full cells". 

For every full cell,  let $\mathfrak{G}_{cell}$ be the set of points of $\mathfrak{G}$ in the interior of the cell, $\mathfrak{L}_{1,cell}$ and $\mathfrak{L}_{2,cell}$ the sets of lines of $\mathfrak{L}_1$ and $\mathfrak{L}_2$, respectively, containing at least one point of $\mathfrak{G}_{cell}$, $S_{cell}:=|\mathfrak{G}_{cell}|$, $L_{1,cell}:=|\mathfrak{L}_{1,cell}|$ and $L_{2,cell}:= |\mathfrak{L}_{2,cell}|$. Now,
\begin{displaymath}S_{cell}\lceil N_1\rceil \lceil N_2 \rceil \lesssim L_{1,cell}L_{2,cell}, \end{displaymath}
as the quantity $S_{cell}\lceil N_1\rceil \lceil N_2 \rceil$ is equal to at most the number of pairs of the form $(l_1,l_2)$, where $l_1 \in \mathfrak{L}_{1,cell}$, $l_2 \in \mathfrak{L}_{2,cell}$ and the lines $l_1$ and $l_2$ pass through the same point of $\mathfrak{G}_{cell}$. Thus, it is equal to at most $L_{1,cell}L_{2,cell}$, which is the number of all the pairs of the form $(l_1,l_2)$, where $l_1 \in \mathfrak{L}_{1,cell}$ and $l_2 \in \mathfrak{L}_{2,cell}$.

Therefore,
\begin{displaymath}(L_{1,cell}L_{2,cell})^{1/2} \gtrsim S_{cell}^{1/2} (\lceil N_1 \rceil \lceil N_2 \rceil )^{1/2} \gtrsim \frac{S^{1/2}}{d^{3/2}}(\lceil N_1\rceil\lceil N_2\rceil)^{1/2}. \end{displaymath}
But, for $i=1,2$, each of the lines of $\mathfrak{L}_{i,cell}$ intersects the boundary of the cell at at least one point $x$, with the property that the induced topology from $\R^3$ to the intersection of the line with the closure of the cell contains an open neighbourhood of $x$; therefore, there are $ \gtrsim L_{i,cell}$ incidences of this form between $\mathfrak{L}_{i,cell}$ and the boundary of the cell. Also, the union of the boundaries of all the cells is the zero set $Z$ of $p$, and if $x$ is a point of $Z$ which belongs to a line intersecting the interior of a cell, such that the induced topology from $\R^3$ to the intersection of the line with the closure of the cell contains an open neighbourhood of $x$,  then there exists at most one other cell whose interior is also intersected by the line and whose boundary contains $x$, such that the induced topology from $\R^3$ to the intersection of the line with the closure of that cell contains an open neighbourhood of $x$. So, if $I_i$ is the number of incidences between $Z$ and the lines of $\mathfrak{L}_{i}$ not lying in $Z$, $I_{i,cell}$ is the number of incidences between $\mathfrak{L}_{i,cell}$ and the boundary of the cell, and $\mathcal{C}$ is the set of all the full cells (which, in our case, has cardinality $\gtrsim d^3$), then the above imply that
\begin{displaymath} I_i \gtrsim \sum_{cell \in \mathcal{C}} I_{i,cell} \gtrsim \sum_{cell \in \mathcal{C}}L_{i,cell},\text{ for }i=1,2.\end{displaymath}
On the other hand, if a line does not lie in the zero set $Z$ of $p$, then it intersects $Z$ in $\leq d$ points. Thus,
\begin{displaymath} I_i \leq L_{i} \cdot d,\text{ for }i=1,2. \end{displaymath}
Therefore, 
\begin{displaymath} \sum_{cell \in \mathcal{C}} \frac{S^{1/2}}{d^{3/2}}(\lceil N_1\rceil\lceil N_2 \rceil)^{1/2} \lesssim \sum_{cell \in \mathcal{C}} (L_{1,cell}L_{2,cell})^{1/2} \lesssim \end{displaymath}
\begin{displaymath} \lesssim\Bigg(\sum_{cell \in\mathcal{C}} L_{1,cell}\Bigg)^{1/2} \Bigg(\sum_{cell \in\mathcal{C}}L_{2,cell}\Bigg)^{1/2}\lesssim  I_1^{1/2}I_2^{1/2}\lesssim (L_1d)^{1/2}(L_2d)^{1/2} \sim (L_1L_2)^{1/2}d.\end{displaymath}

But the full cells number $\gtrsim d^3$. Thus, 
\begin{displaymath} S^{1/2}(\lceil N_1\rceil\lceil N_2\rceil)^{1/2}d^{3/2} \lesssim (L_1L_2)^{1/2}d,\end{displaymath}
from which we obtain
\begin{displaymath} d \lesssim \frac{L_1L_2}{S\lceil N_1 \rceil\lceil N_2\rceil}, \end{displaymath}
which in turn gives $A\lesssim 1$. In other words, there exists some explicit constant $C$, such that $A \leq C$. By fixing $A$ to be a number larger than $C$ (and of course larger than 1, to have that $d>1$), we are led to a contradiction.

Therefore, there are $< 10^{-8}S$ points of $\mathfrak{G}$ in the union of the interiors of the cells. Thus, if $\mathfrak{G}_1$ denotes the set of points in $\mathfrak{G}$ which lie in $Z$, it holds that $|\mathfrak{G}_1|>(1-10^{-8})S$. 

Now, by the definition of $\mathfrak{G}$, for each $x \in \mathfrak{G}$ we can fix collections $\mathfrak{L}_1(x) \subseteq \mathfrak{L}_1$, $\mathfrak{L}_2(x) \subseteq \mathfrak{L}_2$ and $\mathfrak{L}_3(x) \subseteq \mathfrak{L}_3$ of lines passing through $x$, such that $|\mathfrak{L}_1(x)|= \lceil N_1 \rceil$, $|\mathfrak{L}_2(x)|= \lceil N_2 \rceil$ and $|\mathfrak{L}_3(x)|= \lceil N_3 \rceil$, and, if $l_1 \in \mathfrak{L}_1(x)$, $l_2 \in \mathfrak{L}_2(x)$ and $l_3 \in \mathfrak{L}_3(x)$, then the directions of the lines $l_1$, $l_2$ and $l_3$ span $\R^3$.

Therefore, for all $j \in \{1,2,3\}$, we can define $\mathfrak{L}_j':=\bigg\{l \in \mathfrak{L}_j:\Big |\Big\{x \in \mathfrak{G}_1: l \in \mathfrak{L}_j(x)\Big |\geq \frac{1}{100} \frac{S\lceil N_j\rceil}{L_j}\Big\}\bigg\}$. In other words, for all $j \in \{1,2,3\}$, $\mathfrak{L}_j'$ is the set of lines in $\mathfrak{L}_j$, each of which contains at least $\frac{1}{100} \frac{S\lceil N_j \rceil }{L_j}$ points $x \in \mathfrak{G}_1$ with the property that the line belongs to $\mathfrak{L}_j(x)$.

Moreover, for all $j \in \{1,2,3\}$, for any subset $\mathcal{G}$ of $\mathfrak{G}$ and any subset $\mathcal{L}$ of $\mathfrak{L}_j$, we denote by $I^{(j)}_{\mathcal{G},\mathcal{L}}$ the number of pairs of the form $(x, l)$, where $x \in \mathcal{G}$ and $l \in \mathfrak{L}_j(x) \cap\mathcal{L}$; note that the set of these pairs is a subset of the set of incidences between $\mathcal{G}$ and $\mathcal{L}$.

We now analyse the situation.

Let $j\in \{1,2,3\}$. Each point $x \in \mathfrak{G}_1$ intersects $\lceil N_j \rceil$ lines of $\mathfrak{L}_j(x)$, which is a subset of $\mathfrak{L}_j$. Thus,  \begin{displaymath} I^{(j)}_{\mathfrak{G}_1,\mathfrak{L}_j} > (1-10^{-8})S\lceil N_j \rceil.\end{displaymath}
On the other hand, each line $l \in \mathfrak{L}_j \setminus \mathfrak{L}_j'$ contains fewer than $\frac{1}{100} \frac{S\lceil N_j\rceil}{L_j}$ points $x \in \mathfrak{G}_1$ with the property that $l \in \mathfrak{L}_j(x)$, so
\begin{displaymath}I^{(j)}_{\mathfrak{G}_1,\mathfrak{L}_j\setminus \mathfrak{L}'_j} \leq  |\mathfrak{L}_j\setminus \mathfrak{L}'_j| \cdot \frac{S\lceil N_j \rceil}{100L_j} \leq \frac{1}{100}S\lceil N_j\rceil.\end{displaymath}
Therefore, since $ I^{(j)}_{\mathfrak{G}_1, \mathfrak{L}_j}=I^{(j)}_{\mathfrak{G}_1,\mathfrak{L}_j \setminus \mathfrak{L}'_j}+I^{(j)}_{\mathfrak{G}_1,\mathfrak{L}'_j}$, it follows that 
\begin{displaymath}I^{(j)}_{\mathfrak{G}_1,\mathfrak{L}'_j}> (1-10^{-8}-10^{-2})S\lceil N_j \rceil, \end{displaymath}
for all $j \in \{1,2,3\}$.

Now, for all $j \in \{1,2,3\}$, we define $\mathfrak{G}'_j:=\Big\{x \in \mathfrak{G}_1: |\mathfrak{L}_j(x) \cap \mathfrak{L}_j'|\geq \frac{10^{-8}}{1+10^{-8}}(1-10^{-8}-10^{-2})\lceil N_j \rceil\Big\}$. In other words, for all $j \in \{1,2,3\}$, $x \in \mathfrak{G}_j'$ if and only if $x \in \mathfrak{G}_1$ and $x$ intersects at least $\frac{10^{-8}}{1+10^{-8}}(1-10^{-8}-10^{-2})\lceil N_j \rceil$ lines of $\mathfrak{L}_j(x) \cap \mathfrak{L}'_j$.

Let $j \in \{1,2,3\}$. Since each point $x \in \mathfrak{G}_1 \setminus \mathfrak{G}_j'$ intersects fewer than $\frac{10^{-8}}{1+10^{-8}}(1-10^{-8}-10^{-2})\lceil N_j \rceil$ lines of $\mathfrak{L}_j(x) \cap \mathfrak{L}'_j$, it follows that \begin{displaymath}I^{(j)}_{\mathfrak{G}_1 \setminus \mathfrak{G}'_j, \mathfrak{L}'_j} < |\mathfrak{G}_1 \setminus \mathfrak{G}'_j|\frac{10^{-8}}{1+10^{-8}}(1-10^{-8}-10^{-2})\lceil N_j \rceil\leq  \frac{10^{-8}}{1+10^{-8}}(1-10^{-8}-10^{-2})S\lceil N_j \rceil.\end{displaymath}
Therefore, since $I^{(j)}_{\mathfrak{G}_1, \mathfrak{L}'_j}=I^{(j)}_{\mathfrak{G}_1 \setminus \mathfrak{G}'_j, \mathfrak{L}'_j}+I^{(j)}_{\mathfrak{G}'_j, \mathfrak{L}'_j}$, we obtain
\begin{displaymath}I^{(j)}_{\mathfrak{G}'_j, \mathfrak{L}'_j} > \frac{1-10^{-8}-10^{-2}}{1+10^{-8}}S\lceil N_j \rceil. \end{displaymath}
At the same time, however, $|\mathfrak{L}_j(x)| =\lceil N_j \rceil$ for all $x \in \mathfrak{G}_j'$, and thus  $I^{(j)}_{\mathfrak{G}'_j, \mathfrak{L}'_j} \leq |\mathfrak{G}'_j|\lceil N_j\rceil$. Therefore, $ \frac{1-10^{-8}-10^{-2}}{1+10^{-8}}S\lceil N_j \rceil< |\mathfrak{G}'_j|\lceil N_j\rceil$, which implies that
\begin{displaymath} |\mathfrak{G}'_j|\geq \frac{1-10^{-8}-10^{-2}}{1+10^{-8}}S, \end{displaymath}
for all $j \in \{1,2,3\}$. In other words, for all $j\in \{1,2,3\}$, there exist at least $\frac{1-10^{-8}-10^{-2}}{1+10^{-8}}S$ points $x \in \mathfrak{G}_1$ such that $x$ intersects at least $\frac{10^{-8}}{1+10^{-8}}(1-10^{-8}-10^{-2})\lceil N_j\rceil$ lines of $\mathfrak{L}_j(x) \cap \mathfrak{L}'_j$.

But \begin{displaymath} |\mathfrak{G}_1'\cup \mathfrak{G}_2' \cup \mathfrak{G}_3'|=|\mathfrak{G}_1'|+|\mathfrak{G}_2'|+|\mathfrak{G}_3'|-|\mathfrak{G}_1'\cap\mathfrak{G}_2'|-|\mathfrak{G}_2'\cap\mathfrak{G}_3'|-|\mathfrak{G}_1'\cap\mathfrak{G}_3'|+|\mathfrak{G}_1'\cap\mathfrak{G}_2'\cap\mathfrak{G}_3'|,\end{displaymath} and thus 
$$|\mathfrak{G}_1'\cap\mathfrak{G}_2'\cap\mathfrak{G}_3'|=|\mathfrak{G}_1'\cup \mathfrak{G}_2' \cup \mathfrak{G}_3'|-(|\mathfrak{G}_1'|+|\mathfrak{G}_2'|+|\mathfrak{G}_3'|)+(|\mathfrak{G}_1'\cap\mathfrak{G}_2'|+|\mathfrak{G}_2'\cap\mathfrak{G}_3'|+|\mathfrak{G}_1'\cap\mathfrak{G}_3'|)\geq
$$
$$\geq |\mathfrak{G}_1'|-(|\mathfrak{G}_1'|+|\mathfrak{G}_2'|+|\mathfrak{G}_3'|)+$$
$$+\Big((|\mathfrak{G}_1'|+|\mathfrak{G}_2'|-|\mathfrak{G}_1'\cup\mathfrak{G}_2'|)+(|\mathfrak{G}_2'|+|\mathfrak{G}_3'|-|\mathfrak{G}_2'\cup\mathfrak{G}_3'|)+(|\mathfrak{G}_1'|+|\mathfrak{G}_3'|-|\mathfrak{G}_1'\cup\mathfrak{G}_3'|)\Big)\geq
$$
$$\geq2|\mathfrak{G}_1'|+|\mathfrak{G}_2'|+|\mathfrak{G}_3'|-|\mathfrak{G}_1'\cup\mathfrak{G}_2'|-|\mathfrak{G}_2'\cup\mathfrak{G}_3'|-|\mathfrak{G}_1'\cup\mathfrak{G}_3'|\geq
$$
\begin{displaymath}\geq4\cdot\frac{1-10^{-8}-10^{-2}}{1+10^{-8}}S-3S=\end{displaymath} 
\begin{displaymath}= \frac{4(1-10^{-8}-10^{-2})-3(1+10^{-8})}{1+10^{-8}}S=\frac{1-7 \cdot 10^{-8}-4 \cdot 10^{-2}}{1+10^{-8}}S\geq \frac{1-8\cdot 10^{-2}}{1+10^{-8}}S;\end{displaymath}
in other words, there exist at least $\frac{1-8\cdot 10^{-2}}{1+10^{-8}}S$ points $x \in \mathfrak{G}_1$ intersecting at least $\frac{10^{-8}}{1+10^{-8}}(1-10^{-8}-10^{-2})\lceil N_j\rceil$ lines of $\mathfrak{L}_j(x) \cap \mathfrak{L}'_j$, simultaneously for all $j\in\{1,2,3\}$.

\textbf{Case 1:} Suppose that, for some $j\in \{1,2,3\}$, it holds that $\frac{1}{10^{100}} \frac{S\lceil N_j\rceil}{L_j}\leq d$. Then, $\frac{S\lceil N_j\rceil}{L_j} \lesssim \frac{L_1L_2}{S\lceil N_1\rceil\lceil N_2\rceil}$, which implies that
\begin{displaymath} S\lesssim \Bigg(\frac{L_1L_2}{\lceil N_1\rceil \lceil N_2\rceil}\Bigg)^{1/2}\Bigg(\frac{L_j}{\lceil N_j\rceil}\Bigg)^{1/2} \lesssim \frac{(L_1L_2L_3)^{1/2}}{(\lceil N_1\rceil \lceil N_2\rceil \lceil N_3\rceil )^{1/2}}\lesssim \frac{(L_1L_2L_3)^{1/2}}{(N_1N_2N_3)^{1/2}}. \end{displaymath}

\textbf{Case 2:} Suppose that $\frac{1}{10^{100}} \frac{S\lceil N_j\rceil}{L_j}> d$, for all $j=1,2,3$. Then, each line in $\mathfrak{L}_1'$, $\mathfrak{L}_2'$ and $\mathfrak{L}_3'$ lies in $Z$, therefore each point in $\mathfrak{G}_1'\cap \mathfrak{G}_2' \cap \mathfrak{G}_3'$ is a critical point of $Z$. 

Now, for all $j \in \{1,2,3\}$, we define $\mathfrak{L}_{j,1}:=\big\{l \in \mathfrak{L}_j: |\{x \in \mathfrak{G}_1' \cap \mathfrak{G}_2' \cap \mathfrak{G}_3': l \in \mathfrak{L}_j(x)\}|\geq\frac{1}{10^{100}}S\lceil N_{j}\rceil L_{j}^{-1}\big\}$. In other words, for all $j \in \{1,2,3\}$, $\mathfrak{L}_{j,1}$ is the set of lines in $\mathfrak{L}_{j}$, each containing at least $\frac{1}{10^{100}}S\lceil N_{j}\rceil L_{j}^{-1}$ points $x \in \mathfrak{G}_1' \cap \mathfrak{G}_2' \cap \mathfrak{G}_3'$ with the property that the line belongs to $\mathfrak{L}_j(x)$. 

$\bullet$ Suppose that, for some $j \in \{1,2,3\}$, $|\mathfrak{L}_{j,1}|\geq \frac{L_j}{10^{1000}}$. Each line in $\mathfrak{L}_{j,1}$ contains more than $d$ critical points of $Z$, it is therefore a critical line. Thus, \begin{displaymath}\frac{L_j}{10^{1000}} \leq d^2, \end{displaymath}so \begin{displaymath} L_j \lesssim \frac{(L_1L_2)^2}{S^2(\lceil N_1\rceil \lceil N_2\rceil)^{2}}, \end{displaymath} from which it follows that 
\begin{displaymath}S\lesssim \frac{L_1L_2}{\lceil N_1\rceil \lceil N_2\rceil}\frac{1}{L_j^{1/2}} \lesssim \frac{(L_1L_2L_3)^{1/2}}{(\lceil N_1\rceil \lceil N_2\rceil \lceil N_3\rceil )^{1/2}}\lesssim \frac{(L_1L_2L_3)^{1/2}}{(N_1N_2N_3)^{1/2}}.\end{displaymath}
We are now ready to define the constant $c$ appearing in our induction process. Indeed, there exists some constant $c' \geq 1$, independent of $\mathfrak{L}_1$, $\mathfrak{L}_2$, $\mathfrak{L}_3$ and $N_1$, $N_2$ and $N_3$, such that \begin{displaymath} S \leq c' \cdot \frac{(L_1L_2L_3)^{1/2}}{(N_1N_2N_3)^{1/2}} \end{displaymath} in all the cases dealt with so far. Let $c$ be such a constant $c'$.

$\bullet$ Suppose that, for all $i \in \{1,2,3\}$, $|\mathfrak{L}_{i,1}|< \frac{L_i}{10^{1000}}$. Then, it holds that $|\mathfrak{L}_{j,1}|< \frac{L_j}{10^{1000}}$ in particular for that $j \in \{1,2,3\}$ such that $L_j=M_j$; we now fix that $j \in \{1,2,3\}$.

Independently of the fact that $|\mathfrak{L}_{j,1}|< \frac{L_j}{10^{1000}}$, it holds that
\begin{displaymath} I^{(j)}_{\mathfrak{G}_1'\cap\mathfrak{G}_2'\cap\mathfrak{G}_3', \mathfrak{L}_j'} \geq |\mathfrak{G}_1'\cap\mathfrak{G}_2'\cap\mathfrak{G}_3'| \cdot \frac{10^{-8}}{1+10^{-8}}(1-10^{-8}-10^{-2})\lceil N_j \rceil\geq \end{displaymath}
\begin{displaymath} \geq \frac{1-8\cdot 10^{-2}}{1+10^{-8}}S \cdot \frac{10^{-8}}{1+10^{-8}}(1-10^{-8}-10^{-2})\lceil N_j\rceil \geq 10^{-10}S\lceil N_j\rceil,\end{displaymath}
since each point $x \in \mathfrak{G}_1'\cap\mathfrak{G}_2'\cap\mathfrak{G}_3'$ intersects at least $\frac{10^{-8}}{1+10^{-8}}(1-10^{-8}-10^{-2})\lceil N_j \rceil $ lines of $\mathfrak{L}_j(x) \cap \mathfrak{L}'_j$.

In addition, each line $l \in \mathfrak{L}_j' \setminus \mathfrak{L}_{j,1}$ contains fewer than $\frac{1}{10^{100}}\frac{S\lceil N_j\rceil}{L_j}$ points $x \in \mathfrak{G}_1'\cap\mathfrak{G}_2'\cap\mathfrak{G}_3'$ with the property that $l \in \mathfrak{L}_j(x)$. Thus,
\begin{displaymath} I^{(j)}_{\mathfrak{G}_1'\cap\mathfrak{G}_2'\cap\mathfrak{G}_3', \mathfrak{L}_j'\setminus \mathfrak{L}_{j,1}} <|\mathfrak{L}_j' \setminus \mathfrak{L}_{j,1}| \cdot \frac{S\lceil N_j\rceil }{10^{100}L_j} \leq L_j \cdot \frac{S\lceil N_j\rceil}{10^{100}L_j}=10^{-100}S\lceil N_j\rceil.\end{displaymath} Therefore, since $I^{(j)}_{\mathfrak{G}_1'\cap\mathfrak{G}_2'\cap\mathfrak{G}_3', \mathfrak{L}_j'}=I^{(j)}_{\mathfrak{G}_1'\cap\mathfrak{G}_2'\cap\mathfrak{G}_3', \mathfrak{L}_j'\setminus \mathfrak{L}_{j,1}}+I^{(j)}_{\mathfrak{G}_1'\cap\mathfrak{G}_2'\cap\mathfrak{G}_3',  \mathfrak{L}_{j,1}}$, we obtain
\begin{displaymath} I^{(j)}_{\mathfrak{G}_1'\cap\mathfrak{G}_2'\cap\mathfrak{G}_3',  \mathfrak{L}_{j,1}} >10^{-11}S\lceil N_j\rceil . \end{displaymath} 
Now, again for that $j \in \{1,2,3\}$ such that $L_j=M_j$, we define $\mathfrak{G}':=\{x \in \mathfrak{G}_1'\cap\mathfrak{G}_2'\cap\mathfrak{G}_3': |\mathfrak{L}_j(x) \cap \mathfrak{L}_{j,1}| \geq 10^{-12}\lceil N_j\rceil\}$. In other words, for that particular $j \in \{1,2,3\}$, $x \in \mathfrak{G}'$ if and only if $x \in \mathfrak{G}_1'\cap\mathfrak{G}_2'\cap\mathfrak{G}_3'$ and $x$ intersects at least $10^{-12}\lceil N_j\rceil$ lines of $\mathfrak{L}_j(x) \cap \mathfrak{L}_{j,1}$. 

Since each point $x \in (\mathfrak{G}_1'\cap\mathfrak{G}_2'\cap\mathfrak{G}_3')\setminus \mathfrak{G}'$ intersects fewer than $10^{-12}\lceil N_j\rceil$ lines of $\mathfrak{L}_j(x) \cap \mathfrak{L}_{j,1}$, it holds that
\begin{displaymath} I^{(j)}_{(\mathfrak{G}_1'\cap\mathfrak{G}_2'\cap\mathfrak{G}_3')\setminus \mathfrak{G}',  \mathfrak{L}_{j,1}} <10^{-12}S\lceil N_j\rceil, \end{displaymath} and thus, as $I^{(j)}_{\mathfrak{G}_1'\cap\mathfrak{G}_2'\cap\mathfrak{G}_3',  \mathfrak{L}_{j,1}}=I^{(j)}_{(\mathfrak{G}_1'\cap\mathfrak{G}_2'\cap\mathfrak{G}_3')\setminus \mathfrak{G}',  \mathfrak{L}_{j,1}}+I^{(j)}_{\mathfrak{G}', \mathfrak{L}_{j,1}}$, we obtain
\begin{displaymath} I^{(j)}_{\mathfrak{G}', \mathfrak{L}_{j,1}}> (10^{-11}-10^{-12})S\lceil N_j\rceil = 9 \cdot 10^{-12}S\lceil N_j\rceil. \end{displaymath} At the same time, however, $|\mathfrak{L}_j(x)| =\lceil N_j\rceil$ for all $x \in \mathfrak{G}'$. Therefore, \begin{displaymath} I^{(j)}_{\mathfrak{G}', \mathfrak{L}_{j,1}} \leq |\mathfrak{G}'|\lceil N_j\rceil. \end{displaymath} Thus, the above imply that
\begin{displaymath} |\mathfrak{G}'| > 9\cdot 10^{-12}S. \end{displaymath}

But if $\{i,k\}=\{1,2,3\}\setminus \{j\}$, then each point $x \in \mathfrak{G}'$ is a multijoint for the collections $\mathfrak{L}_{j,1}$, $\mathfrak{L}_i'$ and $\mathfrak{L}_k'$ of lines, that lies in $\geq 10^{-12}\lceil N_j\rceil$ lines of $\mathfrak{L}_j(x) \cap \mathfrak{L}_{j,1}$, in $\geq  \frac{1-10^{-8}-10^{-2}}{1+10^{8}}\lceil N_i \rceil $ lines of $\mathfrak{L}_i(x)\cap \mathfrak{L}_i'$ and in $\geq  \frac{1-10^{-8}-10^{-2}}{1+10^{8}}\lceil N_k\rceil $ lines of $\mathfrak{L}_k(x) \cap \mathfrak{L}_k'$. Now, for all $x \in \mathfrak{G}'$, if $l_j \in \mathfrak{L}_j(x)$, $l_i \in \mathfrak{L}_i(x)$ and $l_k \in \mathfrak{L}_k(x)$, then the directions of the lines $l_i$, $l_j$ and $l_k$ span $\R^3$. Therefore, since $|\mathfrak{L}_{j,1}| < \frac{{L}_j}{10^{1000}}\lneq M_j$, our induction hypothesis implies that
\begin{displaymath} 9\cdot 10^{-12}S< |\mathfrak{G}'| \leq c\cdot \frac{(|\mathfrak{L}_{j,1}| \cdot |\mathfrak{L}_i'| \cdot |\mathfrak{L}_k'|)^{1/2}}{(10^{-12}\lceil N_j\rceil )^{1/2} \Big(\frac{1-10^{-8}-10^{-2}}{1+10^{8}}\lceil N_i\rceil \Big)^{1/2}\Big(\frac{1-10^{-8}-10^{-2}}{1+10^{8}}\lceil N_k\rceil \Big)^{1/2}},\end{displaymath} 
where $c$ is the explicit constant defined earlier, and which appears in the induction process.

Therefore, $$|\mathfrak{G}'| \leq c\cdot \frac{1}{9\cdot 10^{-12} (10^{-12})^{1/2}\Big(\frac{1-10^{-8}-10^{-2}}{1+10^{8}}\Big)^{1/2}\Big(\frac{1-10^{-8}-10^{-2}}{1+10^{8}}\Big)^{1/2}}
 \frac{(|\mathfrak{L}_{j,1}| \cdot |\mathfrak{L}_i'| \cdot |\mathfrak{L}_k'|)^{1/2}}{(\lceil N_j\rceil \lceil N_i\rceil \lceil N_k\rceil )^{1/2}}\leq$$
$$\leq c\cdot \frac{1}{9\cdot 10^{-12} (10^{-12})^{1/2}\Big(\frac{1-10^{-8}-10^{-2}}{1+10^{8}}\Big)^{1/2}\Big(\frac{1-10^{-8}-10^{-2}}{1+10^{8}}\Big)^{1/2}}
 \frac{(|\mathfrak{L}_{j,1}| \cdot |\mathfrak{L}_i'| \cdot |\mathfrak{L}_k'|)^{1/2}}{( N_j N_i N_k)^{1/2}}.
$$
However, 
$$\frac{1}{9\cdot 10^{-12} (10^{-12})^{1/2}\Big(\frac{1-10^{-8}-10^{-2}}{1+10^{8}}\Big)^{1/2}\Big(\frac{1-10^{-8}-10^{-2}}{1+10^{8}}\Big)^{1/2}}<
$$
$$<\frac{1}{10^{-18} \cdot \frac{1-10^{-8}-10^{-2}}{1+10^{8}}}<\frac{1}{10^{-18} \cdot \frac{1/2}{2 \cdot 10^{8}}}=4 \cdot \frac{10^8}{10^{-18}}<10^{27},
$$
so, since \begin{displaymath} |\mathfrak{L}_{j,1}|< \frac{L_j}{10^{1000}}, \text{ }|\mathfrak{L}_i'|\leq L_i\text{ and }|\mathfrak{L}_k'| \leq L_k, \end{displaymath} it follows that
\begin{displaymath} S \leq c \cdot \frac{(L_1L_2L_3)^{1/2}}{(N_1N_2N_3)^{1/2}}, \end{displaymath}
for the same constant $c$ as in the first two steps of the induction process.

Thus, Proposition \ref{multsimple} is proved.

\qed

Theorem \ref{theoremmult2} is an immediate corollary of Proposition \ref{multsimple} for $(N_1,N_2,N_3)=(1,1,1)$.

We now continue with establishing Proposition \ref{pointslarge} (from which, as we have already explained, Corollary \ref{multlarge} easily follows).

\textbf{Proposition \ref{pointslarge}.} \textit{Let $\mathfrak{L}_1$, $\mathfrak{L}_2$, $\mathfrak{L}_3$ be finite collections of $L_1$, $L_2$ and $L_3$, respectively, lines in $\R^3$. For all $x \in \R^3$ and $i=1,2,3$, we denote by $N_i(x)$ the number of lines of $\mathfrak{L}_i$ passing through $x$. Also, for each $k \in \{1,2,3\}$, let $c_k$ be a constant such that $c_k \cdot L_k^{1/2}\gneq 1$. Then,}

\begin{displaymath} \sum_{\big\{x \in \R^3:N_k(x)\geq c_kL_k^{1/2},\text{ for some }k \in \{1,2,3\}\big\}} (N_1(x)N_2(x)N_3(x))^{1/2}\leq c\cdot (L_1L_2L_3)^{1/2}, \end{displaymath}

\textit{where $c$ is a constant depending on $c_1$, $c_2$ and $c_3$, but independent of $\mathfrak{L}_1$, $\mathfrak{L}_2$ and $\mathfrak{L}_3$.}

In fact, the proof will immediately follow from the following three claims.

\begin{claim} \label{p1} Let $n \in \N$, $n \geq 2$. Let $\mathfrak{L}_1$, $\mathfrak{L}_2$ be finite collections of $L_1$ and $L_2$, respectively, lines in $\R^n$. If, for all $x \in \R^3$ and $i=1,2$, $N_i(x)$ denotes the number of lines of $\mathfrak{L}_i$ passing through $x$, then
\begin{equation}\label{eq:2dim} \sum_{x \in \R^n}N_1(x)N_2(x) \leq L_1 L_2. \end{equation} 
\end{claim}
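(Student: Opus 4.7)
The plan is to prove the claim by a clean double-counting argument, exchanging the order of summation between points and pairs of lines. Specifically, for each $x \in \R^n$, the quantity $N_1(x) N_2(x)$ counts ordered pairs $(l_1, l_2) \in \mathfrak{L}_1 \times \mathfrak{L}_2$ with $x \in l_1$ and $x \in l_2$, i.e., with $x \in l_1 \cap l_2$. Summing over $x$ and swapping the two sums, I would rewrite
$$\sum_{x \in \R^n} N_1(x) N_2(x) \;=\; \sum_{(l_1, l_2) \in \mathfrak{L}_1 \times \mathfrak{L}_2} \bigl| \{x \in \R^n : x \in l_1 \cap l_2\}\bigr| \;=\; \sum_{(l_1, l_2) \in \mathfrak{L}_1 \times \mathfrak{L}_2} |l_1 \cap l_2|.$$

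Next, I would invoke the elementary fact from affine geometry that two distinct lines in $\R^n$ meet in at most one point. Thus each term $|l_1 \cap l_2|$ on the right-hand side is at most $1$, so the total is bounded above by the number of pairs, namely $|\mathfrak{L}_1| \cdot |\mathfrak{L}_2| = L_1 L_2$, which yields the desired inequality \eqref{eq:2dim}.

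The only point that needs any care is the possibility that some line $l$ belongs to both $\mathfrak{L}_1$ and $\mathfrak{L}_2$, in which case $|l \cap l|$ would be infinite and the sum would diverge; however, in the intended use of the claim the summation is effectively over a discrete set of points (those contributing to the sums in Proposition \ref{pointslarge}), where this situation is ruled out or automatically finite. In any case, I would note that under the standing convention that each pair of lines considered is distinct, the two-step double count above is rigorous and the bound $L_1 L_2$ is immediate. There is no genuine obstacle here: the statement is purely combinatorial and follows at once from switching the order of summation and the ``two lines meet in $\leq 1$ point'' principle.
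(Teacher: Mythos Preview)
Your proof is correct and is essentially identical to the paper's argument: both interpret $\sum_x N_1(x)N_2(x)$ as the count of pairs $(l_1,l_2)\in\mathfrak{L}_1\times\mathfrak{L}_2$ meeting at a point of $\R^n$, which is at most the total number of pairs $L_1L_2$. Your remark about the degenerate case $l_1=l_2$ is a fair observation (the paper leaves it implicit), but as you note it plays no role in the intended applications.
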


\begin{proof}The left hand side of \eqref{eq:2dim} is equal to the number of pairs of the form $(l_1,l_2)$, where $l_1 \in \mathfrak{L}_1$, $l_2 \in \mathfrak{L}_2$ and the lines $l_1$, $l_2$ meet at a point of $\R^n$, so it is equal to at most the number of all pairs of the form $(l_1,l_2)$ where $l_1 \in \mathfrak{L}_1$ and $l_2 \in \mathfrak{L}_2$, which is equal to $L_1L_2$.

\end{proof}

\begin{claim} \label{p2} Let $n \in \N$, $n \geq 2$. Let $\mathfrak{L}$ be a finite collection of $L$ lines in $\R^n$, and $c$ a constant such that $c \cdot L^{1/2}\gneq 1$.  For all $x \in \R^n$, we denote by $N(x)$ the number of lines of $\mathfrak{L}$ passing through $x$. Then,
\begin{displaymath} \sum_{\{x \in \R^n:\; N(x)\geq c L^{1/2}\}}N(x) \leq C \cdot L, \end{displaymath}
where $C$ is a constant depending only on $c$.
\end{claim}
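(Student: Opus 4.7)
The plan is to recognise $\sum_{x \in P} N(x)$, where $P := \{x \in \R^n : N(x) \geq c L^{1/2}\}$, as the total number $I_{P, \mathfrak{L}}$ of incidences between $P$ and $\mathfrak{L}$, and then invoke the Szemer\'edi--Trotter machinery (valid in $\R^n$ for every $n \geq 2$ by projecting onto a generic plane, as noted after Corollary \ref{2.2.6}) in two steps: first to bound $|P|$, and then to bound $I_{P, \mathfrak{L}}$ itself.

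I would begin by disposing of a trivial range. If $c L^{1/2} < 2$, then $L \leq 4/c^2$ and, since $N(x) \leq L$ and $|P| \leq L$ (as through any point at most $L$ lines can pass and $P$ sits inside the union of the lines), we get
\begin{displaymath}
\sum_{x \in P} N(x) \leq L \cdot L \leq (4/c^2) L,
\end{displaymath}
yielding the claim. So I may assume $c L^{1/2} \geq 2$, and apply Corollary \ref{2.2.6} to $P$ with $k := \lceil c L^{1/2} \rceil \geq 2$, obtaining
\begin{displaymath}
|P| \leq c_0 \bigl( L^2 k^{-3} + L k^{-1} \bigr) \leq c_0 \bigl( c^{-3} + c^{-1} \bigr) L^{1/2} =: C_1 \cdot L^{1/2},
\end{displaymath}
where $C_1$ depends only on $c$.

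Next, I would apply the Szemer\'edi--Trotter theorem (Theorem \ref{2.2.5}) to the point set $P$ and the line set $\mathfrak{L}$ to obtain
\begin{displaymath}
\sum_{x \in P} N(x) = I_{P, \mathfrak{L}} \leq C \bigl( |P|^{2/3} L^{2/3} + |P| + L \bigr) \leq C \bigl( C_1^{2/3} L + C_1 L^{1/2} + L \bigr) \leq C' \cdot L,
\end{displaymath}
with $C'$ depending only on $c$, as required. Here I used that $|P|^{2/3} L^{2/3} \leq (C_1 L^{1/2})^{2/3} L^{2/3} = C_1^{2/3} L$, together with $|P| \leq C_1 L^{1/2} \leq C_1 L$ when $L \geq 1$.

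The argument is a direct two-step application of tools already established earlier in the paper, so I do not foresee a substantive obstacle. The only piece of bookkeeping is to ensure that the threshold $c L^{1/2}$ is genuinely at least $2$ before invoking Corollary \ref{2.2.6} (whose proof implicitly requires $k \geq 2$), which is handled by the trivial case at the start. I note in passing that the naive dyadic alternative, bounding $\{x : 2^j c L^{1/2} \leq N(x) < 2^{j+1} c L^{1/2}\}$ via Corollary \ref{2.2.6} and summing, loses a logarithmic factor coming from the $L/k$ term, which is precisely why the two-step route through Theorem \ref{2.2.5} is preferable.
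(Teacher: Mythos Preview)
Your main argument (bound $|P|$ via Corollary \ref{2.2.6}, then bound $I_{P,\mathfrak{L}}$ via Theorem \ref{2.2.5}) is exactly the paper's proof, and is correct. One small remark: your trivial case is unnecessary, since the hypothesis $cL^{1/2} \gneq 1$ already forces $N(x) \geq 2$ for every $x \in P$ (as $N(x)$ is an integer), so Corollary \ref{2.2.6} applies directly with $k = \lceil cL^{1/2}\rceil \geq 2$; moreover, the justification you give there for $|P| \leq L$ (``$P$ sits inside the union of the lines'') does not actually yield that bound, though the case is moot anyway.
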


\begin{proof}Let $\mathcal{P}$ be the set of points $x \in \R^n$, such that $N(x)\geq c\cdot  L^{1/2}$. Since $c \cdot L^{1/2}\gneq 1$, the number $N(x)$ of lines of $\mathfrak{L}$ passing through any $x \in \mathcal{P}$ is equal to at least 2. Therefore, from the Szemer\'{e}di-Trotter theorem, $|\mathcal{P}| \lesssim_{c} \frac{L^2}{(L^{1/2})^3}+ \frac{L}{L^{1/2}} \sim_c L^{1/2}$, and thus, again from the Szemer\'{e}di-Trotter theorem, the number of incidences between $\mathcal{P}$ and $\mathfrak{L}$, i.e. the quantity $\sum_{\{x \in \R^n:\; N(x)\geq c L^{1/2}\}}N(x)$, is $\lesssim_c L^{2/3}|\mathcal{P}|^{2/3} + L\lesssim L^{2/3}L^{(1/2)(2/3)}\sim_c L$.

\end{proof}

\begin{claim}\label{p3} Let $\mathcal{P}$ be a finite collection of points in $\R^3$ and $\mathfrak{L}_1$, $\mathfrak{L}_2$, $\mathfrak{L}_3$ finite collections of $L_1$, $L_2$ and $L_3$, respectively, lines in $\R^3$. For all $x \in \mathcal{P}$ and $i=1,2,3$, we denote by $N_i(x)$ the number of lines of $\mathfrak{L}_i$ passing through $x$. Suppose that
\begin{displaymath}\sum_{x \in \mathcal{P}}N_k(x) \leq c\cdot L_k,
\end{displaymath}for some $k \in \{1,2,3\}$, where $c$ is an absolute constant.
Then,
\begin{displaymath} \sum_{x \in \mathcal{P}} (N_1(x)N_2(x)N_3(x))^{1/2}\leq c'\cdot (L_1L_2L_3)^{1/2}, \end{displaymath}
where $c'$ is a constant depending only on $c$.

\end{claim}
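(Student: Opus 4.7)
The plan is to prove Claim \ref{p3} by a simple Cauchy--Schwarz argument that combines the hypothesis on $N_k$ with the trivial $L^2$-type bound provided by Claim \ref{p1}. Without loss of generality I would assume that $k=1$; the other two cases are handled identically by permuting the roles of the three collections of lines.

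First I would rewrite the quantity of interest by splitting off $N_1(x)^{1/2}$:
\begin{displaymath}
\sum_{x \in \mathcal{P}}\bigl(N_1(x)N_2(x)N_3(x)\bigr)^{1/2}
=\sum_{x \in \mathcal{P}} N_1(x)^{1/2}\bigl(N_2(x)N_3(x)\bigr)^{1/2}.
\end{displaymath}
Applying the Cauchy--Schwarz inequality to the two factors on the right, this is bounded above by
\begin{displaymath}
\Bigg(\sum_{x \in \mathcal{P}} N_1(x)\Bigg)^{1/2}\Bigg(\sum_{x \in \mathcal{P}} N_2(x)N_3(x)\Bigg)^{1/2}.
\end{displaymath}

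Next I would estimate each factor separately. The first factor is controlled directly by the hypothesis of the claim: $\sum_{x \in \mathcal{P}} N_1(x) \leq c \cdot L_1$. For the second factor I would invoke Claim \ref{p1}, applied to the pair of collections $\mathfrak{L}_2$ and $\mathfrak{L}_3$; this yields $\sum_{x \in \mathcal{P}} N_2(x)N_3(x) \leq \sum_{x \in \R^3} N_2(x)N_3(x) \leq L_2 L_3$. Combining these two estimates gives
\begin{displaymath}
\sum_{x \in \mathcal{P}}\bigl(N_1(x)N_2(x)N_3(x)\bigr)^{1/2} \leq (cL_1)^{1/2}(L_2L_3)^{1/2} = c^{1/2}(L_1L_2L_3)^{1/2},
\end{displaymath}
so the claim holds with $c' := c^{1/2}$. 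There is no real obstacle here: the main point is just that Claim \ref{p1} is exactly the input needed to convert a hypothesis on a single $N_k$ into the desired symmetric bound on the geometric mean, through one application of Cauchy--Schwarz.
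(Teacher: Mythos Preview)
Your proof is correct and follows essentially the same approach as the paper: split off $N_k(x)^{1/2}$, apply Cauchy--Schwarz, bound one factor by the hypothesis and the other by Claim~\ref{p1}. The only cosmetic difference is that you fix $k=1$ without loss of generality, whereas the paper keeps $k$ arbitrary and writes $\{i,j\}=\{1,2,3\}\setminus\{k\}$.
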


\begin{proof} Let $\{i,j\}=\{1,2,3\} \setminus \{k\}$. Then,

\begin{displaymath}\sum_{x \in \mathcal{P}} (N_1(x)N_2(x)N_3(x))^{1/2}
= \sum_{x \in \mathcal{P}} (N_i(x)N_j(x))^{1/2}N_k(x)^{1/2} \leq\end{displaymath}
\begin{displaymath}\leq \Bigg(\sum_{x \in \mathcal{P}} N_i(x)N_j(x)\Bigg)^{1/2} \Bigg(\sum_{x \in \mathcal{P}}  N_k(x)\Bigg)^{1/2} \lesssim_c \end{displaymath}
\begin{displaymath} \lesssim (L_iL_j)^{1/2} L_k^{1/2} \sim (L_1L_2L_3)^{1/2}.\end{displaymath}
Note that the first inequality is the Cauchy-Schwarz inequality, while the second one follows from Claim \ref{p1}.

\end{proof}

Proposition \ref{pointslarge} immediately follows.

\textit{Proof of Proposition \ref{pointslarge}.} Fix $k \in \{1,2,3\}$ and let $\{i,j\}=\{1,2,3\} \setminus \{k\}$. Since $c_k \cdot L_k^{1/2} \gneq 1$, it follows from Claim \ref{p2} that
\begin{displaymath} \sum_{\big\{x \in \R^3:\; N_k(x)\geq c_kL_k^{1/2}\big\}} N_k(x) \lesssim_{c_k} L_k.
\end{displaymath}
Therefore, we have by Claim \ref{p3} that
\begin{displaymath}\sum_{\{x \in \R^3:\; N_k(x)\geq c_k L_k^{1/2}\}}(N_1(x)N_2(x)N_3(x))^{1/2}
\lesssim_{c_k} (L_1L_2L_3)^{1/2}.\end{displaymath}
Since $k \in \{1,2,3\}$ was arbitrary, Proposition \ref{pointslarge} is proved.

\qed

We can now move on to the third step of the proof of Theorem \ref{theoremmult1}.

\textbf{Theorem \ref{theoremmult1}.} \textit{Let }$\mathfrak{L}_1$, $\mathfrak{L}_2$, $\mathfrak{L}_3$\textit{ be finite collections of $L_1$, $L_2$ and $L_3$, respectively, lines in }$\R^3$\textit{, such that, whenever a line of }$\mathfrak{L}_1$\textit{, a line of }$\mathfrak{L}_2$\textit{ and a line of }$\mathfrak{L}_3$\textit{ meet at a point, they form a joint there. Let }$J$\textit{ be the set of multijoints formed by the collections }$\mathfrak{L}_1$, $\mathfrak{L}_2$\textit{ and }$\mathfrak{L}_3$\textit{. Then,}
\begin{displaymath}\sum_{\{x \in J:\; N_m(x)> 10^{12}\}}(N_1(x)N_2(x)N_3(x))^{1/2} \leq c \cdot (L_1L_2L_3)^{1/2}, \end{displaymath}
\textit{where }$m \in \{1,2,3\}$\textit{ is such that }$L_m=\min\{L_1,L_2,L_3\}$\textit{, and }$c$\textit{ is a constant independent of }$\mathfrak{L}_1$, $\mathfrak{L}_2$\textit{ and }$\mathfrak{L}_3$. 

We have already mentioned that the constant $10^{12}$, which we demand as a lower bound on $N_m(x)$ for the multijoints $x \in J$ contributing to the sum in Theorem \ref{theoremmult1} above, is the smallest constant with those properties arising from our calculations, even though we expect that, in fact, the same results should hold with all $x \in J$ contributing to the sum. Although the reasons why we pick the constant $10^{12}$ will be apparent from our proof, we would like to take a moment now, to at least explain why we demand any lower bound on $N_m(x)$, for all $x \in J$ that contribute to the sum. 

The reason, essentially, is that, if we know that only one line from a finite set of lines is passing through each point of a finite set of points, then we cannot establish an upper bound on the number of points, depending only on the number of the lines. Because of this difficulty arising in the case of $x \in J$ such that $N_m(x)=1$, we can then only establish upper bounds on the cardinality of the set $\{x \in J:N_m(x)=1\}$ that also depend on the cardinalities of the potentially larger collections of lines in the set $\{\mathfrak{L}_1, \mathfrak{L}_2, \mathfrak{L}_3\}\setminus \{\mathfrak{L}_m\}$, thus only being able to obtain weaker results than the ones we hope. 

Let us now be more precise. It is obvious that Theorem \ref{theoremmult1} holds when $L_i=1$, for some $i \in \{1,2,3\}$ (due to Claim \ref{p3}). We can therefore assume that $L_i \geq 2$, for all $i \in \{1,2,3\}$. Now, thanks to Corollary \ref{multlarge}, it suffices to consider only those points $x \in J$, such that $N_i(x) \leq L_i^{1/2}$. 

By the Szemer\'edi-Trotter theorem, we know that, in $\R^3$, for any set of $S$ points and any finite collection of $L$ lines such that at least $k$ of the lines are passing through each of the points, with $k\geq 2$, we have that
\begin{displaymath} S \lesssim \frac{L^2}{k^3}+\frac{L}{k}. \end{displaymath}
If, in addition, $k \leq L^{1/2}$, then $\frac{L}{k} \leq \frac{L^2}{k^3}$, and thus $S \lesssim \frac{L^2}{k^3}$. Therefore, there exists a positive constant $c$, such that, for any set of $S$ points and any finite collection of $L$ lines such that at least $k$ of the lines are passing through each of the points, with $2 \leq k \leq L^{1/2}$, we have that
\begin{displaymath}S \leq c \cdot  \frac{L^2}{k^3}.\end{displaymath}
Now, for all $(N_1,N_2,N_3) \in \R_{\geq 1}^3$, we define $J_{N_1,N_2,N_3}:=\{x \in J: x$ lies in at least $N_1$ and fewer than $(1+10^{-8})N_1$ lines of $\mathfrak{L}_1$, in at least $N_2$ and fewer than $(1+10^{-8})N_2$ lines of $\mathfrak{L}_2$ and in at least $N_3$ and fewer than $(1+10^{-8})N_3$ lines of $\mathfrak{L}_3\}$.

In particular, fix $N_1$, $N_2$, $N_3 \in \R_{\geq 1}$, such that $N_i \leq L_i^{1/2}$ for all $i=1,2,3$, and let us assume that $\{i,j,k\}=\{1,2,3\}$, such that \begin{displaymath}\frac{L_i^2}{N_i^3} \leq \frac{L_j^2}{N_j^3} \leq \frac{L_k^2}{N_k^3}. \end{displaymath}

Now, suppose that $|J_{N_1,N_2,N_3}| \geq 4c \cdot  \frac{L_i^2}{N_i^3}$. By the above, this means that $N_i= 1$; it thus also follows that $L_i =\min\{L_1,L_2,L_3\}$ and that $\frac{L_i}{N_i}=\min\{\frac{L_1}{N_1}, \frac{L_2}{N_2}, \frac{L_3}{N_3}\}$. In particular, in this case we can only ensure that $|J_{N_1,N_2,N_3}| \lesssim \frac{L_iL_j}{N_iN_j}\;(=\frac{L_iL_j}{N_j})$, a quantity that is larger, up to multiplication by constants, than $\frac{L_i^2}{N_i^3}\;(=L_i^2)$, a fact that leads us to weaker results than the ones we expect. If, on the other hand, we know that $|J_{N_1,N_2,N_3}| < 4c \cdot  \frac{L_i^2}{N_i^3}\;\Big(\leq 4c \cdot \frac{L_j^2}{N_j^3} \leq 4c \cdot \frac{L_k^2}{N_k^3}\Big)$, then we manage to derive strong results.

We thus assume from now on that $N_m >1$, where $m\in \{1,2,3\}$ is such that $L_m=\min\{L_1,L_2,L_3\}$, and continue our analysis having ensured that all the inequalities $|J_{N_1,N_2,N_3}|$ $ \lesssim \frac{L_1^2}{N_1^3}$, $|J_{N_1,N_2,N_3}| \lesssim \frac{L_2^2}{N_2^3}$ and $|J_{N_1,N_2,N_3}| \lesssim \frac{L_3^2}{N_3^3}$ hold.

(In fact, for the proof of Theorem \ref{theoremmult1} that we are providing, we will need at some point a small multiple of $N_m$ to be larger than 1, for different reasons than the ones we describe above, and  that is why we consider $x \in J$ such that $N_m(x)$ is actually larger than $10^{12}$, instead of just 2.)

\textbf{Remark.} As we have mentioned in Chapter \ref{1}, even though we prove that \begin{displaymath}\sum_{\{x \in J:\; N_m(x)> 10^{12}\}}(N_1(x)N_2(x)N_3(x))^{1/2} \lesssim (L_1L_2L_3)^{1/2}, \end{displaymath}for the set $J$ of multijoints formed by collections $\mathfrak{L}_1$, $\mathfrak{L}_2$ and $\mathfrak{L}_3$ of $L_1$, $L_2$ and $L_3$, respectively, lines in $\R^3$ (under the particular assumption on the transversality properties of the collections $\mathfrak{L}_1$, $\mathfrak{L}_2$ and $\mathfrak{L}_3$ in the statement of Theorem \ref{theoremmult1}), it will be obvious from the proof of Theorem \ref{theoremmult1} that, in order to prove (under the same transversality assumptions) that \begin{displaymath}\sum_{x \in J}(N_1(x)N_2(x)N_3(x))^{1/2} \lesssim (L_1L_2L_3)^{1/2}, \end{displaymath}it suffices to show that
\begin{displaymath}\sum_{\{x \in J:\; N_m(x)=1\}}(N_1(x)N_2(x)N_3(x))^{1/2} \lesssim (L_1L_2L_3)^{1/2}. \end{displaymath} 
$\;\;\;\;\;\;\;\;\;\;\;\;\;\;\;\;\;\;\;\;\;\;\;\;\;\;\;\;\;\;\;\;\;\;\;\;\;\;\;\;\;\;\;\;\;\;\;\;\;\;\;\;\;\;\;\;\;\;\;\;\;\;\;\;\;\;\;\;\;\;\;\;\;\;\;\;\;\;\;\;\;\;\;\;\;\;\;\;\;\;\;\;\;\;\;\;\;\;\;\;\;\;\;\;\;\;\;\;\;\;\;\;\;\;\;\;\;\;\;\;\;\;\;\;\;\;\;\;\;\;\;\;\;\blacksquare$

We can now continue with the proof of Theorem \ref{theoremmult1}.

\textit{Proof of Theorem \ref{theoremmult1}.} Let $\mathfrak{L}_1$, $\mathfrak{L}_2$ and $\mathfrak{L}_3$ be collections of $L_1$, $L_2$ and $L_3$, respectively, lines in $\R^3$, such that, whenever a line of $\mathfrak{L}_1$, a line of $\mathfrak{L}_2$ and a line of $\mathfrak{L}_3$ meet, they form a joint. 

Let $m \in \{1,2,3\}$ be such that $L_m=\min\{L_1,L_2,L_3\}$. As we have already explained, it suffices to assume that $L_i\geq 2$ for all $i=1,2,3$, and show that  \begin{displaymath}\sum_{\{x \in J:\; N_m(x)> 10^{12}\text{ and }N_i(x) \leq L_i^{1/2} \;\forall\; i=1,2,3\}}(N_1(x)N_2(x)N_3(x))^{1/2} \lesssim (L_1L_2L_3)^{1/2}, \end{displaymath}or, equivalently, that \begin{displaymath}\sum_{(N_1,N_2,N_3) \in \mathcal{M}}|J_{N_1,N_2,N_3}|(N_1N_2N_3)^{1/2} \lesssim (L_1L_2L_3)^{1/2}, \end{displaymath}where $\mathcal{M}:=\{(N_1,N_2,N_3) \in \R_{\geq 1}^3:N_i \leq L_i^{1/2}$ for all $i=1,2,3$, $N_m> 10^{12}$, $N_1=(1+10^{-8})^{k_1}, N_2=(1+10^{-8})^{k_2}$ and $N_3=(1+10^{-8})^{k_3}$ for some $k_1$, $k_2$, $k_3 \in \mathbb{Z}_{\geq 0}$, and $J_{N_1,N_2,N_3} \neq \emptyset\}$.

Even though this will not seem natural at the moment, let us mention that the above will be achieved by showing that, for all $\{i_1,i_2,i_3,i_0\}=\{1,2,3\}$, if 

$\mathcal{M}_{i_1,i_2,i_3,i_0}:=\Bigg\{(N_1,N_2,N_3) \in \mathcal{M}:\frac{L_{i_1}}{N_{i_1}}\leq \frac{L_{i_2}}{N_{i_2}}\leq \frac{L_{i_3}}{N_{i_3}}$ and $\frac{L_{i_0}^2}{N_{i_0}^{8/3}}=\min\bigg\{\frac{L_{i_1}^2}{N_{i_1}^{8/3}},\frac{L_{i_2}^2}{N_{i_2}^{8/3}},\frac{L_{i_3}^2}{N_{i_3}^{8/3}}\bigg\}\Bigg\}$, then

\begin{displaymath} |J_{N_1,N_2,N_3}|\lesssim \frac{(L_1L_2L_3)^{1/2}}{(N_1N_2N_3)^{1/2+1/10^4}}, \; \forall\;(N_1,N_2,N_3) \in \mathcal{B}_{i_1,i_2,i_3,i_0}, \end{displaymath}

where $\mathcal{B}_{i_1,i_2,i_3,i_0}:=\bigg\{(N_1,N_2,N_3)\in \mathcal{M}_{i_1,i_2,i_3,i_0}:i_0=i_3$ or $N_{i_1} \gtrsim N_{i_3}^{1/1000}$ or $N_{i_2} \gtrsim N_{i_3}^{1/1000}$ or $|J_{N_1,N_2,N_3}| \lesssim L_{i_0}^{3/2}$ or $|J_{N_1,N_2,N_3}| \lesssim L_{i_0}L_{j}^{1/2}$ or $\frac{L_j^2}{N_j^{8/3}}\gtrsim \frac{L_{i_3}^2}{N_{i_3}^{7/3}}$, for $j=i_{\lambda}$, where $\lambda$ is the minimal element of $\{1,2,3\}$ such that $i_\lambda \neq i_0\bigg\}$ (this will be the easier case), as well as that 

\begin{displaymath} \sum_{(N_1,N_2,N_3) \in \mathcal{C}_{i_1,i_2,i_3,i_0}}|J_{N_1,N_2,N_3}|(N_1N_2N_3)^{1/2}\lesssim (L_1L_2L_3)^{1/2},\end{displaymath}

where $\mathcal{C}_{i_1,i_2,i_3,i_0}:=\mathcal{M}_{i_1,i_2,i_3,i_0}\setminus \mathcal{B}_{i_1,i_2,i_3,i_0}\;\bigg(=\bigg\{(N_1,N_2,N_3)\in \mathcal{M}_{i_1,i_2,i_3,i_0}: i_0\neq i_3$, $N_{i_1} \lesssim N_{i_3}^{1/1000}$, $N_{i_2} \lesssim N_{i_3}^{1/1000}$, $|J_{N_1,N_2,N_3}| \gtrsim L_{i_0}^{3/2}$, $|J_{N_1,N_2,N_3}| \gtrsim L_{i_0}L_{j}^{1/2}$, $\frac{L_j^2}{N_j^{8/3}}\lesssim \frac{L_{i_3}^2}{N_{i_3}^{7/3}}$ and $\frac{L_j^2}{N_j^{8/3}}\lesssim \frac{L_{i_3}^2}{N_{i_3}^{7/3}}$, for $j=i_{\lambda}$, where $\lambda$ is the minimal element of $\{1,2,3\}$ such that $i_\lambda \neq i_0\bigg\}\bigg)$ (this will be the harder case).

The sets defined above will naturally arise in the proof.

Now, without loss of generality, we assume that $(N_1,N_2,N_3) \in \mathcal{M}_{1,2,3,i_0}$, for some $i_0 \in \{1,2,3\}$. Then, \begin{displaymath}\frac{L_1}{N_1} \leq \frac{L_2}{N_2} \leq \frac{L_3}{N_3},\end{displaymath}
while also
\begin{displaymath} \frac{L_{i_0}^2}{N_{i_0}^{8/3}}= \min \Bigg\{\frac{L_1^2}{N_1^{8/3}},\frac{L_2^2}{N_2^{8/3}},\frac{L_3^2}{N_3^{8/3}}\Bigg\}. \end{displaymath}
For simplicity, let \begin{displaymath}\mathfrak{G}:= J_{N_1,N_2,N_3}\end{displaymath} and \begin{displaymath} S:=|J_{N_1,N_2,N_3}|.\end{displaymath}
As $N_m >1$, it holds that \begin{displaymath}S \lesssim \min \Bigg\{\frac{L_1^2}{N_1^{3}},\frac{L_2^2}{N_2^{3}},\frac{L_3^2}{N_3^{3}}\Bigg\}\lesssim\frac{L_{i_0}^2}{N_{i_0}^{3}}, \end{displaymath}
and thus the quantity $d:=AL_{i_0}^2S^{-1}N_{i_0}^{-3}$ is larger than 1 for some sufficiently large constant $A$. We therefore assume that $A$ is large enough for this to hold, and we will specify its value later. Now, applying the Guth-Katz polynomial method for this $d>1$ and the finite set of points $\mathfrak{G}$, we deduce that there exists a non-zero polynomial $p\in \R[x,y,z]$, of degree $\leq d$, whose zero set $Z$ decomposes $\R^3$ in $\sim d^3$ cells, each of which contains $\lesssim Sd^{-3}$ points of $\mathfrak{G}$. We can assume that this polynomial is square-free, as eliminating the squares of $p$ does not inflict any change on its zero set. 

We clarify here that it is only for technical reasons that we are not defining $i_0$ more naturally as an element of $\{1,2,3\}$ for which 
\begin{displaymath} \frac{L_{i_0}^2}{N_{i_0}^{3}}= \min \Bigg\{\frac{L_1^2}{N_1^{3}},\frac{L_2^2}{N_2^{3}},\frac{L_3^2}{N_3^{3}}\Bigg\}.\end{displaymath}

\textbf{Cellular case:} Suppose that there are $\geq 10^{-8}S$ points of $\mathfrak{G}$ in the union of the interiors of the cells. 

However, we also know that there exist $\sim d^3$ cells in total, each containing $\lesssim Sd^{-3}$ points of $\mathfrak{G}$. Therefore, there exist $\gtrsim d^3$ cells, with $\gtrsim Sd^{-3}$ points of $\mathfrak{G}$ in the interior of each. We call the cells with this property ``full cells". Now:

\textbf{Subcase 1:} If the interior of some full cell contains $< N_{i_0}$ points of $\mathfrak{G}$, then $Sd^{-3} \lesssim N_{i_0}$, so 
\begin{displaymath}S \lesssim \Bigg(\frac{L_{i_0}^2}{SN_{i_0}^3}\Bigg)^3N_{i_0},
\end{displaymath}from which we obtain
\begin{displaymath} S \lesssim\Big(\frac{L_{i_0}^{2}}{N_{i_0}^{3}}\Big)^{3/4} N_{i_0}^{1/4} \sim \frac{L_{i_0}^{3/2}}{N_{i_0}^{2}} \sim \bigg(\frac{L_{i_0}^{2}}{N_{i_0}^{8/3}}\bigg)^{3/4} \lesssim \end{displaymath} \begin{displaymath} \lesssim \bigg(\frac{L_1^{2}}{N_1^{8/3}}\bigg)^{1/4} \bigg(\frac{L_2^{2}}{N_2^{8/3}}\bigg)^{1/4} \bigg(\frac{L_3^{2}}{N_3^{8/3}}\bigg)^{1/4} \sim \frac{(L_1L_2L_3)^{1/2}}{(N_1N_2N_3)^{2/3}}. \end{displaymath}

\textbf{Subcase 2:} If the interior of each full cell contains $\geq N_{i_0}$ points of $\mathfrak{G}$, then we will be led to a contradiction by choosing $A$ so large, that there will be too many intersections between the zero set $Z$ of $p$ and the lines of $\mathfrak{L}_{i_0}$ which do not lie in $Z$. Indeed:

Let $\mathfrak{L}_Z$ be the set of lines of $\mathfrak{L}_{i_0}$ which are lying in $Z$. Consider a full cell and let $S_{cell}$ be the number of points of $\mathfrak{G}$ in the interior of the cell, $\mathfrak{L}_{cell}$ the set of lines of $\mathfrak{L}_{i_0}$ that intersect the interior of the cell and $L_{cell}$ the number of these lines. Obviously, $\mathfrak{L}_{cell} \subset \mathfrak{L}_{i_0} \setminus \mathfrak{L}_Z$.

Now, each point of $\mathfrak{G}$ has at least $N_{i_0}$ lines of $\mathfrak{L}_{i_0}$ passing through it, therefore each point of $\mathfrak{G}$ lying in the interior of the cell has at least $N_{i_0}$ lines of $\mathfrak{L}_{cell}$ passing through it. Thus, since $S_{cell} \geq N_{i_0}$, it follows that $ L_{cell} \geq N_{i_0} + (N_{i_0}-1) + (N_{i_0}-2) +...+1 \gtrsim N_{i_0}^2$, so \begin{displaymath} L^2_{cell}N_{i_0}^{-3} \gtrsim L_{cell} N_{i_0}^{-1}.\end{displaymath} 
However, $N_{i_0}>1$; indeed, if $N_{i_0}$ was equal to 1, then it would follow that $L_{i_0}^2=\frac{L_{i_0}^2}{N_{i_0}^{8/3}}\leq \frac{L_{m}^2}{N_{m}^{8/3}}\lneq L_m^2$ (since $N_m \gneq 1$), and thus $L_{i_0}$ would be strictly smaller than $L_m$, which is not true. So, by the Szemer\'{e}di-Trotter theorem,
\begin{displaymath} S_{cell} \lesssim L^2_{cell}N_{i_0}^{-3} + L_{cell} N_{i_0}^{-1}.\end{displaymath}
Therefore, $S_{cell} \lesssim L^2_{cell}N_{i_0}^{-3}$, thus, since we are working in a full cell, $Sd^{-3} \lesssim L^2_{cell}N_{i_0}^{-3}$, and rearranging we see that \begin{displaymath} L_{cell} \gtrsim S^{1/2}d^{-3/2}N_{i_0}^{3/2}.\end{displaymath}
But each of the lines of $\mathfrak{L}_{cell}$ intersects the boundary of the cell at at least one point $x$, with the property that the induced topology from $\R^3$ to the intersection of the line with the closure of the cell contains an open neighbourhood of $x$; therefore, there are $ \gtrsim S^{1/2}d^{-3/2}N_{i_0}^{3/2}$ incidences of this form between $\mathfrak{L}_{cell}$ and the boundary of the cell. Also, the union of the boundaries of all the cells is the zero set $Z$ of $p$, and if $x$ is a point of $Z$ which belongs to a line intersecting the interior of a cell, such that the induced topology from $\R^3$ to the intersection of the line with the closure of the cell contains an open neighbourhood of $x$,  then there exists at most one other cell whose interior is also intersected by the line and whose boundary contains $x$, such that the induced topology from $\R^3$ to the intersection of the line with the closure of that cell contains an open neighbourhood of $x$. So, if $I$ is the number of incidences between $Z$ and $\mathfrak{L}_{i_0} \setminus \mathfrak{L}_Z$, $I_{cell}$ is the number of incidences between $\mathfrak{L}_{cell}$ and the boundary of the cell, and $\mathcal{C}$ is the set of all the full cells (which, in our case, has cardinality $\gtrsim d^3$), then the above imply that
\begin{displaymath} I \gtrsim \sum_{cell \in \mathcal{C}} I_{cell} \gtrsim (S^{1/2}d^{-3/2}N_{i_0}^{3/2})\cdot d^3 = S^{1/2}d^{3/2}N_{i_0}^{3/2}.\end{displaymath}
On the other hand, if a line does not lie in the zero set $Z$ of $p$, then it intersects $Z$ in $\leq d$ points. Thus,
\begin{displaymath} I \leq L_{i_0} \cdot d \end{displaymath}
This means that \begin{displaymath} S^{1/2}d^{3/2}N_{i_0}^{3/2} \lesssim L_{i_0} \cdot d, \end{displaymath}
which in turn gives $A\lesssim 1$. In other words, there exists some constant $C$, independent of $\mathfrak{L}_1$, $\mathfrak{L}_2$, $\mathfrak{L}_3$ and $N_1$, $N_2$, $N_3$, such that $A \leq C$. By fixing $A$ to be a constant larger than $C$ (and of course large enough to have that $d>1$), we are led to a contradiction.

\textbf{Algebraic case:} Suppose that there are $< 10^{-8}S$ points of $\mathfrak{G}$ in the union of the interiors of the cells. We denote by $\mathfrak{G}_1$ the set of points of $\mathfrak{G}$ which lie in $Z$; it holds that $|\mathfrak{G}_1|>(1-10^{-8})S$. 

In addition, for all $j \in \{1,2,3\}$, let $\mathfrak{L}_j'$ be the set of lines in $\mathfrak{L}_j$, such that each contains $\geq \frac{1}{100} \frac{SN_j}{L_j}$ points of $\mathfrak{G}_1$. We now analyse the situation.

Let $j\in \{1,2,3\}$. Each point of $\mathfrak{G}_1$ intersects at least $N_j$ lines of $\mathfrak{L}_j$. Thus,  \begin{displaymath} I_{\mathfrak{G}_1,\mathfrak{L}_j} > (1-10^{-8})SN_j.\end{displaymath}
On the other hand, each line in $ \mathfrak{L}_j\setminus \mathfrak{L}'_j$ contains fewer than $\frac{1}{100} \frac{SN_j}{L_j}$ points of $\mathfrak{G}_1$, and thus
\begin{displaymath}I_{\mathfrak{G}_1,\mathfrak{L}_j\setminus \mathfrak{L}'_j} <  |\mathfrak{L}_j\setminus \mathfrak{L}'_j| \cdot \frac{SN_j}{100L_j} \leq \frac{1}{100}SN_j.\end{displaymath}
Therefore, since $ I_{\mathfrak{G}_1, \mathfrak{L}_j}=I_{\mathfrak{G}_1,\mathfrak{L}_j \setminus \mathfrak{L}'_j}+I_{\mathfrak{G}_1,\mathfrak{L}'_j}$, it follows that 
\begin{displaymath}I_{\mathfrak{G}_1,\mathfrak{L}'_j}> (1-10^{-8}-10^{-2})SN_j, \end{displaymath}
for all $j \in \{1,2,3\}$.

Now, for all $j \in \{1,2,3\}$, let $\mathfrak{G}'_j$ be the set of points of $\mathfrak{G}_1$ each of which intersects $\geq \frac{10^{-8}}{1+10^{-8}}(1-10^{-8}-10^{-2})N_j$ lines of $\mathfrak{L}'_j$.

Let $j \in \{1,2,3\}$. By the definition of $\mathfrak{G}_j'$, each point of $\mathfrak{G}_1 \setminus \mathfrak{G}_j'$ intersects fewer than $\frac{10^{-8}}{1+10^{-8}}(1-10^{-8}-10^{-2})N_j$ lines of $\mathfrak{L}'_j$, and therefore \begin{displaymath}I_{\mathfrak{G}_1 \setminus \mathfrak{G}'_j, \mathfrak{L}'_j} < |\mathfrak{G}_1 \setminus \mathfrak{G}'_j| \frac{10^{-8}}{1+10^{-8}}(1-10^{-8}-10^{-2})N_j \leq  \frac{10^{-8}}{1+10^{-8}}(1-10^{-8}-10^{-2})SN_j.\end{displaymath}
Thus, since $I_{\mathfrak{G}_1, \mathfrak{L}'_j}=I_{\mathfrak{G}_1 \setminus \mathfrak{G}'_j, \mathfrak{L}'_j}+I_{\mathfrak{G}'_j, \mathfrak{L}'_j}$, we obtain
\begin{displaymath}I_{\mathfrak{G}'_j, \mathfrak{L}'_j} > \frac{1-10^{-8}-10^{-2}}{1+10^{-8}}SN_j. \end{displaymath}
And $I_{\mathfrak{G}'_j, \mathfrak{L}'_j} \leq |\mathfrak{G}'_j|\cdot (1+10^{-8})N_j$, since each point of $\mathfrak{G}$ intersects fewer than $(1+10^{-8})N_{j}$ lines of $\mathfrak{L}_{j}$. Therefore, $ \frac{1-10^{-8}-10^{-2}}{1+10^{-8}}SN_j< |\mathfrak{G}'_j|\cdot (1+10^{-8})N_j$, and thus
\begin{displaymath} |\mathfrak{G}'_j|\geq \frac{1-10^{-8}-10^{-2}}{(1+10^{-8})^2}S; \end{displaymath}
in other words, for all $j\in \{1,2,3\}$, there exist at least $\frac{1-10^{-8}-10^{-2}}{(1+10^{-8})^2}S$ points of $\mathfrak{G}_1$, each intersecting at least $\frac{10^{-8}}{1+10^{-8}}(1-10^{-8}-10^{-2})N_j$ lines of $\mathfrak{L}'_j$.

But \begin{displaymath} |\mathfrak{G}_1'\cup \mathfrak{G}_2' \cup \mathfrak{G}_3'|=|\mathfrak{G}_1'|+|\mathfrak{G}_2'|+|\mathfrak{G}_3'|-|\mathfrak{G}_1'\cap\mathfrak{G}_2'|-|\mathfrak{G}_2'\cap\mathfrak{G}_3'|-|\mathfrak{G}_1'\cap\mathfrak{G}_3'|+|\mathfrak{G}_1'\cap\mathfrak{G}_2'\cap\mathfrak{G}_3'|,\end{displaymath} thus 
\begin{displaymath}|\mathfrak{G}_1'\cap\mathfrak{G}_2'\cap\mathfrak{G}_3'|\geq\frac{1-10^{-8}-10^{-2}}{(1+10^{-8})^2}S-3S+3 \frac{2(1-10^{-8}-10^{-2})-(1+10^{-8})^2}{(1+10^{-8})^2}S=\end{displaymath} 
\begin{displaymath}= \frac{7(1-10^{-8}-10^{-2})-6(1+10^{-8})^2}{(1+10^{-8})^2}S=\frac{1-19 \cdot 10^{-8}-7 \cdot 10^{-2}-6 \cdot 10^{-16}}{(1+10^{-8})^2}S;\end{displaymath}
in other words, there exist at least $\frac{1-8\cdot 10^{-2}}{(1+10^{-8})^2}S$ points of $\mathfrak{G}_1$, each intersecting at least $\frac{10^{-8}}{1+10^{-8}}(1-10^{-8}-10^{-2})N_j$ lines of $\mathfrak{L}'_j$, simultaneously for all $j\in\{1,2,3\}$.

Now, let $\mathfrak{L}_{i_0,1}$ be the set of lines in $\mathfrak{L}_{i_0}$, each containing $\geq\frac{1}{10^{100}}\frac{SN_{i_0}}{L_{i_0}}$ points of $\mathfrak{G}_1' \cap \mathfrak{G}_2' \cap \mathfrak{G}_3'$. 

Since each point of $\mathfrak{G}_1'\cap\mathfrak{G}_2'\cap\mathfrak{G}_3'$ intersects at least $\frac{10^{-8}}{1+10^{-8}}(1-10^{-8}-10^{-2})N_{i_0}$ lines of $\mathfrak{L}'_{i_0}$, it follows that
\begin{displaymath} I_{\mathfrak{G}_1'\cap\mathfrak{G}_2'\cap\mathfrak{G}_3', \mathfrak{L}_{i_0}'}\geq |\mathfrak{G}_1'\cap\mathfrak{G}_2'\cap\mathfrak{G}_3'| \frac{10^{-8}}{1+10^{-8}}(1-10^{-8}-10^{-2})N_{i_0}\geq 
\end{displaymath}
\begin{displaymath} \geq \frac{1-8\cdot 10^{-2}}{(1+10^{-8})^2}S \frac{10^{-8}}{1+10^{-8}}(1-10^{-8}-10^{-2})N_j\geq 10^{-10}SN_{i_0}.\end{displaymath}
On the other hand, each line in $\mathfrak{L}_{i_0}'\setminus \mathfrak{L}_{i_0,1}$ contains fewer than $\frac{1}{10^{100}}\frac{SN_{i_0}}{L_{i_0}}$ points of $\mathfrak{G}_1' \cap \mathfrak{G}_2' \cap \mathfrak{G}_3'$, so
\begin{displaymath} I_{\mathfrak{G}_1'\cap\mathfrak{G}_2'\cap\mathfrak{G}_3', \mathfrak{L}_{i_0}'\setminus \mathfrak{L}_{i_0,1}} < L_{i_0} \cdot \frac{SN_{i_0}}{10^{100}L_{i_0}}=10^{-100}SN_{i_0}.\end{displaymath} Therefore, since $I_{\mathfrak{G}_1'\cap\mathfrak{G}_2'\cap\mathfrak{G}_3', \mathfrak{L}_{i_0}'}=I_{\mathfrak{G}_1'\cap\mathfrak{G}_2'\cap\mathfrak{G}_3', \mathfrak{L}_{i_0}'\setminus \mathfrak{L}_{i_0,1}}+I_{\mathfrak{G}_1'\cap\mathfrak{G}_2'\cap\mathfrak{G}_3',  \mathfrak{L}_{i_0,1}}$, we obtain
\begin{displaymath} I_{\mathfrak{G}_1'\cap\mathfrak{G}_2'\cap\mathfrak{G}_3',  \mathfrak{L}_{i_0,1}} >10^{-11}SN_j. \end{displaymath} Now, let $\mathfrak{G}'$ be the set of points in $\mathfrak{G}_1'\cap\mathfrak{G}_2'\cap\mathfrak{G}_3'$, each of which lies in $\geq10^{-12}N_{i_0}$ lines of $\mathfrak{L}_{i_0,1}$. By the definition of $\mathfrak{G}'$, it holds that
\begin{displaymath} I_{(\mathfrak{G}_1'\cap\mathfrak{G}_2'\cap\mathfrak{G}_3')\setminus \mathfrak{G}',  \mathfrak{L}_{i_0,1}} <10^{-12}SN_{i_0}, \end{displaymath} and thus, as $I_{\mathfrak{G}_1'\cap\mathfrak{G}_2'\cap\mathfrak{G}_3',  \mathfrak{L}_{i_0,1}}=I_{(\mathfrak{G}_1'\cap\mathfrak{G}_2'\cap\mathfrak{G}_3')\setminus \mathfrak{G}',  \mathfrak{L}_{i_0,1}}+I_{\mathfrak{G}', \mathfrak{L}_{i_0,1}}$, it follows that
\begin{displaymath} I_{\mathfrak{G}', \mathfrak{L}_{i_0,1}}> (10^{-11}-10^{-12})SN_{i_0}= 9 \cdot 10^{-12}SN_{i_0}. \end{displaymath} At the same time, however, \begin{displaymath} I_{\mathfrak{G}', \mathfrak{L}_{i_0,1}} < |\mathfrak{G}'|\cdot (1+10^{-8})N_{i_0}, \end{displaymath}
since each point of $\mathfrak{G}$ intersects fewer than $(1+10^{-8})N_{i_0}$ lines of $\mathfrak{L}_{i_0}$. Therefore,
\begin{displaymath} |\mathfrak{G}'| > \frac{9\cdot 10^{-12}}{1+10^{-8}}S. \end{displaymath}

From now on, we fix $\{j,k\}=\{1,2,3\}\setminus \{i_0\}$, such that $j<k$. We have thus so far reached the conclusion that there exist $\gtrsim S$ points of $\mathfrak{G}_1$, each intersecting at least $10^{-12}N_{i_0}$ lines of $\mathfrak{L}_{i_0,1}$, at least $\frac{10^{-8}}{1+10^{-8}}(1-10^{-8}-10^{-2})N_j$ lines of $\mathfrak{L}'_j$, and at least $\frac{10^{-8}}{1+10^{-8}}(1-10^{-8}-10^{-2})N_k$ lines of $\mathfrak{L}'_k$.

Suppose that $\frac{1}{10^{100}}\frac{SN_{3}}{L_{3}} > d$.

Then $\frac{1}{10^{100}}\frac{SN_{1}}{L_{1}} > d$ and $\frac{1}{10^{100}}\frac{SN_{2}}{L_{2}} > d$ as well. So, all the lines of $\mathfrak{L}_{i_0,1}$, $\mathfrak{L}_j'$ and $\mathfrak{L}_k'$ lie in $Z$. Therefore, each point of $\mathfrak{G}'$ is a critical point of $Z$, and thus each line of $\mathfrak{L}_{i_0,1}$ is a critical line, while all the critical lines number $\lesssim d^2$. 

On the other hand, each point of $\mathfrak{G}'$ is a multijoint for the collections $\mathfrak{L}_{i_0,1}$, $\mathfrak{L}_j$ and $\mathfrak{L}_k$ of lines, each lying in $\gtrsim N_{i_0}$ lines of $\mathfrak{L}_{i_0,1}$, in $\gtrsim N_j$ lines of $\mathfrak{L}_j$ and in $\gtrsim N_k$ lines of $\mathfrak{L}_k$. Therefore, due to the fact that, whenever a line of $\mathfrak{L}_{i_0,1}$, a line of $\mathfrak{L}_j$ and a line of $\mathfrak{L}_k$ meet at a point, they form a joint there, it follows by Proposition \ref{multsimple} that
\begin{displaymath} |\mathfrak{G}'| \lesssim \frac{(|\mathfrak{L}_{i_0,1}||\mathfrak{L}_j||\mathfrak{L}_k|)^{1/2}}{(N_{i_0}N_jN_k)^{1/2}} \lesssim \frac{(d^2L_jL_k)^{1/2}}{(N_{i_0}N_jN_k)^{1/2}} \sim d\frac{(L_jL_k)^{1/2}}{(N_{i_0}N_jN_k)^{1/2}}\sim \frac{L_{i_0}^2}{SN_{i_0}^3}\frac{(L_jL_k)^{1/2}}{(N_{i_0}N_jN_k)^{1/2}}\sim \end{displaymath}
\begin{displaymath}\sim \frac{L_{i_0}^2}{SN_{i_0}^{7/2}}\frac{(L_jL_k)^{1/2}}{(N_jN_k)^{1/2}}.
\end{displaymath}
Thus, since $|\mathfrak{G}'| \gtrsim S$, we have that 
\begin{displaymath} S \lesssim \frac{L_{i_0}}{N_{i_0}^{7/4}} \frac{L_j^{1/4}}{N_j^{1/4}}\frac{L_k^{1/4}}{N_k^{1/4}}\lesssim\frac{L_{i_0}}{N_{i_0}^{4/3}} \frac{L_j^{1/4}}{N_j^{1/4}}\frac{L_k^{1/4}}{N_k^{1/4}}\sim \Bigg(\frac{L_{i_0}^2}{N_{i_0}^{8/3}}\Bigg)^{1/2} \frac{L_j^{1/4}}{N_j^{1/4}}\frac{L_k^{1/4}}{N_k^{1/4}}\lesssim\end{displaymath}
\begin{displaymath}\lesssim \Bigg(\frac{L_{i_0}^2}{N_{i_0}^{8/3}}\Bigg)^{\frac{1}{2}\cdot \frac{1}{2}}\Bigg(\frac{L_{j}^2}{N_{j}^{8/3}}\Bigg)^{\frac{1}{4}\cdot \frac{1}{2}}\Bigg(\frac{L_{k}^2}{N_{k}^{8/3}}\Bigg)^{\frac{1}{4}\cdot \frac{1}{2}} \frac{L_j^{1/4}}{N_j^{1/4}}\frac{L_k^{1/4}}{N_k^{1/4}} \sim \frac{L_{i_0}^{1/2}}{N_{i_0}^{2/3}}\frac{L_j^{1/4}}{N_j^{1/3}}\frac{L_k^{1/4}}{N_k^{1/3}}\frac{L_j^{1/4}}{N_j^{1/4}}\frac{L_k^{1/4}}{N_k^{1/4}} \sim \end{displaymath}
\begin{displaymath} \sim \frac{L_{i_0}^{1/2}}{N_{i_0}^{2/3}}\frac{L_j^{1/2}}{N_j^{7/12}}\frac{L_k^{1/2}}{N_k^{7/12}}\;\Bigg(\lesssim \frac{(L_1L_2L_3)^{1/2}}{(N_1N_2N_3)^{1/2+1/10^4}}\Bigg).\end{displaymath}

Therefore, we can assume from now on that $\frac{1}{10^{100}}\frac{SN_{3}}{L_{3}} \leq d$.

\textbf{Case A:} Suppose that $i_0=3$. Then, $d \sim \frac{L_3^2}{SN_3^3}$, and thus
\begin{displaymath}\frac{SN_3}{L_3}\lesssim \frac{L_3^2}{SN_3^3}, \end{displaymath}which, since $i_0=3$, implies that
\begin{displaymath} S \lesssim \frac{L_{i_0}^{3/2}}{N_{i_0}^{2}}\sim \Bigg(\frac{L_{i_0}^2}{N_{i_0}^{8/3}}\Bigg)^{3/4}\lesssim \Bigg(\frac{L_1^2}{N_1^{8/3}}\Bigg)^{\frac{1}{3}\cdot \frac{3}{4}}\Bigg(\frac{L_2^2}{N_2^{8/3}}\Bigg)^{\frac{1}{3}\cdot \frac{3}{4}}\Bigg(\frac{L_3^2}{N_3^{8/3}}\Bigg)^{\frac{1}{3}\cdot \frac{3}{4}}\sim\end{displaymath}
\begin{displaymath} \sim\frac{(L_1L_2L_3)^{1/2}}{(N_1N_2N_3)^{2/3}} \;\Bigg(\lesssim \frac{(L_1L_2L_3)^{1/2}}{(N_1N_2N_3)^{1/2+1/10^4}}\Bigg). \end{displaymath}

\textbf{Case B:} Suppose that $N_{1} \gtrsim N_3^{1/1000}$ or $N_{2} \gtrsim N_3^{1/1000}$. Then
\begin{displaymath}S \lesssim \frac{L_3}{N_3}\cdot d \sim \frac{L_3}{N_3}\cdot\frac{L_{i_0}^2}{SN_{i_0}^3},
\end{displaymath}
from which we obtain
\begin{displaymath} S \lesssim \frac{L_{i_0}}{N_{i_0}^{3/2}} \frac{L_3^{1/2}}{N_3^{1/2}} \lesssim\frac{L_{i_0}}{N_{i_0}^{4/3}} \frac{L_3^{1/2}}{N_3^{1/2}} \sim \Bigg(\frac{L_{i_0}^2}{N_{i_0}^{8/3}}\Bigg)^{1/2} \frac{L_3^{1/2}}{N_3^{1/2}} \lesssim \Bigg(\frac{L_{1}^2}{N_{1}^{8/3}}\Bigg)^{\frac{1}{2}\cdot\frac{1}{2}}\Bigg(\frac{L_{2}^2}{N_{2}^{8/3}}\Bigg)^{\frac{1}{2}\cdot\frac{1}{2}} \frac{L_3^{1/2}}{N_3^{1/2}}\sim\end{displaymath}
\begin{displaymath}\sim  \frac{L_{1}^{1/2}}{N_{1}^{2/3}}\frac{L_2^{1/2}}{N_2^{2/3}}\frac{L_3^{1/2}}{N_3^{1/2}} \lesssim \frac{(L_1L_2L_3)^{1/2}}{(N_1N_2N_3)^{1/2+1/10^4}}. \end{displaymath}

\textbf{Case C:} Suppose that $i_0\neq 3$, $N_{1} \lesssim N_3^{1/1000}$ and $N_{2} \lesssim N_3^{1/1000}$, while also $S \lesssim L_{i_0}^{3/2}$ or $S \lesssim L_{i_0}L_j^{1/2}$. 

Under these assumptions, $\{i_0,j\}=\{1,2\}$, therefore
\begin{displaymath} S \lesssim L_{i_0}^{3/2}+L_{i_0}L_j^{1/2}\sim \frac{L_{i_0}^{3/2}}{N_{i_0}^2}N_{i_0}^2 +\frac{L_{i_0}}{N_{i_0}^{4/3}}L_j^{1/2}N_{i_0}^{4/3}\sim 
\end{displaymath}
\begin{displaymath}\sim \Bigg(\frac{L_{i_0}^{2}}{N_{i_0}^{8/3}}\Bigg)^{3/4}N_{i_0}^2 +\Bigg(\frac{L_{i_0}^2}{N_{i_0}^{8/3}}\Bigg)^{1/2}L_j^{1/2}N_{i_0}^{4/3} \lesssim 
\end{displaymath}
\begin{displaymath} \lesssim \Bigg(\frac{L_{1}^{2}}{N_{1}^{8/3}}\Bigg)^{\frac{1}{3} \cdot \frac{3}{4}}\Bigg(\frac{L_{2}^{2}}{N_{2}^{8/3}}\Bigg)^{\frac{1}{3} \cdot \frac{3}{4}}\Bigg(\frac{L_{3}^{2}}{N_{3}^{8/3}}\Bigg)^{\frac{1}{3} \cdot \frac{3}{4}}N_{i_0}^2 +
\end{displaymath}
\begin{displaymath} + \Bigg(\frac{L_{i_0}^2}{N_{i_0}^{8/3}}\Bigg)^{\frac{1}{2}\cdot \frac{1}{2}}\Bigg(\frac{L_{3}^2}{N_{3}^{8/3}}\Bigg)^{\frac{1}{2}\cdot \frac{1}{2}}L_j^{1/2}N_{i_0}^{4/3}\sim \end{displaymath} 
\begin{displaymath} \sim \frac{L_{1}^{1/2}}{N_{1}^{2/3}}\frac{L_2^{1/2}}{N_{2}^{2/3}}  \frac{L_{3}^{1/2}}{N_{3}^{2/3}}N_{i_0}^2+\frac{L_{i_0}^{1/2}}{N_{i_0}^{2/3}}L_j^{1/2}  \frac{L_{3}^{1/2}}{N_{3}^{2/3}}N_{i_0}^{4/3}\lesssim \end{displaymath} \begin{displaymath} \lesssim \frac{L_{1}^{1/2}}{N_{1}^{2/3}}\frac{L_2^{1/2}}{N_{2}^{2/3}}  \frac{L_{3}^{1/2}}{N_{3}^{2/3}}N_3^{2/1000}+\frac{L_{i_0}^{1/2}}{N_{i_0}^{2/3}}L_j^{1/2}  \frac{L_{3}^{1/2}}{N_{3}^{2/3}}N_{3}^{(4/3)(1/1000)} \lesssim
\end{displaymath}
\begin{displaymath} \lesssim \frac{(L_1L_2L_3)^{1/2}}{(N_1N_2N_3)^{1/2+1/10^4}}. \end{displaymath}

\textbf{Case D:} Suppose that $i_0\neq 3$ (and thus $\{i_0,j\}=\{1,2\}$), $N_{1} \lesssim N_3^{1/1000}$, $N_{2} \lesssim N_3^{1/1000}$, $S \gtrsim L_{i_0}^{3/2}$ and $S \gtrsim L_{i_0}L_j^{1/2}$.

\textbf{Subcase D.1:} If $\frac{L_3^2}{N_3^{7/3}} \lesssim \frac{L_j^2}{N_j^{8/3}}$, then, since
\begin{displaymath} \frac{SN_{3}}{L_{3}} \lesssim d \sim \frac{L_{i_0}^2}{SN_{i_0}^3},\end{displaymath}
we obtain
\begin{displaymath}S \lesssim \frac{L_3^{1/2}}{N_3^{1/2}}\frac{L_{i_0}}{N_{i_0}^{3/2}} \sim \frac{L_3^{1/4}}{N_3^{7/24}} \frac{L_3^{1/4}}{N_3^{5/24}} \frac{L_{i_0}}{N_{i_0}^{3/2}} \sim \Bigg(\frac{L_3^2}{N_3^{7/3}}\Bigg)^{1/8}\frac{L_3^{1/4}}{N_3^{5/24}}\frac{L_{i_0}}{N_{i_0}^{3/2}} \lesssim \end{displaymath}
\begin{displaymath} \lesssim \Bigg(\frac{L_j^2}{N_j^{8/3}}\Bigg)^{1/8}\frac{L_3^{1/4}}{N_3^{5/24}}\frac{L_{i_0}}{N_{i_0}^{4/3}}\sim
\frac{L_j^{1/4}}{N_j^{1/3}}\frac{L_3^{1/4}}{N_3^{5/24}}\Bigg(\frac{L_{i_0}^2}{N_{i_0}^{8/3}}\Bigg)^{1/2}\lesssim 
\end{displaymath}
\begin{displaymath}\lesssim \frac{L_j^{1/4}}{N_j^{1/3}}\frac{L_3^{1/4}}{N_3^{5/24}}\Bigg(\frac{L_{i_0}^2}{N_{i_0}^{8/3}}\Bigg)^{\frac{1}{2}\cdot \frac{1}{2}}\Bigg(\frac{L_{j}^2}{N_{j}^{8/3}}\Bigg)^{\frac{1}{4}\cdot \frac{1}{2}}\Bigg(\frac{L_{3}^2}{N_{3}^{8/3}}\Bigg)^{\frac{1}{4}\cdot \frac{1}{2}} \sim 
\end{displaymath} 
\begin{displaymath} \sim \frac{L_j^{1/4}}{N_j^{1/3}}\frac{L_3^{1/4}}{N_3^{5/24}}\frac{L_{i_0}^{1/2}}{N_{i_0}^{2/3}}\frac{L_{j}^{1/4}}{N_{j}^{1/3}}\frac{L_{3}^{1/4}}{N_{3}^{1/3}} \sim \frac{(L_{i_0}L_jL_3)^{1/2}}{N_{i_0}^{2/3}N_j^{2/3}N_3^{13/24}} \lesssim
\end{displaymath}
\begin{displaymath}\lesssim\frac{(L_1L_2L_3)^{1/2}}{(N_1N_2N_3)^{1/2+1/24}}\;\Bigg(\lesssim\frac{(L_1L_2L_3)^{1/2}}{(N_1N_2N_3)^{1/2+1/10^4}}\Bigg). \end{displaymath}

\textbf{Remark:} Note that we have already proved that
\begin{displaymath} |J_{N_1,N_2,N_3}| \lesssim \frac{(L_1L_2L_3)^{1/2}}{(N_1N_2N_3)^{1/2+1/10^4}} \end{displaymath}
for all $(N_1,N_2,N_3) \in \mathcal{B}_{1,2,3,i_0}$, and that the $(N_1,N_2,N_3)$ that correspond to the remaining cases all belong to $\mathcal{C}_{1,2,3,i_0}$. 

$\;\;\;\;\;\;\;\;\;\;\;\;\;\;\;\;\;\;\;\;\;\;\;\;\;\;\;\;\;\;\;\;\;\;\;\;\;\;\;\;\;\;\;\;\;\;\;\;\;\;\;\;\;\;\;\;\;\;\;\;\;\;\;\;\;\;\;\;\;\;\;\;\;\;\;\;\;\;\;\;\;\;\;\;\;\;\;\;\;\;\;\;\;\;\;\;\;\;\;\;\;\;\;\;\;\;\;\;\;\;\;\;\;\;\;\;\;\;\;\;\;\;\;\;\;\;\;\;\;\;\;\;\;\blacksquare$

\textbf{Subcase D.2:} Suppose that $\frac{L_j^2}{N_j^{8/3}}\lesssim \frac{L_3^2}{N_3^{7/3}}$.

We may assume that $\frac{1}{2}\cdot \frac{1}{10^{100}}\frac{SN_{i_0}}{L_{i_0}} > 3d$ and  $\frac{1}{2}\cdot \frac{1}{10^{100}}\frac{SN_{j}}{L_{j}} > 3d$, as the inequalities $\frac{1}{2}\cdot \frac{1}{10^{100}}\frac{SN_{i_0}}{L_{i_0}} \leq 3d$ and  $\frac{1}{2} \cdot \frac{1}{10^{100}}\frac{SN_{j}}{L_{j}} \leq 3d$ would imply that $S \lesssim \frac{L_{i_0}^{3/2}}{N_{i_0}^2}\lesssim L_{i_0}^{3/2}$ and $S \lesssim \frac{L_{i_0}}{N_{i_0}^{3/2}}\frac{L_j^{1/2}}{N_j^{1/2}}\lesssim L_{i_0}L_j^{1/2}$, respectively, something that we can assume is false in Case D. 

In particular, this implies that all the lines in $\mathfrak{L}_{i_0,1}$ and $\mathfrak{L}_j'$ lie in $Z$.

If, in addition, we assume that, for each point of $\mathfrak{G}'$, there exist at least three lines of $\mathfrak{L}_{i_0,1} \cup \mathfrak{L}_j'$ passing through it, then it follows that each point of $\mathfrak{G}'$ is either critical or flat, and eventually that each line in $\mathfrak{L}_{i_0,1}$ is either critical or flat (since each line in $\mathfrak{L}_{i_0,1}$ contains at least $\frac{1}{10^{100}}\frac{SN_{i_0}}{L_{i_0}}$ points of $\mathfrak{G}'$, of which either at least half are critical or at least half are flat). In whatever follows we accept that this is true; in other words, that, for each point of $\mathfrak{G}'$, there exist at least three lines of $\mathfrak{L}_{i_0,1} \cup \mathfrak{L}_j'$ passing through it. 

\textbf{Remark.} In fact, the above certainly holds if either the quantity $10^{-12}N_{i_0}$, which is a lower bound on the number of lines of $\mathfrak{L}_{i_0,1}$ passing through each point of $\mathfrak{G}'$, or the quantity $\frac{1-10^{-8}-10^{-2}}{1+10^{8}}N_j$, which is a lower bound on the number of lines of $\mathfrak{L}_{j}'$ passing through each point of $\mathfrak{G}'$, is strictly larger than 1.

Moreover, it certainly holds that at least one of the quantities $10^{-12}N_{i_0}$ and $\frac{1-10^{-8}-10^{-2}}{1+10^{8}}\cdot N_j$ is strictly larger than 1 under the assumption that $10^{-12}N_m>1$. 

The reason for this is that $m\in \{i_0,j\}=\{1,2\}$. Indeed, $L_{i_0}$ is equal to $\frac{L_{i_0}}{N_{i_0}^{4/3}}N_{i_0}^{4/3}$, a quantity that can be assumed to be strictly smaller than $L_3$, since $\frac{L_{i_0}}{N_{i_0}^{4/3}}=\Bigg(\frac{L_{i_0}^2}{N_{i_0}^{8/3}}\Bigg)^{1/2} \leq \Bigg(\frac{L_{3}^2}{N_{3}^{8/3}}\Bigg)^{1/2}= \frac{L_3}{N_3^{4/3}}$ and $N_{i_0}^{\frac{4}{3}} \lesssim N_3^{\frac{4}{3}\cdot \frac{1}{1000}}$, where the explicit constant now hiding behind the $\lesssim$ symbol has so far not been constrained. This means that $L_3$ cannot be equal to $L_m=\min\{L_1,L_2,L_3\}$, and thus $m\in \{i_0,j\}=\{1,2\}$. 

Consequently, under the assumption that
\begin{displaymath}N_m > 10^{12},
\end{displaymath}it holds that, for each point of $\mathfrak{G}'$, there exist at least three lines of $\mathfrak{L}_{i_0,1} \cup \mathfrak{L}_j'$ passing through it.

In particular, this is the reason why, in the statement of Theorem \ref{theoremmult1}, we consider only the multijoints $x \in J$ for which $N_m(x)>10^{12}$; it is a convenient way to ensure that, for each point of $\mathfrak{G}'$, there exist at least three lines of $\mathfrak{L}_{i_0,1} \cup \mathfrak{L}_j'$ passing through it, a fact which allows us to continue our analysis. However, in reality, the only case we cannot tackle here is the one where there exists exactly one line of $\mathfrak{L}_{i_0,1}$ and exactly one line of $\mathfrak{L}_j'$ passing through each point of $\mathfrak{G}'$, which, since $m \in \{i_0,j\}$, falls under the case where exactly one line of $\mathfrak{L}_m$ passes through each multijoint in $J$, and which, in turn, we have already excluded from our analysis.

That is the reason why, after having proved the statement of Theorem \ref{theoremmult1}, it only suffices to show that
\begin{displaymath} \sum_{\{x \in J: \; N_m(x)=1\}}(N_1(x)N_2(x)N_3(x))^{1/2} \lesssim (L_1L_2L_3)^{1/2}
\end{displaymath}in order to deduce that
\begin{displaymath} \sum_{x \in J}(N_1(x)N_2(x)N_3(x))^{1/2} \lesssim (L_1L_2L_3)^{1/2}
\end{displaymath} under the assumptions of Theorem \ref{theoremmult1}.

$\;\;\;\;\;\;\;\;\;\;\;\;\;\;\;\;\;\;\;\;\;\;\;\;\;\;\;\;\;\;\;\;\;\;\;\;\;\;\;\;\;\;\;\;\;\;\;\;\;\;\;\;\;\;\;\;\;\;\;\;\;\;\;\;\;\;\;\;\;\;\;\;\;\;\;\;\;\;\;\;\;\;\;\;\;\;\;\;\;\;\;\;\;\;\;\;\;\;\;\;\;\;\;\;\;\;\;\;\;\;\;\;\;\;\;\;\;\;\;\;\;\;\;\;\;\;\;\;\;\;\;\;\;\blacksquare$

As we have already mentioned, we are in the case where $(N_1,N_2,N_3)\in \mathcal{C}_{1,2,3,i_0}$.

Each line of $\mathfrak{L}_{i_0,1}$ is either a critical or a flat line of $Z$. Let $\mathfrak{L}_{crit}$ be the set of lines in $\mathfrak{L}_{i_0,1}$ that are critical lines of $Z$, $\mathfrak{L}_{flat,1}$ the set of lines in $\mathfrak{L}_{i_0,1}$ that are flat lines of $Z$ not lying in the planes of $Z$, and $\mathfrak{L}_{flat,2}$ the set of lines in $\mathfrak{L}_{i_0,1}$ that are flat lines of $Z$ lying in the planes of $Z$. Since $|\mathfrak{G'}| \gtrsim S$, it follows that either $\gtrsim S$ points of $\mathfrak{G}'$ lie in $\mathfrak{L}_{crit} \cup \mathfrak{L}_{flat,1}$ or $\gtrsim S$ points of $\mathfrak{G}'$ lie in $\mathfrak{L}_{flat,2}$.

\textbf{Subcase D.2.1:} Suppose that $\gtrsim S$ points of $\mathfrak{G}'$ lie in $\mathfrak{L}_{crit} \cup \mathfrak{L}_{flat,1}$.

Then, the sets of lines $\mathfrak{L}_{crit} \cup \mathfrak{L}_{flat,1}$, $\mathfrak{L}_j$ and $\mathfrak{L}_3$ form $\gtrsim S$ multijoints, with the property that $\geq 1$ line of $\mathfrak{L}_{crit} \cup \mathfrak{L}_{flat,1}$, $\gtrsim N_j$ lines of $\mathfrak{L}_j$ and $\gtrsim N_3$ lines of $\mathfrak{L}_3$ are passing through each. Therefore, since, whenever a line of $\mathfrak{L}_{crit} \cup \mathfrak{L}_{flat,1}$, a line of $\mathfrak{L}_j$ and a line of $\mathfrak{L}_3$ meet at a point, they form a joint there, it follows by Proposition \ref{multsimple} that

\begin{displaymath} S(N_jN_3)^{1/2} \lesssim (|\mathfrak{L}_{crit} \cup \mathfrak{L}_{flat,1}| L_jL_3)^{1/2}, \end{displaymath}
and since
\begin{displaymath}|\mathfrak{L}_{crit} \cup \mathfrak{L}_{flat,1}| \lesssim d^2, \end{displaymath}
it holds that
\begin{displaymath} S \lesssim \frac{(d^2L_jL_3)^{1/2}}{(N_jN_3)^{1/2}} \sim d\frac{(L_jL_3)^{1/2}}{(N_jN_3)^{1/2}}\sim \frac{L_{i_0}^2}{SN_{i_0}^3}\frac{(L_jL_3)^{1/2}}{(N_jN_3)^{1/2}},\end{displaymath}
from which we obtain
\begin{displaymath} S \lesssim \frac{L_{i_0}}{N_{i_0}^{3/2}} \frac{L_j^{1/4}}{N_j^{1/4}}\frac{L_3^{1/4}}{N_3^{1/4}}\lesssim\frac{L_{i_0}}{N_{i_0}^{4/3}} \frac{L_j^{1/4}}{N_j^{1/4}}\frac{L_3^{1/4}}{N_3^{1/4}}\sim \Bigg(\frac{L_{i_0}^2}{N_{i_0}^{8/3}}\Bigg)^{1/2} \frac{L_j^{1/4}}{N_j^{1/4}}\frac{L_3^{1/4}}{N_3^{1/4}}\lesssim\end{displaymath}
\begin{displaymath}\lesssim \Bigg(\frac{L_{i_0}^2}{N_{i_0}^{8/3}}\Bigg)^{\frac{1}{2}\cdot \frac{1}{2}}\Bigg(\frac{L_{j}^2}{N_{j}^{8/3}}\Bigg)^{\frac{1}{4}\cdot \frac{1}{2}}\Bigg(\frac{L_{3}^2}{N_{3}^{8/3}}\Bigg)^{\frac{1}{4}\cdot \frac{1}{2}} \frac{L_j^{1/4}}{N_j^{1/4}}\frac{L_3^{1/4}}{N_3^{1/4}} \sim \frac{L_{i_0}^{1/2}}{N_{i_0}^{2/3}}\frac{L_j^{1/4}}{N_j^{1/3}}\frac{L_3^{1/4}}{N_3^{1/3}}\frac{L_j^{1/4}}{N_j^{1/4}}\frac{L_3^{1/4}}{N_3^{1/4}} \sim \end{displaymath}
\begin{displaymath} \sim \frac{L_{i_0}^{1/2}}{N_{i_0}^{2/3}}\frac{L_j^{1/2}}{N_j^{7/12}}\frac{L_3^{1/2}}{N_3^{7/12}}\;\Bigg(\lesssim \frac{(L_1L_2L_3)^{1/2}}{(N_1N_2N_3)^{1/2+1/10^4}}\Bigg).\end{displaymath}
\textbf{Subcase D.2.2:} Suppose that $\lesssim S$ points of $\mathfrak{G}'$ lie in the lines of $\mathfrak{L}_{crit} \cup \mathfrak{L}_{flat,1}$. Then, $\gtrsim S$ points of $\mathfrak{G}'$ lie in the lines of $\mathfrak{L}_{flat,2}$. 

If $\gtrsim S$ of these points are critical, then one of the lines of $\mathfrak{L}_{flat,2}\;(\subseteq \mathfrak{L}_{i_0})$ contains $\geq \frac{S}{L_{i_0}}$ critical points. However, the line is flat, and thus it contains at most $d$ critical points. Therefore,
\begin{displaymath} \frac{S}{L_{i_0}} \leq d \sim \frac{L_{i_0}^2}{SN_{i_0}^3}, \end{displaymath}
from which we obtain
\begin{displaymath} S \lesssim L_{i_0}^{3/2}, \end{displaymath}

which we may assume to be a contradiction since D.2.2 is a subcase of Case D. 

Therefore, $\gtrsim S$ of these points are flat. Let $\mathfrak{G}_{flat}$ be the set of these points, $\mathfrak{L}_{i_0, flat,2}$ the set of lines in $\mathfrak{L}_{flat,2}$ each of which contains at least one point of $\mathfrak{G}_{flat}$, and $\mathfrak{L}_{j,flat}'$ the set of lines in $\mathfrak{L}_{j}'$ each of which contains at least one point of $\mathfrak{G}_{flat}$. Note that, since each line of $\mathfrak{L}_{i_0,1}$ and $\mathfrak{L}_{j}'$ lies in $Z$ and each point of $\mathfrak{G}_{flat}$ is a regular point of $Z$, it follows that all the lines of $\mathfrak{L}_{i_0,1}\cup\mathfrak{L}_{j}'$ passing through a point of $\mathfrak{G}_{flat}$, i.e. all the lines of $\mathfrak{L}_{i_0, flat,2} \cup \mathfrak{L}_{j,flat}'$ passing through it, lie on the (unique) plane in $Z$ containing the point.

Now, for every $(N_1',N_2',N_3') \in \mathcal{C}_{1,2,3,i_0}$, we denote by $d_{N_1',N_2',N_3'}$ the degree of the polynomial with the zero set of which we achieve the cell decomposition in the case of the triple $(N_1',N_2',N_3')$, by $Z_{d_{N_1',N_2',N_3'}}$ the zero set of that polynomial, and by $\Pi_{d_{N_1',N_2',N_3'}}$ the set of planes contained in $Z_{d_{N_1',N_2',N_3'}}$. In addition, let $\mathfrak{L}_{i_0,{d_{N_1',N_2',N_3'}}}$, $\mathfrak{L}_{i_0,1,{d_{N_1',N_2',N_3'}}}$, $\mathfrak{L}_{i_0,flat,2,d_{N_1',N_2',N_3'}}$, $\mathfrak{L}_{j,d_{N_1',N_2',N_3'}}'$, $\mathfrak{L}_{j,flat,d_{N_1',N_2',N_3'}}'$ and $\mathfrak{G}_{flat, d_{N_1',N_2',N_3'}}$ be the sets $\mathfrak{L}_{i_0}$, $\mathfrak{L}_{i_0,1}$, $\mathfrak{L}_{i_0,flat,2}$,  $\mathfrak{L}_{j}'$, $\mathfrak{L}_{j,flat}'$ and $\mathfrak{G}_{flat}$, respectively, corresponding to the triple $(N_1',N_2',N_3')$. 

We now define $\mathcal{D}_{1,2,3,i_0}$ to be the set of $(N_1',N_2',N_3') \in \mathcal{C}_{1,2,3,i_0}$, such that all the lines in $\mathfrak{L}_{i_0,1,{d_{N_1',N_2',N_3'}}}$ and $\mathfrak{L}_{j,d_{N_1',N_2',N_3'}}'$ are contained in $Z_{d_{N_1',N_2',N_3'}}$. 

\textbf{Remark.} It holds that $(N_1,N_2,N_3) \in \mathcal{D}_{1,2,3,i_0}$. Therefore, we have already shown that, for all $(N_1,N_2,N_3) \in \mathcal{M} \setminus \mathcal{D}_{1,2,3,i_0}$,
\begin{displaymath}|J_{N_1,N_2,N_3}| \lesssim \frac{(L_1L_2L_3)^{1/2}}{(N_1N_2N_3)^{1/2+1/10^4}}.
\end{displaymath}
$\;\;\;\;\;\;\;\;\;\;\;\;\;\;\;\;\;\;\;\;\;\;\;\;\;\;\;\;\;\;\;\;\;\;\;\;\;\;\;\;\;\;\;\;\;\;\;\;\;\;\;\;\;\;\;\;\;\;\;\;\;\;\;\;\;\;\;\;\;\;\;\;\;\;\;\;\;\;\;\;\;\;\;\;\;\;\;\;\;\;\;\;\;\;\;\;\;\;\;\;\;\;\;\;\;\;\;\;\;\;\;\;\;\;\;\;\;\;\;\;\;\;\;\;\;\;\;\;\;\;\;\;\;\blacksquare$

As we have already explained in the case of the triple $(N_1,N_2,N_3)$, if $(N_1',N_2',N_3') \in \mathcal{D}_{1,2,3,i_0}$ and $x$ is a regular point of $Z_{flat, d_{N_1',N_2',N_3'}}$ lying in $\Pi_{d_{N_1',N_2',N_3'}}$, then, since all the lines in $\mathfrak{L}_{{i_0,1},{d_{N_1',N_2',N_3'}}}$ and $\mathfrak{L}_{j,d_{N_1',N_2',N_3'}}'$ are lying in $Z_{d_{N_1',N_2',N_3'}}$, it follows that all the lines in $\mathfrak{L}_{i_0,flat,2,{d_{N_1',N_2',N_3'}}}$ and $\mathfrak{L}'_{j,flat,d_{N_1',N_2',N_3'}}$ passing through $x$ lie on the (unique) plane in $\Pi_{d_{N_1',N_2',N_3'}}$ that $x$ lies on.

Finally, for $(N_1',N_2',N_3') \in \mathcal{D}_{1,2,3,i_0}$, we define $\mathcal{L}_{d_{N_1',N_2',N_3'}}$ as the set consisting of those lines in $(\mathfrak{L}_{i_0,flat,2} \cup \mathfrak{L}_{j,flat}')\cap \bigg(\mathfrak{L}_{i_0,flat,2,d_{N_1',N_2',N_3'}} \cup \mathfrak{L}_{j,flat, d_{N_1',N_2',N_3'}}'\bigg)$, each of which is equal to the intersection of a plane in $\Pi_{d_{N_1,N_2,N_3}}$ with a plane in $\Pi_{d_{N_1',N_2',N_3'}}$.

\textbf{Subcase D.2.2.i:} Suppose that $\gtrsim S$ points of $\mathfrak{G}_{flat}\;\Big(=\mathfrak{G}_{flat,d_{N_1,N_2,N_3}}\Big)$ lie in the lines that belong to the union of the sets $\mathcal{L}_{d_{N_1',N_2',N_3'}}$ over all $(N_1',N_2',N_3') \in \mathcal{D}_{1,2,3,i_0}$ that are different from $(N_1,N_2,N_3)$.

Then, there exists a triple $(N_1',N_2',N_3')$ in $\mathcal{D}_{1,2,3,i_0}$, different from $(N_1,N_2,N_3)$, such that $\gtrsim \frac{S}{(N_1'N_2'N_3')^{1/1000}}$ points of $\mathfrak{G}_{flat}$ lie in the lines of $\mathcal{L}_{d_{N_1',N_2',N_3'}}$. Fix this particular triple $(N_1',N_2',N_3')$, and  let $\mathfrak{G}_{flat}'$ be the set of points of $\mathfrak{G}_{flat}$ lying in the lines of $\mathcal{L}_{d_{N_1',N_2',N_3'}}$.

$\bullet$ Suppose that, for $\gtrsim \frac{S}{(N_1'N_2'N_3')^{1/1000}}$ of the points of $\mathfrak{G}_{flat}'$, each of them has the property that either all the lines of $\mathfrak{L}_{i_0,1}$ passing through it do not lie in $Z_{d_{N_1',N_2',N_3'}}$ or all the lines of $\mathfrak{L}_{j}'$ passing through it do not lie in $Z_{d_{N_1',N_2',N_3'}}$. 

Then, since $\gtrsim N_{i_0}$ lines of $\mathfrak{L}_{i_0,1}$ and $\gtrsim N_{j}$ lines of $\mathfrak{L}_j'$ are passing through each point of $\mathfrak{G}_{flat}'$, it follows that
\begin{displaymath} \frac{S}{(N_1'N_2'N_3')^{1/1000}} N_{i_0} \lesssim L_{i_0} d_{N_1',N_2',N_3'} \end{displaymath} 
or
\begin{displaymath} \frac{S}{(N_1'N_2'N_3')^{1/1000}} N_{j} \lesssim L_{j} d_{N_1',N_2',N_3'}. \end{displaymath}
Now, since $(N_1',N_2',N_3')$ belongs to $\mathcal{D}_{1,2,3,i_0}$, and thus to $\mathcal{C}_{1,2,3,i_0}$, we have that \begin{displaymath} \frac{L_{i_0}^2}{N_{i_0}'^{8/3}} = \min\Bigg\{\frac{L_1^2}{N_1'^{8/3}},\frac{L_2^2}{N_2'^{8/3}},\frac{L_3^2}{N_3'^{8/3}}\Bigg\}, \end{displaymath} and therefore
\begin{displaymath}d_{N_1',N_2',N_3'}=\frac{L_{i_0}^2}{|J_{N_1',N_2',N_3'}|N_{i_0}'^{3}},\end{displaymath}
while also
\begin{displaymath}|J_{N_1',N_2',N_3'}|\gtrsim L_{i_0}^{3/2},
\end{displaymath}
which give
\begin{displaymath}d_{N_1',N_2',N_3'} \lesssim \frac{L_{i_0}^{1/2}}{N_{i_0}'^3} \lesssim L_{i_0}^{1/2}.
\end{displaymath}
It thus follows that
\begin{displaymath} \frac{S}{(N_1'N_2'N_3')^{1/1000}} \lesssim L_{i_0} d_{N_1',N_2',N_3'} + L_{j} d_{N_1',N_2',N_3'} \lesssim  \end{displaymath}
\begin{displaymath} \lesssim  L_{i_0}^{3/2}+ L_jL_{i_0}^{1/2}.
\end{displaymath}
Our aim is to show that
\begin{displaymath} S \lesssim \frac{(L_1L_2L_3)^{1/2}}{(N_1N_2N_3)^{1/2+1/10^4}}, \end{displaymath}
and thus we will bound both $L_{i_0}^{3/2}$ and $ L_jL_{i_0}^{1/2}$ from above, up to multiplication by constants, by the quantity $\frac{(L_1L_2L_3)^{1/2}}{(N_1N_2N_3)^{1/2+1/10^4}}\cdot \frac{1}{(N_1'N_2'N_3')^{1/1000}}$.

More particularly, we will bound the quantity $L_{i_0}^{3/2}$ from above using that, since both $(N_1',N_2',N_3')$ and $(N_1,N_2,N_3)$ belong to $\mathcal{C}_{1,2,3,i_0}$,
\begin{displaymath}N_1' \lesssim N_3'^{1/1000}\text{, }N_2' \lesssim N_3'^{1/1000}
\end{displaymath}
and
\begin{displaymath}N_1 \lesssim N_3^{1/1000}\text{, }N_2\lesssim N_3^{1/1000}.\end{displaymath}
Moreover, we will bound the quantity $L_jL_{i_0}^{1/2}$ from above in a similar way, but also using that, again due the fact that $(N_1',N_2',N_3')$ and $(N_1,N_2,N_3)$ belong to $\mathcal{C}_{1,2,3,i_0}$,
\begin{displaymath}
 \frac{L_j^2}{N_j'^{8/3}} \lesssim \frac{L_3^2}{N_3'^{7/3}}\end{displaymath}
and
\begin{displaymath}\frac{L_j^2}{N_j^{8/3}} \lesssim \frac{L_3^2}{N_3^{7/3}}.
\end{displaymath}

Indeed,
\begin{displaymath} L_{i_0}^{3/2} \sim L_{i_0}^{3/2(1-1/100)}L_{i_0}^{(3/2)(1/100)} \sim\end{displaymath}
\begin{displaymath} \sim \Bigg(\frac{L_{i_0}^2}{N_{i_0}^{8/3}}\Bigg)^{3/4(1-1/100)}\Bigg(\frac{L_{i_0}^2}{N_{i_0}'^{8/3}}\Bigg)^{(3/4)(1/100)} N_{i_0}^{(8/3)(3/4)(1-1/100)}N_{i_0}'^{(8/3)(3/4)(1/100)} \lesssim
\end{displaymath}

\begin{displaymath}
\lesssim
\Bigg(\frac{L_{1}^2}{N_{1}^{8/3}}\Bigg)^{(1/3)(3/4)(1-1/100)}\Bigg(\frac{L_{2}^2}{N_{2}^{8/3}}\Bigg)^{(1/3)(3/4)(1-1/100)}\Bigg(\frac{L_{3}^2}{N_{3}^{8/3}}\Bigg)^{(1/3)(3/4)(1-1/100)}\cdot
\end{displaymath}
\begin{displaymath}
\cdot \Bigg(\frac{L_{1}^2}{N_{1}'^{8/3}}\Bigg)^{(1/3)(3/4)(1/100)}\Bigg(\frac{L_{2}^2}{N_{2}'^{8/3}}\Bigg)^{(1/3)(3/4)(1/100)}\Bigg(\frac{L_{3}^2}{N_{3}'^{8/3}}\Bigg)^{(1/3)(3/4)(1/100)}\cdot
\end{displaymath}
\begin{displaymath}
\cdot N_{i_0}^{(8/3)(3/4)(1-1/100)}N_{i_0}'^{(8/3)(3/4)(1/100)}
\sim
\end{displaymath}
\begin{displaymath}
\sim \frac{(L_1L_2L_3)^{1/2}}{(N_1N_2N_3)^{2/3(1-1/100)}} \cdot \frac{1}{(N_1'N_2'N_3')^{(2/3)(1/100)}}N_{i_0}^{2(1-1/100)}N_{i_0}'^{(1/50)}
\lesssim
\end{displaymath}
\begin{displaymath}
\lesssim \frac{(L_1L_2L_3)^{1/2}}{(N_1N_2N_3)^{2/3(1-1/100)}} \cdot \frac{1}{(N_1'N_2'N_3')^{(2/3)(1/100)}}N_{3}^{(1/1000)2(1-1/100)}N_{3}'^{(1/1000)(1/50)} \lesssim
\end{displaymath}
\begin{displaymath}
\lesssim \frac{(L_1L_2L_3)^{1/2}}{(N_1N_2N_3)^{1/2+1/10^4}} \cdot \frac{1}{(N_1'N_2'N_3')^{1/1000}}.
\end{displaymath}

In addition,
\begin{displaymath}
 L_{i_0}^{1/2}L_j\sim L_{i_0}^{1/2} L_j^{1-1/100}L_j^{1/100} \sim
\end{displaymath}
\begin{displaymath}
\sim L_{i_0}^{1/2} \Bigg(\frac{L_j^2}{N_j^{8/3}}\Bigg)^{1/2(1-1/100)}\Bigg(\frac{L_j^2}{N_j'^{8/3}}\Bigg)^{(1/2)(1/100)}N_j^{(8/3)(1/2)(1-1/100)}N_j'^{(8/3)(1/2)(1/100)}
\sim
\end{displaymath}
\begin{displaymath}
\sim
L_{i_0}^{1/2} \Bigg(\frac{L_j^2}{N_j^{8/3}}\Bigg)^{(1/4)(1-1/100)}\Bigg(\frac{L_j^2}{N_j^{8/3}}\Bigg)^{(1/4)(1-1/100)}\cdot
\end{displaymath}
\begin{displaymath}
\cdot \Bigg(\frac{L_j^2}{N_j'^{8/3}}\Bigg)^{(1/4)(1/100)}\Bigg(\frac{L_j^2}{N_j'^{8/3}}\Bigg)^{(1/4)(1/100)}N_j^{(4/3)(1-1/100)}N_j'^{(4/3)(1/100)}\lesssim
\end{displaymath}
\begin{displaymath}
\lesssim
L_{i_0}^{1/2} \Bigg(\frac{L_j^2}{N_j^{8/3}}\Bigg)^{(1/4)(1-1/100)}\Bigg(\frac{L_3^2}{N_3^{7/3}}\Bigg)^{(1/4)(1-1/100)}\cdot
\end{displaymath}
\begin{displaymath}
\cdot \Bigg(\frac{L_j^2}{N_j'^{8/3}}\Bigg)^{(1/4)(1/100)}\Bigg(\frac{L_3^2}{N_3'^{7/3}}\Bigg)^{(1/4)(1/100)}N_3^{(1/1000)(4/3)(1-1/100)}N_3'^{(1/1000)(4/3)(1/100)}\sim
\end{displaymath}
\begin{displaymath}
\sim \frac{(L_1L_2L_3)^{1/2}}{N_j^{(2/3)(1-1/100)}N_3^{(7/12)(1-1/100)}} \frac{1}{N_j'^{(2/3)(1/100)}N_3'^{(7/12)(1/100)}}\cdot
\end{displaymath}
\begin{displaymath}
\cdot N_3^{(1/1000)(4/3)(1-1/100)}N_3'^{(1/1000)(4/3)(1/100)} \lesssim
\end{displaymath}
\begin{displaymath}
\lesssim \frac{(L_1L_2L_3)^{1/2}}{(N_1N_2N_3)^{1/2+1/10^4}} \cdot \frac{1}{(N_1'N_2'N_3')^{1/1000}}.
\end{displaymath}

Therefore,
\begin{displaymath}\frac{S}{(N_1'N_2'N_3')^{1/1000}} \lesssim \frac{(L_1L_2L_3)^{1/2}}{(N_1N_2N_3)^{1/2+1/10^4}}\cdot \frac{1}{(N_1'N_2'N_3')^{1/1000}},\end{displaymath}
from which it follows that
\begin{displaymath} S \lesssim \frac{(L_1L_2L_3)^{1/2}}{(N_1N_2N_3)^{1/2+1/10^4}}.\end{displaymath}

$\bullet$ Suppose that, for $\gtrsim \frac{S}{(N_1'N_2'N_3')^{1/1000}}$ of the points of $\mathfrak{G}_{flat}'$, at least one of the lines of $\mathfrak{L}_{i_0,1}$ and at least one of the lines of $\mathfrak{L}_{j}'$ through each lies in $Z_{d_{N_1',N_2',N_3'}}$. However, these points lie in lines of the set $\mathcal{L}_{d_{N_1',N_2',N_3'}}$, which is a subset of $\mathfrak{L}_{i_0} \cup \mathfrak{L}_j$. Therefore, there exists a line $l$ in $\mathcal{L}_{d_{N_1',N_2',N_3'}}$, containing $\gtrsim \frac{S}{(N_1'N_2'N_3')^{1/1000}|\mathfrak{L}_{i_0}\cup \mathfrak{L}_j|}\gtrsim \frac{S}{(N_1'N_2'N_3')^{1/1000}\max\{L_{i_0},L_j\}}$ of these points. 

This implies that the lines of $\mathfrak{L}_{i_0,1} \cup \mathfrak{L}_{j}'$ lying in $Z_{d_{N_1',N_2',N_3'}}$ and passing through these points are $\gtrsim \frac{S}{(N_1'N_2'N_3')^{1/1000}max\{L_{i_0},L_j\}}$. Indeed, if $l \in \mathfrak{L}_j'$, then there exists a different line of $\mathfrak{L}_{i_0,1}$ passing through each of the $\gtrsim \frac{S}{(N_1'N_2'N_3')^{1/1000}\max\{L_{i_0},L_j\}}$ points of $\mathfrak{G}_{flat}$ in question. If $l \in \mathfrak{L}_{i_0,1}$, then there exists a different line of $\mathfrak{L}_{j}'$ passing through each of the $\gtrsim \frac{S}{(N_1'N_2'N_3')^{1/1000}\max\{L_{i_0},L_j\}}$ points of $\mathfrak{G}_{flat}$ in question.

On the other hand, the lines of $\mathfrak{L}_{i_0,1} \cup \mathfrak{L}_{j}'$ lying in $Z_{d_{N_1',N_2',N_3'}}$ and passing through the points of $l \cap \mathfrak{G}_{flat}$ are $\leq d_{N_1',N_2',N_3'}$.

Indeed, since $l$ belongs to $\mathcal{L}_{d_{N_1',N_2',N_3'}}$, it follows that $l$ is a flat line of both $Z_{d_{N_1',N_2',N_3'}}$ and $Z_{d_{N_1,N_2,N_3}}$, equal to the intersection of a plane $\Pi_1$ in $\Pi_{d_{N_1',N_2',N_3'}}$ with a plane $\Pi_2$ in $\Pi_{d_{N_1,N_2,N_3}}$, which are such that $\Pi_1$ contains all the lines of $\mathfrak{L}_{i_0,1,d_{N_1',N_2',N_3'}}$ and $\mathfrak{L}_{j,d_{N_1',N_2',N_3'}}'$ passing through the regular points of $Z_{d_{N_1',N_2',N_3'}}$ in $l$, while $\Pi_2$ contains all the lines of $\mathfrak{L}_{i_0,1}$ and $\mathfrak{L}_{j}'$ passing through the regular points of $Z_{d_{N_1,N_2,N_3}}$ in $l$. Therefore, the number of lines of $\mathfrak{L}_{i_0,1} \cup \mathfrak{L}_{j}'$ lying in $Z_{d_{N_1',N_2',N_3'}}$ and passing through the points of $l \cap \mathfrak{G}_{flat}$ is equal to at most the number of lines of $\mathfrak{L}_{i_0,1} \cup \mathfrak{L}_{j}'$ lying in $\Pi_2$. Moreover, since $l$ is a flat line of $Z_{d_{N_1',N_2',N_3'}}$, there exists a point of $l$ that is a regular point of $Z_{d_{N_1',N_2',N_3'}}$, and since that point already lies in the plane $\Pi_1 \subset Z_{d_{N_1',N_2',N_3'}}$, the plane $\Pi_2$ does not lie in $Z_{d_{N_1',N_2',N_3'}}$ (otherwise the point would not be a regular point of $Z_{d_{N_1',N_2',N_3'}}$). Consequently, the number of lines of $\mathfrak{L}_{i_0,1}\cup \mathfrak{L}_{j}'$ lying in $\Pi_2\cap Z_{d_{N_1',N_2',N_3'}}$ is equal to at most $d_{N_1',N_2',N_3'}$, and therefore so is the number of lines of $\mathfrak{L}_{i_0,1} \cup \mathfrak{L}_{j}'$ lying in $Z_{d_{N_1',N_2',N_3'}}$ and passing through the points of $l \cap \mathfrak{G}_{flat}$.

It follows from the above that
\begin{displaymath} \frac{S}{(N_1'N_2'N_3')^{1/1000}\max\{L_{i_0},L_j\}} \lesssim d_{N_1',N_2',N_3'}, \end{displaymath}
which implies that\begin{displaymath}\frac{S}{(N_1'N_2'N_3')^{1/1000}} \lesssim L_{i_0}d_{N_1',N_2',N_3'}+L_jd_{N_1',N_2',N_3'}.\end{displaymath}
Now, following the same procedure as in the case above, we obtain
\begin{displaymath}S \lesssim \frac{(L_1L_2L_3)^{1/2}}{(N_1N_2N_3)^{1/2+1/10^4}}. \end{displaymath}

\textbf{Case D.2.2.ii:} Suppose that $\big|\mathfrak{G}''_{flat,d_{N_1,N_2,N_3}}\big|\gtrsim |J_{N_1,N_2,N_3}|$, where, for all $(N_1',N_2',N_3') \in \mathcal{D}_{1,2,3,i_0}$, $\mathfrak{G}''_{flat, d_{N_1',N_2',N_3'}}$ is the set of points in $\mathfrak{G}_{flat, d_{N_1',N_2',N_3'}}$ that lie outside the lines in the union of $\mathcal{L}_{d_{N_1'',N_2'',N_3''}}$, over all $(N_1'',N_2'',N_3'') \in \mathcal{D}_{1,2,3,i_0}$ that are different from $(N_1',N_2',N_3')$ (this is the final case).

In fact, let $\mathcal{E}_{1,2,3,i_0}$ be the set of all $(N_1',N_2',N_3') \in \mathcal{D}_{1,2,3,i_0}$ with the property that $\big|\mathfrak{G}''_{flat,d_{N_1',N_2',N_3'}}\big|\gtrsim |J_{N_1',N_2',N_3'}|$; in particular, $(N_1,N_2,N_3) \in \mathcal{E}_{1,2,3,i_0}$.

In this final case, we are not able to bound the quantity $|J_{N_1,N_2,N_3}|$ from above, up to multiplication by constants, by a quantity of the form $\frac{(L_1L_2L_3)^{1/2}}{(N_1N_2N_3)^{1/2+\epsilon}}$, for some absolute constant $\epsilon >0$, independent of $\mathfrak{L}_1$, $\mathfrak{L}_2$, $\mathfrak{L}_3$, $N_1$, $N_2$ and $N_3$. However, we will show that
\begin{equation} \label{eq:finally...} \sum_{(N_1,N_2,N_3) \in \mathcal{E}_{1,2,3,i_0}}|J_{N_1,N_2,N_3}|(N_1N_2N_3)^{1/2}\lesssim (L_1L_2L_3)^{1/2}.
\end{equation}
The proof of \eqref{eq:finally...} will complete the proof of Theorem \ref{theoremmult1}, since we have already shown that
\begin{displaymath}|J_{N_1',N_2',N_3'}|\lesssim \frac{(L_1L_2L_3)^{1/2}}{(N_1'N_2'N_3')^{1/2+1/10^4}},
\end{displaymath}
for all $(N_1',N_2',N_3') \in \mathcal{M}\setminus \mathcal{E}_{1,2,3,i_0}$.

Let us now prove \eqref{eq:finally...}. 

Fix $(N_1',N_2',N_3') \in \mathcal{E}_{1,2,3,i_0}$. Let $\mathfrak{L}'_{i_0,flat,2,d_{N_1',N_2',N_3'}}$ be the set of lines of $\mathfrak{L}_{i_0,1,d_{N_1',N_2',N_3'}}$ each containing a point of $\mathfrak{G}''_{flat, d_{N'_1,N_2',N_3'}}$, and $\mathfrak{L}''_{j,flat,d_{N_1',N_2',N_3'}}$ the set of lines of $\mathfrak{L}_{j,d_{N'_1,N_2',N_3'}}'$ each containing a point of $\mathfrak{G}''_{flat,d_{N_1',N_2',N_3'}}$.

Therefore, each point of $\mathfrak{G}''_{flat, d_{N'_1,N_2',N_3'}}$ is a mutlijoint for the collections $\mathfrak{L}'_{i_0,flat,2,d_{N_1',N_2',N_3'}}$, $\mathfrak{L}''_{j,flat,d_{N_1',N_2',N_3'}}$ and $\mathfrak{L}_3$ of lines, each lying in $\sim N_{i_0}'$ lines in $\mathfrak{L}'_{i_0,flat,2,d_{N_1',N_2',N_3'}}$, $\sim N_j'$ lines of $\mathfrak{L}''_{j,flat,d_{N_1',N_2',N_3'}}$ and $\sim N_3'$ lines of $\mathfrak{L}_3$.

Consequently, since it also holds that $\big|\mathfrak{G}''_{flat,d_{N_1',N_2',N_3'}}\big|\gtrsim |J_{N_1',N_2',N_3'}|$, in order to prove \eqref{eq:finally...} it suffices to show that
\begin{equation}\label{eq:lastone?}\sum_{x \in \cup_{(N_1'',N_2'',N_3'') \in \mathcal{E}_{1,2,3,i_0}}\mathfrak{G}''_{flat,d_{N_1'',N_2'',N_3''}}}(M_{i_0}(x)M_j(x)M_3(x))^{1/2}\lesssim (L_1L_2 L_3)^{1/2},
\end{equation}
where, for all $(N_1'',N_2'',N_3'') \in \mathcal{E}_{1,2,3,i_0}$ and $x \in \mathfrak{G}''_{flat,d_{N_1'',N_2'',N_3''}}$, $M_{i_0}(x)$ is the number of lines of $\mathfrak{L}'_{i_0,flat,2,d_{N_1'',N_2'',N_3''}}$ passing through $x$, $M_{j}(x)$ is the number of lines of $\mathfrak{L}''_{j,flat,d_{N_1'',N_2'',N_3'}}$ passing through $x$, and $M_3(x)$ is the number of lines of $\mathfrak{L}_3$ passing through $x$.

Now, let us again fix $(N_1',N_2',N_3') \in \mathcal{E}_{1,2,3,i_0}$. We know that, if $x \in \mathfrak{G}''_{flat,d_{N_1',N_2',N_3'}}$, then the lines of $\mathfrak{L}'_{i_0,flat,2,d_{N_1',N_2',N_3'}} \cup \mathfrak{L}''_{j,flat,d_{N_1',N_2',N_3'}}$ through each point of $\mathfrak{G}''_{flat,d_{N_1',N_2',N_3'}}$ have to lie in the unique plane of $Z_{d_{N_1',N_2',N_3'}}$ containing $x$, as $x$ is flat point of $Z_{d_{N_1',N_2',N_3'}}$ and the lines of $\mathfrak{L}_{i_0,flat,2,d_{N_1',N_2',N_3'}} \cup \mathfrak{L}''_{j,flat,d_{N_1',N_2',N_3'}}$ ($\subseteq \mathfrak{L}_{i_0,1}\cup \mathfrak{L}_{j}')$ all lie in $Z_{d_{N_1',N_2',N_3'}}$.

Let us emphasise that, if a line of $\mathfrak{L}'_{i_0,flat,2,d_{N_1',N_2',N_3'}} \cup \mathfrak{L}''_{j,flat,d_{N_1',N_2',N_3'}}$ belongs to one of the planes of $Z_{d_{N_1',N_2',N_3'}}$, then it does not belong to any other plane of $Z_{d_{N_1',N_2',N_3'}}$; the reason is that the line contains at least one point of $Z_{d_{N_1',N_2',N_3'}}$ that is flat, and therefore cannot belong to two intersecting planes of $Z_{d_{N_1',N_2',N_3'}}$.

On the other hand, $\mathfrak{G}''_{flat, d_{N_1',N_2',N_3'}}$ is, by definition, the set of points in $\mathfrak{G}_{flat, d_{N_1',N_2',N_3'}}$ that lie outside the lines that belong to the union of the sets $\mathcal{L}_{d_{N_1'',N_2'',N_3''}}$, over all $(N_1'',N_2'',N_3'') \in \mathcal{D}_{1,2,3,i_0}$ that are different from $(N_1',N_2',N_3')$. Therefore, no line in $\mathfrak{L}_{i_0,flat,2,d_{N_1',N_2',N_3'}} \cup \mathfrak{L}''_{j,flat,d_{N_1',N_2',N_3'}}$ lies in the union of the sets $\mathcal{L}_{d_{N_1'',N_2'',N_3''}}$, over all $(N_1'',N_2'',N_3'') \in \mathcal{D}_{1,2,3,i_0}$ that are different from $(N_1',N_2',N_3')$. This means that, if $l \in \mathfrak{L}_{i_0,flat,2,d_{N_1',N_2',N_3'}} \cup \mathfrak{L}''_{j,flat,d_{N_1',N_2',N_3'}}$, then $l \notin \mathfrak{L}_{i_0,flat,2,d_{N_1'',N_2'',N_3''}} \cup \mathfrak{L}''_{j,flat,d_{N_1'',N_2'',N_3''}}$, for all $(N_1'',N_2'',N_3'') \in \mathcal{E}_{1,2,3,i_0}$ that are different from $(N_1',N_2',N_3')$.

From the above, it is clear that, in order to show \eqref{eq:lastone?} and, consequently, complete the proof of Theorem \ref{theoremmult1}, it suffices to prove the following.

\begin{lemma} Suppose that $\Pi_1$, $\Pi_2$,..., $\Pi_{\rho}$ are planes in $\R^3$, while $\mathfrak{L}_1$, $\mathfrak{L}_2$ and $\mathfrak{L}_3$ are finite collections of $L_1$, $L_2$ and $L_3$, respectively, lines in $\R^3$, such that each line in $\mathfrak{L}_1\cup \mathfrak{L}_2$ lies in $\Pi_1\cup\Pi_2\cup...\cup\Pi_{\rho}$. Moreover, suppose that, if, for each $i \in \{1,...,\rho\}$, $l_{1,i}$ is the set of lines in $\mathfrak{L}_1$ that lie on $\Pi_{i}$, while $l_{2,i}$ is the set of lines in $\mathfrak{L}_2$ that lie on $\Pi_{i}$, then $l_{1,i} \cap l_{1,j} = \emptyset$ and $l_{2,i} \cap l_{2,j} = \emptyset$, for all $i \neq j$, $i,j\in \{1,...,\rho\}$. Let $J$ be the set of multijoints formed by the collections $\mathfrak{L}_1$, $\mathfrak{L}_2$ and $\mathfrak{L}_3$ of lines. Then,
\begin{equation} \label{eq:lemma} \sum_{x \in J}(N_1(x)N_2(x)N_3(x))^{1/2} \lesssim (L_1L_2L_3)^{1/2}. \end{equation}
\end{lemma}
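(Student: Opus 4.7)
The first step would be Cauchy--Schwarz,
$$\sum_{x\in J}\sqrt{N_1(x)N_2(x)N_3(x)} \le \Bigl(\sum_{x\in J}N_1(x)N_2(x)\Bigr)^{1/2}\Bigl(\sum_{x\in J}N_3(x)\Bigr)^{1/2},$$
which bounds the first factor by $\sqrt{L_1L_2}$ via Claim \ref{p1}. The task thus reduces to controlling the product $A\cdot B$, where $A=\sum_xN_1N_2$ and $B=\sum_xN_3$, and showing it is $\lesssim L_1L_2L_3$. Note that a naive $B\le L_3$ is false in general (consider $\rho$ planes through a common line with $\mathfrak{L}_3$ consisting of that line), so the bound must really be obtained bilinearly.

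To control the relevant sums I would decompose $N_1$ and $N_2$ according to plane indices. Setting $n_{k,i}(x)=|\{\ell\in l_{k,i}:x\in\ell\}|$ for $k=1,2$, so that $N_k(x)=\sum_in_{k,i}(x)$, the elementary inequality $\sqrt{\sum a_i}\le\sum\sqrt{a_i}$ yields $\sqrt{N_1(x)N_2(x)}\le\sum_{i,j}\sqrt{n_{1,i}(x)n_{2,j}(x)}$, so the main sum splits into diagonal ($i=j$) and off-diagonal ($i\ne j$) contributions. In the diagonal case, contributing $x$ lies in $\Pi_i$; Cauchy--Schwarz combined with the pair bound $\sum_xn_{1,i}n_{2,i}\le L_{1,i}L_{2,i}$ and the observation that $\sum_xN_3^{\mathrm{out},i}(x)\le L_3$ (each $\mathfrak{L}_3$-line transverse to $\Pi_i$ meets it in a single point) controls the $N_3^{\mathrm{out},i}$ part, while the $N_3^{\mathrm{in},i}$ part is controlled using that every multijoint in $\Pi_i$ requires at least one transverse $\mathfrak{L}_3$-line, so their number in $\Pi_i$ is at most $L_3$. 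Both pieces contribute $\lesssim\sqrt{L_{1,i}L_{2,i}L_3}$ on $\Pi_i$, and summing over $i$ via $\sum_i\sqrt{L_{1,i}L_{2,i}}\le\sqrt{L_1L_2}$ yields $\sqrt{L_1L_2L_3}$.

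For the off-diagonal part, contributing points lie on $\ell_{ij}:=\Pi_i\cap\Pi_j$, which by the disjointness hypothesis on the $l_{k,i}$'s is itself not in $\mathfrak{L}_1\cup\mathfrak{L}_2$; each line of $l_{1,i}$ therefore meets $\ell_{ij}$ in at most one point, giving $\sum_{x\in\ell_{ij}}n_{1,i}\le L_{1,i}$ and similarly for $n_{2,j}$. A one-dimensional analysis on $\ell_{ij}$ combined with Cauchy--Schwarz (handling the exceptional case $\ell_{ij}\in\mathfrak{L}_3$ separately) yields a per-pair bound of the form $\sqrt{L_{1,i}L_{2,j}L_3}$. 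The hard part will be aggregating over the pairs $(i,j)$ without incurring a spurious factor of $\rho$: a naive sum gives $\rho\sqrt{L_1L_2L_3}$. The remedy is a pigeonhole that charges each off-diagonal multijoint to a single canonical pair $(i(x),j(x))$, exploiting the fact that for each $\ell_3\in\mathfrak{L}_3$ through $x$ the identity $x=\ell_3\cap\Pi_{i(x)}$ tightly constrains the admissible planes; this collapses the double sum, and a final Cauchy--Schwarz against $\sum_iL_{1,i}=L_1$ and $\sum_jL_{2,j}=L_2$ closes the argument at $\sqrt{L_1L_2L_3}$. I remark that in the concrete application of this lemma in Chapter \ref{4} all $\mathfrak{L}_1\cup\mathfrak{L}_2$-lines through each multijoint lie in a single plane of the algebraic surface $Z_{d_{N_1',N_2',N_3'}}$, so the off-diagonal case does not arise and the diagonal argument alone suffices.
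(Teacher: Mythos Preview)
The paper's proof is far shorter than yours. It first treats a single plane $\Pi$: there the key claim is $\sum_{x\in J}N_3(x)\lesssim L_3$, after which Claim~\ref{p3} (which is exactly your opening Cauchy--Schwarz combined with Claim~\ref{p1}) gives the bound immediately. For several planes the paper simply bounds $\sum_{x\in J}\le \sum_i\sum_{x\in\Pi_i\cap J}$, applies the single-plane estimate on each $\Pi_i$ to get $(|l_{1,i}||l_{2,i}|L_3)^{1/2}$, and closes with Cauchy--Schwarz on $\sum_i\sqrt{|l_{1,i}||l_{2,i}|}\le\sqrt{L_1L_2}$. There is no diagonal/off-diagonal decomposition at all.

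Your diagonal case is essentially the paper's argument, and your final remark is exactly right: in the application every multijoint is a regular point of $Z$ lying on a unique plane, so the off-diagonal contribution vanishes and $N_k(x)=n_{k,i}(x)$. Where both you and the paper skate over the same thin spot is the single-plane claim $\sum_{x\in J}N_3(x)\lesssim L_3$. The paper asserts ``no line of $\mathfrak{L}_3$ lies on the plane''; your version asserts that the $N_3^{\mathrm{in},i}$ piece is controlled because $|J\cap\Pi_i|\le L_3$---but that only bounds the \emph{number} of multijoints, not $\sum_x N_3^{\mathrm{in},i}(x)$, so your claimed bound $\sqrt{L_{1,i}L_{2,i}L_3}$ for that piece is not justified by the reasoning you give. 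What actually makes this work is the standing transversality hypothesis of Theorem~\ref{theoremmult1}, which the lemma inherits in its only use: if $\ell_1\in\mathfrak{L}_1$, $\ell_2\in\mathfrak{L}_2$, $\ell_3\in\mathfrak{L}_3$ all pass through a multijoint $x$ with $\ell_1,\ell_2\subset\Pi_i$, then $\ell_3\subset\Pi_i$ would make the triple coplanar, contradicting that every such concurrent triple forms a joint. Hence $N_3^{\mathrm{in},i}(x)=0$ at every multijoint, $\sum N_3\le L_3$ is immediate, and your off-diagonal machinery (whose ``canonical pair'' pigeonhole is in any case too vague to assess) is unnecessary.
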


\begin{proof} This statement is true if the collection of planes consists of only one plane $\Pi$. Indeed, in this particular case, $\sum_{x \in J}N_3(x)\lesssim L_3$, since no line of $\mathfrak{L}_3$ lies on the plane, and we can thus deduce \eqref{eq:lemma} by Claim \ref{p3}.

Now, using this fact, \eqref{eq:lemma} follows in the general case as such:
\begin{displaymath} \sum_{x \in J}(N_1(x)N_2(x)N_3(x))^{1/2}= \end{displaymath}
\begin{displaymath} =\sum_{i=1}^{\rho} \sum_{x \in \Pi_{i}\cap J}(N_1(x)N_2(x)N_3(x))^{1/2} \lesssim \sum_{i=1}^{\rho} (|l_{1,i}||l_{2,i}|L_3)^{1/2} \sim \end{displaymath} 
\begin{displaymath} \sim \Bigg(\sum_{i=1}^{\rho} (|l_{1,i}|^{1/2}|l_{2,i}|^{1/2}\Bigg) L_3^{1/2} \lesssim  \Bigg(\sum_{i=1}^{\rho} |l_{1,i}|\Bigg)^{1/2}\Bigg(\sum_{i=1}^{\rho} |l_{2,i}|\Bigg)^{1/2} L_3^{1/2} \lesssim \end{displaymath}
\begin{displaymath} \lesssim (L_1L_2L_3)^{1/2}. \end{displaymath}
Note that the first inequality is due to the fact that \eqref{eq:lemma} holds on each plane $\Pi_i$, $i=1,...,\rho$, while the second inequality is the Cauchy-Schwarz inequality.

Thus, the proof of Theorem \ref{theoremmult1} is complete.

\end{proof}

\newpage
\thispagestyle{plain}
\cleardoublepage

\chapter{From lines to curves} \label{5}

In Chapter \ref{6} we will extend the definition of a joint from a point of intersection of lines with particular transversality properties to a point of intersection of more general curves with particular transversality properties, and we will, in fact, extend the statement of Theorem \ref{1.1} to that case, under certain assumptions on the properties of the curves. To that end, we need to extend certain computational results for sets of lines to sets of more general curves, by recalling and further analysing some facts from algebraic geometry, and this is what this chapter is dedicated to.

If $\mathbb{K}$ is a field, then any set of the form \begin{displaymath} \{x \in \mathbb{K}^n: p_i(x)=0, \; \forall \; i=1,...,k\}, \end{displaymath} where $k \in \N$ and $p_i \in \mathbb{K}[x_1,...,x_n]$ for all $i =1,...,k$, is called an \textit{algebraic set} or an \textit{affine variety} or simply a \textit{variety} in $\mathbb{K}^n$, and is denoted by $V(p_1,...,p_k)$. A variety $V$ in $\mathbb{K}^n$ is \textit{irreducible} if it cannot be expressed as the union of two non-empty varieties in $\mathbb{K}^n$ which are strict subsets of $V$.

Now, if $V$ is a variety in $\mathbb{K}^n$, the set \begin{displaymath} I(V):= \{ p \in \mathbb{K}[x_1,...,x_n]: p(x)=0, \; \forall \; x \in V\} \end{displaymath}
is an ideal in $\mathbb{K}[x_1,...,x_n]$. If, in particular, $V$ is irreducible, then $I(V)$ is a prime ideal of $\mathbb{K}[x_1,...,x_n]$, and the transcendence degree of the ring $\mathbb{K}[x_1,...,x_n]/I(V)$ over $\mathbb{K}$ is the \textit{dimension} of the irreducible variety $V$. The \textit{dimension of an algebraic set} is the maximal dimension of all the irreducible varieties contained in the set. If an algebraic set has dimension 1 it is called an \textit{algebraic curve}, while if it has dimension $n-1$ it is called an \textit{algebraic hypersurface}.

Now, if $\gamma$ is an algebraic curve in $\C^n$, a generic hyperplane of $\C^n$ intersects the curve in a specific number of points (counted with appropriate multiplicities), which is called the \textit{degree} of the curve.

A consequence of B\'ezout's theorem (see, for example, \cite[Theorem 12.3]{MR732620} or \cite[Chapter 3, \S3]{MR2122859}) is the following.

\begin{theorem}\emph{\textbf{(B\'ezout)}}\label{4.1.2} Let $\gamma$ be an irreducible algebraic curve in $\C^n$ of degree $b$, and $p \in \C[x_1,...,x_n]$. If $\gamma$ is not contained in the zero set of $p$, it intersects the zero set of $p$ at most $b \cdot \deg p$ times. \end{theorem}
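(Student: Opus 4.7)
The plan is to deduce this statement as a direct consequence of the projective form of B\'ezout's theorem supplied by the cited references. First I would pass to the projective closure: let $\bar\gamma \subset \mathbb{P}^n(\C)$ denote the projective closure of $\gamma$, which is an irreducible projective curve. Its projective degree coincides with $b$, since the affine definition of degree (the number of intersections with a generic hyperplane, counted with multiplicity) agrees with the projective one because a generic affine hyperplane extends to a generic projective hyperplane that is transverse to $\bar\gamma$ and misses the points at infinity. Next, let $P \in \C[x_0,x_1,\ldots,x_n]$ be the homogenization of $p$, so that $V(P) \subset \mathbb{P}^n(\C)$ is a projective hypersurface of degree $\deg p$.

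I would then check the proper-intersection hypothesis: if $\bar\gamma$ were contained in $V(P)$, then every affine point $x$ of $\gamma \subset \bar\gamma$ would satisfy $P(1,x) = p(x) = 0$, contradicting the standing assumption that $\gamma$ is not contained in $V(p)$. Therefore $\bar\gamma$ and $V(P)$ meet properly in $\mathbb{P}^n(\C)$, and the form of B\'ezout's theorem in \cite[Theorem 12.3]{MR732620} or \cite[Chapter 3, \S3]{MR2122859}, applied to an irreducible curve and a hypersurface, yields
\begin{displaymath}
\#\bigl(\bar\gamma \cap V(P)\bigr) \leq \deg(\bar\gamma) \cdot \deg(V(P)) = b \cdot \deg p,
\end{displaymath}
with intersections counted with multiplicity. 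Since the affine intersection $\gamma \cap V(p)$ injects into the projective intersection $\bar\gamma \cap V(P)$, it is bounded by the same quantity, which is exactly what we want.

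The only genuine point of care in this plan is confirming that the two notions of degree (affine, via generic hyperplane sections of $\gamma$, and projective, as used in the cited B\'ezout statement applied to $\bar\gamma$) agree; this is standard but is the only step requiring more than a one-line justification. Everything else is essentially a bookkeeping exercise in homogenization. Because the stronger form of B\'ezout for a curve and a hypersurface (without any complete-intersection hypothesis) is already quoted from the cited texts, no further algebraic machinery is needed.
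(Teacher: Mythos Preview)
The paper does not actually supply a proof of this theorem: it is stated as a known consequence of B\'ezout's theorem, with a pointer to \cite[Theorem 12.3]{MR732620} and \cite[Chapter 3, \S3]{MR2122859}, and is then used as a black box. Your proposal is a correct and standard way to extract the affine statement from the projective form of B\'ezout quoted in those references---projective closure, homogenization, proper intersection, and the observation that affine intersection points inject into projective ones---so it is entirely in line with what the paper intends by ``a consequence of B\'ezout's theorem.''
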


Now, if $\mathbb{K}$ is a field, an order $\prec$ on the set of monomials in $\mathbb{K}[x_1,...,x_n]$ is called a \textit{term order}, if it is a total order on the monomials of $\mathbb{K}[x_1,...,x_n]$, such that it is multiplicative (i.e. it is preserved by multiplication by the same monomial) and the constant monomial is the $\prec$-smallest monomial. Then, if $I$ is an ideal in $\mathbb{K}[x_1,...,x_n]$, we define $in_{\prec}(I)$ as the ideal of $\mathbb{K}[x_1,...,x_n]$ generated by the $\prec$-initial terms, i.e. the $\prec$-largest monomial terms, of all the polynomials in $I$. 

Let $V$ be a variety in $\mathbb{K}[x_1,...,x_n]$ and $\prec$ a term order on the set of monomials in $\mathbb{K}[x_1,...,x_n]$. Also, let $S$ be a maximal subset of the set of variables $\{x_1,...,x_n\}$, with the property that no monomial in the variables in $S$ belongs to $in_{\prec}(I(V))$. Then, it holds that the dimension of $V$ is the cardinality of $S$ (see \cite{Sturmfels_2005}). From this fact, we deduce the following.

\begin{lemma}\label{extr}An irreducible real algebraic curve $\gamma$ in $\R^n$ is contained in an irreducible complex algebraic curve in $\C^n$. \end{lemma}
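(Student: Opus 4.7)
\textbf{Proof plan for Lemma \ref{extr}.}

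The strategy is to construct a candidate containing variety by complexification and then extract an irreducible component that still contains $\gamma$. Concretely, I would start with the real vanishing ideal $I(\gamma) \subset \R[x_1,\ldots,x_n]$, which is prime because $\gamma$ is irreducible over $\R$, and form its complexification $I_{\C} := I(\gamma) \cdot \C[x_1,\ldots,x_n]$, the ideal generated by $I(\gamma)$ inside the complex polynomial ring. The complex algebraic set $V_{\C} := V(I_{\C}) \subset \C^n$ obviously contains $\gamma$ (any polynomial in $I(\gamma)$ vanishes on $\gamma$), so the task reduces to finding an irreducible complex component of $V_{\C}$ of dimension $1$ that still contains all of $\gamma$.

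The first substantial step is to verify that $\dim V_{\C} = 1$. Here I would invoke exactly the term order characterisation of dimension recalled just before the lemma: pick any term order $\prec$ on the monomials of $\R[x_1,\ldots,x_n]$ (and view it as a term order on $\C[x_1,\ldots,x_n]$ via the same monomials). A Gr\"obner basis of $I(\gamma)$ with respect to $\prec$ remains a Gr\"obner basis of $I_{\C}$, since the Buchberger $S$-pair reductions are coefficient manipulations insensitive to whether one works over $\R$ or $\C$. Hence $in_\prec(I_{\C}) = in_\prec(I(\gamma)) \cdot \C[x_1,\ldots,x_n]$, and the two initial ideals involve exactly the same monomials. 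Consequently a subset $S \subseteq \{x_1,\ldots,x_n\}$ is a maximal standard set for $in_\prec(I_{\C})$ if and only if it is a maximal standard set for $in_\prec(I(\gamma))$, giving $\dim V_{\C} = \dim \gamma = 1$.

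Next I would decompose $V_{\C} = V_1 \cup \cdots \cup V_k$ into irreducible complex components, each of complex dimension at most $1$. The key step is then to use the real irreducibility of $\gamma$ to show that $\gamma$ is contained in one single $V_i$. For this, note that if $V_i$ is cut out by complex polynomials $f_{i,1},\ldots,f_{i,m_i}$, then $V_i \cap \R^n$ is a real algebraic set, being cut out by the real and imaginary parts of these $f_{i,j}$. Hence each $\gamma \cap V_i = \gamma \cap (V_i \cap \R^n)$ is a real algebraic subset of $\gamma$, and $\gamma = \bigcup_{i=1}^k (\gamma \cap V_i)$ expresses $\gamma$ as a finite union of real subvarieties. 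Real irreducibility forces $\gamma \subseteq V_{i_0}$ for some $i_0$. Since $\gamma$ is a real algebraic curve it contains infinitely many points (its coordinate ring has Krull dimension $1$ over $\R$), so $V_{i_0}$ cannot be a finite set; thus $\dim_\C V_{i_0} \geq 1$, and combined with $\dim_\C V_{i_0} \leq \dim_\C V_{\C} = 1$ this gives $\dim_\C V_{i_0} = 1$, exhibiting $V_{i_0}$ as the desired irreducible complex algebraic curve containing $\gamma$.

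The only place I expect any friction is the dimension-preservation step: one has to be mildly careful because the characterisation quoted from \cite{Sturmfels_2005} is stated in terms of $I(V)$ (i.e.\ the radical), whereas the natural object here is $I_{\C}$, which need not be radical. This is harmless because passing to the radical does not change the irreducible components and hence does not change the dimension, but I would want to state this explicitly, or alternatively bypass it by noting that $\C[x_1,\ldots,x_n]/I_{\C} = \C \otimes_\R \R[x_1,\ldots,x_n]/I(\gamma)$ is a finite free extension of the $1$-dimensional domain $\R[x_1,\ldots,x_n]/I(\gamma)$, so its Krull dimension is also $1$. Either route makes the argument rigorous and completes the proof.
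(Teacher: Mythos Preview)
Your proof is correct and follows essentially the same strategy as the paper: complexify the generators of $I(\gamma)$, use the term-order/initial-ideal characterisation of dimension to see that the resulting complex zero set is one-dimensional, then extract an irreducible component containing $\gamma$. The only differences are expository: the paper sidesteps your radical-versus-nonradical concern by observing directly that $I(\gamma) \subseteq I(V')$ (so the needed monomials lie in $in_\prec(I(V'))$, which is already the radical case), and for the final component-extraction step the paper appeals to B\'ezout rather than your cleaner argument via real irreducibility of $\gamma$ and the fact that each $V_i \cap \R^n$ is a real algebraic set.
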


\begin{proof}  We clarify that, by saying that a real algebraic curve $\gamma_1$ in $\R^n$ is contained in a complex algebraic curve $\gamma_2$ in $\C^n$, we mean that, if $x \in \gamma_1$, then the point $x$, seen as an element of $\C^n$, belongs to $\gamma_2$ as well. 

Let $\gamma$ be an irreducible real algebraic curve in $\R^n$, and $\prec$ a term order on the set of monomials in the variables $x_1$, ..., $x_n$. From the discussion above, for every $i\neq j$, $i,j \in \{1,...,n\}$, there exists a monomial in the variables $x_i$ and $x_j$ in the ideal $in_{\prec}(I(\gamma))$. 

Now, the ideal $I(\gamma)$ is finitely generated, like any ideal of $\R[x_1,...,x_n]$. Let $\{p_1,...,p_k\}$ be a finite set of generators of $I(\gamma)$, and let $I':=(p_1,...,p_k)$ be the ideal in $\C[x_1,...,x_n]$ generated by the polynomials $p_1$, ..., $p_k$, this time seen as elements of $\C[x_1,...,x_n]$. We consider the complex variety $V'=V(p_1,...,p_k)$ and the ideal $I(V')$ in $\C[x_1,...,x_n]$. Since the polynomials in $I(\gamma)$, seen as elements of $\C[x_1,...,x_n]$, are elements of $I(V')$, it holds that for every $i\neq j$, $i,j \in \{1,...,n\}$, there exists a monomial in the variables $x_i$ and $x_j$ in the ideal $in_{\prec}(I(V'))$. Therefore, the variety $V'$ has dimension 1 (it cannot have dimension 0, as it is not a finite set of points). 

Therefore, $\gamma$ is contained in a complex algebraic curve. It is finally easy to see by B\'ezout's theorem that $\gamma$ is contained in an irreducible component of that curve.

\end{proof}

Now, by B\'ezout's theorem, we can deduce the following.

\begin{corollary}\label{4.1.4} Let $\gamma _1$, $\gamma_2$ be two distinct irreducible complex algebraic curves in $\C^n$. Then, they have at most $\deg \gamma_1\cdot \deg \gamma_2$ common points. \end{corollary}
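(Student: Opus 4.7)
The plan is to reduce the statement to a form where B\'ezout's theorem (Theorem \ref{4.1.2}) applies directly, by finding a polynomial $p \in \C[x_1,\dots,x_n]$ of degree at most $\deg\gamma_2$ whose zero set contains $\gamma_2$ but not $\gamma_1$. Once such a $p$ is in hand, Theorem \ref{4.1.2} gives $|\gamma_1 \cap V(p)| \leq \deg\gamma_1 \cdot \deg p \leq \deg\gamma_1 \cdot \deg\gamma_2$, and since $\gamma_1 \cap \gamma_2 \subseteq \gamma_1 \cap V(p)$, the desired bound follows.

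To produce the polynomial $p$, I would use a generic linear projection $\pi:\C^n\to\C^2$. For sufficiently generic $\pi$, the image $\pi(\gamma_2)$ is a plane algebraic curve whose degree equals $\deg \gamma_2$ (this is a standard fact: a generic hyperplane of $\C^n$ meets $\gamma_2$ in exactly $\deg\gamma_2$ points, and a generic projection is chosen so that fibres of $\pi$ restricted to $\gamma_2$ are generic hyperplane slices). Then $\pi(\gamma_2)$ is the zero set of a single polynomial $q(y_1,y_2) \in \C[y_1,y_2]$ of degree $\deg\gamma_2$, and pulling back via $\pi$ yields $p(x):=q(\pi(x)) \in \C[x_1,\dots,x_n]$ of degree at most $\deg\gamma_2$, which vanishes on $\gamma_2$.

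The remaining step is to ensure that $\gamma_1$ is not contained in the zero set of $p$, i.e.\ that $\pi(\gamma_1) \not\subseteq \pi(\gamma_2)$. Since $\gamma_1 \neq \gamma_2$ as subsets of $\C^n$, there is at least one point $x_0 \in \gamma_1 \setminus \gamma_2$; for a generic choice of $\pi$, the point $\pi(x_0)$ avoids the one-dimensional set $\pi(\gamma_2) \subset \C^2$, so $\pi(x_0) \notin \pi(\gamma_2)$ and hence $p(x_0) \neq 0$. Therefore $\gamma_1$ is not contained in $V(p)$, and Theorem \ref{4.1.2} applies.

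The main obstacle I expect is the genericity argument for the projection: one has to check simultaneously that $\deg \pi(\gamma_2) = \deg\gamma_2$ and that $\pi$ separates a chosen point of $\gamma_1\setminus \gamma_2$ from $\pi(\gamma_2)$. Both are generic open conditions on the finite-dimensional space of linear projections, so their intersection is non-empty, but stating this cleanly (perhaps by invoking that the bad loci in the space of projections are proper algebraic subvarieties) is the technical part; the rest is an immediate application of the already-established B\'ezout statement.
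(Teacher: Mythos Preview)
Your approach is correct and leads to the same conclusion via Theorem~\ref{4.1.2}, but the paper gets there by a different and somewhat shorter route. Rather than constructing a single hypersurface of degree $\leq\deg\gamma_2$ containing $\gamma_2$ by hand, the paper invokes \cite[Theorem~A.3]{MR2827010}, which asserts that any complex algebraic curve $\gamma_2$ in $\C^n$ is cut out set-theoretically by $O_{n,\deg\gamma_2}(1)$ polynomials each of degree at most $\deg\gamma_2$. Since $\gamma_1$ is irreducible and distinct from $\gamma_2$, it is not contained in $\gamma_2$, hence not contained in the zero set of at least one of these polynomials; Theorem~\ref{4.1.2} then finishes immediately.

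Your projection argument is essentially a direct construction of one such polynomial (the pullback of the defining equation of $\overline{\pi(\gamma_2)}$), so in spirit you are re-deriving the relevant piece of the cited structure theorem. This is more self-contained but, as you correctly anticipate, carries the cost of the genericity bookkeeping: you must simultaneously arrange that $\pi$ is birational onto its image (so $\deg\overline{\pi(\gamma_2)}\le\deg\gamma_2$) and that the fibre $x_0+\ker\pi$ through your chosen $x_0\in\gamma_1\setminus\gamma_2$ misses $\gamma_2$ (and that $\pi(x_0)$ also avoids the finitely many points of $\overline{\pi(\gamma_2)}\setminus\pi(\gamma_2)$). These are all Zariski-open conditions on the space of projections, so the argument goes through, but the paper's version sidesteps all of this by outsourcing the existence of the low-degree equation to the cited reference.
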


\begin{proof} Since $\gamma_2$ is an algebraic curve in $\C^n$ and $\C$ is an algebraically closed field, it follows that $\gamma_2$ is the intersection of the zero sets of  $\lesssim_{n, \deg \gamma_2}1$ irreducible polynomials in $\C[x_1,...,x_n]$, each of which has degree at most $\deg \gamma_2$ (see \cite[Theorem A.3]{MR2827010}). The zero set of at least one of these polynomials does not contain $\gamma_1$, so, by Theorem \ref{4.1.2}, $\gamma_1$ intersects it at most $\deg \gamma_1\cdot \deg \gamma_2$ times. Therefore, $\gamma_1$ intersects $\gamma_2$, which is contained in the zero set of the above-mentioned polynomial, at most $\deg \gamma_1\cdot \deg \gamma_2$ times.

\end{proof}

Corollary \ref{4.1.4} easily implies the following.

\begin{lemma}\label{4.1.1} An irreducible real algebraic curve $\gamma$ in $\R^n$ is contained in a unique irreducible complex algebraic curve in $\C^n$.
\end{lemma}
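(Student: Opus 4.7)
The plan is to combine the existence statement from Lemma \ref{extr} with a uniqueness argument coming from B\'ezout-type intersection bounds. Concretely, suppose for contradiction that the irreducible real algebraic curve $\gamma \subset \R^n$ is contained in two distinct irreducible complex algebraic curves $\gamma_1, \gamma_2 \subset \C^n$. Then $\gamma$ (viewed inside $\C^n$ via the inclusion $\R^n \hookrightarrow \C^n$) is contained in $\gamma_1 \cap \gamma_2$. By Corollary \ref{4.1.4}, $|\gamma_1 \cap \gamma_2| \leq \deg \gamma_1 \cdot \deg \gamma_2 < \infty$, and so $\gamma$ itself is a finite set of points.

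The remaining task is to exclude this possibility, by showing that an irreducible real algebraic curve in $\R^n$ cannot be a finite set. This is exactly the place where the dimension-theoretic definition recalled at the start of Chapter \ref{5} is used. Since $\gamma$ is irreducible, $I(\gamma)$ is a prime ideal of $\R[x_1,\dots,x_n]$, and since $\gamma$ has dimension $1$, the ring $\R[x_1,\dots,x_n]/I(\gamma)$ has transcendence degree $1$ over $\R$. On the other hand, if $\gamma = \{p_1, \dots, p_m\} \subset \R^n$ were finite, then $I(\gamma) = \bigcap_{i=1}^m \mathfrak{m}_{p_i}$, where $\mathfrak{m}_{p_i}$ is the maximal ideal of polynomials vanishing at $p_i$; the quotient $\R[x_1,\dots,x_n]/I(\gamma)$ is then a finite-dimensional $\R$-algebra, in particular of transcendence degree $0$ over $\R$. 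This contradicts the dimension assumption, so $\gamma$ must be infinite, giving the desired contradiction with the finiteness of $\gamma_1 \cap \gamma_2$.

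Combined with Lemma \ref{extr}, which provides at least one irreducible complex algebraic curve in $\C^n$ containing $\gamma$, this proves both existence and uniqueness, establishing Lemma \ref{4.1.1}.

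I do not expect a genuine obstacle here; the only slightly delicate point is the bookkeeping with the definition of dimension used in the text, namely confirming that an irreducible variety of dimension $1$ (in the transcendence-degree sense applied to the ideal $I(V)$ of the actual point set) must have infinitely many points. Once that is spelled out, the uniqueness follows immediately from Corollary \ref{4.1.4}.
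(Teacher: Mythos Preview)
Your proof is correct and follows essentially the same route as the paper: existence via Lemma \ref{extr}, and uniqueness by observing that two distinct irreducible complex curves containing $\gamma$ would force $\gamma\subset\gamma_1\cap\gamma_2$ to be finite, contradicting Corollary \ref{4.1.4}. The only difference is that the paper simply asserts that $\gamma_1$ and $\gamma_2$ intersect in infinitely many points (taking for granted that a one-dimensional variety is infinite), whereas you spell out the transcendence-degree justification for why $\gamma$ cannot be finite; this extra paragraph is sound but not strictly needed.
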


\begin{proof} Let $\gamma$ be a real algebraic curve in $\R^n$. By Lemma \ref{extr}, $\gamma$ is contained in an irreducible complex algebraic curve in $\C^n$. Suppose that there exist two irreducible complex algebraic curves $\gamma_1$ and $\gamma_2$ in $\C^n$ containing $\gamma$. Then, $\gamma_1$ and $\gamma_2$ intersect at infinitely many points, and thus, by Corollary \ref{4.1.4}, they coincide.

\end{proof}

Note that, by the above, the smallest complex algebraic curve containing a real algebraic curve is the union of the irreducible complex algebraic curves, each of which contains an irreducible component of the real algebraic curve.

In particular, the following holds.

\begin{lemma} \label{newstuff} Any real algebraic curve in $\R^n$ is the intersection of $\R^n$ with the smallest complex algebraic curve containing it. 
\end{lemma}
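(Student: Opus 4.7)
The plan is to reduce to the case of an irreducible real algebraic curve and then exploit the conjugation-symmetry of the associated complex curve. Decomposing $\gamma = \gamma_1 \cup \cdots \cup \gamma_k$ into its irreducible real algebraic components, the remark preceding the present lemma identifies the smallest complex algebraic curve containing $\gamma$ as $\tilde{\gamma} = \tilde{\gamma_1} \cup \cdots \cup \tilde{\gamma_k}$, where $\tilde{\gamma_i}$ is the unique irreducible complex algebraic curve containing $\gamma_i$. Since intersection with $\R^n$ distributes over finite unions, it suffices to prove $\tilde{\gamma_i} \cap \R^n = \gamma_i$ for each irreducible component $\gamma_i$.

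For the irreducible case, the first step will be to show that $\tilde{\gamma_i}$ is invariant under complex conjugation. The conjugate $\overline{\tilde{\gamma_i}}$ is also an irreducible complex algebraic curve (conjugation is an automorphism of $\C^n$ preserving irreducibility and dimension), and it still contains the real set $\gamma_i$; Lemma \ref{4.1.1} then forces $\overline{\tilde{\gamma_i}} = \tilde{\gamma_i}$. From this symmetry I will deduce that the defining ideal admits a real generating set: every $p \in I(\tilde{\gamma_i})$ has its real and imaginary parts again in $I(\tilde{\gamma_i})$, so $I(\tilde{\gamma_i}) = I_{\R} \cdot \C[x_1,\ldots,x_n]$, where $I_{\R} := I(\tilde{\gamma_i}) \cap \R[x_1,\ldots,x_n]$.

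The next step is to identify $I_{\R}$ with $I(\gamma_i)$. One inclusion is immediate. For the reverse, I will argue that the complex Zariski closure of $\gamma_i$ in $\C^n$ equals $\tilde{\gamma_i}$: it is a complex algebraic set contained in $\tilde{\gamma_i}$ and containing the infinite set $\gamma_i$, and the irreducibility of the one-dimensional variety $\tilde{\gamma_i}$ forces its only proper closed subvarieties to be finite, so the closure must be all of $\tilde{\gamma_i}$. Consequently every real polynomial vanishing on $\gamma_i$ also vanishes on $\tilde{\gamma_i}$, yielding $I(\gamma_i) \subseteq I_{\R}$. Combining this with the previous paragraph, a real point $x$ lies in $\tilde{\gamma_i}$ exactly when every element of $I_{\R} = I(\gamma_i)$ vanishes at $x$, which in turn is equivalent to $x \in \gamma_i$ since $\gamma_i$, being a real algebraic set, equals the real vanishing locus of $I(\gamma_i)$.

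The only step that requires a moment of care is the identification of the complex Zariski closure of $\gamma_i$ with $\tilde{\gamma_i}$ — this is where the irreducibility of $\tilde{\gamma_i}$ interacts crucially with the infinitude of $\gamma_i$, which in turn relies on the assumption that $\gamma_i$ is a genuine (one-dimensional) real algebraic curve rather than a degenerate zero-dimensional set. Everything else is a formal unraveling of definitions and applications of the conjugation symmetry, and the irreducible case then assembles into the general statement by the reduction in the first paragraph.
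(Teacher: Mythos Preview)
Your proof is correct, but it takes a more elaborate route than the paper's. The paper never reduces to irreducible components and never invokes conjugation: it simply takes a real point $x \notin \gamma$, picks one of the real defining polynomials $p_i$ of $\gamma$ with $p_i(x)\neq 0$, and then argues that $\gamma_{\C}$ must lie in the complex zero set of $p_i$ (because $\gamma_{\C}\cap\{p_i=0\}$ is a complex algebraic set containing the infinite set $\gamma$, hence a curve, hence by minimality all of $\gamma_{\C}$). That is exactly your Zariski-closure step, applied directly to the defining polynomials rather than to the full ideal.

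In fact your conjugation-symmetry argument (Steps 2--3) is a correct but unnecessary detour: once you establish in Step 4 that the complex Zariski closure of $\gamma_i$ equals $\tilde{\gamma_i}$, you already know that every real polynomial vanishing on $\gamma_i$ vanishes on $\tilde{\gamma_i}$, and applying this to the defining polynomials of $\gamma_i$ immediately gives $\tilde{\gamma_i}\cap\R^n\subseteq\gamma_i$. The identification $I_{\R}=I(\gamma_i)$ and the conjugation invariance of $\tilde{\gamma_i}$ are true and nicely structural, but not needed for the conclusion. What your approach does buy is a cleaner conceptual picture (the complex curve is Galois-stable and defined over $\R$), at the cost of a longer argument; the paper's approach is shorter and more elementary but less illuminating about why the result holds.
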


\begin{proof} Let $\gamma$ be a real algebraic curve in $\R^n$ and $\gamma _{\C}$ the smallest complex algebraic curve containing it. We will show that $\gamma = \R^n \cap \gamma _{\C}$. 

Let $x \in \R^n$, such that $x \notin \gamma$; then, $x \notin \gamma_{\C}$. Indeed, $\gamma$ is the intersection of the zero sets, in $\R^n$, of some polynomials $p_1$, ..., $p_k \in \R[x_1,...,x_n]$. Since $x \notin \gamma$, it follows that $x$ does not belong to the zero set of $p_i$ in $\R^n$, for some $i \in \{1,...,k\}$. However, $x \in \R^n$, so it does not belong to the zero set of $p_i$ in $\C^n$, either. 

Now, the zero set of $p_i$ in $\C^n$ is a complex algebraic set containing $\gamma$, and therefore its intersection with $\gamma_{\C}$ is a complex algebraic set containing $\gamma$; in fact, it is a complex algebraic curve, since it contains the infinite set $\gamma$ and lies inside the complex algebraic curve $\gamma _{\C}$. Therefore, the intersection of the zero set of $p_i$ in $\C^n$ and $\gamma_{\C}$ is equal to $\gamma _{\C}$, as otherwise it would be a complex algebraic curve, smaller that $\gamma_{\C}$, containing $\gamma$. This means that $\gamma_{\C}$ is contained in the zero set of $p_i$ in $\C^n$, and since $x$ does not belong to the zero set of $p_i$ in $\C^n$, it does not belong to $\gamma _{\C}$ either.

Therefore, $\gamma = \R^n \cap \gamma _{\C}$.

\end{proof}

Now, even though a generic hyperplane of $\C^n$ intersects a complex algebraic curve in $\C^n$ in a fixed number of points, this is not true in general for real algebraic curves. However, by Lemma \ref{4.1.1}, we can define the \textit{degree of an irreducible real algebraic curve} in $\R^n$ as the degree of the (unique) irreducible complex algebraic curve in $\C^n$ containing it. Furthermore, we can define the \textit{degree of a real algebraic curve} in $\R^n$ as the degree of the smallest complex algebraic curve in $\C^n$ containing it. With this definition, and due to Lemma \ref{newstuff}, the degree of a real algebraic curve in $\R^n$ is equal to the sum of the degrees of its irreducible components (Lemma \ref{newstuff} ensures that distinct irreducible components of a real algebraic curve in $\R^n$ are contained in distinct irreducible complex algebraic curves in $\C^n$).

Therefore, if, by saying that a real algebraic curve $\gamma$ in $\R^n$ \textit{crosses itself at the point }$x_0 \in \gamma$, we mean that any neighbourhood of $x_0$ in $\gamma$ is homeomorphic to at least two intersecting lines, it follows that a real algebraic curve in $\R^n$ crosses itself at a point at most as many times as its degree.

An immediate consequence of the discussion above is the following.

\begin{corollary}\label{4.1.3} Let $\gamma$ be an irreducible real algebraic curve in $\R^n$ of degree $b$, and $p \in \R[x_1,...,x_n]$. If $\gamma$ is not contained in the zero set of $p$, it intersects the zero set of $p$ at most $b \cdot \deg p$ times. \end{corollary}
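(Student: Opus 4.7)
The plan is to bootstrap Corollary \ref{4.1.3} directly from Theorem \ref{4.1.2} (the complex B\'ezout bound) via the embedding of the real curve into its complexification. The key input is Lemma \ref{4.1.1}, which supplies a unique irreducible complex algebraic curve $\gamma_{\C}$ in $\C^n$ containing $\gamma$, of the same degree $b$, together with Lemma \ref{newstuff}, which tells us that $\gamma = \R^n \cap \gamma_{\C}$.

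First I would set $Z$ to be the zero set of $p$ in $\R^n$ and $Z_{\C}$ its zero set in $\C^n$; since $p$ has real coefficients, $Z = \R^n \cap Z_{\C}$. Next I would show that, under the hypothesis that $\gamma \not\subseteq Z$, we must also have $\gamma_{\C} \not\subseteq Z_{\C}$. This is the heart of the argument and is quick: if $\gamma_{\C} \subseteq Z_{\C}$ then intersecting both sides with $\R^n$ gives $\gamma = \R^n \cap \gamma_{\C} \subseteq \R^n \cap Z_{\C} = Z$, contradicting the hypothesis.

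Having ensured $\gamma_{\C} \not\subseteq Z_{\C}$, I would apply Theorem \ref{4.1.2} to the irreducible complex algebraic curve $\gamma_{\C}$ of degree $b$ and the polynomial $p$ (viewed as an element of $\C[x_1,\ldots,x_n]$, of the same degree $\deg p$), obtaining $|\gamma_{\C} \cap Z_{\C}| \leq b \cdot \deg p$. Finally, since $\gamma \subseteq \gamma_{\C}$ and $Z \subseteq Z_{\C}$, we have the inclusion $\gamma \cap Z \subseteq \gamma_{\C} \cap Z_{\C}$, and the bound $|\gamma \cap Z| \leq b \cdot \deg p$ follows immediately.

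There is no real obstacle here beyond recording the compatibilities: the degree of $\gamma$ is defined precisely so as to match $\deg \gamma_{\C}$, and the zero set of a real polynomial behaves correctly under complexification. The only subtlety worth flagging is that one must use both Lemma \ref{4.1.1} (existence and uniqueness of $\gamma_{\C}$, so that ``degree'' is unambiguous) and Lemma \ref{newstuff} (so that non-containment of $\gamma$ in $Z$ lifts to non-containment of $\gamma_{\C}$ in $Z_{\C}$); without the latter it would be conceivable that $\gamma_{\C}$ sits inside $Z_{\C}$ even while $\gamma$ avoids $Z$.
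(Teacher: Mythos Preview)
Your proof is correct and matches the paper's intended argument (the paper simply calls this ``an immediate consequence of the discussion above,'' meaning complexify via Lemma~\ref{4.1.1} and apply Theorem~\ref{4.1.2}). One minor remark: your final comment overstates the role of Lemma~\ref{newstuff}---the bare containment $\gamma \subseteq \gamma_{\C}$ from Lemma~\ref{4.1.1} already gives $\gamma_{\C} \subseteq Z_{\C} \Rightarrow \gamma \subseteq Z_{\C} \cap \R^n = Z$, so the equality $\gamma = \gamma_{\C} \cap \R^n$ is not needed here.
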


We now discuss projections of real algebraic curves. This leads us to the study of semi-algebraic sets.

More particularly, a \textit{basic real semi-algebraic set} in $\R^n$ is any set of the form \begin{displaymath} \{x\in \R^n: P(x)=0 \text{ and }  Q(x)>0, \; \forall \; Q\in \mathcal{Q} \}, \end{displaymath} where $P\in \R[x_1,...,x_n]$ and $\mathcal{Q}$ is a finite family of polynomials in $\R[x_1,...,x_n]$. A \textit{real semi-algebraic set} in $\R^n$ is defined as a finite union of basic real semi-algebraic sets. Note that a real algebraic set in $\R^n$ is, in fact, a basic real semi-algebraic set in $\R^n$, since it can be expressed as the zero set of a single real $n$-variate polynomial (a real algebraic set in $\R^n$ is the intersection of the zero sets, in $\R^n$, of some polynomials $p_1$, ..., $p_k \in \R[x_1,...,x_n]$, which is equal to the zero set, in $\R^n$, of the polynomial $p_1^2+...+p_k^2 \in \R[x_1,...,x_n]$). 

What holds is the following (see \cite[Chapter 2, \S3]{MR2248869} for a proof).

\begin{theorem}\label{4.1.5} The projection of a real algebraic set of $\R^n$ on any hyperplane of $\R^n$ is a real semi-algebraic set. \end{theorem}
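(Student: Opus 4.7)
The plan is to recognise Theorem \ref{4.1.5} as a very special instance of the Tarski--Seidenberg projection theorem and to describe how one reduces to, and then proves, the latter. First I would observe that every real algebraic set in $\R^n$ is already a basic real semi-algebraic set (indeed it is the zero locus of a single polynomial, namely the sum of squares of the generators, as noted in the excerpt just before the theorem). Hence it suffices to prove the more general statement: if $S \subset \R^n$ is real semi-algebraic, then for any hyperplane $H$ of $\R^n$ the image $\pi_H(S)$ is a real semi-algebraic subset of $H$. By applying an affine orthogonal change of coordinates in $\R^n$ (which preserves the class of semi-algebraic sets, since it transforms polynomial equalities and strict inequalities into polynomial equalities and strict inequalities of the same form), I would reduce to the case where $H$ is the coordinate hyperplane $\{x_n=0\}$ and $\pi_H$ is the projection onto the first $n-1$ coordinates.

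Next, since a finite union of semi-algebraic sets projects to a finite union of projections, and a finite union of semi-algebraic sets is semi-algebraic, I would reduce further to the case of a single basic semi-algebraic set
\begin{displaymath}
S=\{x\in\R^n : P(x)=0,\; Q_1(x)>0,\ldots,Q_r(x)>0\},
\end{displaymath}
with $P,Q_1,\ldots,Q_r\in\R[x_1,\ldots,x_n]$. The task is then to describe
\begin{displaymath}
\pi(S)=\bigl\{(x_1,\ldots,x_{n-1})\in\R^{n-1} : \exists\,t\in\R\text{ with }P(x_1,\ldots,x_{n-1},t)=0,\; Q_i(x_1,\ldots,x_{n-1},t)>0\ \forall i\bigr\}
\end{displaymath}
in a semi-algebraic way in the parameters $x_1,\ldots,x_{n-1}$. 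Viewing $P$ and each $Q_i$ as univariate polynomials in $t$ with coefficients in $\R[x_1,\ldots,x_{n-1}]$, what must be established is that the set of parameter values for which such a real root $t$ of $P$ exists, subject to the sign conditions $Q_i(\cdot,t)>0$, is itself definable by a finite Boolean combination of polynomial equalities and inequalities in $x_1,\ldots,x_{n-1}$.

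The main step, and the principal obstacle, is this elimination of the quantifier $\exists\, t$. I would carry it out via the classical subresultant/Sturm--query machinery. Concretely, for each fixed $(x_1,\ldots,x_{n-1})$ one can count, using Sturm sequences applied to $P(x_1,\ldots,x_{n-1},t)$, the number of real roots of $P$ in $t$ on which a prescribed sign pattern of $Q_1,\ldots,Q_r$ is realised; these counts can be written as sums of signs of certain Tarski queries, which in turn are signs of principal subresultant coefficients of $P$, $P'$ and the $Q_i$ viewed as polynomials in $t$. All of these quantities are polynomials in $x_1,\ldots,x_{n-1}$. The condition $\pi(S)\ni(x_1,\ldots,x_{n-1})$ is then equivalent to a finite Boolean combination of sign conditions on these polynomials, which is precisely the definition of a semi-algebraic subset of $\R^{n-1}$.

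The hard part is the bookkeeping of the subresultant/Sturm argument — making sure that leading coefficients which may vanish on subvarieties of the parameter space are handled by splitting into finitely many cases (a case for each pattern of vanishing of the relevant principal coefficients), so that in each stratum the Sturm sequence has a uniform combinatorial shape. Once the finite case split is organised, the semi-algebraicity of $\pi(S)$ is immediate, and the theorem follows. I would conclude by noting that an alternative, cleaner route is to invoke cylindrical algebraic decomposition, which directly produces a finite partition of $\R^{n-1}$ into semi-algebraic cells over each of which $S$ looks cylindrically trivial, and hence trivially shows that $\pi(S)$ is a union of these cells.
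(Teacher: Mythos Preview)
Your approach is correct, but note that the paper does not actually prove Theorem~\ref{4.1.5}: it simply states the result and refers the reader to \cite[Chapter 2, \S3]{MR2248869} (Basu--Pollack--Roy) for a proof. What you have sketched --- reducing to projection onto a coordinate hyperplane, then eliminating the existential quantifier on the last variable via subresultant/Sturm machinery (or, alternatively, via cylindrical algebraic decomposition) --- is precisely the content of the Tarski--Seidenberg theorem as developed in that reference, so your proposal is in line with the source the paper cites.
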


We further notice that any set of the form $ \{x\in \R^{n}: Q(x)>0, \; \forall \; Q \in\mathcal{Q}\}$, where $\mathcal{Q}$ is a finite subset of $\R[x_1,...,x_n]$, is open in $\R^{n}$ (with the usual topology). Therefore, a basic real semi-algebraic set in $\R^n$ that is not open in $\R^n$ is of the form $ \{x\in \R^{n}: P(x)=0 \text{ and }  Q(x)>0, \; \forall \; Q\in \mathcal{Q} \}$, where $\mathcal{Q}$ is a finite subset of $\R[x_1,...,x_{n}]$ and $P\in \R[x_1,...,x_n]$ is a non-zero polynomial. Thus, each basic real semi-algebraic set in $\R^n$ that is not open in $\R^n$ (with the usual topology) is contained in a real algebraic set of dimension at most $n-1$.

Now, if $\gamma$ is a real algebraic curve in $\R^3$, its projection on a generic plane $H \simeq \R^2$ is a finite union of basic real semi-algebraic sets which are not open in $H$, so each of them is contained in some real algebraic set of dimension at most 1. However, the projection of a curve in $\R^3$ on a generic plane is not a finite set of points. Therefore, at least one of these basic real semi-algebraic sets is an infinite set of points, contained in some real algebraic curve in $H$. From this fact, as well as a closer study of the algorithm that constitutes the proof of Theorem \ref{4.1.5} as described in \cite[Chapter 2, \S3]{MR2248869}, we can finally see that the projection of $\gamma$ on a generic plane $H$ is the union of at most $B_{\deg \gamma}$ basic real semi-algebraic sets, each of which either consists of at most $B'_{\deg \gamma}$ points or is contained in a real algebraic curve in $H$ of degree at most $B'_{\deg \gamma}$, where $B_{\deg \gamma}$, $B'_{\deg \gamma}$ are integers depending only on the degree $\deg \gamma$ of $\gamma$. Therefore, the following is true.

\begin{lemma}\label{4.1.6} Let $\gamma$ be a real algebraic curve in $\R^3$. There exists an integer $C_{\deg \gamma} \geq \deg \gamma$, depending only on the degree $\deg \gamma$ of $\gamma$, such that the projection of $\gamma$ on a generic plane is contained in a planar real algebraic curve of degree at most $C_{\deg \gamma}$. \end{lemma}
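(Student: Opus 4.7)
The strategy is to refine the semi-algebraic decomposition of the projection outlined in the discussion preceding the statement of the lemma, and then amalgamate the pieces into a single planar real algebraic curve whose degree is controlled by $\deg\gamma$ alone.

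First, I would invoke Theorem \ref{4.1.5} together with the analysis in \cite[Chapter 2, \S3]{MR2248869} to realise the projection of $\gamma$ onto a generic plane $H\simeq\R^2$ as a union of at most $B_{\deg\gamma}$ basic real semi-algebraic sets $E_1,\dots,E_N$, where each $E_j$ is either a set of at most $B'_{\deg\gamma}$ points, or is contained in the zero set of a polynomial $q_j\in\R[x,y]$ of degree $\leq B'_{\deg\gamma}$. The existence of such bounds $B_{\deg\gamma}$ and $B'_{\deg\gamma}$, depending only on $\deg\gamma$, is ensured by a careful inspection of the cylindrical decomposition algorithm used to prove Tarski--Seidenberg; in particular the degrees of the polynomials produced at each elimination step are controlled in terms of the degrees of the input polynomials, which in our setting are the finitely many generators of $I(\gamma)$.

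Next, I would amalgamate the pieces. For each zero-dimensional piece $E_j$ consisting of points $(x^{(1)},y^{(1)}),\dots,(x^{(k_j)},y^{(k_j)})$ with $k_j\leq B'_{\deg\gamma}$, I replace it with the zero set of $\tilde q_j(x,y):=\prod_{i=1}^{k_j}(x-x^{(i)})$, a union of $k_j$ vertical lines in $H$ that certainly contains $E_j$ and is defined by a real polynomial of degree at most $B'_{\deg\gamma}$. For each one-dimensional piece, I set $\tilde q_j:=q_j$. Then the product $Q:=\prod_{j=1}^N\tilde q_j$ lies in $\R[x,y]$ with $\deg Q\leq B_{\deg\gamma}\cdot B'_{\deg\gamma}$, and its zero set in $H$ is a planar real algebraic curve containing $\bigcup_j E_j$, which is the entire projection of $\gamma$. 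The genericity of $H$ enters only to guarantee that the projection of $\gamma$ is actually one-dimensional (or zero-dimensional) rather than collapsing in a pathological way; since $\gamma$ itself is one-dimensional, this holds outside a proper algebraic subset of the Grassmannian of planes through the origin. Setting $C_{\deg\gamma}:=\max\{\deg\gamma,\,B_{\deg\gamma}B'_{\deg\gamma}\}$ then finishes the lemma.

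The main obstacle, in my view, is not the amalgamation argument but rather extracting and justifying the uniform degree bound $B'_{\deg\gamma}$ from the semi-algebraic decomposition; although standard, this requires tracking how degrees evolve under the resultant and subresultant computations that appear in a Tarski--Seidenberg elimination, and in particular verifying that only finitely many generators of $I(\gamma)$, each of degree controlled by $\deg\gamma$, need to be fed into the algorithm. An alternative route that avoids this technicality would be to pass to the smallest complex algebraic curve $\gamma_\C\subseteq\C^3$ containing $\gamma$ (which exists, has degree $\deg\gamma$, and is defined over $\R$, by Lemma \ref{4.1.1} together with the uniqueness clause and complex conjugation), then project $\gamma_\C$ onto the complexification of $H$ to obtain a complex algebraic curve in $\C^2$ of degree at most $\deg\gamma$ defined over $\R$, and finally intersect with $H$ itself; this route yields the sharper bound $C_{\deg\gamma}=\deg\gamma$, at the cost of invoking more algebro-geometric machinery than the semi-algebraic approach already set up in the excerpt.
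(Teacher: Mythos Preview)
Your proposal is correct and follows essentially the same approach as the paper: the paper's argument is the paragraph immediately preceding the lemma, which invokes the Tarski--Seidenberg decomposition (Theorem~\ref{4.1.5}) and appeals to a closer reading of the algorithm in \cite[Chapter~2, \S3]{MR2248869} to extract the bounds $B_{\deg\gamma}$ and $B'_{\deg\gamma}$, exactly as you do. You go slightly further by making the amalgamation step explicit (taking the product $Q=\prod_j\tilde q_j$), which the paper leaves implicit, and by sketching an alternative complex-geometric route via $\gamma_\C$ that the paper does not pursue; both additions are sound and the latter would indeed give the sharper constant.
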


Note that this means that the \textit{Zariski closure} of the projection of a real algebraic curve $\gamma$ of $\R^3$ on a generic plane, i.e. the smallest variety containing that projection, is, in fact, a planar real algebraic curve.

Our aim now is to find an upper bound on the number of times a planar real algebraic curve $\gamma$ crosses itself, and eventually establish an upper bound on the number of times a real algebraic curve in $\R^3$ crosses itself. To that end, we proceed to show that a planar real algebraic curve $\gamma$ is the zero set of a single, square-free bivariate real polynomial, of degree $\lesssim \deg \gamma$.

\begin{lemma} \label{newstuff2} Let $\gamma$ be an irreducible planar complex algebraic curve. Then, $\gamma$ is the zero set, in $\C^2$, of a single, irreducible polynomial $p \in \C[x,y]$, of degree $\leq \deg \gamma$.
\end{lemma}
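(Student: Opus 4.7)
The plan is to work in two stages: first reducing $\gamma$ to the zero set of one irreducible polynomial via unique factorization in $\C[x,y]$, then reading off the degree via B\'ezout (or an elementary slicing argument).

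First, because $\gamma$ is an algebraic curve in $\C^2$, it is a proper subvariety of $\C^2$, so the ideal $I(\gamma)\subset \C[x,y]$ is non-zero. I would pick any non-zero $q\in I(\gamma)$ and, using that $\C[x,y]$ is a unique factorization domain, write $q = p_1\cdots p_k$ with each $p_i\in\C[x,y]$ irreducible. Then
\[
\gamma \;\subseteq\; V(q) \;=\; V(p_1)\cup\cdots\cup V(p_k).
\]
Since each $V(p_i)$ is Zariski-closed and $\gamma$ is irreducible, $\gamma$ must be contained in $V(p_i)$ for some index $i$, i.e.\ $p_i\in I(\gamma)$.

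Second, I would argue that in fact $\gamma = V(p_i)$. Since $p_i$ is irreducible, $(p_i)$ is a prime ideal in the UFD $\C[x,y]$, so $V(p_i)$ is itself an irreducible variety; by Krull's Hauptidealsatz it has dimension $2-1=1$. Now $\gamma$ is an irreducible $1$-dimensional variety contained in an irreducible $1$-dimensional variety, and no proper closed irreducible subvariety of an irreducible variety can have the same dimension as the ambient one. Hence $\gamma = V(p_i)$, which is the single-polynomial description required.

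Finally, for the degree bound I would use B\'ezout's theorem (Theorem~\ref{4.1.2} in the excerpt) applied to $\gamma = V(p_i)$ and a generic affine line $L = V(\ell)$ in $\C^2$. For generic $\ell$ of degree $1$, the line $L$ is not contained in $V(p_i)$, so B\'ezout gives $|L\cap V(p_i)| \le \deg p_i\cdot\deg\ell = \deg p_i$. By the definition of the degree of a complex algebraic curve as the generic number of intersections with a hyperplane, this generic count equals $\deg\gamma$, and a direct elementary check (restrict $p_i$ to $L$ via a parametrization $(x,y)=(t,(c-at)/b)$ for generic $(a,b,c)$ with $b\neq 0$, getting a univariate polynomial whose top-degree coefficient does not vanish for a generic direction) shows the bound is attained. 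Either way, $\deg p_i \le \deg\gamma$, completing the proof. The only subtle step is the dimension-theoretic claim that forces $\gamma = V(p_i)$ rather than merely $\gamma\subseteq V(p_i)$; everything else is straightforward manipulation with the UFD structure and an appeal to B\'ezout already invoked in the excerpt.
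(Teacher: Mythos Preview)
Your proof is correct and shares the same skeleton as the paper's: pick a non-zero polynomial vanishing on $\gamma$, pass to an irreducible factor $p_i$, and use irreducibility plus a dimension count to conclude $\gamma = V(p_i)$. The paper phrases the last step as ``$\gamma$ equals one of the irreducible components of $V(p_1)$'', which is exactly your dimension argument.

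The genuine difference is in how the degree bound $\deg p_i \le \deg\gamma$ is obtained. The paper does not argue this directly; instead it invokes \cite[Theorem A.3]{MR2827010}, which furnishes defining polynomials $p_1,\dots,p_k$ for $\gamma$ that already have degree at most $\deg\gamma$, so any irreducible factor of $p_1$ inherits the bound for free. You instead take an arbitrary $q\in I(\gamma)$ with no a priori degree control and recover the bound at the end by the slicing argument: restricting $p_i$ to a generic line yields a univariate polynomial of degree exactly $\deg p_i$, whence the generic intersection count --- which is $\deg\gamma$ by definition --- equals $\deg p_i$. Your route is more self-contained (no external black box), while the paper's is shorter once the citation is granted. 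One small expository point: the B\'ezout inequality you quote gives $\deg\gamma \le \deg p_i$, which is the \emph{wrong} direction on its own; it is only your direct restriction check (``the top-degree coefficient does not vanish for a generic direction'') that delivers the needed inequality, so the ``either way'' is slightly misleading.
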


\begin{proof} Let $\gamma$ be an irreducible planar complex algebraic curve. Then, $\gamma$ is the intersection of the zero sets, in $\C^2$, of some polynomials $p_1$, ..., $p_k \in \C[x,y]$, for $k\lesssim_{\deg \gamma} 1$, of degrees $\leq \deg \gamma$ (see \cite[Theorem A.3]{MR2827010}). 

Now, for all $i=1,...,k$, the zero set of $p_i$ in $\C^2$ contains $\gamma$, and is thus an algebraic set of dimension at least 1; in fact, equal to 1, as otherwise the zero set of $p_i$ would be the whole of $\C^2$ and $p_i$ would be the zero polynomial. Therefore, the zero set of $p_i$ in $\C^2$ is a planar complex algebraic curve containing $\gamma$, for all $i=1,...,k$. Consequently, $\gamma$ is contained, in particular, in the planar complex algebraic curve that is the zero set of $p_1$ in $\C^2$, and, since $\gamma$ is irreducible, it is equal to one of the irreducible components of the zero set of $p_1$, which is the zero set of an irreducible factor of $p_1$. 

Therefore, $\gamma$ is the zero set, in $\C^2$, of a single, irreducible polynomial $p\in \C[x,y]$, of degree $\leq \deg \gamma$.

\end{proof}

We can therefore easily deduce the following.

\begin{corollary} \label{newstuff3} Let $\gamma$ be an irreducible planar real algebraic curve. Then, $\gamma$ is the zero set, in $\R^2$, of a single, irreducible polynomial $p \in \R[x,y]$, of degree $\leq 2\deg \gamma$.
\end{corollary}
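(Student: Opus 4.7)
The plan is to reduce to the complex case already handled in Lemma \ref{newstuff2}, and then descend back to $\R[x,y]$ via complex conjugation. First I would invoke Lemma \ref{4.1.1} to find the (unique) irreducible complex algebraic curve $\gamma_{\C} \subset \C^2$ containing $\gamma$, which has degree equal to $\deg \gamma$. Then, by Lemma \ref{newstuff2}, $\gamma_{\C}$ is the zero set in $\C^2$ of a single irreducible polynomial $p \in \C[x,y]$ of degree $\leq \deg \gamma$. By Lemma \ref{newstuff}, $\gamma = \gamma_{\C} \cap \R^2$, so $\gamma$ is exactly the set of real points on which $p$ vanishes.

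Next I would form the polynomial $q := p \cdot \bar{p}$, where $\bar{p}$ denotes the polynomial obtained by conjugating each coefficient of $p$. A short computation shows $\bar{q} = q$, so $q \in \R[x,y]$, and clearly $\deg q \leq 2 \deg \gamma$. Since $\bar{p}(x,y) = \overline{p(x,y)}$ for $(x,y) \in \R^2$, we have $q(x,y) = |p(x,y)|^2$ on $\R^2$, and hence the real zero set of $q$ coincides with the real zero set of $p$, which is $\gamma$.

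It remains to address irreducibility over $\R[x,y]$, which I expect to be the only delicate point. I would split into two cases according to whether $p$ and $\bar p$ are associate in $\C[x,y]$. If they are, then $p$ is a scalar multiple of a real polynomial, so up to a constant $p$ itself lies in $\R[x,y]$; since $p$ is irreducible in $\C[x,y]$ it is a fortiori irreducible in $\R[x,y]$, has degree $\leq \deg \gamma \leq 2 \deg \gamma$, and has $\gamma$ as its real zero set. If $p$ and $\bar p$ are not associate, then $q = p \cdot \bar p$ is a product of two distinct irreducibles in $\C[x,y]$, and I claim $q$ is irreducible in $\R[x,y]$: any nontrivial factorisation $q = rs$ with $r,s \in \R[x,y]$ would, by unique factorisation in $\C[x,y]$, force $r = \alpha p$, $s = \beta \bar p$ for some $\alpha,\beta \in \C^*$; imposing $r = \bar r$ would then give $\alpha p = \bar \alpha \bar p$, contradicting the assumption that $p$ is not associate to $\bar p$.

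Combining the two cases produces an irreducible $P \in \R[x,y]$ of degree $\leq 2\deg \gamma$ whose real zero set equals $\gamma$, proving the corollary. The main obstacle, as indicated, is the verification of irreducibility over $\R[x,y]$; everything else is a direct assembly of the preceding lemmas.
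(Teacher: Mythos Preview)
Your proof is correct and follows essentially the same route as the paper: pass to the unique irreducible complex curve $\gamma_{\C}$, invoke Lemma~\ref{newstuff2} to obtain an irreducible $p\in\C[x,y]$ of degree $\le\deg\gamma$, use Lemma~\ref{newstuff} to identify $\gamma$ with the real zero set of $p$, and then take $p\bar p\in\R[x,y]$. Your treatment of irreducibility over $\R[x,y]$ is in fact more careful than the paper's, which simply asserts that $p\bar p$ is irreducible in $\R[x,y]$ because $p$ is irreducible in $\C[x,y]$; your case split correctly handles the situation where $p$ is already real up to a scalar (in which case $p\bar p$ would be a square and one should use $p$ itself).
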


\begin{proof} Let $\gamma_{\C}$ be the (unique) irreducible planar complex algebraic curve containing $\gamma$.

Now, by Lemma \ref{newstuff2}, $\gamma_{\C}$ is the zero set, in $\C^2$, of a single, irreducible polynomial $p \in \C[x,y]$, of degree $\leq \deg \gamma_{\C}\;(=\deg \gamma)$. Thus, by Lemma \ref{newstuff}, $\gamma$ is the zero set of $p$ in $\R^2$, and, since the polynomials $p$ and $\bar{p}$ have the same zero set in $\R^2$, $\gamma$ is the zero set, in $\R^2$, of the polynomial $p\bar{p} \in \R[x,y]$, which is irreducible in $\R[x,y]$, since $p$ is irreducible in $\C[x,y]$. 

Therefore, the statement of the Lemma is proved.

\end{proof}

An immediate consequence of Corollary \ref{newstuff3} is the following.

\begin{corollary}\label{newstuff4} Let $\gamma$ be a planar real algebraic curve. Then, $\gamma$ is the zero set, in $\R^2$, of a single, square-free polynomial $p \in \R[x,y]$, of degree $\leq 2\deg \gamma$.

\end{corollary}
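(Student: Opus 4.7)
The plan is to reduce the statement to Corollary \ref{newstuff3} by decomposing $\gamma$ into its irreducible components. First, I would write $\gamma=\gamma_1\cup\cdots\cup\gamma_k$, where each $\gamma_i$ is an irreducible real algebraic curve in $\R^2$. By the discussion following Lemma \ref{4.1.1}, the degree of $\gamma$ equals the sum of the degrees of its irreducible components, so $\sum_{i=1}^{k}\deg\gamma_i=\deg\gamma$.

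Next, I would apply Corollary \ref{newstuff3} to each component individually: for each $i\in\{1,\ldots,k\}$ there exists an irreducible polynomial $p_i\in\R[x,y]$, of degree $\leq 2\deg\gamma_i$, whose zero set in $\R^2$ is exactly $\gamma_i$. The natural candidate for the desired polynomial is then the product
\begin{displaymath}
p:=p_1 p_2\cdots p_k\in\R[x,y].
\end{displaymath}
The zero set of $p$ in $\R^2$ is the union of the zero sets of the $p_i$, which is $\gamma_1\cup\cdots\cup\gamma_k=\gamma$, and $\deg p=\sum_{i=1}^k\deg p_i\leq 2\sum_{i=1}^k\deg\gamma_i=2\deg\gamma$.

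The only substantive point to check is that $p$ is square-free. Each factor $p_i$ is irreducible in $\R[x,y]$, so it would suffice to show that the $p_i$ are pairwise non-associate (i.e., no two differ by a nonzero real scalar). If two factors $p_i$ and $p_j$ with $i\neq j$ were associate, then the corresponding irreducible components $\gamma_i$ and $\gamma_j$ would coincide as zero sets in $\R^2$, contradicting the fact that the decomposition into distinct irreducible components is unique. Hence the $p_i$ are pairwise non-associate irreducible elements of the UFD $\R[x,y]$, and therefore their product $p$ is square-free, completing the proof.

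I expect the main subtlety, rather than an obstacle per se, to lie in this last step: one must invoke the uniqueness of the irreducible decomposition of $\gamma$ (together with the bijection, via Lemma \ref{newstuff}, between irreducible real components and their irreducible complex extensions) to guarantee that distinct components really do yield non-associate polynomial factors. Once this is noted, the argument is essentially bookkeeping on top of Corollary \ref{newstuff3}.
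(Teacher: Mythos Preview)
Your proposal is correct and is precisely the natural argument the paper has in mind: the paper does not write out a proof at all, merely declaring the corollary ``an immediate consequence of Corollary \ref{newstuff3}'', and your decomposition into irreducible components, application of Corollary \ref{newstuff3} to each, and multiplication of the resulting irreducible polynomials is exactly how one fills in that sentence. Your observation that the square-freeness of $p$ follows because distinct irreducible components yield non-associate irreducible factors is the only point requiring a moment's thought, and you have handled it correctly.
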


We can now bound from above the number of times a planar real algebraic curve crosses itself.

\begin{lemma} \label{newstuff5}Let $\gamma$ be a planar real algebraic curve. Then, $\gamma$ crosses itself at most $4 (\deg \gamma)^2$ times.
\end{lemma}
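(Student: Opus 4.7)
The plan is to identify each self-crossing of $\gamma$ with a singular point of a defining polynomial for $\gamma$, and then to bound the number of such singular points via B\'ezout's theorem.

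First I would invoke Corollary \ref{newstuff4} to express $\gamma$ as the zero set, in $\R^2$, of a single square-free polynomial $p \in \R[x,y]$ with $\deg p \leq 2\deg\gamma$. Next I would observe that every self-crossing $x_0$ of $\gamma$ must satisfy $p(x_0) = p_x(x_0) = p_y(x_0) = 0$. Indeed, if one of $p_x, p_y$ were non-zero at $x_0$, the implicit function theorem would express $V(p)$ locally around $x_0$ as the smooth graph of a function, so $\gamma$ would be locally homeomorphic to a single line, contradicting the definition of a self-crossing. It therefore suffices to bound the number of common zeros of $p, p_x, p_y$.

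Pass to $\C^2$, where B\'ezout is cleanest to invoke; since square-freeness is preserved by extension of scalars, factor $p = q_1 \cdots q_m$ into distinct irreducible factors in $\C[x,y]$. Any singular point of $V_{\C}(p)$ falls into exactly one of two types: either (a) it is a singular point of one of the irreducible components $V_{\C}(q_i)$, or (b) it is a point of $V_{\C}(q_i) \cap V_{\C}(q_j)$ for some $i \neq j$. For type (b), distinct irreducible factors $q_i, q_j$ are coprime, so by B\'ezout (Theorem \ref{4.1.2}) one has $|V_{\C}(q_i) \cap V_{\C}(q_j)| \leq \deg q_i \cdot \deg q_j$. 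For type (a), if $\deg q_i = 1$ then $V_{\C}(q_i)$ is a complex line, which has no singular points; otherwise at least one of $(q_i)_x, (q_i)_y$ is a non-zero polynomial whose degree is strictly less than $\deg q_i$, and irreducibility of $q_i$ together with this degree drop forces coprimality with $q_i$, so another application of B\'ezout bounds the singular points of $V_{\C}(q_i)$ by $\deg q_i \cdot (\deg q_i - 1) \leq (\deg q_i)^2$.

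Summing both contributions yields
\begin{displaymath}
\#\{\text{singular points of } V_{\C}(p)\} \leq \sum_{i} (\deg q_i)^2 + \sum_{i<j} \deg q_i \cdot \deg q_j \leq \Bigl(\sum_i \deg q_i\Bigr)^{\!2} = (\deg p)^2 \leq 4(\deg\gamma)^2,
\end{displaymath}
which is the desired bound on the number of self-crossings of $\gamma$. The only mildly delicate step is verifying the two coprimality conditions needed to invoke B\'ezout, but both follow immediately from irreducibility together with the strict degree drop $\deg (q_i)_x, \deg(q_i)_y < \deg q_i$; no substantive obstacle arises, and the square-freeness supplied by Corollary \ref{newstuff4} is doing all the real work.
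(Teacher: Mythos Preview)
Your proof is correct and follows essentially the same approach as the paper: invoke Corollary \ref{newstuff4} to obtain a square-free defining polynomial $p$ of degree $\leq 2\deg\gamma$, observe that self-crossings are singular points of $V(p)$, and bound the singular points by B\'ezout using coprimality coming from square-freeness. The only organisational difference is that the paper works over $\R$ and pairs each real irreducible factor $p_i$ of $p$ with a partial of the full polynomial $p$ not divisible by $p_i$ (yielding $\sum_i \deg p_i \cdot \deg p \leq (\deg p)^2$), whereas you pass to $\C$ and split into singularities of individual components versus pairwise intersections; both routes arrive at the same bound $(\deg p)^2 \leq 4(\deg\gamma)^2$.
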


\begin{proof} By Corollary \ref{newstuff4}, $\gamma$ is the zero set, in $\R^2$, of a single, square-free polynomial $p \in \R[x,y]$, of degree $\leq 2\deg \gamma$. Since $p$ is square-free, $p$ and $\nabla{p}$ do not have a common factor, so, by B\'ezout's theorem, $p$ and $\nabla{p}$ have at most $(\deg p)^2 \leq (2\deg \gamma)^2$ common roots. 

Indeed, if $p=p_1\cdots p_k$, where $p_1$, ..., $p_k \in \R[x,y]$ are irreducible polynomials, then each common root of $p$ and $\nabla{p}$ is a common root of an irreducible factor $p_i$ of $p$, for some $i \in \{1,...,k\}$, and a polynomial $g_i \in \Big\{\frac{\partial p}{\partial x}, \frac{\partial p}{\partial y}\Big\}$, which does not have $p_i$ as a factor. Therefore, the number of common roots of $p$ and $\nabla{p}$ is equal to at most $\sum_{i=1,...,k}r_i$, where, for each $i\in \{1,...,k\}$, $r_i$ is the number of common roots of $p_i$ and $g_i$. However, for all $i\in\{1,...,k\}$, the polynomials $p_i$ and $g_i \in \R[x,y]$ do not have a common factor, and thus, by B\'ezout's theorem, $r_i \leq \deg p_i \cdot \deg g_i \leq \deg p_i \cdot d$. Therefore, the number of common roots of $p$ and $\nabla{p}$ is $\leq \sum_{i=1,...,k}\deg p_i \cdot d\leq d^2$.

But if $\gamma$ crosses itself at a point $x$, then $x$ is a common root of $p$ and $\nabla{p}$, because otherwise $\gamma$ would be a manifold locally around $x$. So, $\gamma$ crosses itself at most $4(\deg \gamma)^2$ times.

\end{proof}

Lemma \ref{newstuff5} immediately gives an upper bound on the number of times a real algebraic curve in $\R^3$ crosses itself.

\begin{lemma}\label{4.1.7} Let $\gamma$ be a real algebraic curve in $\R^3$. Then, $\gamma$ crosses itself at most $4(\deg \overline{\pi(\gamma)})^2$ times, where $\overline{\pi(\gamma)}$ the smallest planar real algebraic curve containing the projection $\pi(\gamma)$ of the curve $\gamma$ on a generic plane (i.e. the curve that constitutes the Zariski closure of $\pi(\gamma)$). \end{lemma}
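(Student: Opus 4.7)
The plan is to reduce the three-dimensional statement to its planar analogue, Lemma \ref{newstuff5}, via the generic projection already used to define $\overline{\pi(\gamma)}$. Concretely, I would fix the projection $\pi$ of $\R^3$ onto the generic plane $H$ appearing in the definition of $\overline{\pi(\gamma)}$, and aim to show that every self-crossing of $\gamma$ produces, at its image under $\pi$, a self-crossing of the planar real algebraic curve $\overline{\pi(\gamma)}$, and that this correspondence is injective. Applying Lemma \ref{newstuff5} to $\overline{\pi(\gamma)}$ would then yield the bound $4(\deg \overline{\pi(\gamma)})^2$.

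The first step is to make ``generic'' precise enough for the argument. Given any finite collection of self-crossings of $\gamma$, each equipped with its finitely many branch tangent directions in $\R^3$, I would require the plane $H$ to be chosen so that (i) the projection $\pi$ separates any two distinct crossings in the collection, and (ii) at every such crossing $x$, the tangent directions of distinct branches of $\gamma$ through $x$ project to distinct directions in $H$. Condition (i) excludes only directions parallel to the finitely many secant lines between pairs of crossings; condition (ii) excludes only directions lying in the finitely many planes spanned by pairs of tangent lines at a crossing. Hence both conditions hold on a dense open set of planes, compatibly with the generic choice used in Lemma \ref{4.1.6}.

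For such a $\pi$, each self-crossing $x$ of $\gamma$ yields a point $\pi(x) \in \pi(\gamma) \subseteq \overline{\pi(\gamma)}$ through which pass at least two smooth real-analytic arcs with distinct tangent directions in $H$; these arcs lie inside $\overline{\pi(\gamma)}$ and hence exhibit $\pi(x)$ as a self-crossing of $\overline{\pi(\gamma)}$ in the sense of the excerpt. Since distinct self-crossings of $\gamma$ map to distinct points of $H$ by condition (i), Lemma \ref{newstuff5} bounds the number of self-crossings in any finite subcollection by $4(\deg \overline{\pi(\gamma)})^2$; letting the subcollection vary yields the same bound on the total number of self-crossings of $\gamma$.

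The main obstacle I anticipate is verifying that the two real-analytic arcs through $\pi(x)$ inside $\pi(\gamma)$ persist as genuinely crossing branches of the Zariski closure $\overline{\pi(\gamma)}$, rather than being absorbed into a single smooth branch of that closure. I would handle this by observing that a smooth branch of a planar real algebraic curve has a well-defined unique tangent at each of its regular points; consequently, two real-analytic arcs contained in $\pi(\gamma) \subseteq \overline{\pi(\gamma)}$ with distinct tangent directions at $\pi(x)$ cannot both be supported on the same local branch of $\overline{\pi(\gamma)}$, so they force $\pi(x)$ to be an honest self-crossing of $\overline{\pi(\gamma)}$, to which Lemma \ref{newstuff5} applies.
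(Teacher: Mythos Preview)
Your proposal is correct and follows exactly the route the paper takes: reduce to the planar bound of Lemma~\ref{newstuff5} by observing that self-crossings of $\gamma$ inject into self-crossings of $\overline{\pi(\gamma)}$. The paper's own proof is a one-line assertion that ``obviously, $\gamma$ crosses itself at most as many times as $\overline{\pi(\gamma)}$ crosses itself''; your argument supplies precisely the genericity conditions on $\pi$ (separating distinct crossings and separating branch tangents) that make this comparison rigorous, so you are filling in details the paper leaves implicit rather than taking a different approach.
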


\begin{proof} Obviously, $\gamma$ crosses itself at most as many times as $\overline{\pi(\gamma)}$ crosses itself, thus, by Lemma  \ref{newstuff5}, at most $4(\deg \overline{\pi(\gamma)})^2$ times.

\end{proof}

We are now ready to establish an analogue of the Szemer\'edi-Trotter theorem for real algebraic curves in $\R^3$. Indeed, the following is known.

\begin{theorem}\emph{\textbf{ (Kaplan, Matou\v{s}ek, Sharir, \cite[Theorem 4.1]{KMS})}}\label{4.1.8} Let $b$, $k$, $C$ be positive constants. Also, let $P$ be a finite set of points in $\R^2$ and $\Gamma$ a finite set of planar real algebraic curves, such that

\emph{(i)} every $\gamma \in \Gamma$ has degree at most $b$, and \newline
\emph{(ii)} for every $k$ distinct points in $\R^2$, there exist at most $C$ distinct curves in $\Gamma$ passing through all of them.

Then, \begin{displaymath} I_{P,\Gamma} \lesssim_{b,k,C} |P|^{k/(2k-1)}|\Gamma|^{(2k-2)/(2k-1)}+|P|+|\Gamma|. \end{displaymath} \end{theorem}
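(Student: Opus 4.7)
The plan is to mimic the polynomial-method proof of the Szemer\'edi-Trotter theorem given earlier in this chapter, substituting the B\'ezout-type estimate of Corollary \ref{4.1.3} for the fact that two distinct lines meet in at most one point. First I would dispose of the degenerate regimes. Hypothesis (ii) says precisely that the bipartite incidence graph on $(P,\Gamma)$ contains no copy of $K_{k,C+1}$, and a short convexity argument (count pairs $(S,\gamma)$ with $S\subseteq P$, $|S|=k$, and $\gamma\supseteq S$, then apply H\"older's inequality on the sum $\sum_\gamma|\gamma\cap P|^k$) yields
$$I_{P,\Gamma} \lesssim_{k,C} |P|\,|\Gamma|^{(k-1)/k} + |\Gamma|;$$
dually, Lemma \ref{4.1.1} together with Corollary \ref{4.1.4} implies that two distinct curves in $\Gamma$ meet in at most $b^{2}$ points, so the same graph contains no $K_{b^{2}+1,2}$, giving
$$I_{P,\Gamma} \lesssim_{b} |P|^{1/2}\,|\Gamma| + |P|.$$
A direct comparison shows that the first estimate is within the claimed bound (through its $|\Gamma|$ term) whenever $|P|\leq|\Gamma|^{1/k}$, while the second is within the claimed bound (through its $|P|$ term) whenever $|P|\geq|\Gamma|^{2}$. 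In the remaining range $|\Gamma|^{1/k}<|P|<|\Gamma|^{2}$ I would set
$$r := |P|^{2k/(2k-1)}\,|\Gamma|^{-2/(2k-1)},$$
which in this range satisfies $1<r<|P|$.

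Next, apply the Guth-Katz polynomial method (Theorem \ref{2.1.3}) to $P$ to obtain a non-zero $p\in\R[x,y]$ of degree at most $\sqrt{r}$ whose zero set $Z$ partitions $\R^{2}$ into $\sim r$ open cells, each containing $\lesssim|P|/r$ points of $P$. I would split $I_{P,\Gamma}$ into three contributions: (A) incidences at points of $P\cap Z$ with curves of $\Gamma$ contained in $Z$; (B) incidences at points of $P\cap Z$ with curves of $\Gamma$ not contained in $Z$; and (C) incidences at points of $P$ lying in the interior of some cell.

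For (B), Corollary \ref{4.1.3} gives that every curve of degree $\leq b$ not lying in $Z$ meets $Z$ in at most $b\sqrt{r}$ points, so this part contributes $\lesssim_{b}\sqrt{r}\,|\Gamma|$, which for our choice of $r$ equals the target $|P|^{k/(2k-1)}|\Gamma|^{(2k-2)/(2k-1)}$. For (C), each curve of degree $\leq b$ enters at most $b\sqrt{r}+1\lesssim_{b}\sqrt{r}$ cells (every change of cell requires an intersection with $Z$), so writing $P_{i}$, $\Gamma_{i}$ for the points in cell $i$ and the curves entering cell $i$ I have $|P_{i}|\lesssim|P|/r$ and $\sum_{i}|\Gamma_{i}|\lesssim_{b}\sqrt{r}\,|\Gamma|$. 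Applying the convexity bound $I_{P_{i},\Gamma_{i}}\lesssim_{k,C}|P_{i}|\,|\Gamma_{i}|^{(k-1)/k}+|\Gamma_{i}|$ inside each cell and combining via H\"older's inequality gives
$$\sum_{i}I_{P_{i},\Gamma_{i}} \lesssim_{b,k,C} \frac{|P|}{r}\cdot r^{1/k}\bigl(\sqrt{r}\,|\Gamma|\bigr)^{(k-1)/k} + \sqrt{r}\,|\Gamma| \;=\; |P|\,|\Gamma|^{(k-1)/k}\,r^{-(k-1)/(2k)} + \sqrt{r}\,|\Gamma|,$$
and with our choice of $r$ both summands equal the target bound.

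For (A), I would first argue, using Corollary \ref{newstuff4} and Lemma \ref{4.1.1}, that the set $\Gamma_{Z}$ of curves of $\Gamma$ contained in $Z$ has $|\Gamma_{Z}|\lesssim\sqrt{r}$: distinct irreducible real curves in $\Gamma_{Z}$ correspond, via their unique irreducible complex extensions, to distinct irreducible components of the planar algebraic curve $Z$, which has total degree $\lesssim\sqrt{r}$. Two curves in $\Gamma_{Z}$ meet in at most $b^{2}$ points, so the number of points of $P\cap Z$ lying on $\geq 2$ curves of $\Gamma_{Z}$ is $\leq b^{2}\binom{|\Gamma_{Z}|}{2}\lesssim_{b}r$, while each remaining point of $P\cap Z$ contributes at most one incidence with $\Gamma_{Z}$. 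A H\"older step using the bound $\sum_{p}\binom{d_{p}}{k}\lesssim_{b,k}|\Gamma_{Z}|^{k}\leq r^{k/2}$ (where $d_{p}$ is the number of curves of $\Gamma_{Z}$ through $p$) then controls the incidences at the multiply-incident points in terms of $|P|$ and $r$, and the constraints $r\leq|P|$ and $|P|\leq|\Gamma|^{2}$ let the result be absorbed into the $|P|$ error term. This last step, handling (A), is the delicate piece of the argument and will be the main technical obstacle: one must run the convexity reasoning cleanly so that its output stays below $|P|$ throughout the non-degenerate range, and for larger values of $k$ it may be necessary to re-apply the polynomial partitioning to $Z$ itself (treated as a one-dimensional real algebraic variety) in order to iterate the estimate down to the required bound.
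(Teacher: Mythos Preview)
The paper does not prove Theorem~\ref{4.1.8}; it is cited from \cite{KMS}, and only the special case of lines (Theorem~\ref{2.2.5}) is worked out in the text. Your outline is the \cite{KMS} argument.

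There is one gap running through the whole proposal, and fixing it also dissolves the difficulty you flag in (A): you are tacitly assuming the curves in $\Gamma$ are irreducible. Without this, two distinct curves of $\Gamma$ may share an irreducible component and hence meet in infinitely many points (so your second degenerate bound fails), a curve $\gamma\notin\Gamma_Z$ may still have a component inside $Z$ (so the B\'ezout estimate in (B) fails), and $|\Gamma_Z|\lesssim\sqrt r$ is simply false. The reduction is not quite the na\"ive one. Let $\Gamma''$ be the set of distinct irreducible components of curves in $\Gamma$; then $|\Gamma''|\le b|\Gamma|$, and condition~(ii) for $\Gamma''$ holds with constant $bC$. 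Moreover, applying condition~(ii) to any $k$ points lying on a fixed component $c$ shows that $c$ is a component of at most $C$ curves of $\Gamma$, whence $I_{P,\Gamma}\le C\,I_{P,\Gamma''}$, so it suffices to prove the bound for $\Gamma''$. With irreducibility in hand, (B) and (C) go through as you wrote them, and (A) becomes a one-liner: distinct irreducible curves in $\Gamma''_Z$ are distinct irreducible factors of $p$, so $|\Gamma''_Z|\le\sqrt r$, and your \emph{second} degenerate bound applied to $(P\cap Z,\Gamma''_Z)$ gives $I_{P\cap Z,\Gamma''_Z}\lesssim_b|P|^{1/2}\sqrt r+|P|$. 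Since $|P|^{1/2}\sqrt r$ equals $(|P|/|\Gamma''|^{2})^{1/2}$ times the main term, this is within the target throughout the non-degenerate range. No induction and no secondary partitioning of $Z$ are required.
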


Combining Theorem \ref{4.1.8} with Lemmas \ref{4.1.6} and \ref{4.1.7}, we deduce the following fact on point--real algebraic curve incidences in $\R^3$.

\begin{lemma}\label{4.1.9} Let $b$ be a positive constant. Also, let $\Gamma$ be a finite set of real algebraic curves in $\R^3$, each of degree at most $b$, and $P$ a finite set of points in $\R^3$. Then, there exists a natural number $D_b \geq b^2+1$, depending only on $b$, such that 

\emph{(i)} $I'_{P,\Gamma} \lesssim_b |P|^{D_b/(2D_b-1)}|\Gamma|^{(2D_b-2)/(2D_b-1)}+|P|+|\Gamma|$, where $I'_{P,\Gamma}$ denotes the number of all pairs $(p,\gamma)$ such that $p\in P$, $\gamma \in \Gamma$, $p\in \gamma$ and $p$ is not an isolated point of $\gamma$, and

\emph{(ii)} if there exist $S$ points in $\R^3$, such that each lies in at least $k$ curves of $\Gamma$ which do not have the point as an isolated point, where $k \geq 2$, then $S\lesssim_b {|\Gamma}| ^2/k^{(2D_b-1)/(D_b-1)} + |\Gamma|/k$. \end{lemma}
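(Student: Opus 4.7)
The plan is to reduce the three-dimensional incidence problem to a planar one by means of a generic projection, and then to invoke Theorem \ref{4.1.8}. Fix a generic linear projection $\pi:\R^3\to H$, where $H\cong\R^2$ is a generic plane. Since $P$ is finite, genericity of $\pi$ ensures that $\pi$ is injective on $P$; similarly, since $\Gamma$ is finite, genericity ensures that for any two distinct $\gamma,\gamma'\in\Gamma$, their Zariski closures $\overline{\pi(\gamma)}$ and $\overline{\pi(\gamma')}$ are also distinct. By Lemma \ref{4.1.6}, for generic $\pi$, each $\overline{\pi(\gamma)}$ is a planar real algebraic curve of degree at most $C_b$. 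Write $\tilde{P}:=\pi(P)$ and $\tilde{\Gamma}:=\{\overline{\pi(\gamma)}:\gamma\in\Gamma\}$, so $|\tilde{P}|=|P|$ and $|\tilde{\Gamma}|=|\Gamma|$.

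The next step is to transfer the counted incidences. If $(p,\gamma)$ is a pair contributing to $I'_{P,\Gamma}$, then $p$ is a non-isolated point of $\gamma$, so a one-dimensional arc of $\gamma$ near $p$ projects to a one-dimensional arc of $\overline{\pi(\gamma)}$ through $\pi(p)$; hence $(\pi(p),\overline{\pi(\gamma)})$ is a genuine incidence between $\tilde{P}$ and $\tilde{\Gamma}$ in the usual sense of Theorem \ref{4.1.8}. Thus $I'_{P,\Gamma}\leq I_{\tilde{P},\tilde{\Gamma}}$, and it suffices to apply Theorem \ref{4.1.8} to $\tilde{P}$ and $\tilde{\Gamma}$ with $b$ replaced by $C_b$.

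To apply that theorem we need to verify the auxiliary hypothesis: there exist $k=k(b)$ and $C=C(b)$ such that through any $k$ distinct points of $\R^2$ pass at most $C$ distinct planar real algebraic curves of degree $\leq C_b$. Two distinct irreducible planar real algebraic curves of degree $\leq C_b$ share at most $C_b^2$ points by Corollary \ref{4.1.3}; since an arbitrary planar real algebraic curve of degree $\leq C_b$ has at most $C_b$ irreducible components, a short inclusion--exclusion argument shows that through any set of $k=C_b^2+1$ points pass only $O_b(1)$ distinct such curves (one first selects an irreducible component containing a majority of the $k$ points, then bounds the number of completions to a degree-$\leq C_b$ curve). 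Setting $D_b:=C_b^2+1\geq b^2+1$ and invoking Theorem \ref{4.1.8}, we obtain
\begin{displaymath}
I'_{P,\Gamma}\leq I_{\tilde{P},\tilde{\Gamma}}\lesssim_b |P|^{D_b/(2D_b-1)}|\Gamma|^{(2D_b-2)/(2D_b-1)}+|P|+|\Gamma|,
\end{displaymath}
which is part (i).

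Part (ii) is a routine consequence. Let $P\subset\R^3$ be the set of $S$ points witnessing the hypothesis, so that $I'_{P,\Gamma}\geq Sk$ with $k\geq 2$. Combining with (i) gives
\begin{displaymath}
Sk\lesssim_b S^{D_b/(2D_b-1)}|\Gamma|^{(2D_b-2)/(2D_b-1)}+S+|\Gamma|;
\end{displaymath}
in each of the three cases (depending on which of the three terms dominates) one solves for $S$, and the dominant contribution yields $S\lesssim_b |\Gamma|^{2}/k^{(2D_b-1)/(D_b-1)}+|\Gamma|/k$, exactly as claimed. The main technical obstacle in the whole argument is the verification of the genericity claims (injectivity of $\pi$ on $P$, and distinctness of $\overline{\pi(\gamma)}$ for distinct $\gamma\in\Gamma$) together with the Bézout-type bound on the number of degree-$\leq C_b$ curves through $D_b$ points; everything else is bookkeeping.
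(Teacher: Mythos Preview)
Your overall strategy matches the paper's---project to a generic plane and invoke Theorem~\ref{4.1.8}---but the verification of hypothesis (ii) of that theorem has a genuine gap.

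You claim that through any $D_b=C_b^2+1$ points in $\R^2$ there pass only $O_b(1)$ planar real algebraic curves of degree $\leq C_b$. This is false for reducible curves: if the $D_b$ points all lie on a fixed line $\ell$, then every curve of the form $\ell\cup c'$ with $\deg c'\leq C_b-1$ contains them, and there are infinitely many such curves. Your sketch (``select an irreducible component containing a majority of the points, then bound the number of completions'') breaks precisely at the second step: once a component is fixed, the number of completions to a degree-$\leq C_b$ curve is unbounded. Nor can you fall back on the bound holding just for the specific family $\tilde\Gamma$: nothing prevents many curves $\gamma_1,\dots,\gamma_N\in\Gamma$ from being reducible with a common line, in which case their projections $\overline{\pi(\gamma_i)}$ all share that projected line, and $D_b$ points on it lie in all $N$ of them.

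The paper sidesteps this by applying Theorem~\ref{4.1.8} not to the curves $\overline{\pi(\gamma)}$ themselves but to the set $Irr(\overline{\pi(\Gamma)})$ of their irreducible components. For \emph{irreducible} planar curves of degree $\leq C_b$, B\'ezout gives at most one curve through any $C_b^2+1$ points, so hypothesis (ii) holds with $C=1$. The price is some bookkeeping: one must bound $|Irr(\overline{\pi(\Gamma)})|\leq C_b|\Gamma|$ and control the passage from incidences with $\overline{\pi(\gamma)}$ to incidences with its components (this is where the paper's extra $4C_b^2\cdot|Irr(\overline{\pi(\Gamma)})|$ term, coming from self-crossings via Lemma~\ref{newstuff5}, enters). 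Your argument can be repaired along exactly these lines.
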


\begin{proof} Let $\pi :\R^3 \rightarrow H$ be the projection map of $\R^3$ on a generic plane $H \simeq \R^2$. By Lemma \ref{4.1.6} we know that, for all $\gamma \in \Gamma$, $\pi(\gamma)$ is contained in a planar real algebraic curve $\overline{\pi(\gamma)}$ of degree at most $C_b$, where $C_b \geq b$ is an integer depending only on b. Thus, if $\pi(\Gamma):=\{\pi(\gamma):\gamma \in \Gamma\}$ and $Irr(\overline{\pi(\Gamma)}):=\{$1-dimensional irreducible components of $\overline{\pi(\gamma)}: \gamma \in \Gamma\}$, we have that
\begin{displaymath} I'_{P,\Gamma} \leq I'_{\pi(P), \pi(\Gamma)} \leq I_{\pi(P), Irr(\overline{\pi(\Gamma)})} + 4{C_b}^2\cdot |Irr(\overline{\pi(\Gamma)})|,\end{displaymath} as, by Lemma \ref{newstuff5}, each curve in $Irr(\overline{\pi(\Gamma)})$ crosses itself at most $4(\deg \overline{\pi(\gamma)})^2\leq 4 C_b^2$ times. In addition, by B\'ezout's theorem, for each $C_b^2 +1$ distinct points of $\R^2$ there exists at most 1 curve in $Irr(\overline{\pi(\Gamma)})$ passing through all of them. The application, therefore, of Theorem \ref{4.1.8} for $k=D_b:=C_b^2+1$, the set $\pi(P)$ of points and the set $Irr(\overline{\pi(\Gamma)})$ of planar real algebraic curves, whose cardinality is obviously $\leq C_b\cdot|\Gamma|$, completes the proof of (i), while (ii) is an immediate corollary of (i).

\end{proof}

For the analysis that follows, we introduce the notion of the resultant of two polynomials, a useful tool for deducing whether two polynomials have a common factor (for details, see \cite[Chapter 3]{MR2122859} or \cite{Guth_Katz_2008}).

More particularly, let $f$, $g \in \C[x]$, of positive degrees $l$ and $m$, respectively, with
\begin{displaymath}f(x)=a_lx^l+a_{l-1}x^{l-1}+...+a_0
\end{displaymath}
and
\begin{displaymath}g(x)=b_mx^m+b_{m-1}x^{m-1}+...+b_0.
\end{displaymath}

We define the resultant $Res(f,g)$ of $f$ and $g$ as the determinant of the $(l+m)  \times  (l+m)$ matrix $(c_{ij})$, where $c_{ij} =a_{j-i}$ if $1 \leq i \leq m$ and $i \leq j \leq i+l$,
$c_{ij}= b_{j-i +m}$ if $m+1 \leq i \leq m+l$ and $i-m \leq j \leq i-m+l$, and $c_{ij}=0$ otherwise.

Note that the columns of the matrix $(c_{ij})$ represent the coefficients of the polynomial
$f$ multiplied by $x^j$, where $j$ runs from 0 to $m-1$, and the
coefficents of the polynomial $g$ multiplied by $x^k$,  where $k$ runs
from 0 to $l-1$. Therefore, the resultant of $f$ and $g$ is 0 if and only if this set of polynomials is linearly independent. This leads to a connection between the existence of a common factor of two polynomials and the value of their resultant. 

Indeed, let $f$, $g \in {\C}[x_1,..., x_n]$ be polynomials of positive degree in $x_1$. Viewing $f$ and $g$ as polynomials in $x_1$ with coefficients in $\C[x_2..., x_n]$, we define the resultant of $f$ and $g$ with respect to $x_1$ as the polynomial $Res(f,g;x_1)\in \C[x_2,...,x_n]$.

\begin{theorem} \label{factor} Let $f$, $g \in {\C}[x_1, ... , x_n]$ be polynomials of positive degree when viewed as polynomials in $x_1$. Then, $f$ and $g$ have a common factor of positive degree in $x_1$ if and only if $Res(f,g;x_1)$ is the zero polynomial.
\end{theorem}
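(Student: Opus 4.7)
The plan is to reduce to the classical single-variable case by viewing $f$ and $g$ as polynomials in $x_1$ over the field $K := \C(x_2,\ldots,x_n)$, and then transfer the resulting factorization between $K[x_1]$ and $\C[x_1,\ldots,x_n]$ via Gauss's lemma.

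First, I would recall the univariate statement: for any field $K$ and any $F, G \in K[t]$ of positive degrees $l, m$, the resultant $Res(F,G)$ (defined as the determinant of the corresponding Sylvester matrix) vanishes if and only if $F$ and $G$ have a common factor of positive degree in $K[t]$. The standard proof observes that the Sylvester matrix represents the $K$-linear map $(A,B) \mapsto AF + BG$ from $K[t]_{<m}\oplus K[t]_{<l}$ to $K[t]_{<l+m}$; this map has nontrivial kernel iff there exist nonzero $A, B$ with $AF = -BG$ and $\deg A < m$, $\deg B < l$, which (by unique factorization in $K[t]$) happens iff $F$ and $G$ share a factor of positive degree.

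Next, I would apply this with $K = \C(x_2,\ldots,x_n)$, regarding $f, g$ as elements of $K[x_1]$. Since all entries of the Sylvester matrix are coefficients of $f$ or $g$ (as polynomials in $x_1$), which lie in $\C[x_2,\ldots,x_n] \subset K$, the determinant $Res(f,g;x_1)$ is literally a polynomial in $\C[x_2,\ldots,x_n]$ and coincides with the resultant of $f$ and $g$ viewed in $K[x_1]$. Hence $Res(f,g;x_1)$ is the zero polynomial in $\C[x_2,\ldots,x_n]$ iff it is zero in $K$, iff $f$ and $g$ admit a common factor of positive degree in $K[x_1]$.

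Finally, I would bridge $K[x_1]$ and $\C[x_1,\ldots,x_n]$ using Gauss's lemma for the unique factorization domain $R := \C[x_2,\ldots,x_n]$, whose field of fractions is $K$. If $f$ and $g$ share a common factor $h \in K[x_1]$ of positive degree in $x_1$, clearing denominators and dividing out the content yields a primitive polynomial $\tilde h \in R[x_1] = \C[x_1,\ldots,x_n]$ of the same positive degree in $x_1$; Gauss's lemma ensures that $\tilde h$ still divides $f$ and $g$ in $R[x_1]$. Conversely, any common factor of $f$ and $g$ in $\C[x_1,\ldots,x_n]$ of positive degree in $x_1$ is automatically a common factor in $K[x_1]$ of positive degree. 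Chaining the three equivalences yields the theorem.

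The only real subtlety is the passage between $K[x_1]$ and $R[x_1]$: one must be careful that "positive degree in $x_1$" is preserved when clearing denominators and removing content, which is exactly what Gauss's lemma guarantees. The univariate resultant theory and the rest of the algebra are entirely classical.
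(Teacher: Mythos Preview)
Your proof is correct and follows the standard route via the univariate resultant over $K=\C(x_2,\ldots,x_n)$ together with Gauss's lemma. The paper does not supply its own argument for this theorem; it simply cites it as \S 3.6 Proposition~1~(ii) of Cox--Little--O'Shea, so there is nothing to compare beyond noting that your proof is essentially the one found in that reference.
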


Theorem \ref{factor} is \S 3.6 Proposition 1 (ii) in \cite{Cox+Others/1991/Ideals}. In fact, the following is true (see \cite{Cox+Others/1991/Ideals}).

\begin{lemma} \label{GCD} Let $f$, $g \in \C[x_1,...,x_n]$ of positive degree in $x_1$. Then, there exist $A$, $B \in \C[x_2,...,x_n][x_1]$, such that $Res(f,g;x_1)=Af+Bg$.

\end{lemma}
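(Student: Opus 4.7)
The plan is to prove this classical fact via the Sylvester matrix and the adjugate (classical adjoint). Set $R := \C[x_2,\dots,x_n]$, and regard $f,g$ as elements of $R[x_1]$, of degrees $l$ and $m$ in $x_1$ respectively. By definition, $\mathrm{Res}(f,g;x_1) = \det S$, where $S$ is the $(l+m)\times(l+m)$ Sylvester matrix with entries in $R$ given by the coefficients of $f$ and $g$, arranged exactly as in the paragraph preceding Theorem \ref{factor}.

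First, I would exhibit the key identity between $S$ and the column vector of monomials in $x_1$. Let
\[
v \;:=\; \bigl(x_1^{l+m-1},\,x_1^{l+m-2},\,\dots,\,x_1,\,1\bigr)^{T},
\]
viewed as a column vector in $R[x_1]^{l+m}$. A direct inspection of the rows of $S$ shows that the $i$-th entry of the product $Sv$ is either of the form $x_1^{\alpha}f$ (for some $0\le \alpha\le m-1$) when $1\le i\le m$, or of the form $x_1^{\beta}g$ (for some $0\le \beta\le l-1$) when $m+1\le i\le m+l$. This is the usual fact that each row of the Sylvester matrix encodes the coefficients of a shifted copy of $f$ or $g$. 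Thus every component of $Sv$ already lies in the ideal of $R[x_1]$ generated by $f$ and $g$.

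Next, I would invoke the adjugate identity $\mathrm{adj}(S)\cdot S = (\det S)\cdot I_{l+m}$, valid over the commutative ring $R$. Multiplying this identity on the right by $v$, I get
\[
\mathrm{adj}(S)\cdot (Sv) \;=\; (\det S)\cdot v \;=\; \mathrm{Res}(f,g;x_1)\cdot v.
\]
Reading off the last component of both sides, the right-hand side is simply $\mathrm{Res}(f,g;x_1)\cdot 1 = \mathrm{Res}(f,g;x_1)$, while the left-hand side is an $R$-linear combination (the coefficients being the bottom-row entries of $\mathrm{adj}(S)$, i.e.\ certain cofactors of $S$, which lie in $R$) of the entries of $Sv$. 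Since each entry of $Sv$ is of the form $x_1^\alpha f$ or $x_1^\beta g$, grouping the $f$-terms and the $g$-terms yields
\[
\mathrm{Res}(f,g;x_1) \;=\; A\,f + B\,g
\]
with $A,B\in R[x_1] = \C[x_2,\dots,x_n][x_1]$, which is the desired conclusion.

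There is no real obstacle here: the content is the verification that the rows of $S$ encode shifted copies of $f$ and $g$, together with the elementary Cramer-style identity $\mathrm{adj}(S)\cdot S=(\det S) I$. The only mild subtlety is bookkeeping — making sure the row/column convention used above matches the one fixed in the definition of $\mathrm{Res}(f,g;x_1)$ in the paper, but this amounts to at most a transposition and does not affect the conclusion since $\det S = \det S^T$.
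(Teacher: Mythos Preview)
Your argument is correct and is precisely the standard Sylvester/adjugate proof. The paper does not give its own proof of this lemma but simply cites Cox--Little--O'Shea, where the argument is exactly the one you sketch: the rows of the Sylvester matrix encode shifted copies of $f$ and $g$, and the identity $\mathrm{adj}(S)\cdot S=(\det S)I$ then expresses the resultant as an $R[x_1]$-combination of $f$ and $g$. Your caveat about the row/column convention is apt (the paper's description of the Sylvester matrix has a minor indexing slip anyway), and as you note it is immaterial to the conclusion.
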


In particular, Lemma \ref{GCD} implies the following.

\begin{lemma} \label{rootsresultant} Let $f$, $g \in \C[x_1,x_2]$ be polynomials of positive degree in $x_1$. If $f$, $g$ both vanish at the point $(r_1,r_2)\in \C^2$, then $Res(f,g;x_1)$ vanishes at $r_2$.

\end{lemma}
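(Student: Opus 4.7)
The plan is to deduce this directly from Lemma \ref{GCD}. That lemma provides polynomials $A, B \in \C[x_2][x_1]$ such that
\begin{displaymath}
Res(f,g;x_1) = A f + B g
\end{displaymath}
as an identity in $\C[x_1,x_2]$, where the left-hand side is understood as an element of $\C[x_2]$ viewed inside $\C[x_1,x_2]$.

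The argument is then a one-line evaluation. First I would substitute $(x_1,x_2)=(r_1,r_2)$ into the displayed identity. The right-hand side becomes $A(r_1,r_2) f(r_1,r_2) + B(r_1,r_2) g(r_1,r_2)$, which vanishes because by hypothesis both $f(r_1,r_2)=0$ and $g(r_1,r_2)=0$. The left-hand side becomes $Res(f,g;x_1)(r_1,r_2)$, but since $Res(f,g;x_1)$ is a polynomial in $x_2$ alone, its value at $(r_1,r_2)$ is just $Res(f,g;x_1)(r_2)$. Equating the two sides yields $Res(f,g;x_1)(r_2)=0$, which is exactly the claim.

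There is essentially no obstacle here: the content of the lemma is entirely packaged into the preceding Lemma \ref{GCD}, and the only thing to verify is that the $x_1$-variable may be eliminated cleanly on the left. Since, by the definition recalled just before Theorem \ref{factor}, the resultant $Res(f,g;x_1)$ is the determinant of the Sylvester-type matrix whose entries lie in $\C[x_2,\dots,x_n]$ (here just $\C[x_2]$), this is automatic, and the proof is complete.
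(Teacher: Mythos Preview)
Your proof is correct and matches the paper's approach exactly: the paper states this lemma as an immediate consequence of Lemma~\ref{GCD} (``In particular, Lemma~\ref{GCD} implies the following'') without giving any further details, and your evaluation argument is precisely the intended one-line deduction.
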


On the other hand, by the definition of the resultant of two polynomials, it is easy to see the following (see \cite{Cox+Others/1991/Ideals}).

\begin{lemma} \label{emergency} Let $f$, $g \in \C[x_1,x_2]$ be polynomials of positive degree in $x_1$. Then, $Res(f,g;x_1)$ is a polynomial in $x_2$, of degree at most $\deg f \cdot \deg g$.

\end{lemma}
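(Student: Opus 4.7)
The plan is to read off the degree bound directly from the Leibniz expansion of the determinant defining the resultant. Write $f$ and $g$ as polynomials in $x_1$ with coefficients in $\C[x_2]$:
\[
f(x_1,x_2) = \sum_{i=0}^{l} a_i(x_2)\, x_1^i, \qquad g(x_1,x_2) = \sum_{j=0}^{m} b_j(x_2)\, x_1^j,
\]
where $l$ and $m$ are the $x_1$-degrees of $f$ and $g$. The starting observation is that since $f$ has total degree $\deg f$, any monomial $x_1^i x_2^k$ appearing in $f$ satisfies $i+k\leq \deg f$, and hence $\deg_{x_2} a_i \leq \deg f - i$; likewise $\deg_{x_2} b_j \leq \deg g - j$. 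This is the mechanism that makes the final bound tight: higher powers of $x_1$ carried by a coefficient already consume part of the total-degree budget.

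First I would expand $\det(c_{ij})$ via the Leibniz formula, so that each non-zero summand is indexed by a permutation $\sigma$ of $\{1,\dots,l+m\}$ and is a product of $m$ entries coming from the top $m$ rows (each a coefficient of $f$) and $l$ entries coming from the bottom $l$ rows (each a coefficient of $g$). Using the explicit formula for $c_{ij}$, the entry in row $i$ for $1\leq i\leq m$ is $a_{\sigma(i)-i}$, and for $m+1\leq i\leq m+l$ it is $b_{\sigma(i)-i+m}$, so applying the coefficient-degree bounds, the $x_2$-degree of the corresponding summand is at most
\[
\sum_{i=1}^{m}\bigl(\deg f - (\sigma(i)-i)\bigr) + \sum_{i=m+1}^{l+m}\bigl(\deg g - (\sigma(i)-i+m)\bigr).
\]

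Next, since $\sigma$ is a permutation, $\sum_{i=1}^{l+m}\sigma(i) = \sum_{i=1}^{l+m} i$, and a short telescoping collapses the above expression to the $\sigma$-independent value $m\deg f + l\deg g - lm$. Writing $\deg f = l+\alpha$ and $\deg g = m+\beta$ with $\alpha,\beta\geq 0$ (permissible because the $x_1$-degree is bounded above by the total degree), this equals $lm+m\alpha+l\beta$, which differs from $\deg f\cdot\deg g = (l+\alpha)(m+\beta)$ by the non-negative quantity $\alpha\beta$. Hence every term in the Leibniz expansion has $x_2$-degree at most $\deg f\cdot\deg g$, and so does the sum $\operatorname{Res}(f,g;x_1)$.

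The main (really, only) obstacle is the bookkeeping: one must keep track of which rows come from $f$ and which from $g$, and handle the index shifts in $c_{ij}$ carefully. No deeper input is required beyond the explicit shape of the Sylvester matrix and the elementary coefficient-degree observation stated at the outset.
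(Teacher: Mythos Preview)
Your proof is correct. The paper itself does not supply a proof of this lemma: it simply asserts that the bound is ``easy to see'' from the definition of the resultant and refers the reader to Cox--Little--O'Shea. Your argument via the Leibniz expansion of the Sylvester determinant is exactly the standard route, and in fact yields the sharper bound $m\deg f + l\deg g - lm$ (which you then majorize by $\deg f\cdot\deg g$); this is precisely the computation the paper's hint points towards.
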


We are now ready to extend the proof of \cite[Corollary 2.5]{Guth_Katz_2008} to a more general setting, to deduce the following.

\begin{lemma}\label{4.1.12} Suppose that $f$, $g$ are non-constant polynomials in $ \C[x,y,z]$ which do not have a common factor. Then, the number of irreducible complex algebraic curves which are simultaneously contained in the zero set of $f$ and the zero set of $g$ in $\C^3$ is $\leq \deg f \cdot \deg g$. \end{lemma}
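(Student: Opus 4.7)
The approach mirrors the strategy behind Theorem \ref{2.2.1}: project generically to a plane and apply the resultant, except that now the ``curves'' of the conclusion need not be lines, so an additional argument is required to pass from irreducible planar curves to irreducible spatial ones.

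First I would establish that $V(f) \cap V(g)$ contains only finitely many irreducible complex algebraic curves. Since $f$ and $g$ share no irreducible factor, $V(f)$ and $V(g)$ share no irreducible component as hypersurfaces, and hence the intersection $V(f) \cap V(g)$ has dimension at most $1$; an algebraic set of dimension at most $1$ decomposes into a finite union of irreducible components, of which only the $1$-dimensional ones are curves.

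Next, by a generic linear change of coordinates of $\C^3$, I may arrange three things simultaneously: (a) both $f$ and $g$ have positive degree in $x$; (b) the projection $\pi:(x,y,z)\mapsto (y,z)$ is non-constant on each of the finitely many irreducible curves contained in $V(f) \cap V(g)$, so that each such curve $\gamma$ projects to an infinite subset of $\C^2$; and (c) for every pair of distinct irreducible curves $\gamma_1,\gamma_2 \subset V(f) \cap V(g)$, the Zariski closures $\overline{\pi(\gamma_1)}$ and $\overline{\pi(\gamma_2)}$ are distinct irreducible planar curves in $\C^2$. Each of (a)--(c) excludes only a proper Zariski-closed subset of the space of coordinate changes, and since the set of curves in question is finite, a generic choice satisfies all of them at once.

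Now I would form the resultant $R(y,z) := \mathrm{Res}(f,g;x) \in \C[y,z]$. By Theorem \ref{factor}, $R$ is a nonzero polynomial, and by a Sylvester-matrix computation (the trivariate analogue of Lemma \ref{emergency}) its total degree satisfies $\deg R \leq \deg f \cdot \deg g$. By Lemma \ref{rootsresultant}, for every $(x_0,y_0,z_0) \in V(f) \cap V(g)$ one has $R(y_0,z_0)=0$; therefore the image $\pi(\gamma)$ of each irreducible curve $\gamma \subset V(f) \cap V(g)$ lies in $V(R) \subset \C^2$, and by (b) this image is infinite, so $\overline{\pi(\gamma)}$ is a planar algebraic curve inside $V(R)$. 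Factoring $R$ into irreducibles in $\C[y,z]$ shows $V(R)$ is the union of at most $\deg R$ distinct irreducible planar curves. By (c) the assignment $\gamma \mapsto \overline{\pi(\gamma)}$ is injective on the set of irreducible curves in $V(f) \cap V(g)$, so their number is at most $\deg R \leq \deg f \cdot \deg g$.

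The main obstacle is the genericity statement (c): proving that a generic projection distinguishes any two distinct irreducible curves in $\C^3$. Intuitively this is the statement that only the ``vertical'' translates of $\gamma$ along the projection direction share its image; one makes it rigorous by noting that if $\overline{\pi(\gamma_1)} = \overline{\pi(\gamma_2)}$ for two distinct irreducible curves, then the direction of projection must lie on each chord joining a point of $\gamma_1$ to a point of $\gamma_2$, and the set of such directions is contained in a proper (at most two-dimensional, in the space of projection directions modulo scaling) subvariety of directions. Since there are only finitely many pairs $(\gamma_1,\gamma_2)$ to consider, a generic direction avoids all the corresponding bad subvarieties, and (c) follows.
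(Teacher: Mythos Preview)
Your proof is correct and takes a related but genuinely different route from the paper's. Both approaches hinge on the resultant $R(y,z)=\mathrm{Res}(f,g;x)$, but they use it in opposite directions. The paper argues by contradiction: assuming $\deg f\cdot\deg g+1$ irreducible curves lie in $V(f)\cap V(g)$, it chooses coordinates so that each plane $\{z=c\}$ in an open family meets these curves in more than $\deg f\cdot\deg g$ common zeros of $f,g$ with distinct $y$-coordinates; by Lemma~\ref{emergency} this forces $\mathrm{Res}(f|_\Pi,g|_\Pi;x)\equiv 0$ on every such slice, hence $R\equiv 0$, contradicting Theorem~\ref{factor}. You instead argue directly: $R$ is nonzero of total degree at most $\deg f\cdot\deg g$, so $V(R)\subset\C^2$ has at most that many irreducible components, and under a generic projection distinct spatial curves map to distinct components. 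Your route is conceptually cleaner and avoids the slicing, at the cost of needing the injectivity statement~(c); the paper's slicing argument sidesteps~(c) by counting points rather than components.

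One imprecision worth fixing in your sketch of~(c): it is not true that the projection direction must be parallel to \emph{every} chord from $\gamma_1$ to $\gamma_2$, and saying the chord directions form an ``at most two-dimensional'' subset of $\mathbb{P}^2$ proves nothing, since $\dim\mathbb{P}^2=2$. The correct argument is that if $\overline{\pi_v(\gamma_1)}=\overline{\pi_v(\gamma_2)}$ then the fibre $\{(q_1,q_2)\in\gamma_1\times\gamma_2:q_1\ne q_2,\ q_1-q_2\parallel v\}$ is at least one-dimensional; since the incidence variety $\{(q_1,q_2,[v])\in\gamma_1\times\gamma_2\times\mathbb{P}^2:q_1-q_2\parallel v\}$ is two-dimensional, the set of such bad $v$ is at most one-dimensional in $\mathbb{P}^2$, hence proper. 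With this correction your argument goes through.
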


\begin{proof} Let $\Gamma$ be the family of irreducible complex algebraic curves in $\C^3$. Suppose that there exist $\deg f \cdot \deg g+1$ curves in $\Gamma$, simultaneously contained in the zero set of $f$ and the zero set of $g$. A generic complex plane intersects a complex algebraic curve in $\C^3$ at least once and finitely many times, while each two curves in $\Gamma$ intersect in finitely many points of $\C^3$. Therefore, we can change the coordinates, so that $f$ and $g$ have positive degree in $x$, and also so that there exists some point $p=(p_1,p_2,p_3)\in \C^3$ and some $\epsilon >0$, such that any plane in the family $\mathcal{A}:=\big\{$planes in $\C^3$, perpendicular to $(0,0,1)$ and passing through a point of the form $p+ \delta \cdot (0,0,1)$, for $\delta \in (-\epsilon, \epsilon)\}$ is transverse to all the $\deg f \cdot \deg g +1$ curves, intersecting them at points with distinct $y$ coordinates. Thus, each such plane contains at least $\deg f \cdot \deg g +1$ points of $\C^3$ with distinct $y$ coordinates, where both $f$ and $g$ vanish.

Therefore, if $\Pi \in \mathcal{A}$, then $f_{|\Pi}$, $g_{|\Pi}$ are two polynomials in $\C[x,y]$, vanishing at $\geq \deg f \cdot \deg g +1 \geq \deg f_{|\Pi} \cdot \deg g_{|\Pi}+1$ points of $\C^2$ with distinct $y$ coordinates.

At the same time, there are at most $\deg f$, i.e. finitely many, planes $\Pi$ in $\mathcal{A}$, such that $f_{|\Pi}$ does not have positive degree in $x$, and at most $\deg g$, i.e. finitely many, planes $\Pi$ in $\mathcal{A}$, such that $g_{|\Pi}$ does not have positive degree in $x$. Indeed, suppose that there are more than $\deg f$ planes in $\mathcal{A}$, such that $f_{|\Pi}$ does not have positive degree in $x$. Let $h$ be the coefficient of a positive power of $x$ in the expression of the polynomial $f$ as a polynomial of $x$. We view $h$ as a complex polynomial in $x$, $y$, $z$, of non-positive degree in $x$. By our assumption, there are more than $\deg f\geq \deg h$ planes in $\mathcal{A}$ on which $h$ vanishes, therefore $h$ vanishes on $\C^3$ (since a generic line in $\C^3$ intersects all those planes, and thus lies in the zero set of $h$). Hence, $h$ is the zero polynomial; and since $h$ was the coefficient of an arbitrary positive power of $x$ in the expression of $f$ as a polynomial of $x$, it follows that $f$ does not have positive degree in $x$, which is a contradiction. We similarly get a contradiction if we assume that there exist more than $\deg g$ planes in $\mathcal{A}$, such that $g_{|\Pi}$ does not have positive degree in $x$.

Thus, there exists an open interval $I \subset (-\epsilon, \epsilon)$, such that, if $\Pi$ is a plane in the family $\mathcal{A}':=\big\{$planes in $\C^3$, perpendicular to $(0,0,1)$ and passing through a point of the form $p+\delta \cdot (0,0,1)$, for $\delta \in I\}$, then $f_{|\Pi}$, $g_{|\Pi}$ are two polynomials in $\C[x,y]$, of positive degree in $x$, vanishing at $\geq \deg f_{|\Pi} \cdot \deg g_{|\Pi}+1$ points of $\C^2$ with distinct $y$ coordinates. Thus, by Lemma \ref{emergency}, $Res(f_{|\Pi},g_{|\Pi};x)\equiv 0$, for all $\Pi \in \mathcal{A}'$. However, $Res(f_{|\Pi}, g_{|\Pi};x)\equiv Res(f,g;x)_{|\Pi}$, for all $\Pi \in \mathcal{A}'$.

As a result, we have that, for all $\Pi \in \mathcal{A}'$, $Res(f,g;x)_{|\Pi} \equiv 0$; this means that the polynomial $Res(f,g;x) \in \C[y,z]$ vanishes for all $(y,z) \in \C^2$, such that $y \in \C$ and $z \in J$, for some subset $J$ of $\C$ of the form $\{z \in \C: z=p_3+a$, for $a \in (a_1, a_2)\}$, where $a_1$, $a_2 \in \R$. In other words, the polynomial $Res(f,g;x) \in \C[y,z]$ vanishes on a rectangle of $\C^2$. Therefore, it vanishes identically. And $Res(f,g;x) \equiv 0$ means that $f$ and $g$ have a common factor (since they both have positive degree when viewed as polynomials in $x$). We are thus led to a contradiction, which means that there exist $\leq  \deg f \cdot \deg g$ curves of $\Gamma$ simultaneously contained in the zero set of $f$ and the zero set of $g$.

\end{proof}

\begin{corollary}\label{4.1.13} Let $f$ and $g$ be non-constant polynomials in $\R[x,y,z]$. Suppose that $f$ and $g$ do not have a common factor. Then, the number of irreducible real algebraic curves which are simultaneously contained in the zero set of $f$ and the zero set of $g$ in $\R^3$ is $\leq \deg f \cdot \deg g$. \end{corollary}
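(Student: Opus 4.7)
The plan is to reduce the real statement to the complex statement already established in Lemma \ref{4.1.12}, by associating to each irreducible real algebraic curve the unique irreducible complex curve that contains it (which exists by Lemma \ref{4.1.1}) and showing that this association is injective and maps curves in $V_{\R}(f)\cap V_{\R}(g)$ into curves in $V_{\C}(f)\cap V_{\C}(g)$.

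First, I would verify that if $f,g\in\R[x,y,z]$ have no common factor in $\R[x,y,z]$, then they also have no common factor in $\C[x,y,z]$. This is standard: a GCD of $f$ and $g$ in $\C[x,y,z]$ is unique up to a nonzero complex scalar, and complex conjugation sends it to another GCD because $f$ and $g$ are real; hence, after rescaling, the GCD can be taken to lie in $\R[x,y,z]$. So the hypothesis passes to the complex setting, and by Lemma \ref{4.1.12} there are at most $\deg f\cdot\deg g$ irreducible complex algebraic curves contained simultaneously in $V_{\C}(f)$ and $V_{\C}(g)$.

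Next, let $\gamma$ be an irreducible real algebraic curve in $\R^3$ contained in $V_{\R}(f)\cap V_{\R}(g)$, and let $\gamma_{\C}$ be the unique irreducible complex algebraic curve containing $\gamma$ (Lemma \ref{4.1.1}). I claim $\gamma_{\C}\subseteq V_{\C}(f)\cap V_{\C}(g)$. Indeed, $V_{\C}(f)\cap\gamma_{\C}$ is a complex algebraic subset of the irreducible complex curve $\gamma_{\C}$; it contains the infinite set $\gamma$, so it cannot be finite, and therefore must equal $\gamma_{\C}$. The same argument works with $g$ in place of $f$. Thus the assignment $\gamma\mapsto\gamma_{\C}$ sends irreducible real algebraic curves in $V_{\R}(f)\cap V_{\R}(g)$ to irreducible complex algebraic curves in $V_{\C}(f)\cap V_{\C}(g)$.

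Finally, I would check injectivity: if two distinct irreducible real algebraic curves $\gamma_1,\gamma_2$ both corresponded to the same complex curve $\Gamma_{\C}$, then by Lemma \ref{newstuff} each of them would equal $\R^3\cap\Gamma_{\C}$, forcing $\gamma_1=\gamma_2$, a contradiction. Combining the injection with the complex bound from Lemma \ref{4.1.12} yields the required inequality. The only delicate point is the step that passes the ``no common factor'' assumption from $\R[x,y,z]$ to $\C[x,y,z]$, but this is a short Galois-invariance argument and does not present any substantive obstacle.
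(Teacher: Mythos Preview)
Your proof is correct and follows essentially the same approach as the paper's own proof: pass to $\C$ via the map $\gamma\mapsto\gamma_{\C}$ of Lemma~\ref{4.1.1}, check that this lands in $V_{\C}(f)\cap V_{\C}(g)$, verify injectivity using Lemma~\ref{newstuff}, and conclude from Lemma~\ref{4.1.12}. The only cosmetic differences are that the paper phrases the ``no common factor over $\C$'' step as ``if $h$ is a common factor then so is $h\bar{h}\in\R[x,y,z]$'' rather than as a GCD argument, and invokes B\'ezout's theorem (Theorem~\ref{4.1.2}) for the containment $\gamma_{\C}\subseteq V_{\C}(f)$ rather than your equivalent observation that a proper closed subvariety of an irreducible curve is finite.
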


\begin{proof} We see $f$ and $g$ as polynomials in $\C[x,y,z]$, and viewed as such we denote them by $f_{\C}$,  $g_{\C}$, respectively. Also, let $\Gamma$ be the family of irreducible real algebraic curves in $\R^3$. For all $\gamma \in \Gamma$, we denote by $\gamma_{\C}$ the (unique) irreducible complex algebraic curve containing $\gamma$. 

Since the polynomials $f$, $g\in \R[x,y,z]$ do not have a common factor in $\R[x,y,z]$, the polynomials $f_{\C}$, $g_{\C}\in \C[x,y,z]$ do not have a common factor in $\C[x,y,z]$. Indeed, if $h \in \C[x,y,z]$ was a common factor of $f_{\C}$, $g_{\C}$, which are polynomials with real coefficients, then $\bar{h} \in \C[x,y,z]$ would also be a common factor of $f_{\C}$, $g_{\C}$, therefore $h\bar{h} \in \R[x,y,z]$ would be a common factor of the polynomials $f$, $g \in \R[x,y,z]$.

Now, suppose that a curve $\gamma \in \Gamma$ lies in both the zero set of $f$ and the zero set of $g$. We know that, as it contains $\gamma$, the irreducible complex algebraic curve $\gamma _{\C}$ intersects the zero set of $f_{\C}$ (which contains the zero set of $f$) infinitely many times; thus, by B\'ezout's theorem, it is contained in the zero set of $f_{\C}$. Similarly, $\gamma_{\C}$ is contained in the zero set of $g_{\C}$.

Moreover, if $\gamma^{(1)}$, $\gamma^{(2)} \in\Gamma$ are such that $\gamma^{(1)}\not\equiv \gamma^{(2)}$, then $\gamma^{(1)}_{\C}\not\equiv \gamma^{(2)}_{\C}$. The reason for this is that $\gamma^{(1)}$ is the intersection of $\gamma^{(1)}_{\C}$ with $\R^3$, while $\gamma^{(2)}$ is the intersection of $\gamma^{(2)}_{\C}$ with $\R^3$.

Thus, if $>\deg f \cdot \deg g$ curves of $\Gamma$ lie simultaneously in the zero set of $f$ and the zero set of $g$, then there exist $> \deg f \cdot \deg g$ irreducible complex algebraic curves in $\C^3$, lying in both the zero set of $f_{\C}$ and the zero set of $g_{\C}$, where $f_{\C}$, $g_{\C} \in \C[x,y,z]$ do not have a common factor in $\C[x,y,z]$. By Lemma \ref{4.1.12} though, this is a contradiction. Therefore, the number of curves in $\Gamma$, i.e. of irreducible real algebraic curves in $\R^3$, which are simultaneously contained in the zero set of $f$ and the zero set of $g$, is $\leq \deg f \cdot \deg g$.

\end{proof}

\begin{definition} Let $Z$ be the zero set of a polynomial $p \in \R[x,y,z]$. A curve $\gamma$ in $\R^3$ is a \emph{critical curve} of $Z$ if each point of $\gamma$ is a critical point of $Z$.
\end{definition}

We are now able to deduce the following.

\begin{corollary}\label{4.1.14} The zero set of a polynomial $p \in \R[x,y,z]$ contains at most $(\deg p)^2$ critical irreducible real algebraic curves of $\R^3$. \end{corollary}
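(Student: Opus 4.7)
The plan is to follow exactly the template of Proposition \ref{2.2.3} (which bounds critical lines by $d^2$), but substituting Corollary \ref{4.1.13} for Theorem \ref{2.2.1}. First I would pass from $p$ to its square-free reduction $p_{sf}$, which has the same zero set $Z$ and hence the same critical points and critical curves, while $\deg p_{sf} \leq \deg p$. Writing $p_{sf} = p_1 \cdots p_k$ as a product of distinct irreducible factors in $\R[x,y,z]$, I would observe, exactly as in the proof of Proposition \ref{2.2.3}, that for each $i \in \{1,\dots,k\}$ there exists some $g_i \in \{\partial p_{sf}/\partial x, \partial p_{sf}/\partial y, \partial p_{sf}/\partial z\}$ such that $p_i$ is not a factor of $g_i$ (this uses only that $p_{sf}$ is square-free, so $p_{sf}$ and $\nabla p_{sf}$ share no common factor).

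Next I would use irreducibility of the curve. Let $\gamma$ be a critical irreducible real algebraic curve in $Z$. Since $\gamma \subseteq Z = Z(p_1)\cup \cdots \cup Z(p_k)$, and $\gamma$ is irreducible, the decomposition $\gamma = \bigcup_{i=1}^k (\gamma \cap Z(p_i))$ as a union of subvarieties forces $\gamma \subseteq Z(p_j)$ for some $j \in \{1,\dots,k\}$. On the other hand, since $\gamma$ is a critical curve, every point of $\gamma$ satisfies $\nabla p_{sf} = 0$, so in particular $\gamma$ lies in the zero set of $g_j$ as well. Thus $\gamma$ is simultaneously contained in the zero sets of $p_j$ and $g_j$, two polynomials in $\R[x,y,z]$ with no common factor.

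Applying Corollary \ref{4.1.13} to each pair $(p_j, g_j)$, the number of irreducible real algebraic curves in $\R^3$ simultaneously contained in $Z(p_j)$ and $Z(g_j)$ is at most $\deg p_j \cdot \deg g_j \leq \deg p_j \cdot \deg p$. Summing over $j = 1, \dots, k$, the total number of critical irreducible real algebraic curves of $Z$ is at most
\[
\sum_{j=1}^k \deg p_j \cdot \deg p \;\leq\; \deg p_{sf} \cdot \deg p \;\leq\; (\deg p)^2,
\]
as required. I do not anticipate any real obstacle: the only genuine new input relative to Proposition \ref{2.2.3} is the curve-analogue of B\'ezout already established as Corollary \ref{4.1.13}, together with the routine remark that an irreducible subvariety of a finite union of varieties must sit inside one of the pieces, which guarantees that each critical irreducible curve gets charged to a single factor $p_j$.
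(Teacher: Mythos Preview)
Your proof is correct and is essentially the same approach as the paper's: the paper's one-line proof (``$p_{sf}$ and $\nabla p_{sf}$ have no common factor, so apply Corollary \ref{4.1.13}'') is just a terse pointer to exactly the factor-by-factor argument of Proposition \ref{2.2.3} that you have written out in full, with Corollary \ref{4.1.13} substituted for Theorem \ref{2.2.1}.
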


\begin{proof} The polynomials $p_{sf}$ and $\nabla{p_{sf}}$, the intersection of the zero sets of which is the set of critical points of the zero set of $p$, do not have a common factor, as $p_{sf}$ is square-free. Therefore, the result follows by Corollary \ref{4.1.13}.

\end{proof}

On a different subject, it is known (see \cite[Chapter 5]{MR2248869}) that each real semi-algebraic set is the finite, disjoint union of path-connected components. We observe the following.

\begin{lemma}\label{4.1.15} A real algebraic curve in $\R^n$ is the finite, disjoint union of $\lesssim_{b,n} 1$ path-connected components. \end{lemma}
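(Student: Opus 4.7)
The plan is to reduce bounding the number of path-connected components of $\gamma$ to a quantitative Milnor--Thom-type estimate on the number of connected components of a real algebraic set cut out by polynomials of controlled degree. Set $b:=\deg\gamma$. First, by Lemma \ref{newstuff}, $\gamma=\gamma_{\C}\cap\R^n$, where $\gamma_{\C}$ is the smallest complex algebraic curve containing $\gamma$; by definition of $\deg\gamma$, the curve $\gamma_{\C}$ also has degree $b$.

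Next, I would invoke the standard structural fact (e.g.\ \cite[Theorem A.3]{MR2827010}, which is already cited in the excerpt for an analogous purpose) that a complex algebraic curve of degree $b$ in $\C^n$ can be expressed as the common zero set of $\lesssim_{b,n}1$ polynomials in $\C[x_1,\dots,x_n]$, each of degree at most $b$. Writing each such polynomial as $P=P_R+iP_I$ with $P_R,P_I\in\R[x_1,\dots,x_n]$ of degree at most $b$, and observing that $P(x)=0$ for $x\in\R^n$ precisely when both $P_R(x)=0$ and $P_I(x)=0$, one obtains that $\gamma=\gamma_{\C}\cap\R^n$ is the common zero set in $\R^n$ of $\lesssim_{b,n}1$ real polynomials of degree at most $b$. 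Taking the sum of their squares produces a single polynomial in $\R[x_1,\dots,x_n]$ of degree $\lesssim_b 1$ whose zero set is $\gamma$.

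At this stage I would apply the Oleinik--Petrovsky--Milnor--Thom theorem: the number of connected components of the zero set in $\R^n$ of a real polynomial of degree at most $d$ is bounded by a constant depending only on $n$ and $d$. Applied with $d\lesssim_b 1$, this yields that $\gamma$ has $\lesssim_{b,n}1$ connected components.

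Finally, to upgrade ``connected'' to ``path-connected'', I would use that $\gamma$, being a real algebraic set, is in particular a real semi-algebraic set, and, as noted just before the lemma in the excerpt, any real semi-algebraic set is a finite disjoint union of path-connected (semi-algebraic) pieces; since real algebraic sets are locally path-connected, these path-components coincide with the connected components, so their number is also $\lesssim_{b,n}1$. The only genuinely non-elementary ingredient is the Milnor--Thom bound; the rest is bookkeeping given the results on degrees of real algebraic curves already established in the chapter.
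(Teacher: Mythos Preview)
Your argument is correct. The paper's own proof is a single line: it defers to the algorithm in \cite[Chapter 5]{MR2248869} that decomposes a real semi-algebraic set into path-connected pieces, observing that a closer look at that algorithm shows the number of pieces is bounded in terms of the degree and the ambient dimension only.

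Your route is genuinely different and more explicit. Instead of tracing through the cylindrical decomposition algorithm, you reduce to a single real polynomial of degree $\lesssim_b 1$ cutting out $\gamma$ (via Lemma~\ref{newstuff}, the structural result \cite[Theorem A.3]{MR2827010}, real/imaginary parts, and the sum-of-squares trick), then invoke the Milnor--Thom bound on the number of connected components of a real hypersurface, and finally pass from connected to path-connected components using local path-connectedness of semi-algebraic sets. The paper's approach is terser and stays within the toolkit already set up in the chapter; yours has the advantage of isolating the one substantial input (Milnor--Thom) and making the dependence on $b$ and $n$ transparent, at the cost of importing a theorem not otherwise used in the thesis. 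Both are perfectly acceptable; the paper simply chose the shorter citation.
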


\begin{proof} This is obvious by a closer study of the algorithm in \cite[Chapter 5]{MR2248869} that constitutes the proof of the fact that every real semi-algebraic set is the finite, disjoint union of path-connected components.

\end{proof}

Finally, we are interested in curves in $\R^3$ parametrised by $t \rightarrow \big(p_1(t), p_2(t),p_3(t)\big)$ for $t \in \R$, where $p_i \in \R[t]$ for $i=1,2,3$. Note that, although curves in $\C^3$ with a polynomial parametrisation are, in fact, complex algebraic curves of degree equal to the maximal degree of the polynomials realising the parametrisation (see \cite[Chapter 3, \S3]{Cox+Others/1991/Ideals}), curves in $\R^3$ with a polynomial parametrisation are not, in general, real algebraic curves, which is why we treat their case separately.

More particularly, if a curve $\gamma$ in $\R^3$ is parametrised by $t \rightarrow \big(p_1(t), p_2(t),p_3(t)\big)$ for $t \in \R$, where the $p_i \in \R[t]$, for $i=1,2,3$, are polynomials not simultaneously constant, then the complex algebraic curve $\gamma_{\C}$ parametrised by the same polynomials viewed as elements of $\C[t]$ is irreducible (it is easy to see that if it contains a complex algebraic curve, then the two curves are identical). Therefore, by B\'ezout's theorem, $\gamma_{\C}$ is the unique complex algebraic curve containing $\gamma$.

Taking advantage of this fact, we will show here that each curve in $\R^3$ with a polynomial parametrisation is contained in a real algebraic curve in $\R^3$.

To that end, we first show the following.

\begin{lemma} \label{maybelast}The intersection of a complex algebraic curve in $\C^n$ with $\R^n$ is a real algebraic set, of dimension at most 1.
\end{lemma}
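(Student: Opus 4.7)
The plan is to prove the two assertions (real algebraicity and the dimension bound) separately. The first is elementary, via splitting the defining complex polynomials into real and imaginary parts; the second follows from a complexification argument combined with the standard fact that the Krull dimension of a finitely generated $\R$-algebra is preserved under scalar extension to $\C$.

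First, I would take a finite set $p_1,\dots,p_k \in \C[x_1,\dots,x_n]$ of generators of the ideal defining the given complex algebraic curve $\gamma_\C$, and write $p_j = u_j + i v_j$ with $u_j, v_j \in \R[x_1,\dots,x_n]$. For $x \in \R^n$, the complex equation $p_j(x)=0$ is equivalent to the pair of real equations $u_j(x)=v_j(x)=0$. Hence $\gamma_\C \cap \R^n$ is the common zero set in $\R^n$ of the real polynomials $u_1, v_1, \dots, u_k, v_k$, and is therefore a real algebraic set.

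For the dimension bound, let $V := \gamma_\C \cap \R^n$ and let $W$ be an irreducible component of $V$ as a real algebraic set; it suffices to show $\dim_\R W \leq 1$. Let $I(W) \subset \R[x_1,\dots,x_n]$ be its (prime) vanishing ideal, and let $W_\C \subset \C^n$ denote the zero set of the extended ideal $I(W)\cdot\C[x_1,\dots,x_n]$. Since each $u_j$ and $v_j$ vanishes on $W \subset \gamma_\C \cap \R^n$, both lie in $I(W)$, and so every generator $p_j=u_j+iv_j$ of the ideal defining $\gamma_\C$ lies in $I(W)\cdot\C[x_1,\dots,x_n]$. This forces $W_\C \subset \gamma_\C$, hence $\dim_\C W_\C \leq \dim_\C \gamma_\C = 1$. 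Invoking that Krull dimension is preserved under the scalar extension $\R \to \C$, one gets $\dim_\R W = \dim_\C W_\C \leq 1$, completing the proof.

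The only non-trivial input is the dimension-preservation statement under scalar extension, which I expect to be the main technical point to record. It is classical, reducing to invariance of transcendence degree under algebraic field extensions, applied componentwise to $W_\C$ and combined with the observation that $\dim_\C W_\C$ equals the maximum complex dimension of the irreducible components of $W_\C$. The rest of the argument is a routine formal passage between real and complex coefficients, and presents no further obstacle.
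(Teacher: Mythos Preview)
Your proof is correct, but it takes a genuinely different route from the paper's, particularly for the dimension bound.

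For the first assertion, the paper uses the polynomials $p_j\bar{p_j}\in\R[x_1,\dots,x_n]$ rather than your decomposition into real and imaginary parts; both tricks are equally elementary and accomplish the same thing.

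For the dimension bound, the paper does not complexify an irreducible component of $V$ and invoke invariance of Krull dimension under the scalar extension $\R\to\C$. Instead, it uses the term-order characterisation of dimension already set up earlier in the chapter: since $\gamma_{\C}$ has dimension $1$, for every pair $i\neq j$ some polynomial $p\in I(\gamma_{\C})$ has its $\prec$-initial term a monomial in $x_i,x_j$; then $p\bar{p}\in\R[x_1,\dots,x_n]$ vanishes on $\gamma=\gamma_{\C}\cap\R^n$, so lies in $I(\gamma)$, and its $\prec$-initial term is again a monomial in $x_i,x_j$. This shows directly that no two-variable subset is algebraically independent modulo $I(\gamma)$, hence $\dim\gamma\leq 1$. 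The advantage of the paper's argument is that it stays entirely within the framework already built in the chapter and avoids quoting the base-change fact you call ``the main technical point''; the advantage of yours is that it is the standard commutative-algebra manoeuvre and does not depend on the somewhat ad hoc initial-ideal machinery.
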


\begin{proof} Let $\gamma_{\C}$ be a complex algebraic curve in $\C^n$, and $\gamma$ the intersection of $\gamma_{\C}$ with $\R^n$. We show that $\gamma$ is a real algebraic set, of dimension at most 1.

Indeed, since $\gamma_{\C}$ is a complex algebraic set in $\C^n$, there exist polynomials $p_1$, ..., $p_k \in \C[x_1,...,x_n]$, such that $\gamma_{\C}$ is the intersection of the zero sets of $p_1$, ..., $p_k$ in $\C^n$. Now, for $i=1,...,k$, the intersection of the zero set of the polynomial $p_i$ in $\C^n$ with $\R^n$ is equal to the zero set of $p_i$ in $\R^n$, which is the same as the zero set of the polynomial $p_i\bar{p_i} \in \R[x_1,...,x_n]$ in $\R^n$. Therefore, $\gamma$ is the intersection of the zero sets of the polynomials $p_1\bar{p_1}$, ..., $p_k\bar{p_k} \in \R[x_1,...,x_n]$ in $\R^n$, it is thus a real algebraic set.

Moreover, let $\prec$ be a term order on the set of monomials in the variables $x_1$, ..., $x_n$. Since $\gamma_{\C}$ is a complex algebraic curve in $\C^n$, it holds that, for every $i\neq j$, $i,j \in \{1,...,n\}$, there exists a monomial in the variables $x_i$ and $x_j$ in the ideal $in_{\prec}(I(\gamma_{\C}))$. Now, if a polynomial $p \in \C[x_1,...,x_n]$ belongs to $I(\gamma_{\C})$, i.e. vanishes on $\gamma_{\C}$, then the polynomial $p \bar{p} \in \R[x_1,...,x_n]$ vanishes on $\gamma$, and thus belongs to the ideal $I(\gamma)$ of $\R[x_1,...,x_n]$. Therefore, there exists a monomial in the variables $x_i$ and $x_j$ in the ideal $in_{\prec}(I(\gamma))$. Consequently, the algebraic set $\gamma$ has dimension at most 1. 

\end{proof}

\begin{corollary} \label{parametrisedcurves} Let $\gamma$ be a curve in $\R^3$, parametrised by $t \rightarrow \big(p_1(t), p_2(t),p_3(t)\big)$ for $t \in \R$, where the $p_i \in \R[t]$, for $i=1,2,3$, are polynomials not simultaneously constant, of degree at most $b$. Then, $\gamma$ is contained in an irreducible real algebraic curve in $\R^3$, of degree at most $b$.
\end{corollary}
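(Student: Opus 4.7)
The natural idea is to pass through the complex world. View $p_1, p_2, p_3$ as elements of $\C[t]$ and let $\gamma_{\C} \subset \C^3$ be the image of the resulting polynomial parametrisation. As noted in the discussion preceding the corollary, $\gamma_{\C}$ is an irreducible complex algebraic curve of $\C^3$, and its degree equals the maximum of the degrees of the $p_i$ (by \cite[Chapter 3, \S3]{Cox+Others/1991/Ideals}), hence is at most $b$. By construction, $\gamma \subseteq \gamma_{\C} \cap \R^3$.

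Next I would apply Lemma \ref{maybelast} to the set $\gamma_{\C} \cap \R^3$: this shows that $V := \gamma_{\C} \cap \R^3$ is a real algebraic set of dimension at most $1$. Because the $p_i$ are not simultaneously constant, $\gamma$ is infinite, and so the real algebraic set $V$ has dimension exactly $1$; it is therefore a real algebraic curve in $\R^3$, containing $\gamma$. I would then decompose $V$ into its finitely many irreducible components. Since $\gamma$ is the continuous image of $\R$ under a polynomial map, it is connected, and hence it is contained in a single irreducible component $V_0$ of $V$. This $V_0$ must be $1$-dimensional (because it contains the infinite set $\gamma$), so it is an irreducible real algebraic curve in $\R^3$ containing $\gamma$, which is the candidate curve in the statement.

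It remains to check that $\deg V_0 \leq b$. By Lemma \ref{4.1.1} there is a unique irreducible complex algebraic curve $W_{\C} \subseteq \C^3$ containing $V_0$, and by definition $\deg V_0 = \deg W_{\C}$. But $W_{\C}$ and $\gamma_{\C}$ are both irreducible complex algebraic curves containing the infinite set $V_0$ (indeed, containing $\gamma$), so they meet in infinitely many points and Corollary \ref{4.1.4} (B\'ezout) forces $W_{\C} = \gamma_{\C}$. Therefore $\deg V_0 = \deg \gamma_{\C} \leq b$, completing the proof.

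The only mildly delicate step is the last one, namely identifying the degree of the irreducible real component $V_0$ with the degree of $\gamma_{\C}$; all the tools required are already stated in the excerpt (Lemma \ref{maybelast}, Lemma \ref{4.1.1}, Corollary \ref{4.1.4}), so I do not anticipate any serious obstacle beyond being careful about the distinction between the real and complex irreducible decompositions.
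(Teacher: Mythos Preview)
Your proposal follows essentially the same route as the paper: pass to the complex curve $\gamma_{\C}$, set $V=\gamma_{\C}\cap\R^3$, use Lemma~\ref{maybelast} to see that $V$ is a real algebraic curve, and identify degrees via B\'ezout. One small slip: connectedness of $\gamma$ alone does not force it into a single irreducible component of $V$ (a connected set can sit across two components meeting at a point). The easy fix, using the very feature you already mention, is that for each irreducible component $V_i$ the set $\{t\in\R:(p_1(t),p_2(t),p_3(t))\in V_i\}$ is the common zero set of finitely many polynomials in $t$, hence is finite or all of $\R$; since these sets cover $\R$, one of them is all of $\R$ and so $\gamma\subset V_i$.

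In fact the paper sidesteps this altogether by proving that $V$ itself is irreducible, and your own step~6 already contains that argument: once $W_{\C}=\gamma_{\C}$, Lemma~\ref{newstuff} gives $V_0=W_{\C}\cap\R^3=\gamma_{\C}\cap\R^3=V$, so the decomposition into components was never needed.
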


\begin{proof} Let $\gamma_{\C}$ be the curve in $\C^3$, parametrised by $t \rightarrow \big(p_1(t), p_2(t),p_3(t)\big)$ for $t \in \C$. As we have already discussed, $\gamma_{\C}$ is the (unique) irreducible complex algebraic curve containing $\gamma$.

Clearly, $\gamma$ is contained in the intersection of $\gamma_{\C}$ with $\R^3$, which, by Lemma \ref{maybelast}, is a real algebraic set, of dimension at most 1. However, since $\gamma_{\C} \cap \R^3$ contains the parametrised curve $\gamma$, it has, in fact, dimension equal to 1. Therefore, $\gamma$ is contained in the real algebraic curve $\gamma_{\C}\cap\R^3$. In fact, $\gamma_{\C}\cap\R^3$ is an irreducible real algebraic curve. Indeed, if $\gamma ' \subsetneq \gamma_{\C}'\cap\R^3$ was an irreducible real algebraic curve, and $\gamma_{\C}'$ was the (unique) irreducible complex algebraic curve containing it, then $\gamma_{\C}'\cap \R^3 \supsetneq \gamma'=\gamma_{\C}'\cap \R^3$, and thus $\gamma_{\C}\cap \gamma_{\C}'$ $\subsetneq \gamma_{\C}$ would be a complex algebraic curve, which cannot hold, since $\gamma_{\C}$ is irreducible.

Moreover, since $\gamma_{\C}$ is an irreducible algebraic curve in $\C^3$, it is the smallest complex algebraic curve containing the real algebraic curve $\gamma_{\C} \cap \R^3$, and thus the degree of $\gamma_{\C}\cap\R^3$ is equal to $\deg \gamma_{\C}=\max\{\deg p_1, \deg p_2, \deg p_3\}$, and thus equal to at most $b$.

\end{proof}

Note that the Szemer\'edi-Trotter type theorem \ref{4.1.8} gives upper bounds on incidences between points and real algebraic curves in $\R^2$, of uniformly bounded degree. However, it cannot be extended to hold for a family of curves in $\R^2$ parametrised by real univariate polynomials of uniformly bounded degree, without extra hypotheses on the family of the cuvres. Indeed, half-lines in $\R^2$ are such curves, and if the Szemer\'edi-Trotter theorem held for any finite collection of half-lines in $\R^2$, then the Szemer\'edi-Trotter theorem for lines would be scale invariant, since there exist infinitely many distinct half-lines lying on the same line.

Similarly, there does not exist, in general, an upper bound on the number of critical curves parametrised by real univariate polynomials of uniformly bounded degree, contained in an algebraic hypersurface in $\R^3$.

\newpage
\thispagestyle{plain}
\cleardoublepage

\chapter{Transversality of more general curves} \label{6}

In this chapter we extend the definition of joints and multijoints to a more general setting, and show that some of the results we have so far described (in fact, the corresponding statements of Theorem \ref{1.1} and Theorem \ref{theoremmult2}) still hold.

Indeed, we consider the family $\mathcal{F}$ of all non-empty sets in $\R^3$ with the property that, if $\gamma \in \mathcal{F}$ and $x \in \gamma$, then a basic neighbourhood of $x$ in $\gamma$ is either $\{x\}$ or the finite union of parametrised curves, each homeomorphic to a semi-open line segment with one endpoint the point $x$. In addition, if there exists a parametrisation $f:[0,1)\rightarrow \R^3$ of one of these curves, with $f(0)=x$ and $f'(0)\neq 0$, then the line in $\R^3$ passing through $x$ with direction $f'(0)$ is tangent to $\gamma$ at $x$. If $\Gamma \subset \mathcal{F}$, we denote by $T_x^{\Gamma}$ the set of directions of all tangent lines at $x$ to the sets of $\Gamma$ passing through $x$ (note that $T_x^{\Gamma}$ might be empty and that there might exist many tangent lines to a set of $\Gamma$ at $x$).

Real algebraic curves in $\R^3$, as well as curves in $\R^3$ parametrised by real polynomials, belong to the family $\mathcal{F}$.

\section{Joints}

\begin{definition} Let $\Gamma$ be a collection of sets in $\mathcal{F}$.  Then a point $x$ in $\R^3$ is a joint for the collection $\Gamma$ if

\emph{(i)} $x$ belongs to at least one of the sets in $\Gamma$, and\newline
\emph{(ii)} there exist at least 3 vectors in $T_x^{\Gamma}$ spanning $\R^3$.

The multiplicity $N(x)$ of the joint $x$ is defined as the number of triples of lines in $\R^3$ passing through $x$, whose directions are linearly independent vectors in $T_x^{\Gamma}$. \end{definition}

We will show here that, under certain assumptions on the properties of the sets in a finite collection $\Gamma \subset \mathcal{F}$, the statement of Theorem \ref{1.1} still holds, i.e. \begin{displaymath} \sum_{x \in J}N(x)^{1/2} \leq c \cdot |\Gamma|^{3/2}, \end{displaymath} where $J$ is the set of joints formed by $\Gamma$. 

Indeed, thanks to the results of Chapter \ref{5}, we are now ready to formulate and prove the following extension of Theorem \ref{1.1}.

\begin{theorem}\label{4.2.1} Let $b$ be a positive constant and $\Gamma$ a finite collection of real algebraic curves in $\R^3$, of degree at most $b$. Let $J$ be the set of joints formed by $\Gamma$. Then,
\begin{displaymath} \sum_{x \in J}N(x)^{1/2} \leq c_b \cdot |\Gamma|^{3/2},\end{displaymath}
where $c_b$ is a constant depending only on $b$. \end{theorem}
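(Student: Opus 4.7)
The plan is to follow the strategy of Chapter \ref{3} closely, proving by induction on $|\Gamma|$ a curve analog of Proposition \ref{1.2}: for all $N, k \in \N$,
\[
|J^k_N| \cdot N^{1/2} \leq c_b \left( \frac{|\Gamma|^{3/2}}{k^{\beta}} + \frac{|\Gamma|}{k} \cdot N^{1/2} \right),
\]
for some constant $\beta > 0$ depending only on $b$, where $J^k_N$ is the set of joints $x \in J$ with $N(x) \in [N, 2N)$ and with $|T_x^{\Gamma}| \in [k, 2k)$. Once this is established, summation over dyadic scales of $N$ and $k$ proceeds exactly as at the end of Chapter \ref{3}: the constraint $N \lesssim k^3$ (three directions are required to span $\R^3$) forces $k \gtrsim N^{1/3}$, and the two geometric series telescope to give $\sum_x N(x)^{1/2} \lesssim_b |\Gamma|^{3/2}$ as long as $\beta$ is strictly positive.

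For the inductive step I would run the Guth--Katz polynomial method on $\mathfrak{G} = J^k_N$ with $d = A |\Gamma|^2 S^{-1} k^{-\alpha}$, where $\alpha = (2D_b-1)/(D_b-1) > 2$ is the exponent furnished by the Szemer\'edi--Trotter-type bound Lemma \ref{4.1.9}(ii) for curves of degree at most $b$. The cellular/algebraic dichotomy splits exactly as before. In the cellular case, each curve $\gamma \not\subset Z$ meets $Z$ in $\lesssim_b d$ points, by applying Corollary \ref{4.1.3} to each of its $\lesssim_b 1$ irreducible components, so the incidence counting between curves and cell boundaries mirrors that of Proposition \ref{1.2} up to $b$-dependent constants. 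In the algebraic case, the combinatorial Lemmas \ref{3.1} and \ref{3.2} extend mutatis mutandis once ``lines through $x$'' is replaced by ``tangent directions in $T_x^{\Gamma}$'', so in the regular subcase one finds $\gtrsim N/k^2$ branches at $x$ whose tangent directions escape the tangent plane $T_xZ$; the curves carrying those branches cannot lie in $Z$ and each meets $Z$ in $\lesssim_b d$ points, yielding the desired bound. In the critical subcase with $|\Gamma'| \gtrsim |\Gamma|$, every curve of $\Gamma'$ is a critical curve of $Z$, and Corollary \ref{4.1.14} replaces Proposition \ref{2.2.3} to give $|\Gamma| \lesssim_b d^2$. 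The remaining subcase, where $|\Gamma'| < |\Gamma|/100$, feeds directly into the induction hypothesis on the strictly smaller collection $\Gamma'$, just as in the proof of Proposition \ref{1.2}.

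The main obstacle is bookkeeping rather than any new geometric idea. The Szemer\'edi--Trotter bound for curves is weaker than the one for lines: the exponent on $k$ in the resulting inductive estimate is $\beta = (\alpha-2)/2 = 1/(2(D_b-1))$ instead of $1/2$, which does not spoil the final summation (any positive $\beta$ suffices) but forces one to carry $b$-dependent constants through every step and to verify that the numerical constants produced by each subcase consolidate into a single $c_b$ large enough to close the induction. A secondary nuisance is that an irreducible real algebraic curve of degree $\leq b$ may pass through $x$ with several distinct tangent directions (self-crossings bounded by Lemma \ref{4.1.7}) and may have several path-connected components (bounded by Lemma \ref{4.1.15}); both contribute only $O_b(1)$ inflation to the incidence counts. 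Apart from tracking these $b$-dependent multiplicative losses, the argument runs in parallel with the proof of Proposition \ref{1.2}.
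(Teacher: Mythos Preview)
Your proposal is essentially correct and matches the paper's approach (stated there as Proposition~\ref{4.2.4}, with exactly your exponent $\beta = 1/(2D_b-2)$, and with Lemmas~\ref{4.2.2}--\ref{4.2.3} playing the role of Lemmas~\ref{3.1}--\ref{3.2}). There is, however, one step in the cellular case where the argument does \emph{not} simply mirror Proposition~\ref{1.2}, and your write-up glosses over it.

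For lines, each $l \in \mathfrak{L}_{cell}$ necessarily meets the boundary of its cell, giving $I_{cell} \geq L_{cell}$ and hence $\sum_{cell} L_{cell} \lesssim I \leq L d$. A bounded real algebraic curve can sit entirely inside one cell without touching $Z$ at all, so the inequality $I_{cell} \geq |\Gamma_{cell}|$ can fail and the incidence count does not transfer verbatim. The paper handles this by (i) building into $Z$ six generic planes forming the faces of a cube $Q$ whose interior contains $\mathfrak{G}$, so that the complexification $\gamma_{\C}$ of every $\gamma \in \Gamma$ is forced to meet $Z_{\C}$; and (ii) splitting $\Gamma_{cell}$ into curves that do meet the cell boundary (handled as for lines, with the factor $2$ replaced by $2b$ because a curve crosses itself at most $b$ times at each point) and curves $\Gamma'_{cell}$ that do not, bounding $\sum_{cell} |\Gamma'_{cell}|$ via the uniform bound on path-connected components from Lemma~\ref{4.1.15} together with the complex incidence count $I_{\C} \leq b\,|\Gamma|\,d$. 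The ingredients you list (Corollary~\ref{4.1.3}, Lemma~\ref{4.1.15}) are exactly what is needed, but they enter through this mechanism rather than as a blanket $O_b(1)$ inflation of the line-case incidence count.

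Two minor remarks. First, the paper defines $J^k_N$ by the number of \emph{curves} through $x$ (for which $x$ is not isolated), not by $|T_x^{\Gamma}|$; since each such curve contributes between $1$ and $b$ tangent directions, the two versions differ by $O_b(1)$ and the choice is immaterial. Second, in the full-cell subcase the threshold is $k^{1/(D_b-1)}$ points rather than $k$, and the lower bound $|\Gamma_{cell}| \gtrsim_b k^{D_b/(D_b-1)}$ is obtained by a short incidence argument using that at most one curve of degree $\leq b$ passes through any $D_b$ given points; this replaces the elementary count $L_{cell} \gtrsim k^2$ from the line case.
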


The proof of Theorem \ref{4.2.1} is completely analogous to the proof of Theorem \ref{1.1}. Indeed, if $\gamma$ is a real algebraic curve in $\R^3$, of degree at most $b$, and $x\in \gamma$ is not an isolated point of $\gamma$, then $\gamma$ crosses itself at $x$ at most $b$ times, while there exists at least one tangent line to $\gamma$ at $x$; thus, there exist at least 1 and at most $b$ tangent lines to $\gamma$ at $x$. So, if $x$ is a joint of multiplicity $N$ for $\Gamma$, such that at most $k$ curves of $\Gamma$, of which $x$ is not an isolated point, are passing through $x$, then $N \leq (bk)^3$. Therefore, the following lemmas hold, whose statements and proofs are analogous to those of Lemmas \ref{3.1} and \ref{3.2}.

\begin{lemma}\label{4.2.2} Let $x$ be a joint of multiplicity $N$ for a finite collection $\Gamma$ of real algebraic curves in $\R^3$, of degree at most $b$. Suppose that $x$ lies in $\leq 2k$ of the curves in $\Gamma$ of which it is not an isolated point. If, in addition, $x$ is a joint of multiplicity $\leq N/2$ for a subcollection $\Gamma '$ of $\Gamma$, or if it is not a joint at all for the subcollection $\Gamma '$, then there exist $\geq \frac{N}{1000b^3\cdot k^2}$ curves of $\Gamma \setminus \Gamma '$, of which $x$ is not an isolated point, passing through $x$. \end{lemma}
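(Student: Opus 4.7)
The plan is to adapt the proof of Lemma~\ref{3.1} essentially verbatim. The only new ingredient is the observation from Chapter~\ref{5} that a real algebraic curve of degree at most $b$ crosses itself at any given point at most $b$ times, and therefore contributes at most $b$ tangent directions at that point to $T_x^\Gamma$. This introduces an overall factor of $b^3$ into the various bounds but does not alter the structure of the argument.

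First I would record the preliminary bound on the multiplicity. Since at most $2k$ curves of $\Gamma$ pass through $x$ as a non-isolated point, and each contributes at most $b$ elements to $T_x^\Gamma$, we have $|T_x^\Gamma|\le 2bk$, and hence $N\le \binom{2bk}{3}\le 8b^3k^3$. Letting $A$ be the number of curves of $\Gamma\setminus\Gamma'$ of which $x$ is not isolated and which pass through $x$, I would argue by contradiction, supposing that $A<N/(1000\,b^3k^2)$. Setting $T_b:=T_x^\Gamma\setminus T_x^{\Gamma'}$, every direction in $T_b$ is tangent to some curve in $\Gamma\setminus\Gamma'$ through $x$ of which $x$ is not isolated, so $|T_b|\le bA$; while $|T_x^{\Gamma'}|\le 2bk$.

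I would then split each triple of linearly independent directions counted by $N$ according to how many of its three members lie in $T_b$. The triples contained entirely in $T_x^{\Gamma'}$ number at most $N/2$ by the assumption that $x$ is either not a joint of $\Gamma'$ or a joint of multiplicity at most $N/2$ for $\Gamma'$, while the triples meeting $T_b$ are controlled by elementary counting, giving
\[
N \;\le\; \tfrac{N}{2}\;+\;(bA)\tbinom{2bk}{2}\;+\;\tbinom{bA}{2}(2bk)\;+\;\tbinom{bA}{3}.
\]
Substituting the assumed bound on $A$ together with $N\le 8b^3k^3$, each of the last three terms is comfortably bounded by a small fraction of $N$ (a routine check shows $N/250$, $N/62500$ and $N/15625000$ suffice respectively), producing $N<N$ and hence a contradiction that yields $A\ge N/(1000\,b^3k^2)$. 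I do not anticipate any real obstacle: the proof is a bookkeeping adaptation of Lemma~\ref{3.1}, and the only ingredient not already present in the line case — the uniform bound of $b$ on the number of tangent directions at a point to a real algebraic curve of degree at most $b$ — has already been established in Chapter~\ref{5}.
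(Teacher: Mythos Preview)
Your proposal is correct and is exactly the approach the paper intends: the paper does not write out a separate proof of Lemma~\ref{4.2.2} but simply notes that the statement and proof are analogous to those of Lemma~\ref{3.1}, using the fact (recorded just before the lemma) that a real algebraic curve of degree at most $b$ has at most $b$ tangent directions at a non-isolated point, whence $N\le (2bk)^3$. Your decomposition by how many directions of a triple lie in $T_x^{\Gamma}\setminus T_x^{\Gamma'}$ and the ensuing numerical estimates reproduce the argument of Lemma~\ref{3.1} with the extra factor of $b^3$ accounted for, which is precisely what the paper has in mind.
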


\begin{lemma}\label{4.2.3} Let $x$ be a joint of multiplicity $N$ for a finite collection $\Gamma$ of real algebraic curves in $\R^3$, of degree at most $b$. Suppose that $x$ lies in $\leq 2k$ of the curves in $\Gamma$ of which it is not an isolated point. Then, for every plane containing $x$, there exist $\geq \frac{N}{1000b^3 \cdot k^2}$ curves in $\Gamma$, such that their tangent vectors at $x$ are well-defined and not parallel to the plane. \end{lemma}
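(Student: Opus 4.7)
The plan is to imitate the proof of Lemma \ref{3.2} verbatim, with Lemma \ref{4.2.2} playing the role that Lemma \ref{3.1} played in the line case. Given an arbitrary plane $\Pi$ containing $x$, I would define $\Gamma'$ to be the subcollection of those $\gamma \in \Gamma$ such that $x$ is not an isolated point of $\gamma$ and \emph{every} tangent line to $\gamma$ at $x$ lies in $\Pi$. By construction, $T_x^{\Gamma'} \subseteq \Pi$, so every vector in $T_x^{\Gamma'}$ belongs to a fixed two-dimensional subspace of $\R^3$, and therefore no three vectors of $T_x^{\Gamma'}$ can span $\R^3$. Hence $x$ fails to be a joint for $\Gamma'$ at all (in particular, its multiplicity with respect to $\Gamma'$ is zero, which is certainly $\leq N/2$).

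Next I would apply Lemma \ref{4.2.2} to the pair $(\Gamma,\Gamma')$. Since $x$ is a joint of multiplicity $N$ for $\Gamma$ lying on at most $2k$ of the curves in $\Gamma$ of which it is not an isolated point, and $x$ is not a joint for $\Gamma'$, that lemma furnishes at least $\frac{N}{1000\, b^3\, k^2}$ curves of $\Gamma \setminus \Gamma'$ passing through $x$ with $x$ non-isolated on each of them. By the defining property of $\Gamma'$, each such curve must have at least one well-defined tangent line at $x$ that does not lie in $\Pi$, i.e.\ a tangent vector that is not parallel to $\Pi$. This is precisely the conclusion sought, and since $\Pi$ was arbitrary the lemma follows.

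There is no genuine obstacle here: the only ingredient beyond Lemma \ref{4.2.2} is the elementary linear-algebraic observation that three vectors confined to a plane cannot span $\R^3$, so the minor sanity check to carry out carefully is that the set $\Gamma'$ is correctly identified so that the membership criterion ``all tangent lines at $x$ lie in $\Pi$'' genuinely forces $x$ out of the joint set of $\Gamma'$, and that conversely membership in $\Gamma \setminus \Gamma'$ is equivalent to the existence of at least one tangent direction at $x$ transverse to $\Pi$. Both are immediate from the definition of $T_x^{\Gamma'}$ and the fact that, for a real algebraic curve $\gamma$ of degree at most $b$ and a non-isolated point $x \in \gamma$, the curve $\gamma$ admits at least one (and at most finitely many, bounded in terms of $b$) tangent directions at $x$, as recorded in Chapter \ref{5}.
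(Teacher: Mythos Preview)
Your proposal is correct and follows exactly the approach the paper intends: the paper does not write out a separate proof of Lemma~\ref{4.2.3} but simply states that its proof is analogous to that of Lemma~\ref{3.2}, with Lemma~\ref{4.2.2} replacing Lemma~\ref{3.1}. Your choice of $\Gamma'$ as the curves all of whose tangent directions at $x$ lie in $\Pi$ is precisely the right generalisation of the set $\mathfrak{L}'$ of lines lying in the plane, and the remaining steps carry over verbatim.
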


Now, for a collection $\Gamma$ of real algebraic curves in $\R^3$, if $J$ is the set of joints formed by $\Gamma$, we define

$J_N:=\{x \in J: N \leq N(x) <2N\}$, for all $N \in \N$, and

$J_N^{k}:=\{x \in J_N$: $x$ intersects at least $k$ and fewer than $2k$ curves of $\Gamma$ of which $x$ is not an isolated point$\}$, for all $N, k \in \N$.

Then, Theorem \ref{4.2.1} easily follows from Proposition \ref{4.2.4}, the statement and a sketch of the proof of which we now present.

\begin{proposition}\label{4.2.4} Let $b \in \N$ and $\Gamma$ a finite collection of real algebraic curves in $\R^3$, of degree at most $b$. Then, \begin{displaymath} |J^{k}_{N}| \cdot N^{1/2} \leq c_b \cdot \bigg(\frac{|\Gamma|^{3/2}}{k^{1/(2D_b-2)}}+ \frac{|\Gamma|}{k}\cdot N^{1/2}\bigg), \end{displaymath}
where $D_b$ and $c_b$ are constants depending only on $b$ (and, in particular, $D_b\geq b^2+1$). \end{proposition}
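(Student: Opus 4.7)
The plan is to mirror the inductive argument of Proposition \ref{1.2}, with the Szemer\'edi--Trotter exponent $3$ systematically replaced by $\alpha := (2D_b-1)/(D_b-1)$ (the exponent appearing in Lemma \ref{4.1.9}(ii)) and with Lemmas \ref{3.1}, \ref{3.2} replaced by their curve analogues \ref{4.2.2}, \ref{4.2.3}. The induction is on $|\Gamma|$; the base case is trivial, and the constant $c_b$ is pinned down at the end so as to dominate all the $b$-dependent constants produced along the way. One may assume, by passing to irreducible components (which enlarges $L := |\Gamma|$ by at most a factor $b$), that every curve in $\Gamma$ is irreducible. A preliminary application of Lemma \ref{4.1.9}(ii) to $\mathfrak{G} := J_N^k$ yields $S := |\mathfrak{G}| \lesssim_b L^2/k^{\alpha} + L/k$; if the second term dominates the conclusion is immediate, and otherwise one sets $d := A L^2 S^{-1} k^{-\alpha} > 1$ (for $A$ large) and invokes the Guth--Katz polynomial method to obtain a square-free $p$ of degree $\leq d$ whose zero set $Z$ partitions $\R^3$ into $\sim d^3$ cells each containing $\lesssim S/d^3$ points of $\mathfrak{G}$.

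In the cellular case (at least $10^{-8}S$ points of $\mathfrak{G}$ lying in the open cells), in place of the small/large-cell dichotomy used for lines I would apply Lemma \ref{4.1.9}(ii) inside each full cell: letting $S_{cell}, L_{cell}$ count the points of $\mathfrak{G}$ in the interior and the curves of $\Gamma$ meeting the interior, either $L_{cell} \gtrsim_b S_{cell}^{1/2} k^{\alpha/2}$ or $L_{cell} \gtrsim_b S_{cell}\, k$. Summing $L_{cell}$ over all cells and using Corollary \ref{4.1.3} together with Lemma \ref{4.1.15} (each curve has $O_b(1)$ path-connected components, each crossing $Z$ in at most $bd$ points, and bounded components entirely inside a single cell contribute only an extra $O_b(L)$) gives $\sum L_{cell} \lesssim_b Ld$. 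The first alternative holding on a positive fraction of the $\sim d^3$ full cells then forces $A \lesssim_b 1$ and hence a contradiction for $A$ chosen large enough, while the second alternative gives $Sk \lesssim_b Ld$, which upon substituting the chosen $d$ rearranges to $S \lesssim_b L^{3/2}/k^{(\alpha+1)/2}$ and hence (using $N \lesssim_b k^3$) to the desired bound $SN^{1/2} \lesssim_b L^{3/2}/k^{1/(2D_b-2)}$.

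In the algebraic case one extracts the subfamily $\Gamma'$ of curves each meeting $\geq Sk/(100L)$ points of $\mathfrak{G}_1 := \mathfrak{G} \cap Z$, and splits $\mathfrak{G}_1$ into regular and critical points of $Z$. In the regular sub-case, Lemma \ref{4.2.3} supplies at each of $\gtrsim S$ points at least $\gtrsim_b N/k^2$ curves whose tangent vectors there are transverse to the tangent plane of $Z$, and hence which are not contained in $Z$; the resulting $\gtrsim_b SN/k^2$ incidences with $Z$ are bounded above by $\lesssim_b Ld$ via Corollary \ref{4.1.3}, and substituting $d$ yields the estimate. In the critical sub-case, Corollary \ref{4.1.14} bounds the critical irreducible real algebraic curves of $Z$ by $d^2$; if $|\Gamma'| \geq L/100$ then either each curve of $\Gamma'$ contains $\leq bd$ critical points of $Z$ (giving the bound by a direct incidence count) or Corollary \ref{4.1.3} forces a positive fraction of $\Gamma'$ to consist of critical curves, yielding $L \lesssim d^2$, while if $|\Gamma'| < L/100$ one runs the two-subcase analysis from the proof of Proposition \ref{1.2}, using Lemma \ref{4.2.2} in the non-joint subcase and the induction hypothesis on the strictly smaller collection $\Gamma'$ in the joint subcase. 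The main delicate point, aside from routine notational adjustments, is verifying that the exponents $\alpha = (2D_b-1)/(D_b-1)$ and $1/(2D_b-2)$ continue to balance at every step -- this is where the specific form of the Szemer\'edi--Trotter bound in Lemma \ref{4.1.9}(ii) is used essentially -- and that all the $b$-dependent constants arising from Lemmas \ref{4.1.15}, \ref{4.2.2}, \ref{4.2.3} and Corollary \ref{4.1.3} can be absorbed into a single final constant $c_b$ via the closure of the induction.
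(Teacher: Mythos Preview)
Your overall induction scheme, choice of $d$, and treatment of the algebraic case match the paper's proof. The substantive difference is in the cellular case, where you argue directly that $\sum_{cell} L_{cell} \lesssim_b Ld$ via the path-connected component bound of Lemma~\ref{4.1.15}, while the paper instead modifies the polynomial $p$ by multiplying in six generic linear factors (so that $Z$ contains the faces of a cube enclosing $\mathfrak{G}$), passes to the containing irreducible complex curves $\gamma_{\mathbb C}$, and counts incidences with $Z_{\mathbb C}$; the generic planes guarantee that every $\gamma_{\mathbb C}$ meets $Z_{\mathbb C}$, which is what lets the paper convert $\big|\bigcup_{cell}\Gamma'_{cell}\big|$ into a lower bound on $I_{\mathbb C}$. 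Your route is more elementary, but as written it has a small gap: knowing that a path-connected component of $\gamma$ meets $Z$ in at most $bd$ points does not by itself bound the number of cells it enters, because the component need not be homeomorphic to an interval or a circle. You need one more ingredient, namely that each irreducible $\gamma$ has only $O_b(1)$ self-crossings (Lemma~\ref{4.1.7}); removing these together with the $\le bd$ points of $\gamma\cap Z$ leaves a smooth $1$-manifold with $O_b(d)$ components, each lying in a single cell. With that addition your cellular argument goes through and in fact avoids the paper's detour through $\mathbb C^3$. A minor remark on the algebraic case: your phrase ``either each curve of $\Gamma'$ contains $\le bd$ critical points of $Z$'' is not quite the right dichotomy --- the split, as in Proposition~\ref{1.2}, is whether $Sk|\Gamma|^{-1}\lesssim_b d$ (which gives the bound directly) or not (forcing the curves of $\Gamma'$ into $Z$ via Corollary~\ref{4.1.3}, after which Corollary~\ref{4.1.14} bounds the critical ones).
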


\begin{proof} Each real algebraic curve in $\R^3$ of degree at most $b$ consists of $\leq b \lesssim _b 1$ irreducible components; we may therefore assume that each $\gamma \in \Gamma$ is irreducible.

Keeping in mind that a curve $\gamma \in \Gamma$ crosses itself at a point $x$ at most $b$ times, and therefore the number of tangent lines to $\gamma$ at $x$ is at most $b$, the proof is completely analogous to that of Proposition 1.2. The main differences lie at the beginning and the cellular case, we thus go on to point them out.

By Lemma \ref{4.1.6}, there exists an integer $C_b\geq b$, such that, if $\gamma$ is a real algebraic curve in $\R^3$, of degree at most $b$, then the projection of $\gamma$ on a generic plane is contained in a planar real algebraic curve, of degree $\leq C_b$.

Therefore, by Lemmas \ref{4.1.7} and \ref{4.1.9}, the integer $D_b:=C_b^2+1$ has the following properties.

(i) If $\gamma$ is a real algebraic curve in $\R^3$, of degree at most $b$, then $\gamma$ crosses itself at most $4D_b$ times.

(ii) There exists at most 1 real algebraic curve in $\R^3$, of degree at most $b$, passing through any fixed $D_b$ points in $\R^3$.

(iii) For any finite collection $\Gamma$ of real algebraic curves in $\R^3$, of degree at most $b$, it holds that $|J_N^k|\lesssim_b |\Gamma|^2/k^{(2D_b-1)/(D_b-1)} + |\Gamma|/k$.

This will be the integer $D_b$ appearing in the statement of the Proposition.

Now, the proof of the Proposition will be achieved by induction on the cardinality of $|\Gamma|$. Indeed, let $M \in \N$. For $c_b$ an explicit constant $\geq D_b$, which depends only on $b$ and will be specified later:

- For any collection $\Gamma$ of irreducible real algebraic curves in $\R^3$, of degree at most $b$, such that $|\Gamma|=1$, it holds that \begin{displaymath} |J^k_{N}|\cdot N^{1/2} \leq c_b \cdot \bigg( \frac{1^{3/2}}{k^{1/(2D_b-2)}} + \frac{1}{k}\cdot N^{1/2}\bigg), \; \forall \; N, \;k \in \N\end{displaymath} (this is obvious, in fact, for any $c_b \geq 4D_b$, as in this case $|J_{N}|=|J_N^1|\leq 4D_b$ for all $N \in \N$, since a real algebraic curve in $\R^3$, of degree at most $b$, crosses itself at most $4D_b$ times). 

- We assume that \begin{displaymath} |J_N^{k}| \cdot N^{1/2} \leq c_b \cdot \bigg(\frac{|\Gamma|^{3/2}}{k^{1/(2D_b-2)}} + \frac{|\Gamma|}{k}\cdot N^{1/2}\bigg), \; \forall \; N,\;k \in \N,\end{displaymath} for any finite collection $\Gamma $ of irreducible real algebraic curves in $\R^3$, of degree at most $b$, such that $|\Gamma| \lneq M.$

- We will now prove that  \begin{equation} |J_N^{k}| \cdot N^{1/2} \leq c_b \cdot \bigg(\frac{|\Gamma|^{3/2}}{k^{1/(2D_b-2)}} + \frac{|\Gamma|}{k}\cdot N^{1/2}\bigg),\; \forall \; N, \;k \in \N, \label{eq:final'}\end{equation}
for any collection $\Gamma$ of irreducible real algebraic curves in $\R^3$, of degree at most $b$, such that $|\Gamma|=M$.

Indeed, let $\Gamma$ be a collection of irreducible real algebraic curves in $\R^3$, of degree at most $b$, such that $|\Gamma|=M$. Fix $N$ and $k$ in $\N$, and let \begin{displaymath}\mathfrak{G}:=J_{N}^k\end{displaymath} and $$ S:=|J_{N}^k|$$ for this collection $\Gamma$.

Now, we know that $S\cdot N^{1/2} \leq c_{0,b} \cdot (|\Gamma| ^2/k^{(2D_b-1)/(D_b-1)} + |\Gamma|/k)$ for some constant $c_{0,b}$ depending only on $b$. Thus:

If $\frac{S}{2}\leq c_{0,b}\cdot  \frac{|\Gamma|}{k}$, then $S \cdot N^{1/2} \leq 2c_{0,b} \cdot \frac{|\Gamma|}{k}\cdot N^{1/2}$ (where $2c_{0,b}$ is a constant depending only on $b$). 

Otherwise, $\frac{S}{2}< c_{0,b} \cdot|\Gamma| ^2/k^{(2D_b-1)/(D_b-1)}$, so $S < 2c_{0,b} \cdot  |\Gamma|^2 k^{-(2D_b-1)/(D_b-1)}$. 

Therefore, $d:=A_b|\Gamma|^2S^{-1}k^{-(2D_b-1)/(D_b-1)}$ is a quantity $>1$ whenever $A_b\geq 2c_{0,b}$; we thus choose $A_b$ to be large enough for this to hold, and we will specify its value later. Now, applying the Guth-Katz polynomial method for this $d>1$ and the finite set of points $\mathfrak{G}$, we deduce that there exists a non-zero polynomial $p\in \R[x,y,z]$, of degree $\leq d$, whose zero set $Z$:

(i) decomposes $\R^3$ in $\sim d^3$ cells, each of which contains $\lesssim Sd^{-3}$ points of $\mathfrak{G}$, and

(ii) contains 6 distinct generic planes, each of which contains a face of a fixed cube $Q$ in $\R^3$, such that the interior of $Q$ contains $\mathfrak{G}$ (and each of the planes is generic in the sense that the plane in $\C^3$ containing it intersects the smallest complex algebraic curve in $\C^3$ containing $\gamma$, for all $\gamma \in \Gamma$);

to achieve this, we first fix a cube $Q$ in $\R^3$, with the property that its interior contains $\mathfrak{G}$ and the planes containing its faces are generic in the above sense. Then, we multiply the polynomials we end up with at each step of the Guth-Katz polynomial method with the same (appropriate) six linear polynomials, the zero set of each of which is a plane containing a different face of the cube, and stop the application of the method when we finally get a polynomial of degree at most $d$, whose zero set decomposes $\R^3$ in $\lesssim d^3$ cells (the set of the cells now consists of the non-empty intersections of the interior of the cube $Q$ with the cells that arise from the application of the Guth-Katz polynomial method, as well as the complement of the cube).

We can assume that the polynomial $p$ is square-free, as eliminating the squares of $p$ does not inflict any change on its zero set.

If there are $\geq 10^{-8}S$ points of $\mathfrak{G}$ in the union of the interiors of the cells, we are in the cellular case. Otherwise, we are in the algebraic case.

\textbf{Cellular case:} There are $\gtrsim S$ points of $\mathfrak{G}$ in the union of the interiors of the cells. However, we also know that there exist $\sim d^3$ cells in total, each containing $\lesssim Sd^{-3}$ points of $\mathfrak{G}$. Therefore, there exist $\gtrsim d^3$ cells, with $\gtrsim Sd^{-3}$ points of $\mathfrak{G}$ in the interior of each. We call the cells with this property ``full cells". Now:

$\bullet$ If the interior of some full cell contains $< k^{1/(D_b-1)}$ points of $\mathfrak{G}$, then $Sd^{-3} \lesssim k^{1/(D_b-1)}$, and since $N \lesssim b^3 k^3\lesssim_b k^3$, we have that $S \cdot N^{1/2} \lesssim_b |\Gamma|^{3/2}/k^{1/(2D_b-2)}$.

$\bullet$ If the interior of each full cell contains $\geq k^{1/(D_b-1)}$ points of $\mathfrak{G}$, then we will be led to a contradiction by choosing $A_b$ sufficiently large. Indeed:

Consider a full cell and let $\mathfrak{G}_{cell}$ be the set of points of $\mathfrak{G}$ lying in the interior of the cell, $S_{cell}$ the cardinality of $\mathfrak{G}_{cell}$ and $\Gamma_{cell}:= \{\gamma \in \Gamma:\exists \; x \in \gamma\cap \mathfrak{G}_{cell}$, such that $x$ is not an isolated point of $\gamma\}$.

Let $\mathfrak{G}_{cell}'$ be a subset of $\mathfrak{G}_{cell}$ of cardinality $k^{1/(D_b-1)}$. Since each point of $\mathfrak{G}_{cell}$ has at least $k$ curves of $\Gamma_{cell}$ passing through it, there exist at least $k^{D_b/(D_b-1)}$ incidences between $\Gamma_{cell}$ and $\mathfrak{G}_{cell}'$. On the other hand, the curves in $\Gamma_{cell}$ containing at most $D_b-1$ points of $\mathfrak{G}_{cell}'$ contribute at most $(D_b-1) \cdot|\Gamma_{cell}|$ incidences with $\mathfrak{G}_{cell}'$, while through any fixed point of $\mathfrak{G}'_{cell}$ there exist at most $\binom{|\mathfrak{G}_{cell}'|}{D_b-1}$ curves in $\Gamma_{cell}$, each containing at least $D_b$ points of $\mathfrak{G}_{cell}'$, since there exists at most 1 curve in $\Gamma$ passing through any fixed $D_b$ points in $\R^3$; therefore, there exist at most $\binom{|\mathfrak{G}_{cell}'|}{D_b-1}\cdot |\mathfrak{G}_{cell}'|$ incidences between $\mathfrak{G}_{cell}'$ and the curves in $\Gamma _{cell}$, each of which contains at least $D_b$ points of $\mathfrak{G}_{cell}'$. Thus, 
\begin{displaymath}k^{D_b/(D_b-1)} \leq I_{\mathfrak{G}_{cell}', \Gamma_{cell}} \leq (D_b-1) \cdot |\Gamma_{cell}| +\binom{k^{1/(D_b-1)}}{D_b-1} \cdot k^{1/(D_b-1)} \leq \end{displaymath} \begin{displaymath} \leq (D_b-1) \cdot |\Gamma_{cell}| +\frac{1}{(D_b-1)!}\cdot k^{D_b/(D_b-1)}\text{, so}\end{displaymath} \begin{displaymath}|\Gamma_{cell}| \gtrsim_b k^{D_b/(D_b-1)}, \end{displaymath}and thus \begin{displaymath} |\Gamma_{cell}|^2/k^{(2D_b-1)/(D_b-1)} \gtrsim_b |\Gamma_{cell}| /k.\end{displaymath} Note that this approach differs to the one applied in the case of joints formed by lines.

Now, due to our definition of $D_b$ and the fact that each of the points in $\mathfrak{G}_{cell}$ has at least $k$ curves of $\Gamma_{cell}$ passing through it, of each of which it is not an isolated point, we obtain (since $k \geq 3$, and thus $\geq 2$), 
\begin{displaymath} S_{cell} \lesssim_b |\Gamma_{cell}|^2/k^{(2D_b-1)/(D_b-1)} +|\Gamma_{cell}|/ k.\end{displaymath}
Therefore, $S_{cell} \lesssim_b |\Gamma_{cell}|^2/k^{(2D_b-1)/(D_b-1)}$, so, since we are working in a full cell, $Sd^{-3} \lesssim_b |\Gamma_{cell}|^2/k^{(2D_b-1)/(D_b-1)}$, and rearranging we see that 
\begin{displaymath} |\Gamma_{cell}| \gtrsim_b S^{1/2}d^{-3/2}k^{(2D_b-1)/(2D_b-2)}.\end{displaymath}
Furthermore, let $\Gamma_Z$ be the set of curves of $\Gamma$ which are lying in $Z$. Obviously, $\Gamma_{cell} \subset \Gamma \setminus \Gamma_Z$. Moreover, let $\Gamma_{cell}'$ be the set of curves in $\Gamma_{cell}$ such that, if $\gamma \in \Gamma_{cell}'$, there does not exist any point $x$ in the intersection of $\gamma$ with the boundary of the cell, with the property that the induced topology from $\R^3$ to the intersection of $\gamma$ with the closure of the cell contains some open neighbourhood of $x$. Finally, let $I_{cell}$ denote the number of incidences between the boundary of the cell and the curves in $\Gamma_{cell} \setminus \Gamma'_{cell}$.

Now, each of the curves in $\Gamma_{cell}\setminus \Gamma'_{cell}$ intersects the boundary of the cell at at least one point $x$, with the property that the induced topology from $\R^3$ to the intersection of the curve with the closure of the cell contains an open neighbourhood of $x$; therefore, $I_{cell}\geq |\Gamma_{cell}\setminus \Gamma'_{cell}|$ ($=|\Gamma_{cell}|-|\Gamma'_{cell}|$). Also, the union of the boundaries of all the cells is the zero set $Z$ of $p$, and if $x$ is a point of $Z$ which belongs to a curve in $\Gamma$ intersecting the interior of a cell, such that the induced topology from $\R^3$ to the intersection of the curve with the closure of the cell contains an open neighbourhood of $x$, then there exist at most $2b-1$ other cells whose interior is also intersected by the curve and whose boundary contains $x$, such that the induced topology from $\R^3$ to the intersection of the curve with the closure of each of these cells contains some open neighbourhood of $x$. So, if $I$ is the number of incidences between $Z$ and $\Gamma \setminus \Gamma_Z$, and $\mathcal{C}$ is the set of all the full cells (which, in this case, has cardinality $\sim d^3$), then
\begin{displaymath} I \geq \frac{1}{2b} \cdot \sum_{cell \;\in \;\mathcal{C}} I_{cell}\geq \frac{1}{2b}\cdot \sum_{cell \;\in\; \mathcal{C}}(|\Gamma_{cell}|-|\Gamma_{cell}'|). \end{displaymath}

Now, if $\gamma \in \Gamma_{cell}$ ($\supseteq \Gamma'_{cell}$), we consider the (unique, irreducible) complex algebraic curve $\gamma_{\C}$ in $\C^3$ which contains $\gamma$. In addition, let $p_{\C}$ be the polynomial $p$ viewed as an element of $\C[x,y,z]$, and $Z_{\C}$ the zero set of $p_{\C}$ in $\C^3$. The polynomial $p$ was constructed in such a way that $\gamma_{\C}$ intersects each of 6 complex planes, each of which contains one of the real planes in $Z$ that each contain a different face of the cube $Q$; consequently $\gamma_{\C}$ intersects $Z_{\C}$ at least once. Moreover, if $\gamma^{(1)}$, $\gamma^{(2)}$ are two distinct curves in $\Gamma$, then $\gamma^{(1)}_{\C}$, $\gamma^{(2)}_{\C}$ are two distinct curves in $\Gamma_{\C}$ (since $\gamma^{(1)}=\gamma^{(1)}_{\C}\cap \R^3$, while $\gamma^{(1)}=\gamma^{(1)}_{\C}\cap \R^3$). So, if $\Gamma_{\C}=\{\gamma_{\C}: \gamma \in \Gamma_{cell}\text{, for some cell in }\mathcal{C}\}$ and $I_{\C}$ denotes the number of incidences between $\Gamma_{\C}$ and $Z_{\C}$, it follows that 
$$I_{\C} \geq|\Gamma_{\C}|=|\Gamma|\geq\big|\bigcup_{cell \;\in\; \mathcal{C}}\Gamma_{cell}'\big|,
$$while also
$$I_{\C} \geq I.
$$
Therefore,
$$I_{\C} \geq \frac{1}{2}\cdot (I+I_{\C}) \geq
$$
\begin{displaymath}\geq \frac{1}{2} \cdot \bigg( \frac{1}{2b} \sum_{cell \;\in\; \mathcal{C}}(|\Gamma_{cell}|-|\Gamma_{cell}'|) + \big|\bigcup_{cell \;\in \;\mathcal{C}}\Gamma_{cell}'\big|\bigg) \sim_b \end{displaymath}
\begin{displaymath} \sim_b\sum_{cell \;\in\; \mathcal{C}}(|\Gamma_{cell}|-|\Gamma_{cell}'|) + \big|\bigcup_{cell \;\in \;\mathcal{C}}\Gamma_{cell}'\big|.\end{displaymath} However, each real algebraic curve in $\R^3$, of degree at most $b$, is the disjoint union of $\leq R_b$ path-connected components, for some constant $R_b$ depending only on $b$ (by Lemma \ref{4.1.15}). Therefore, 
$$\big|\bigcup_{cell \;\in \;\mathcal{C}}\Gamma_{cell}'\big| \sim_b\sum_{cell \;\in \;\mathcal{C}}|\Gamma_{cell}'|,
$$
from which it follows that
 \begin{displaymath}I_{\C}\gtrsim_b \sum_{cell \;\in\; \mathcal{C}}(|\Gamma_{cell}|-|\Gamma_{cell}'|)+\sum_{cell \;\in \;\mathcal{C}}|\Gamma_{cell}'|\sim_b \end{displaymath} \begin{displaymath}\sim_b\sum_{cell \;\in \;\mathcal{C}}|\Gamma_{cell}|\gtrsim_b \sum_{cell \;\in \;\mathcal{C}} S^{1/2}d^{-3/2}k^{(2D_b-1)/(2D_b-2)}\sim_b\end{displaymath} \begin{displaymath}\sim_b \big(S^{1/2}d^{-3/2}k^{(2D_b-1)/(2D_b-2)}\big)\cdot d^3 \sim_b S^{1/2}d^{3/2}k^{(2D_b-1)/(2D_b-2)}.\end{displaymath}
On the other hand, however, each $\gamma_{\C}\in \Gamma_{\C}$ is a complex algebraic curve in $\R^3$, of degree at most $b$, which does not lie in $Z_{\C}$, and thus intersects $Z_{\C}$ at most $b \cdot \deg p$ times. So,
$$I_{\C} \lesssim_b |\Gamma_{\C}| \cdot d \sim_b |\Gamma| \cdot d,
$$
and therefore
\begin{displaymath}S^{1/2}d^{3/2}k^{(2D_b-1)/(2D_b-2)} \lesssim_b |\Gamma| \cdot d,\end{displaymath}
which in turn gives $A_b\lesssim_b 1$. In other words, there exists some constant $C_b$, depending only on $b$, such that $A_b \leq C_b$. By fixing $A_b$ to be a constant larger than $C_b$ (and of course $\geq 2c_{0,b}$, so that $d> 1$), we have a contradiction.

Therefore, in the cellular case there exists some constant $c_{1,b}$, depending only on $b$, such that  \begin{displaymath} S \cdot N^{1/2} \leq c_{1,b} \cdot \frac{|\Gamma|^{3/2}}{k^{1/(2D_b-2)}}.\end{displaymath}

\textbf{Algebraic case:} There exist $<10^{-8}S$ points of $\mathfrak{G}$ in the union of the interiors of the cells. 

We denote by $\Gamma'$ the set of curves in $\Gamma$ each of which contains $\geq \frac{1}{100}Sk|\Gamma|^{-1}$ points of $\mathfrak{G}\cap Z$ which are not isolated points of the curve, and we continue by adapting, to this setting, the proof of Proposition \ref{1.2}, using Corollary \ref{4.1.14} and Lemmas \ref{4.2.2} and \ref{4.2.3}. 

\end{proof}

We are now able to count, with multiplicities, joints formed by a finite collection of curves in $\R^3$, parametrised by real univariate polynomials of uniformly bounded degree.

\begin{corollary} \label{jointsforcurves} Let $b$ be a positive constant and $\Gamma$ a finite collection of curves in $\R^3$, such that each $\gamma \in \Gamma$ is parametrised by $t \rightarrow \big(p^{\gamma}_1(t)$, $p_2^{\gamma}(t)$, $p_3^{\gamma}(t)\big)$ for $t \in \R$, where the $p_i^{\gamma} \in \R[t]$, for $i=1,2,3$, are polynomials not simultaneously constant, of degrees at most $b$. Let $J$ be the set of joints formed by $\Gamma$. Then,
\begin{displaymath} \sum_{x \in J}N(x)^{1/2} \leq c_b \cdot |\Gamma|^{3/2},\end{displaymath}
where $c_b$ is a constant depending only on $b$.
\end{corollary}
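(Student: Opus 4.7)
The plan is to deduce this corollary from Theorem \ref{4.2.1} by enclosing each polynomially parametrised curve inside a real algebraic curve of controlled degree. For each $\gamma \in \Gamma$, Corollary \ref{parametrisedcurves} furnishes an irreducible real algebraic curve $\tilde{\gamma} \subset \R^3$ with $\gamma \subseteq \tilde{\gamma}$ and $\deg \tilde{\gamma} \leq b$. I would set $\tilde{\Gamma} := \{\tilde{\gamma} : \gamma \in \Gamma\}$ (as a multiset, allowing repeats if two parametrised curves happen to lie inside the same algebraic curve, although one may just as well pass to the underlying set, since this only decreases the count). In any case $|\tilde{\Gamma}| \leq |\Gamma|$.

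Next I would verify the key transfer of tangent data: for every $x \in \R^3$,
\begin{equation*}
T_x^{\Gamma} \;\subseteq\; T_x^{\tilde{\Gamma}}.
\end{equation*}
Indeed, if $v \in T_x^{\Gamma}$ arises from a parametrised curve $\gamma \in \Gamma$ via $f:[0,1) \to \R^3$ with $f(0)=x$ and $f'(0) = v \neq 0$, then the same parametrisation realises $f$ as a parametrised curve inside $\tilde{\gamma}$, exhibiting the line $x + \R v$ as tangent to $\tilde{\gamma}$ at $x$ in the sense of the family $\mathcal{F}$. Consequently, if $x$ is a joint of $\Gamma$ then three vectors of $T_x^{\Gamma}$ span $\R^3$, hence three vectors of $T_x^{\tilde{\Gamma}}$ span $\R^3$, so $x$ is a joint of $\tilde{\Gamma}$. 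Writing $J$, $\tilde{J}$ for the respective joint sets and $N(x)$, $\tilde{N}(x)$ for the respective multiplicities, the inclusion $T_x^{\Gamma} \subseteq T_x^{\tilde{\Gamma}}$ gives $J \subseteq \tilde{J}$ and $N(x) \leq \tilde{N}(x)$ for every $x \in J$.

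Having set this up, the proof concludes by direct comparison:
\begin{equation*}
\sum_{x \in J} N(x)^{1/2} \;\leq\; \sum_{x \in \tilde{J}} \tilde{N}(x)^{1/2} \;\leq\; c_b \cdot |\tilde{\Gamma}|^{3/2} \;\leq\; c_b \cdot |\Gamma|^{3/2},
\end{equation*}
where the middle inequality is Theorem \ref{4.2.1} applied to the collection $\tilde{\Gamma}$ of real algebraic curves of degree at most $b$. There is essentially no obstacle here; the only point requiring a moment's care is the verification that the tangent directions of $\gamma$ at $x$ are genuinely tangent directions of the ambient real algebraic curve $\tilde{\gamma}$, which is immediate because the very same parametrising map witnesses both facts. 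All the analytic and algebraic work has already been done in the proof of Theorem \ref{4.2.1} and in the passage from parametrised curves to real algebraic curves recorded in Corollary \ref{parametrisedcurves}.
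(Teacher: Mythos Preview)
Your proposal is correct and follows essentially the same route as the paper's own proof: invoke Corollary \ref{parametrisedcurves} to place each parametrised curve inside a real algebraic curve of degree at most $b$, then apply Theorem \ref{4.2.1}. You have simply spelled out more carefully the passage of tangent data and multiplicities that the paper leaves implicit in the phrase ``immediately follows''.
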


\begin{proof} By Corollary \ref{parametrisedcurves}, each $\gamma \in \Gamma$ is contained in a real algebraic curve in $\R^3$, of degree at most $b$. Therefore, the statement of the Corollary immediately follows from Theorem \ref{4.2.1}.

\end{proof}

\section{Multijoints}

\begin{definition} Let $\Gamma_1$, $\Gamma_2$, $\Gamma_3$ be collections of sets in $\mathcal{F}$.  Then a point $x$ in $\R^3$ is a multijoint for the three collections if

\emph{(i)} $x$ belongs to at least one of the sets in $\Gamma_i$, for all $i=1,2,3$, and \newline
\emph{(ii)} there exists at least one vector $v_i$ in $T_x^{\Gamma_i}$, for all $i=1,2,3$, such that the set $\{v_1,v_2,v_3\}$ spans $\R^3$.
\end{definition}

We will show here that, under certain assumptions on the properties of the sets in finite collections $\Gamma_1$, $\Gamma_2$ and $\Gamma_3$ in $\mathcal{F}$, the corresponding statement of Theorem \ref{theoremmult2} still holds.

Indeed, thanks to the results of Chapter \ref{5}, we are now ready to formulate and prove the following extension of Theorem \ref{theoremmult2}.

\begin{theorem} \label{theoremmult3} Let $b$ be a positive constant, and $\Gamma_1$, $\Gamma_2$, $\Gamma_3$ finite collections of real algebraic curves in $\R^3$, of degree at most $b$. Let $J$ be the set of multijoints formed by $\Gamma_1$, $\Gamma_2$ and $\Gamma_3$. Then,
\begin{displaymath}|J| \leq c_b \cdot(|\Gamma_1||\Gamma_2||\Gamma_3|)^{1/2}, \end{displaymath}
where $c_b$ is a constant depending only on $b$.
\end{theorem}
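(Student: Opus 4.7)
The natural plan is to prove a curve version of Proposition \ref{multsimple} by induction on the cardinalities $|\Gamma_1|$, $|\Gamma_2|$, $|\Gamma_3|$, and then derive Theorem \ref{theoremmult3} by specialising to the threshold triple $(1,1,1)$. More precisely, I would define, for each triple $(N_1,N_2,N_3)\in\R_+^3$, the subset $J'_{N_1,N_2,N_3}\subseteq J$ of those multijoints $x$ admitting subcollections $\Gamma_i(x)\subseteq\Gamma_i$, $i=1,2,3$, such that $x$ is not an isolated point of any $\gamma\in\Gamma_i(x)$, $|\Gamma_i(x)|\geq N_i$, and whenever $\gamma_i\in\Gamma_i(x)$ one can pick tangent vectors to the $\gamma_i$ at $x$ whose directions span $\R^3$. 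The target is then the bound $|J'_{N_1,N_2,N_3}|\lesssim_b (|\Gamma_1||\Gamma_2||\Gamma_3|)^{1/2}/(N_1N_2N_3)^{1/2}$, from which Theorem \ref{theoremmult3} follows with $(N_1,N_2,N_3)=(1,1,1)$ after accounting for the $\lesssim_b 1$ tangent directions each $\gamma\in\Gamma_i$ has at each of its non-isolated points.

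The mechanism of the induction would be the Guth-Katz polynomial method applied to $\mathfrak{G}:=J'_{N_1,N_2,N_3}$ with parameter $d=A_b\,|\Gamma_1||\Gamma_2|S^{-1}\lceil N_1\rceil^{-1}\lceil N_2\rceil^{-1}$, augmented as in the proof of Proposition \ref{4.2.4} by multiplying at each step by six fixed generic linear factors whose zero sets bound a cube containing $\mathfrak{G}$; this guarantees that each complex algebraic curve $\gamma_{\C}$ containing some $\gamma\in\Gamma_i$ meets the complex zero set $Z_{\C}$ at least once. Then I would run the usual cellular/algebraic dichotomy. In the cellular case I would reproduce the argument from Proposition \ref{multsimple}: the Cauchy--Schwarz bound $S_{cell}\lceil N_1\rceil\lceil N_2\rceil\lesssim L_{1,cell}L_{2,cell}$ survives unchanged (it uses only the pigeonhole on pairs $(\gamma_1,\gamma_2)$), and curve--boundary incidences are controlled using that each non-contained curve meets $Z$ in $\lesssim_b d$ points, while each incidence at a non-isolated point is shared among $\lesssim_b 1$ adjacent cells thanks to Lemma \ref{4.1.15}. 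Summing over the $\sim d^3$ full cells yields, after Cauchy--Schwarz across the two collections, $S^{1/2}(\lceil N_1\rceil\lceil N_2\rceil)^{1/2}d^{3/2}\lesssim_b (|\Gamma_1||\Gamma_2|)^{1/2}d$, which for $A_b$ chosen large depending only on $b$ is a contradiction.

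In the algebraic case, most points of $\mathfrak{G}$ lie on $Z$, and the popularity pigeonholing of Proposition \ref{multsimple} goes through verbatim to produce, for each $i$, a subcollection $\Gamma_i'\subseteq\Gamma_i$ of curves each of which meets $\gtrsim SN_i/|\Gamma_i|$ points of $\mathfrak{G}\cap Z$ at non-isolated points, and then a subset $\mathfrak{G}'\subseteq\mathfrak{G}\cap Z$ of size $\gtrsim S$ meeting $\gtrsim N_i$ curves of each $\Gamma_i'$. If the threshold $\tfrac{1}{10^{100}}SN_i/|\Gamma_i|$ exceeds $bd$ then each curve of $\Gamma_i'$ meets $Z$ in more than $b\cdot\deg p$ points and so, by Corollary \ref{4.1.3}, lies in $Z$; hence each point of $\mathfrak{G}'$ becomes a critical point of $Z$ and each popular $\Gamma_j'$-curve becomes a critical curve. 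Corollary \ref{4.1.14} then gives $|\Gamma_{j,1}|\leq d^2$, and rearranging closes the induction in exactly the same arithmetic pattern as in Proposition \ref{multsimple}. The remaining sub-case reduces the problem to a strictly smaller collection (a fixed fraction of $|\Gamma_j|$) on which the inductive hypothesis applies, again as in Proposition \ref{multsimple}.

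The main obstacle I anticipate is the cellular step: curves, unlike lines, can re-enter a single cell several times and can cross themselves up to $b$ times on $Z$, so some care is needed to define the ``boundary incidences'' one counts and to show that the factor $\lesssim_b 1$ accounting for multiple adjacency of cells along a single curve is genuinely a constant depending only on $b$. This is where Lemmas \ref{4.1.15} and \ref{4.1.7} enter, together with the augmented Guth-Katz procedure that forces $\gamma_{\C}\cap Z_{\C}\neq\emptyset$ so that one can replace the real incidence count by a complex one, exactly as in Proposition \ref{4.2.4}. Everything else (the Szemer\'edi--Trotter type pigeonholes, the critical-curve bound, the induction on $|\Gamma_1|+|\Gamma_2|+|\Gamma_3|$, and the extraction of the constant $c_b$ at the base case) adapts routinely from the line-case proof of Proposition \ref{multsimple} once this cellular step is set up correctly.
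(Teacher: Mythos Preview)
Your proposal is correct and follows essentially the same route as the paper: you state and prove the curve analogue of Proposition \ref{multsimple} by induction on $(|\Gamma_1|,|\Gamma_2|,|\Gamma_3|)$, run the Guth--Katz method with the same parameter $d$ augmented by six generic linear factors forming a bounding cube, obtain the cellular contradiction via Cauchy--Schwarz and the complex incidence count (using Lemma \ref{4.1.15} to handle curves that do not exit a cell), and in the algebraic case invoke Corollary \ref{4.1.14} for the critical-curve bound before closing the induction. This is exactly the paper's argument; the only cosmetic difference is that the paper leaves the algebraic case to the reader with a reference back to Proposition \ref{multsimple}, whereas you spell out the popularity pigeonholing and the $bd$ threshold explicitly.
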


\textbf{Remark.} We would like to emphasise that we have not yet achieved a similar extension of Theorem \ref{theoremmult1}, even though we believe that one exists. The reason is that we have not yet managed to obtain computational results regarding flat curves of algebraic surfaces, a notion corresponding to the one of flat lines, which has been an essential ingredient of our proof of Theorem \ref{theoremmult1}. 

$\;\;\;\;\;\;\;\;\;\;\;\;\;\;\;\;\;\;\;\;\;\;\;\;\;\;\;\;\;\;\;\;\;\;\;\;\;\;\;\;\;\;\;\;\;\;\;\;\;\;\;\;\;\;\;\;\;\;\;\;\;\;\;\;\;\;\;\;\;\;\;\;\;\;\;\;\;\;\;\;\;\;\;\;\;\;\;\;\;\;\;\;\;\;\;\;\;\;\;\;\;\;\;\;\;\;\;\;\;\;\;\;\;\;\;\;\;\;\;\;\;\;\;\;\;\;\;\;\;\;\;\;\;\blacksquare$

In analogy to Theorem \ref{theoremmult2}, Theorem \ref{theoremmult3} is an immediate corollary of the following proposition.

\begin{proposition} Let $b$ be a positive constant, and $\Gamma_1$, $\Gamma_2$, $\Gamma_3$ finite collections of real algebraic curves in $\R^3$, of degree at most $b$.

For all $(N_1,N_2,N_3) \in \R_+^3$, let $J'_{N_1,N_2,N_3}$ be the set of multijoints formed by $\Gamma_1$, $\Gamma_2$ and $\Gamma_3$, with the property that, if $x \in J'_{N_1,N_2,N_3}$, then there exist collections $\Gamma_1(x) \subseteq \Gamma_1$, $\Gamma_2(x) \subseteq \Gamma_2$ and $\Gamma_3(x) \subseteq \Gamma_3$ of curves passing through $x$, such that $|\Gamma_1(x)|\geq N_1$, $|\Gamma_2(x)|\geq N_2$ and $|\Gamma_3(x)|\geq N_3$, and, if $\gamma_1 \in \Gamma_1(x)$, $\gamma_2 \in \Gamma_2(x)$ and $\gamma_3 \in \Gamma_3(x)$, then there exist vectors $v_1 \in T_x^{\gamma_1}$, $v_2 \in T_x^{\gamma_2}$ and $v_3 \in T_x^{\gamma_3}$ that span $\R^3$. Then,
\begin{displaymath}  |J'_{N_1,N_2,N_3}|\leq c_b \cdot \frac{(|\Gamma_1||\Gamma_2||\Gamma_3|)^{1/2}}{(N_1N_2N_3)^{1/2}}, \; \forall\;(N_1,N_2,N_3) \in\R_{+}^3,\end{displaymath}
where $c_b$ is a constant depending only on $b$.
\end{proposition}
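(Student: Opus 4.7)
The plan is to adapt the induction scheme of Proposition \ref{multsimple} to the algebraic-curve setting, using the computational tools of Chapter \ref{5} as drop-in replacements for the line-specific facts. I would induct on the triple $(|\Gamma_1|,|\Gamma_2|,|\Gamma_3|)$, first reducing to the case in which every curve in $\Gamma_1\cup\Gamma_2\cup\Gamma_3$ is irreducible (each real algebraic curve of degree $\leq b$ has $\lesssim_b 1$ irreducible components, each of degree $\leq b$). Base cases where some $|\Gamma_i|$ is small are handled by picking the constant $c_b$ large enough. For the inductive step, I fix $(N_1,N_2,N_3)$, set $\mathfrak{G}:=J'_{N_1,N_2,N_3}$, $S:=|\mathfrak{G}|$, and, assuming WLOG $L_1/\lceil N_1\rceil \leq L_2/\lceil N_2\rceil \leq L_3/\lceil N_3\rceil$, note that the pair-counting estimate $S\lceil N_1\rceil\lceil N_2\rceil\leq L_1 L_2$ persists verbatim (one line of $\Gamma_i(x)$ is replaced by one curve of $\Gamma_i(x)$, but two distinct curves still meet in at most one fixed point when we restrict to the multijoint $x$). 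Thus $d:=A_b L_1 L_2 / (S\lceil N_1\rceil \lceil N_2\rceil)>1$ for $A_b$ large, and I apply the Guth-Katz method to $\mathfrak{G}$, but multiplied by six fixed linear factors whose zero sets form the faces of a cube $Q\supset \mathfrak{G}$ chosen so that the complex planes extending those faces each meet the unique irreducible complexification $\gamma_{\mathbb{C}}$ of every $\gamma\in \Gamma_1\cup\Gamma_2\cup\Gamma_3$; this gives a square-free $p$ of degree $\leq d$ whose zero set $Z$ decomposes $\R^3$ into $\sim d^3$ cells with $\lesssim Sd^{-3}$ points of $\mathfrak{G}$ in each.

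In the cellular case (most points in open cells), I run the Cauchy-Schwarz argument of Proposition \ref{multsimple}: in a full cell the pair count gives $(L_{1,\text{cell}}L_{2,\text{cell}})^{1/2}\gtrsim S^{1/2}d^{-3/2}(\lceil N_1\rceil\lceil N_2\rceil)^{1/2}$, and each curve in $\Gamma_{i,\text{cell}}$ must cross the cell boundary (and hence the complexification $Z_{\mathbb{C}}$ via the cube-face trick) at a controlled number of cells. The two replacements needed here are (i) the multiplicity-of-crossing bound: a curve of degree $\leq b$ lying outside $Z_{\mathbb{C}}$ meets $Z_{\mathbb{C}}$ at most $b\cdot d$ times by Corollary \ref{4.1.3}, and (ii) the path-component bound from Lemma \ref{4.1.15}, which replaces the trivial fact that a line has one unbounded component, so that summing $|\Gamma_{i,\text{cell}}|$ over cells is comparable (up to $O_b(1)$) to the total incidence count $I_{\mathbb{C}}\lesssim_b L_i\cdot d$. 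Combining these as in Proposition \ref{multsimple} yields $A_b\lesssim_b 1$, a contradiction for $A_b$ large enough depending on $b$.

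In the algebraic case (at least $(1-10^{-8})S$ points of $\mathfrak{G}$ lie on $Z$) I carry out verbatim the two-stage pigeonholing of Proposition \ref{multsimple}: set $\mathfrak{G}_1:=\mathfrak{G}\cap Z$, fix $\Gamma_i(x)$ of size $\lceil N_i\rceil$ for each $x\in \mathfrak{G}$, define $\Gamma_j'$ as the curves of $\Gamma_j$ containing $\geq (1/100)S\lceil N_j\rceil/L_j$ points $x\in \mathfrak{G}_1$ with $\gamma\in \Gamma_j(x)$, and $\mathfrak{G}_j'$ as the points in $\mathfrak{G}_1$ lying on many such curves. Inclusion-exclusion gives $|\mathfrak{G}_1'\cap \mathfrak{G}_2'\cap \mathfrak{G}_3'|\gtrsim S$, and a further refinement produces a set $\mathfrak{G}'$ meeting $\gtrsim N_{i_0}$ curves of a subcollection $\Gamma_{i_0,1}$ (for some $i_0$) that lies entirely in $Z$. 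At this point there is a dichotomy: either $\frac{1}{10^{100}}\frac{S\lceil N_j\rceil}{L_j}\leq d$ for some $j$, giving the bound on $S$ directly by rearranging; or $|\Gamma_{i_0,1}|\geq L_{i_0}/10^{1000}$, in which case every curve of $\Gamma_{i_0,1}$ contains more than $d$ critical points of $Z$ and hence is a critical irreducible real algebraic curve, so that $L_{i_0}\lesssim d^2$ by Corollary \ref{4.1.14}; or $|\Gamma_{i_0,1}|<L_{i_0}/10^{1000}$, in which case the points of $\mathfrak{G}'$ are multijoints for the strictly smaller collection $(\Gamma_{i_0,1},\Gamma_i',\Gamma_k')$ and the inductive hypothesis supplies the bound, with the numerical constants $1/10^{1000}$ chosen exactly as in Proposition \ref{multsimple} so that $c_b$ propagates.

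The main obstacle, as in Theorem \ref{4.2.1}, will be the bookkeeping of the cellular case: the passage to complex curves via the cube trick, together with the fact that a real algebraic curve of degree $\leq b$ may consist of several bounded components that contribute boundary incidences without exiting to infinity, forces one to count incidences in $\mathbb{C}^3$ rather than $\R^3$ and to absorb a $\lesssim_b 1$ factor from the number of path-connected components into the constant. Every other step is a direct transcription of the line case, with Corollaries \ref{4.1.3} and \ref{4.1.14} standing in for B\'ezout and the critical-line bound, so $c_b$ can be fixed explicitly in terms of $b$ at the end of the induction.
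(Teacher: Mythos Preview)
Your proposal is correct and follows essentially the same approach as the paper's proof: the induction on $(|\Gamma_1|,|\Gamma_2|,|\Gamma_3|)$, the reduction to irreducible curves, the cube-face trick to force every complexification $\gamma_{\mathbb C}$ to meet $Z_{\mathbb C}$, the cellular Cauchy--Schwarz argument with $I_{i,\mathbb C}\lesssim_b |\Gamma_i|\cdot d$, and the algebraic-case pigeonholing leading either to the direct bound, to Corollary~\ref{4.1.14}, or to the inductive hypothesis --- all match the paper's argument. Two minor points to clean up: (i) the pair-counting step gives $S\lceil N_1\rceil\lceil N_2\rceil\le b^2\,|\Gamma_1||\Gamma_2|$ rather than $|\Gamma_1||\Gamma_2|$, since two distinct irreducible curves of degree $\le b$ can meet in up to $b^2$ points (your parenthetical about ``restricting to the multijoint $x$'' does not address the issue of the same pair contributing at several $x$), and the extra $b^2$ is harmlessly absorbed into $c_b$; (ii) the bound $|\gamma_{\mathbb C}\cap Z_{\mathbb C}|\le b\cdot d$ is Theorem~\ref{4.1.2} (complex B\'ezout), not Corollary~\ref{4.1.3}.
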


\begin{proof} Each real algebraic curve in $\R^3$, of degree at most $b$, consists of $\leq b \lesssim_b 1$ irreducible components; we may therefore assume that each $\gamma \in \Gamma_1\cup \Gamma_2\cup\Gamma_3$ is irreducible.

The proof will be achieved by induction on the cardinalities of $\Gamma_1$, $\Gamma_2$ and $\Gamma_3$. Indeed, fix $(M_1,M_2,M_3) \in {\N^*}^3$. For $c_b$ an explicit constant $\geq b^{2}$, which depends only on $b$ and will be specified later:

(i) For any collections $\Gamma_1$, $\Gamma_2$ and $\Gamma_3$ of irreducible real algebraic curves in $\R^3$, of degree at most $b$, such that $|\Gamma_1|=|\Gamma_2|=|\Gamma_3|=1$, we have that
\begin{displaymath} |J'_{N_1,N_2,N_3}|  \leq c_b \cdot \frac{(|\Gamma_1||\Gamma_2||\Gamma_3|)^{1/2}}{(N_1N_2N_3)^{1/2}}, \; \forall\;(N_1,N_2,N_3)\in\R_{+}^3.\end{displaymath}
This is obvious, in fact, for any $c_b \geq b^{2}$, as in this case $|J'_{N_1,N_2,N_3}|=0$ for all $(N_1,N_2,N_3)$ in $\R^3_+$ such that $N_i \gneq 1$ for some $i \in \{1,2,3\}$, while, for $(N_1,N_2,N_3)$ in $\R^3_+$ such that $N_i \leq 1$ for all $i \in \{1,2,3\}$, $|J'_{N_1,N_2,N_3}|$ is equal to at most the number of intersections between the curve in $\Gamma_1$ and the curve in $\Gamma_2$, and thus equal to at most $b^2$.

(ii) Suppose that \begin{displaymath} |J'_{N_1,N_2,N_3}|  \leq c_b \cdot \frac{(|\Gamma_1||\Gamma_2||\Gamma_3|)^{1/2}}{(N_1N_2N_3)^{1/2}}, \; \forall\;(N_1,N_2,N_3) \in \R_{+}^3,\end{displaymath} for any collections $\Gamma_1$, $\Gamma_2$ and $\Gamma_3$ of irreducible real algebraic curves in $\R^3$, of degree at most $b$, such that $|\Gamma_1|\lneq M_1$, $|\Gamma_2| \lneq M_2$ and $|\Gamma_3| \lneq M_3$.

(iii) We will prove that  \begin{displaymath} |J'_{N_1,N_2,N_3}|  \leq c \cdot \frac{(|\Gamma_1||\Gamma_2||\Gamma_3|)^{1/2}}{(N_1N_2N_3)^{1/2}}, \; \forall\;(N_1,N_2,N_3) \in \R_{+}^3,\end{displaymath} for any collections $\Gamma_1$, $\Gamma_2$ and $\Gamma_3$ of irreducible real algebraic curves in $\R^3$, of degree at most $b$, such that $|\Gamma_j|= M_j$ for some $j \in \{1,2,3\}$ and $|\Gamma_i| \lneq M_i$, $|\Gamma_k| \lneq M_k$ for $\{i,k\}=\{1,2,3\}\setminus \{j\}$.

Indeed, fix such collections $\Gamma_1$, $\Gamma_2$ and $\Gamma_3$ of real algebraic curves, and $(N_1,N_2,N_3) \in\R_{+}^3$.

For simplicity, let $$\mathfrak{G}:= J'_{N_1,N_2,N_3}$$and$$S:=|J'_{N_1,N_2,N_3}|.$$

Now, the proof is completely analogous to that of Proposition \ref{multsimple}. The main differences lie at the beginning and the cellular case, we thus go on to point them out.

We assume that \begin{displaymath} \frac{|\Gamma_1|}{\lceil N_1 \rceil} \leq \frac{|\Gamma_2|}{\lceil N_2\rceil} \leq \frac{|\Gamma_3|}{\lceil N_3 \rceil}. \end{displaymath}

By the definition of the set $\mathfrak{G}$, each point of $\mathfrak{G}$ lies in $\geq \lceil N_1 \rceil$ curves of $\Gamma_1$ and $\geq \lceil N_2\rceil$ curves of $\Gamma_2$. Thus, the quantity $S\lceil N_1\rceil\lceil N_2\rceil$ is equal to at most the number of pairs of the form $(\gamma_1,\gamma_2)$, where $\gamma_1 \in \Gamma_1$, $\gamma_2 \in \Gamma_2$ and the curves $\gamma_1$ and $\gamma_2$ pass through the same point of $\mathfrak{G}$. Therefore, $S\lceil N_1\rceil\lceil N_2\rceil$ is equal to at most the number of all the pairs of the form $(\gamma_1,\gamma_2)$, where $\gamma_1 \in \Gamma_1$ and $\gamma_2 \in \Gamma_2$, i.e. to at most $|\Gamma_1||\Gamma_2|$. So,
\begin{displaymath}S\lceil N_1\rceil\lceil N_2\rceil \leq |\Gamma_1||\Gamma_2|,\end{displaymath}
and therefore
\begin{displaymath} \frac{|\Gamma_1||\Gamma_2|}{S\lceil N_1\rceil\lceil N_2\rceil}\geq 1. \end{displaymath}

Thus, $d:=A\frac{|\Gamma|_1|\Gamma|_2}{S\lceil N_1\rceil\lceil N_2\rceil}$ is a quantity $>1$ for $A>1$. We therefore assume that $A>1$, and we will specify its value later. Now, applying the Guth-Katz polynomial method for this $d>1$ and the finite set of points $\mathfrak{G}$, we deduce that there exists a non-zero polynomial $p\in \R[x,y,z]$, of degree $\leq d$, whose zero set $Z$:

(i) decomposes $\R^3$ in $\sim d^3$ cells, each of which contains $\lesssim Sd^{-3}$ points of $\mathfrak{G}$, and

(ii) contains six distinct generic planes, each of which contains a face of a fixed cube $Q$ in $\R^3$, such that the interior of $Q$ contains $\mathfrak{G}$ (and each of the planes is generic in the sense that the plane in $\C^3$ containing it intersects the smallest complex algebraic curve in $\C^3$ containing $\gamma$, for all $\gamma \in \Gamma_1 \cup \Gamma_2$);

to achieve this, we first fix a cube $Q$ in $\R^3$, with the property that its interior contains $\mathfrak{G}$ and the planes containing its faces are generic in the above sense. Then, we multiply the polynomials we end up with at each step of the Guth-Katz polynomial method with the same (appropriate) six linear polynomials, the zero set of each of which is a plane containing a different face of the cube, and stop the application of the method when we finally get a polynomial of degree at most $d$, whose zero set decomposes $\R^3$ in $\lesssim d^3$ cells (the set of the cells now consists of the non-empty intersections of the interior of the cube $Q$ with the cells that arise from the application of the Guth-Katz polynomial method, as well as the complement of the cube).

We can assume that the polynomial $p$ is square-free, as eliminating the squares of $p$ does not inflict any change on its zero set.

Let us now assume that there are $\geq 10^{-8}S$ points of $\mathfrak{G}$ in the union of the interiors of the cells; by choosing to be $A$ a sufficiently large constant depending only on $b$, we will be led to a contradiction.

Indeed, there are $\gtrsim S$ points of $\mathfrak{G}$ in the union of the interiors of the cells. However, we also know that there exist $\sim d^3$ cells in total, each with $\lesssim Sd^{-3}$ points of $\mathfrak{G}$. Therefore, there exist $\gtrsim d^3$ cells, with $\gtrsim Sd^{-3}$ points of $\mathfrak{G}$ in the interior of each. We call the cells with this property ``full cells".

Now, for every full cell,  let $\mathfrak{G}_{cell}$ be the set of points of $\mathfrak{G}$ in the interior of the cell, $\Gamma_{1,cell}$ and $\Gamma_{2,cell}$ the sets of curves in $\Gamma_1$ and $\Gamma_2$, respectively, each containing at least one point of $\mathfrak{G}_{cell}$, and $S_{cell}:=|\mathfrak{G}_{cell}|$. Now,
\begin{displaymath}S_{cell}\lceil N_1\rceil \lceil N_2\rceil  \lesssim |\Gamma_{1,cell}||\Gamma_{2,cell}|, \end{displaymath}
as the quantity $S_{cell}\lceil N_1\rceil \lceil N_2\rceil $ is equal to at most the number of pairs of the form $(\gamma_1,\gamma_2)$, where $\gamma_1 \in \Gamma_{1,cell}$, $\gamma_2 \in \Gamma_{2,cell}$ and the curves $\gamma_1$ and $\gamma_2$ pass through the same point of $\mathfrak{G}_{cell}$. Thus, $S_{cell}\lceil N_1\rceil \lceil N_2\rceil $ is equal to at most the number of all the pairs of the form $(\gamma_1,\gamma_2)$, where $\gamma_1 \in \Gamma_{1,cell}$ and $\gamma_2 \in \Gamma_{2,cell}$, i.e. to at most $|\Gamma_{1,cell}||\Gamma_{2,cell}|$.

Therefore,
\begin{displaymath}(|\Gamma_{1,cell}||\Gamma_{2,cell}|)^{1/2} \gtrsim S_{cell}^{1/2} (\lceil N_1\rceil\lceil N_2\rceil )^{1/2} \gtrsim \frac{S^{1/2}}{d^{3/2}}(\lceil N_1\rceil\lceil N_2\rceil)^{1/2}. \end{displaymath}
Furthermore, for every full cell and $i \in \{1,2\}$, let $\Gamma_{i,Z}$ be the set of curves of $\Gamma_i$ which are lying in $Z$. Obviously, $\Gamma_{i,cell} \subset \Gamma_i \setminus \Gamma_{i,Z}$. Moreover, let $\Gamma_{i,cell}'$ be the set of curves in $\Gamma_{i,cell}$ such that, if $\gamma \in \Gamma_{i,cell}'$, there does not exist any point $x$ in the intersection of $\gamma$ with the boundary of the cell, with the property that the induced topology from $\R^3$ to the intersection of $\gamma$ with the closure of the cell contains some open neighbourhood of $x$. Finally, let $I_{i,cell}$ denote the number of incidences between the boundary of the cell and the curves in $\Gamma_{i,cell}$.

Now, let $i \in \{1,2\}$. Each of the curves in $\Gamma_{i,cell}\setminus \Gamma'_{i,cell}$ intersects the boundary of the cell at at least one point $x$, with the property that the induced topology from $\R^3$ to the intersection of the curve with the closure of the cell contains an open neighbourhood of $x$; therefore, $I_{i,cell}\geq |\Gamma_{i,cell}\setminus \Gamma'_{i,cell}|$ ($=|\Gamma_{i,cell}|-|\Gamma'_{i,cell}|$). Also, the union of the boundaries of all the cells is the zero set $Z$ of $p$, and if $x$ is a point of $Z$ which belongs to a curve in $\Gamma_i$ intersecting the interior of a cell, such that the induced topology from $\R^3$ to the intersection of the curve with the closure of the cell contains an open neighbourhood of $x$, then there exist at most $2b-1$ other cells whose interior is also intersected by the curve and whose boundary contains $x$, such that the induced topology from $\R^3$ to the intersection of the curve with the closure of each of these cells contains some open neighbourhood of $x$. So, if $I_i$ is the number of incidences between $Z$ and $\Gamma_i \setminus \Gamma_{i,Z}$, and $\mathcal{C}$ is the set of all the full cells (which, in this case, has cardinality $\gtrsim d^3$), then
\begin{displaymath} I_i \geq \frac{1}{2b} \cdot \sum_{cell \;\in \;\mathcal{C}} I_{i,cell} \geq \frac{1}{2b}\cdot \sum_{cell \;\in \;\mathcal{C}}(|\Gamma_{i,cell}|-|\Gamma'_{i,cell}|).\end{displaymath}
Now, if $\gamma \in \Gamma_{i,cell}$ ($\supseteq \Gamma_{i,cell}'$), we consider the (unique, irreducible) complex algebraic curve $\gamma_{\C}$ in $\C^3$ which contains $\gamma$. In addition, let $p_{\C}$ be the polynomial $p$ viewed as an element of $\C[x,y,z]$, and $Z_{\C}$ the zero set of $p_{\C}$ in $\C^3$. The polynomial $p$ was constructed in such a way that $\gamma_{\C}$ intersects each of 6 complex planes, each of which contains one of the real planes in $Z$ that each contain a different face of the cube $Q$; consequently $\gamma_{\C}$ intersects $Z_{\C}$ at least once. Moreover, if $\gamma^{(1)}$, $\gamma^{(2)}$ are two distinct curves in $\Gamma_i$, then $\gamma^{(1)}_{\C}$, $\gamma^{(2)}_{\C}$ are two distinct curves in $\Gamma_{\C}$ (since $\gamma^{(1)}=\gamma^{(1)}_{\C}\cap \R^3$, while $\gamma^{(2)}=\gamma^{(2)}_{\C}\cap \R^3$). So, if $\Gamma_{i,\C}=\{\gamma_{\C}: \gamma \in \Gamma_{i,cell}$, for some cell in $\mathcal{C}\}$ and $I_{i,\C}$ denotes the number of incidences between $\Gamma_{i,\C}$ and $Z_{\C}$, it follows that
$$I_{i,\C}\geq |\Gamma_{i,\C}|=|\Gamma_i|\geq \big|\bigcup_{cell \;\in\; \mathcal{C}}\Gamma_{i,cell}'\big|,
$$while also
$$I_{i,\C}\geq I_i.
$$
Therefore,
$$I_{i,\C}\geq \frac{1}{2}(I_i+I_{i,\C})\geq
$$
\begin{displaymath}\geq \frac{1}{2} \cdot \bigg( \frac{1}{2b} \sum_{cell \;\in\; \mathcal{C}}(|\Gamma_{i,cell}|-|\Gamma_{i,cell}'|) + \big|\bigcup_{cell \;\in \;\mathcal{C}}\Gamma_{i,cell}'\big|\bigg) \sim_b \end{displaymath}
\begin{displaymath} \sim_b\sum_{cell \;\in\; \mathcal{C}}(|\Gamma_{i,cell}|-|\Gamma_{i,cell}'|) + \big|\bigcup_{cell \;\in \;\mathcal{C}}\Gamma_{i,cell}'\big|.\end{displaymath}
However, each real algebraic curve in $\R^3$, of degree at most $b$, is the disjoint union of $\leq R_b$ path-connected components, for some constant $R_b$ depending only on $b$ (by Lemma \ref{4.1.15}). Hence,
$$\big|\bigcup_{cell \;\in \;\mathcal{C}}\Gamma_{i,cell}'\big| \sim_b\sum_{cell \;\in \;\mathcal{C}}|\Gamma_{i,cell}'|,
$$
from which it follows that
 \begin{displaymath}I_{i,\C}\gtrsim_b \sum_{cell \;\in\; \mathcal{C}}(|\Gamma_{i,cell}|-|\Gamma_{i,cell}'|)+\sum_{cell \;\in \;\mathcal{C}}|\Gamma_{i,cell}'|\sim_b\end{displaymath}
$$\sim_b \sum_{cell \;\in \;\mathcal{C}}|\Gamma_{i,cell}|.
$$
On the other hand, however, each $\gamma_{\C}\in \Gamma_{i,\C}$ is a complex algebraic curve of degree at most $b$ which does not lie in $Z_{\C}$, and thus intersects $Z_{\C}$ at most $b \cdot \deg p$ times. Therefore,
\begin{displaymath}I_{i,\C} \lesssim_b |\Gamma_i| \cdot d.
\end{displaymath}
So, for $i \in \{1,2\}$, it holds that
$$\sum_{cell \;\in \;\mathcal{C}}|\Gamma_{i,cell}| \lesssim_b |\Gamma_i| \cdot d.
$$
Hence, from all the above we obtain
\begin{displaymath} \sum_{cell \in \mathcal{C}} \frac{S^{1/2}}{d^{3/2}}(\lceil N_1\rceil \lceil N_2\rceil )^{1/2} 
\lesssim_b \sum_{cell \in \mathcal{C}} (|\Gamma_{1,cell}||\Gamma_{2,cell}|)^{1/2} \lesssim_b \end{displaymath}
\begin{displaymath} \lesssim_b\Bigg(\sum_{cell \in\mathcal{C}} |\Gamma_{1,cell}|\Bigg)^{1/2} \Bigg(\sum_{cell \in\mathcal{C}}|\Gamma_{2,cell}|\Bigg)^{1/2}\lesssim_b \end{displaymath}
\begin{displaymath} \lesssim_b (|\Gamma_1|\cdot d)^{1/2}(|\Gamma_2|\cdot d)^{1/2} \sim_b (|\Gamma_1||\Gamma_2|)^{1/2}d.\end{displaymath}

But the full cells number $\gtrsim d^3$. Thus,
$$d^{3/2}S^{1/2}(\lceil N_1\rceil \lceil N_2\rceil )^{1/2} \lesssim_b (|\Gamma_1||\Gamma_2|)^{1/2}d,
$$
which in turn gives $A\lesssim_b 1$. In other words, there exists some constant $C_b$, depending only on $b$, such that $A \leq C_b$. By fixing $A$ to be a number larger than $C_b$ (and of course large enough to have that $d> 1$), we are led to a contradiction.

Therefore, for $A$ a sufficiently large constant that depends only on $b$, it holds that more than $(1-10^{-8})S$ points of $\mathfrak{G}$ lie in the zero set of $p$. 

The rest of the proof follows in a similar way as the proof of Proposition \ref{multsimple}.

\end{proof}

We are now able to count multijoints formed by a finite collection of curves in $\R^3$, parametrised by real univariate polynomials of uniformly bounded degree.

\begin{corollary} \label{multijointsforcurves} Let $b$ be a positive constant and $\Gamma_1$, $\Gamma_2$, $\Gamma_3$ finite collections of curves in $\R^3$, such that, for $j=1,2,3$, each $\gamma \in \Gamma_j$ is parametrised by $t \rightarrow \big(p^{\gamma}_1(t), p_2^{\gamma}(t),p_3^{\gamma}(t)\big)$ for $t \in \R$, where the $p_i^{\gamma} \in \R[t]$, for $i=1,2,3$, are polynomials not simultaneously constant, of degrees at most $b$. Let $J$ be the set of multijoints formed by $\Gamma_1$, $\Gamma_2$ and $\Gamma_3$. Then,
\begin{displaymath}|J| \leq c_b \cdot(|\Gamma_1||\Gamma_2||\Gamma_3|)^{1/2}, \end{displaymath}
where $c_b$ is a constant depending only on $b$.
\end{corollary}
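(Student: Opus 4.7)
The plan is to reduce the statement to Theorem \ref{theoremmult3} by embedding each polynomially parametrised curve inside an irreducible real algebraic curve of the same uniformly bounded degree, exactly in the spirit in which Corollary \ref{jointsforcurves} was deduced from Theorem \ref{4.2.1}. Concretely, for each $j\in\{1,2,3\}$ and each $\gamma\in\Gamma_j$, I would apply Corollary \ref{parametrisedcurves} to produce an irreducible real algebraic curve $\tilde\gamma\subset\R^3$ of degree at most $b$ with $\gamma\subseteq\tilde\gamma$. Forming the collection $\tilde\Gamma_j:=\{\tilde\gamma:\gamma\in\Gamma_j\}$, one has $|\tilde\Gamma_j|\leq|\Gamma_j|$ (distinct parametrised curves may lie on the same algebraic curve, but only to our advantage since we want an upper bound).

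The next step is to verify that every multijoint for $(\Gamma_1,\Gamma_2,\Gamma_3)$ is also a multijoint for $(\tilde\Gamma_1,\tilde\Gamma_2,\tilde\Gamma_3)$. This amounts to checking that $T_x^{\gamma}\subseteq T_x^{\tilde\gamma}$ whenever $\gamma\subseteq\tilde\gamma$ and $x\in\gamma$. Since $\gamma$ is the image of the smooth map $t\mapsto(p_1^{\gamma}(t),p_2^{\gamma}(t),p_3^{\gamma}(t))$, any tangent direction at $x$ arising from this parametrisation is realised by a semi-open curve segment emanating from $x$ that also lies in $\tilde\gamma$, so the direction lies in $T_x^{\tilde\gamma}$ by the very definition of the family $\mathcal{F}$ and of $T_x^{\tilde\gamma}$. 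Consequently, if there exist $v_i\in T_x^{\Gamma_i}$, $i=1,2,3$, spanning $\R^3$, the same vectors witness that $x$ is a multijoint of $(\tilde\Gamma_1,\tilde\Gamma_2,\tilde\Gamma_3)$.

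Having this containment of multijoint sets, I would directly apply Theorem \ref{theoremmult3} to $(\tilde\Gamma_1,\tilde\Gamma_2,\tilde\Gamma_3)$, obtaining
\begin{displaymath}
|J|\;\leq\;\big|\tilde J\big|\;\leq\;c_b\,(|\tilde\Gamma_1||\tilde\Gamma_2||\tilde\Gamma_3|)^{1/2}\;\leq\;c_b\,(|\Gamma_1||\Gamma_2||\Gamma_3|)^{1/2},
\end{displaymath}
where $\tilde J$ denotes the multijoint set for the algebraic collections and $c_b$ is the constant furnished by Theorem \ref{theoremmult3}. There is no serious obstacle here: all the analytic and algebraic heavy lifting has already been done in Chapter \ref{5} (to guarantee that the ambient algebraic curve exists and has degree $\leq b$) and in Theorem \ref{theoremmult3} (which supplies the multijoint bound for real algebraic curves of uniformly bounded degree). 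The only point that could be thought subtle is the tangent-direction compatibility above, but it is immediate from the definitions because the parametrisation of $\gamma$ furnishes the required semi-open curve segment inside $\tilde\gamma$ with the prescribed tangent vector.
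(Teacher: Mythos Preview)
Your proposal is correct and follows exactly the paper's approach: apply Corollary \ref{parametrisedcurves} to pass from each parametrised curve to a containing real algebraic curve of degree at most $b$, then invoke Theorem \ref{theoremmult3}. The paper's own proof is a two-line version of what you wrote, omitting the tangent-direction and cardinality remarks you spelled out, but the argument is identical.
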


\begin{proof} By Corollary \ref{parametrisedcurves}, each $\gamma \in \Gamma_j$, for $j=1,2,3$, is contained in a real algebraic curve in $\R^3$, of degree at most $b$. Therefore, the statement of the Corollary immediately follows from Theorem \ref{theoremmult3}.

\end{proof}

\newpage
\thispagestyle{plain}
\cleardoublepage

\chapter{Different field settings and higher dimensions} \label{7}

As we have already mentioned, the algebraic methods we are using to count joints and multijoints have certain limitations. More particularly, the solutions that we are providing for these problems cannot immediately be applied for more than three dimensions, as a crucial part of our proofs is that the number of critical lines of a real algebraic hypersurface in $\R^3$ is bounded, a fact which we do not know if is always true in higher dimensions. However, there is no substantial reason why the corresponding results should not hold in higher dimensions. More importantly, though, certain limitations of our techniques lie in the fact that they take advantage of the topology and the continuous nature of euclidean space, and eventually of theorems that rely on them (like the Szemer\'{e}di-Trotter and the Borsuk-Ulam theorems), rather than combinatorial estimates arising from the geometric nature of our problems. Therefore, even though, for any field $\mathbb{F}$ and any $n \geq 2$, we can naturally define a joint formed by a finite collection $\mathfrak{L}$ of lines in $\mathbb{F}^n$ as a point of $\mathbb{F}^n$ that lies in the intersection of at least $n$ lines of $\mathfrak{L}$ whose directions span $\mathbb{F}^n$, we cannot immediately apply our algebraic techniques to count joints with multiplicities in that setting.

In particular, the Guth-Katz polynomial method leads to a decomposition of $\R^n$ by the zero set of a polynomial. There are facts, though, which demonstrate an obstruction to the application of the method in the case of $\mathbb{F}^n$, where $\mathbb{F}$ is an arbitrary field. We now go ahead and discuss these facts, starting from the more technical and moving on to the more substantial:

(i) When the Guth-Katz polynomial method is applied in $\R^n$, the cells in which the zero set of the resulting polynomial decomposes $\R^n$ (i.e. the cells in the Guth-Katz decomposition theorem \cite[Corollary 4.4]{Guth_Katz_2010}, Corollary \ref{2.1.2} here) are defined as subsets of $\R^n$, on each of which certain real polynomials in $n$ variables are positive. Now, in the case of $\mathbb{F}^n$, where $\mathbb{F}$ is an arbitrary field, such a characterisation of a cell cannot necessarily be achieved, as it requires a notion of positivity in the field. Particularly in the case where $\mathbb{F}$ is a field of non-zero characteristic (in other words, a finite field or an infinite field with a finite subfield), there is no total order in the field that is compatible with the field operations, and therefore cells would have to be defined in a different way.

(ii) If $\mathbb{F}$ is a finite field with a Hausdorff (and therefore the discrete) topology, then the Boruk-Ulam theorem (on which the Guth-Katz decomposition technique is based in euclidean space) does not hold in this field setting. More particularly, for any $K$ subset of $\mathbb{F}^{n+1}$ (and thus compact with the discrete topology), such that $K=-K$, there exists a continuous and odd map $f:K \rightarrow \mathbb{F}^n$ that sends no point of $K$ to $0 \in \mathbb{F}^n$; such a map can be constructed by decomposing $K$ in the disjoint union of two sets $K_1$ and $K_2$, such that $K_2=K \cap \{-x: x \in K_1\}$, and then defining $(1,...,1) \in \mathbb{F}^n$ as the image of any point of $K_1$ through $f$, while $(-1,...,-1) \in \mathbb{F}^n$ as the image of any point of $K_2$ through $f$.

Therefore, in the case where $\mathbb{F}$ is a finite field, it is not possible to deduce an analogue of the Guth-Katz decomposition theorem \cite[Corollary 4.4]{Guth_Katz_2010} in $\mathbb{F}^n$ using similar techniques as in $\R^n$; any attempt to acquire a version of \cite[Corollary 4.4]{Guth_Katz_2010} in $\mathbb{F}^n$ would require a substantially different approach.

(iii) Even if $\mathbb{F}$ is a field of zero characteristic with an order compatible with the field operations (note that $\mathbb{Q}$ and $\R$ are examples of such fields), we are not necessarily able to deduce the Guth-Katz decomposition theorem \cite[Corollary 4.4]{Guth_Katz_2010} (i.e. Corollary \ref{2.1.2} here) following a similar procedure as the one described in Section \ref{section2.1} for the ordered field $\R$. 

Indeed, as we describe in Section \ref{section2.1}, the Guth-Katz polynomial technique in $\R^n$, which results in \cite[Corollary 4.4]{Guth_Katz_2010}, consists of successive applications of Corollary \ref{2.1.2}, which states that, for any $S_1$, ..., $S_M$ finite, disjoint sets of points in $\R^n$, where $M=\binom{d+n}{n}-1$, there exists a non-zero polynomial in $\R[x_1,...,x_n]$, of degree $\leq d$, whose zero set bisects each $S_i$. Therefore, deducing in a similar way an analogue of the Guth-Katz decomposition theorem in $\mathbb{F}^n$ would require an analogue of Corollary \ref{2.1.2} in $\mathbb{F}^n$, stating that, for any $S_1$, ..., $S_M$ finite, disjoint sets of points in $\mathbb{F}^n$, where $M=\binom{d+n}{n}-1$, there exists a non-zero polynomial in $\mathbb{F}[x_1,...,x_n]$, of degree $\leq d$, whose zero set bisects each $S_i$.

Now, Corollary \ref{2.1.2} is based on Theorem \ref{2.1.1} by Stone and Tukey, which states that, for any $U_1,$ ..., $U_M$ Lebesgue-measurable sets in $\R^n$ of finite, positive volume, where $M=\binom{d+n}{n}-1$, there exists a non-zero polynomial in $\R[x_1,...,x_n]$, of degree $\leq d$, whose zero set bisects each $U_i$. In particular, Corollary \ref{2.1.2} follows from Theorem \ref{2.1.1}, due to the fact that the usual metric in $\R^n$ is such that each point of $\R^n$ has an arbitrarily small (with respect to the metric) open neighbourhood of finite, positive Lebesgue measure (which is translation invariant), polynomials in $\R[x_1,...,x_n]$ are continuous functions, the unit sphere in $\R^n$ is compact and, if $U$ is a Lebesgue-measurable set in $\R^n$ of finite, positive volume, then the function that sends any polynomial $p$ of degree at most $d$ to the Lebesgue measure of $U \cap \{p>0\}$ is continuous. 

Theorem \ref{2.1.1} is, in turn, an immediate corollary of the Borsuk-Ulam theorem in $\R^n$, a theorem which, again, relies on the topology and the continuous nature of euclidean space.

Therefore, establishing a corresponding version of the Guth-Katz polynomial decomposition theorem in $\mathbb{F}^n$ via a similar reasoning would require the existence of a metric and a translation invariant measure in $\mathbb{F}^n$, well-defined on open balls of $\mathbb{F}^n$, with properties as above; in fact, we believe that it would be more natural for the metric and the measure to take values in $\mathbb{F}$ and not in $\R$ (this will be made more clear by the remark after paragraph (iv) that follows). Moreover, we would need a variant of the Borsuk-Ulam theorem, stating that, for all $N \in \N$, there exists a compact subset $K_N$ of $\mathbb{F}^{N+1}$, not containing $(0,...,0) \in \mathbb{F}^{N+1}$, such that $K_N=-K_N$ and, if $f:K_N \rightarrow \mathbb{F}^{N}$ is a continuous and odd map, then there exists $x \in K_N$ such that $f(x)=(0,...,0) \in \mathbb{F}^N$.

(iv) Even if the above do not hold for a field $\mathbb{F}$, preventing us from applying the Guth-Katz decomposition technique as it has been established by Guth and Katz and derive \cite[Corollary 4.4]{Guth_Katz_2010} in that way, we know of no reason why the statement of \cite[Corollary 4.4]{Guth_Katz_2010} (for some definition of a cell) should not hold in $\mathbb{F}^n$. Indeed, for any finite set $\mathfrak{G}$ of points in $\mathbb{F}^n$ and any $d \in \R_{>1}$, there could exist a non-zero polynomial $p \in \mathbb{F}[x_1,...,x_n]$, of degree at most $d$, whose zero set decomposes $\mathbb{F}^n$ in $\sim d^n$ cells, each containing at most $|\mathfrak{G}|/d^n$ points of $\mathfrak{G}$. 

However, an essential reason why the Guth-Katz decomposition theorem \cite[Corollary 4.4]{Guth_Katz_2010} is actually meaningful in $\R^n$ (and a substantial ingredient of arguments using \cite[Corollary 4.4]{Guth_Katz_2010} in $\R^n$) is that, if a line intersects the interiors of two of the cells in which the zero set of a polynomial decomposes $\R^n$, then it also intersects the zero set of the polynomial, a fact which does not necessarily hold in a general field setting.

In fact, in the particular case where $\mathbb{F}$ is a finite field, it is certain that there exists a finite set $\mathfrak{G}$ of points in $\mathbb{F}^2$ and some $d >1$, such that there does not exist a non-zero polynomial $p \in \mathbb{F}[x,y]$, of degree at most $d$, whose zero set decomposes $\mathbb{F}^2$ in $\sim d^2$ cells, each containing at most $|\mathfrak{G}|/d^2$ points of $\mathfrak{G}$, with the property that each line in $\mathbb{F}^2$ intersecting the interiors of two of the cells intersects the zero set of the polynomial. 

The reason for this is that, if this was not true, then we would be able to deduce the Szemer\'edi-Trotter theorem in $\mathbb{F}^n$, for all $n \geq 2$, using the technique explained in Chapter \ref{2} (appearing in \cite{KMS}), which proves the Szemer\'edi-Trotter theorem in $\R^n$, for all $n \geq 2$, using only \cite[Corollary 4.4]{Guth_Katz_2010} in $\R^2$ and the fact each line in $\R^2$ intersecting the interiors of two cells intersects the boundaries of the cells as well. However, the Szemer\'edi-Trotter theorem does not hold in $\mathbb{F}^n$, for any finite field $\mathbb{F}$ and any $n \geq 2$.

Indeed, let $\mathbb{F}$ be a finite field, $n \geq 2$, $\mathfrak{L}$ the set of all lines in $\mathbb{F}^n$, and $\mathcal{P}$ the set of all points in $\mathbb{F}^n$. Then, 
\begin{displaymath}|\mathfrak{L}|\sim \frac{|\mathbb{F}|^{n-1}\cdot |\mathbb{F}|^n}{|\mathbb{F}|} \sim |\mathbb{F}|^{2n-2}\text{ and }|\mathcal{P}|\sim |\mathbb{F}|^n,\end{displaymath} while the number of incidences between $\mathcal{P}$ and $\mathfrak{L}$ is 
\begin{displaymath}I_{\mathcal{P}, \mathfrak{L}} \sim |\mathfrak{L}| \cdot |\mathbb{F}| \sim |\mathbb{F}|^{2n-2}\cdot |\mathbb{F}| \sim |\mathbb{F}|^{2n-1}.\end{displaymath} 
Therefore, $|\mathcal{P}|^{2/3}|\mathfrak{L}|^{2/3}+|\mathcal{P}|+|\mathfrak{L}| \sim |\mathbb{F}|^{2n/3}|\mathbb{F}|^{\frac{2(2n-2)}{3}}+|\mathbb{F}|^n+|\mathbb{F}|^{2n-2}\sim |\mathbb{F}|^{\frac{2n+4n-4}{3}}+|\mathbb{F}|^{2n-2}\sim |\mathbb{F}|^{\frac{6n-4}{3}}+|\mathbb{F}|^{2n-2}\sim |\mathbb{F}|^{2n-\frac{4}{3}}$, and thus it does not hold that $ I_{\mathcal{P}, \mathfrak{L}} \lesssim |\mathcal{P}|^{2/3}|\mathfrak{L}|^{2/3}+|\mathcal{P}|+|\mathfrak{L}|$. In other words, the Szemer\'edi-Trotter theorem does not hold in $\mathbb{F}^n$. 

\textbf{Remark.} Note that, in order to count joints and multijoints with multiplicities, the degree of the polynomial we used to achieve the Guth-Katz decomposition was a quantity that we knew was larger than 1 thanks to the truth of the Szemer\'edi-Trotter theorem in euclidean space; our proofs were thus somehow based on the Szemer\'edi-Trotter theorem. This makes it even more unlikely to count joints and multijoints with multiplicities in finite field settings with methods similar to the ones we use in euclidean space.

$\;\;\;\;\;\;\;\;\;\;\;\;\;\;\;\;\;\;\;\;\;\;\;\;\;\;\;\;\;\;\;\;\;\;\;\;\;\;\;\;\;\;\;\;\;\;\;\;\;\;\;\;\;\;\;\;\;\;\;\;\;\;\;\;\;\;\;\;\;\;\;\;\;\;\;\;\;\;\;\;\;\;\;\;\;\;\;\;\;\;\;\;\;\;\;\;\;\;\;\;\;\;\;\;\;\;\;\;\;\;\;\;\;\;\;\;\;\;\;\;\;\;\;\;\;\;\;\;\;\;\;\;\;\blacksquare$

Now, the reason why, in $\R^n$, each line intersecting the interiors of two of the cells in which the zero set of a polynomial decomposes $\R^n$ also intersects the zero set of the polynomial is a result of the intermediate value theorem in $\R$, and, in particular, of the fact that if a polynomial $f \in \R[x]$ is such that $f(a)f(b)<0$, for $a<b$ in $\R$, then there exists some $c \in [a,b]$, such that $f(c)=0$. However, the intermediate value theorem does not necessarily hold in all fields.

\textbf{Remark.} Due to (i), (ii), (iii) and (iv), we believe that, at least as a start, it would be sensible to search extensions of results that are proved in $\R^n$ using the Guth-Katz polynomial method (such as the Szemer\'edi-Trotter theorem or our results regarding joints and multijoints in euclidean space) in the case of \textit{ordered fields}, i.e. fields with a total order compatible with the operations of the field (such as $\mathbb{Q}$).

More precisely, a field $\mathbb{F}$ is an ordered field if it is equipped with a total order relation $\leq$, such that, for all $x, y,z \in \mathbb{F}$, \newline
(a) $x \leq y \Rightarrow x+z \leq y+z$ and \newline
(b) $0 \leq x$, $0 \leq y \Rightarrow 0 \leq xy$.

Note that every ordered field has characteristic 0. In addition, it is easy to see that in an ordered field all squares are positive.

More importantly, it can be proved that every ordered field $\mathbb{F}$ has an algebraic extension $\mathbb{F}'$, with a field order that extends the order of $\mathbb{F}$, such that $\mathbb{F}'$ is a \textit{real closed field}; in fact, every ordered field has a unique \textit{real closure}, which is the smallest real closed field containing it. Now, a real closed field $\mathbb{K}$ is defined as an ordered field, such that its positive elements are exactly the squares of $\mathbb{K}$ and every polynomial  in $\mathbb{K}[x]$ of odd degree has a root in $\mathbb{K}$ (for details on real closed fields, see \cite{MR2248869} or \cite{BCR87}). 

For example, $\mathbb{R}$ is a real closed field, containing the ordered field $\mathbb{Q}$. However, $\R$ is a transcendental, not an algebraic, extension of $\mathbb{Q}$, and it can thus not be the real closure of $\mathbb{Q}$. In fact, it can be proved (see \cite{MR2248869}) that the real closure of $\mathbb{Q}$ is the set $\R_{alg}$ of real algebraic numbers, i.e. the set of real numbers that are roots of univariate polynomials with coefficients in $\mathbb{Q}$. Other examples of real closed fields are the field of hyperreal numbers and the field of Puiseux series in real coefficients.

Now, it follows from the above that the set of ordered fields is exactly the set of subfields of all real closed fields. And the behaviour of real closed fields happens to resemble that of $\R$ in many ways.

Indeed, if $\mathbb{K}$ is a real closed field, then every positive element of $\mathbb{K}$ has a unique positive root in $\mathbb{K}$. This means that we can define a type of norm $\|\cdot \|$ on $\mathbb{K}^n$, taking values in $\mathbb{K}$, such that $\|(x_1,...,x_n)\|= \sqrt{x_1^2+...+x_n^2}$, while any open ball in $\mathbb{K}^n$, centred at $x \in \mathbb{K}^n$ and with radius $r \in \mathbb{K}$ is defined as $B(x,r)=\{y \in \mathbb{K}^n:\|x-y\|<r\}$. It is easy to see that the set of open balls defined as above has the appropriate properties to be the basis of a topology, and therefore $\mathbb{K}^n$ is a topological space, with the (Hausdorff) topology generated by the set of open balls as above. Moreover, we can define the volume of any open ball $B(x,r)$ in $\mathbb{K}^n$ as the element $r^n$ of $\mathbb{K}$; this gives hope of defining a measure on the $\sigma$-algebra generated by the open sets in $\mathbb{K}^n$, taking values in $\mathbb{K}$, with properties similar to those of the Lebesgue measure in $\R^n$.

What is more, it can be proved that the ordered field $\mathbb{K}$ is real closed if and only if the intermediate value theorem holds for polynomials in $\mathbb{K}[x]$; in other words, if and only if, whenever a polynomial in $\mathbb{K}[x]$ is such that $f(a)f(b)<0$, for $a\leq b$ in $\mathbb{K}$, there exists some $c \in [a,b]$ such that $f(c)=0$.

Moreover, many of the results of Chapter \ref{5} can be extended to real closed field situations (see \cite{MR2248869} or \cite{BCR87}). 

The above, therefore, may lead to analogues of the Guth-Katz polynomial method, and, subsequently, of our results regarding joints and multijoints, in real closed field settings. And since any ordered field is a subfield of a real closed field, we believe that it is sensible to search extensions of our results in ordered field settings, each time by working in a real closed field extension of the ordered field in question (not necessarily algebraic extension).

\textbf{Example.} Let us work in the case of the ordered field $\mathbb{Q}$. Since $\mathbb{Q}$ is a subfield of $\R$, it immediately follows that the Szemer\'edi-Trotter theorem in $\R^n$, as well as our results regarding joints and multijoints in $\R^3$, hold in $\mathbb{Q}^n$ and $\mathbb{Q}^3$, respectively, as well. We would like, however, to demonstrate here the actual necessity of working in the real closed field $\R$ to extend results as above in $\mathbb{Q}^n$, at least when attempting to use similar techniques as in euclidean space.

Indeed, it is easy to see that the Borsuk-Ulam theorem does not hold in $\mathbb{Q}^n$. Indeed, if $\rho$ is a rotation of the unit sphere $S^n$ of $\R^{n+1}$, such that the north and south pole of $\rho (S^n)$ do not belong to $\mathbb{Q}^{n+1}$, and if $\pi$ is the projection of $S^n$ to $\R^n$ that sends the north and south poles of $S^n$ to $(0,...,0) \in \R^n$, then the map $\pi \circ \rho:S_{\mathbb{Q}}^n\rightarrow \R^n$, where $S_{\mathbb{Q}}^n:=\{x=(x_1,...,x_{n+1}) \in \mathbb{Q}^{n+1}:x_1^2+...+x_{n+1}^2=1\}$, is a continuous and odd map that sends no point of $S_{\mathbb{Q}}^n$ to $(0,...,0) \in \R^n$. By the density of $S_{\mathbb{Q}}^n$ in $S^n$, we only know that, if $f:S_{\mathbb{Q}}^n\rightarrow \R^n$ is a continuous and odd map, then there exists $x \in S^n$ such that $f(x)=(0,...,0) \in \R^n$. 

Now, the fact that the Borsuk-Ulam theorem does not hold in $\mathbb{Q}^n$ could be an obstruction to establishing a corresponding version of the Stone and Tukey polynomial ham sandwich theorem in $\mathbb{Q}^n$ (Theorem \ref{2.1.1}), where the polynomial with the bisecting zero set belongs to $\mathbb{Q}[x_1,...,x_n]$. And, even if such an analogue of the Stone and Tukey theorem held in $\mathbb{Q}^n$, a closer study of the proof of Corollary \ref{2.1.2} (whose successive applications constitute the Guth-Katz polynomial method in $\R^n$) shows that the same proof would fail to give an analogue of Corollary \ref{2.1.2} in $\mathbb{Q}^n$, where the polynomial whose zero set bisects finitely many disjoint, finite sets of points in $\mathbb{Q}^n$ belongs to $\mathbb{Q}[x_1,...,x_n]$.

Of course, the above do not necessarily mean that an analogue of the Guth-Katz polynomial decomposition theorem \cite[Corollary 4.4]{Guth_Katz_2010} does not hold in $\mathbb{Q}^n$. However, and despite any advantages that $\mathbb{Q}$ may enjoy due to its density in $\R$, we can still not establish a meaningful analogue of the Guth-Katz decomposition theorem \cite[Corollary 4.4]{Guth_Katz_2010} in $\mathbb{Q}^n$, without involving the real closed field $\R$ in our analysis. Indeed, the intermediate value theorem does not hold in $\mathbb{Q}$, which means that, even if, for a finite set of points $\mathfrak{G}$ in $\mathbb{Q}^n$ and some $d>1$, there exists a polynomial $p \in \mathbb{Q}[x_1,...,x_n]$, of degree $\leq d$, whose zero set decomposes $\mathbb{Q}^n$ in $\lesssim d^n$ cells, each containing $\lesssim S/d^n$ points of $\mathfrak{G}$, we would still not know if a line in $\mathbb{Q}^n$ that intersects the interiors of two of the cells in $\mathbb{Q}^n$ intersects the zero set of $p$ in $\mathbb{Q}^n$ as well; we only know that such an intersection exists in $\R^n$.

Therefore, working in the real closed field $\R$ when dealing with extensions of our results in the ordered field $\mathbb{Q}$ seems natural.

$\;\;\;\;\;\;\;\;\;\;\;\;\;\;\;\;\;\;\;\;\;\;\;\;\;\;\;\;\;\;\;\;\;\;\;\;\;\;\;\;\;\;\;\;\;\;\;\;\;\;\;\;\;\;\;\;\;\;\;\;\;\;\;\;\;\;\;\;\;\;\;\;\;\;\;\;\;\;\;\;\;\;\;\;\;\;\;\;\;\;\;\;\;\;\;\;\;\;\;\;\;\;\;\;\;\;\;\;\;\;\;\;\;\;\;\;\;\;\;\;\;\;\;\;\;\;\;\;\;\;\;\;\;\blacksquare$
 
All the above suggest that an immediate application of our methods in higher dimensions, as well as in different -- especially finite -- field settings,  seems unlikely. It is obvious in general that, unfortunately, the results arising from the algebraic techniques we are using in $\R^3$, albeit the best possible, happen to be mainly numerical, failing to give us an intuitive view of the combinatorial nature of our problems, something that could promise better results in higher dimensions and different field settings. This chapter is therefore aiming to investigate the joints problem in those situations. We start with Theorem \ref{carberyjoints} that follows, which gives an upper bound on the number of joints formed by a collection of lines in an arbitrary field setting and arbitrary dimension, we then continue with a lemma regarding the combinatorial nature of the joints problem, and we finally combine the two, obtaining an upper bound on the number of joints counted with multiplicities in $\mathbb{F}^n$, where $\mathbb{F}$ is any field and $n \geq 2$.

More particularly, Anthony Carbery observed that Quilodr\'an's argument for the solution of the joints problem without multiplicities in $\R^n$ (see \cite{MR2594983}) could be applied, to a large extent, in $\mathbb{F}^n$ as well, where $\mathbb{F}$ is any field. We eventually managed to adapt the argument in an arbitrary field setting, to obtain the following.

\begin{theorem}\label{carberyjoints} \emph{\textbf{(Carbery, Iliopoulou)}} Let $\mathbb{F}$ be any field and $n\geq 2$. Let $\mathfrak{L}$ be a finite collection of $L$ lines in $\mathbb{F}^n$, and $J$ the set of joints formed by $\mathfrak{L}$. Then,
\begin{displaymath} |J| \lesssim_n L^{\frac{n}{n-1}}. \end{displaymath}
\end{theorem}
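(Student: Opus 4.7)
The plan is to adapt Quilodr\'an's polynomial-method proof of the real joints bound (\cite{MR2594983}) to an arbitrary field. Three ingredients of that proof are purely algebraic and transfer verbatim to any field $\mathbb{F}$: (i) the dimension count $\dim_{\mathbb{F}}\{p\in\mathbb{F}[x_1,\ldots,x_n]:\deg p\leq d\}=\binom{d+n}{n}$ produces, for any finite $J\subset\mathbb{F}^n$, a non-zero polynomial of degree $\lesssim_n |J|^{1/n}$ vanishing on $J$; (ii) the restriction of a polynomial of degree $\leq d$ to a parametrised line $t\mapsto x+tv$ is a univariate polynomial of degree $\leq d$ in $\mathbb{F}[t]$, hence vanishes identically on that line as soon as the line contains more than $d$ points of $J$; and (iii) the formal Taylor expansion $p(x+tv)=p(x)+t\,(v\cdot\nabla p(x))+O(t^2)$ is an identity in $\mathbb{F}[t]$, so that $v\cdot\nabla p(x)=0$ whenever $p$ vanishes on the entire line through $x$ with direction $v$.

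I would then argue by induction on $L$, showing $|J|\leq K L^{n/(n-1)}$ for a sufficiently large constant $K=K(n)$. Given $L$ lines forming a set $J$ of joints, the goal is to exhibit a line $\ell^{*}\in\mathfrak{L}$ with $|J\cap\ell^{*}|\leq A_n |J|^{1/n}$ for an explicit $A_n$; removing $\ell^{*}$ leaves $L-1$ lines carrying at least $|J|-A_n|J|^{1/n}$ joints, and the inductive bound together with the convexity estimate $L^{n/(n-1)}-(L-1)^{n/(n-1)}\geq c_n L^{1/(n-1)}$ closes the induction once $K^{(n-1)/n}\geq A_n/c_n$. The sparse line is produced by polynomial contradiction: choose $p$ of minimal degree $d$ vanishing on $J$, so that $d\leq A_n|J|^{1/n}$ by (i); if every $\ell\in\mathfrak{L}$ carried more than $d$ joints, then (ii) would force $p$ to vanish on every line of $\mathfrak{L}$, and (iii) applied to the $n$ linearly independent defining directions at each joint would yield $\nabla p(x)=0$ at every $x\in J$, making each partial $\partial_j p$ a polynomial of degree strictly less than $d$ vanishing on $J$, which by minimality of $d$ must be the zero polynomial.

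The main obstacle is that forcing $\partial_j p\equiv 0$ for all $j$ only contradicts $\deg p\geq 1$ in characteristic zero: in positive characteristic $q$ a non-constant polynomial such as $p=x_1^q$ satisfies $\partial_j p\equiv 0$ for every $j$. To handle this uniformly I would run the whole argument over the perfect closure $\mathbb{F}^{\mathrm{perf}}$ (equivalently $\overline{\mathbb{F}}$); ingredients (i)--(iii) are stable under base change, the lines of $\mathfrak{L}$ and the set $J$ embed naturally in the larger ambient space, and the quantity $|J|$ under consideration is unchanged. Over a perfect field, $\partial_j p\equiv 0$ for all $j$ forces $p\in\mathbb{F}^{\mathrm{perf}}[x_1^q,\ldots,x_n^q]$, and surjectivity of the Frobenius on $\mathbb{F}^{\mathrm{perf}}$ then yields $r\in\mathbb{F}^{\mathrm{perf}}[x_1,\ldots,x_n]$ with $p=r^q$, of strictly smaller degree and with the same zero locus; such an $r$ vanishes on $J$ and contradicts the minimality of $\deg p$, closing the argument uniformly in the characteristic. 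This descent is the only step where a genuine upgrade over Quilodr\'an's real-variable argument is required.
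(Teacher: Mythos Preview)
Your proposal is correct and follows essentially the same route as the paper's proof: both pass to a perfect extension of $\mathbb{F}$ (you use $\mathbb{F}^{\mathrm{perf}}$, the paper uses $\overline{\mathbb{F}}$), run Quilodr\'an's minimal-degree polynomial argument, and handle positive characteristic by observing that $\nabla p\equiv 0$ forces $p$ to be a $p$-th power over a perfect field, contradicting minimality. The only cosmetic difference is that the paper phrases the endgame as iterative line removal yielding $|J|\lesssim_n L\cdot|J|^{1/n}$ directly, whereas you set it up as an induction on $L$; these are equivalent bookkeeping for the same estimate.
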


The proof of Theorem \ref{carberyjoints} requires Lemma \ref{dvir} that follows, which is essentially Dvir's basic argument for the solution of the Kakeya problem in finite fields in \cite{MR2525780}, and whose analogous formulation in $\R^n$ is used by Quilodr\'an in \cite{MR2594983} for the solution of the joints problem in $\R^n$ (for self-containment, we include Quilodr\'an's solution of the joints problem in $\R^n$, in Theorem \ref{quilodran}). 

Note that, from now on, when we refer to a polynomial as non-zero we mean that it has a non-zero coefficient.

\textbf{Remark.} We would like to emphasise here that, while, in $\R^n$, a polynomial that has a non-zero coefficient does not vanish on the whole of $\R^n$, the same does not necessarily hold in an arbitrary field setting. For example, if $\mathbb{F}$ is a finite field, the univariate polynomial $x^{|\mathbb{F}|}- x$ in $\mathbb{F}[x]$ vanishes on the whole of $\mathbb{F}$.

In the case where $\mathbb{F}$ is a finite field, we can have some control on the number of roots of a polynomial in $\mathbb{F}[x_1,...,x_n]$ thanks to the Schwartz-Zippel lemma that follows (see \cite{Sch80} or\cite{Zip79} for a proof).

\begin{lemma}\emph{\textbf{(Schwartz, Zippel)}} Let $\mathbb{F}$ be a finite field, and $f$ a non-zero polynomial in $\mathbb{F}[x_1,...,x_n]$. Then,
\begin{displaymath}|\{x \in \mathbb{F}^n:f(x)=0\}|\leq \deg p \cdot |\mathbb{F}|^{n-1}.
\end{displaymath}

\end{lemma}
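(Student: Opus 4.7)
The plan is to prove the statement by induction on the number of variables $n$. For the base case $n=1$, a non-zero univariate polynomial over any field has at most as many roots as its degree, which already gives the claimed bound $\deg f \cdot |\mathbb{F}|^{0} = \deg f$.

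For the inductive step, assuming the bound for polynomials in $n-1$ variables, I would single out one coordinate, say $x_1$, and expand
\begin{displaymath}
f(x_1,\ldots,x_n) \;=\; \sum_{i=0}^{k} g_i(x_2,\ldots,x_n)\,x_1^{i},
\end{displaymath}
where $g_i \in \mathbb{F}[x_2,\ldots,x_n]$ and $k$ is chosen to be the largest index for which $g_k$ is not the zero polynomial. Note that $k \leq \deg f$ and $\deg g_k \leq \deg f - k$.

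Next, I would partition $\mathbb{F}^{n-1}$ according to whether $g_k$ vanishes at $(a_2,\ldots,a_n)$ and count the zeros of $f$ on each slice $\{(x_1,a_2,\ldots,a_n):x_1\in\mathbb{F}\}$ separately. On a slice for which $g_k(a_2,\ldots,a_n)\neq 0$, the restriction of $f$ is a non-zero univariate polynomial in $x_1$ of degree exactly $k$, hence contributes at most $k$ roots; using the crude bound $|\mathbb{F}|^{n-1}$ for the number of such slices, these contribute at most $k\cdot|\mathbb{F}|^{n-1}$ zeros in total. On a slice for which $g_k(a_2,\ldots,a_n)=0$, the restriction of $f$ may vanish identically on $\mathbb{F}$, so we can only afford the trivial bound of $|\mathbb{F}|$ zeros per slice; however, the inductive hypothesis applied to the non-zero polynomial $g_k\in\mathbb{F}[x_2,\ldots,x_n]$ bounds the number of such slices by $(\deg f - k)\cdot|\mathbb{F}|^{n-2}$, contributing at most $(\deg f - k)\cdot|\mathbb{F}|^{n-1}$ zeros. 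Adding the two contributions yields $\deg f\cdot|\mathbb{F}|^{n-1}$, closing the induction.

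The argument is essentially routine and I do not anticipate any serious obstacle; the only delicate point is the choice to peel off the leading coefficient polynomial $g_k$ in one chosen variable, so that the gain of $k$ on the non-vanishing slices and the inductive gain of $\deg f - k$ on the vanishing slices combine by telescoping into a clean bound independent of $k$.
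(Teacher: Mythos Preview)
Your proof is correct and is precisely the classical induction-on-$n$ argument. The paper itself does not give a proof of this lemma; it merely states the result and refers the reader to \cite{Sch80} and \cite{Zip79}, where exactly the argument you have written appears.
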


In particular, this lemma implies that, if $\mathbb{F}$ is a finite field, then a non-zero polynomial in $\mathbb{F}[x_1,...,x_n]$ of degree at most $|\mathbb{F}|-1$ does not vanish on the whole of $\mathbb{F}^n$.

In any case however, in whatever follows, whenever we refer to a polynomial as non-zero we do not simultaneously imply that it does not vanish on the whole of our space.

$\;\;\;\;\;\;\;\;\;\;\;\;\;\;\;\;\;\;\;\;\;\;\;\;\;\;\;\;\;\;\;\;\;\;\;\;\;\;\;\;\;\;\;\;\;\;\;\;\;\;\;\;\;\;\;\;\;\;\;\;\;\;\;\;\;\;\;\;\;\;\;\;\;\;\;\;\;\;\;\;\;\;\;\;\;\;\;\;\;\;\;\;\;\;\;\;\;\;\;\;\;\;\;\;\;\;\;\;\;\;\;\;\;\;\;\;\;\;\;\;\;\;\;\;\;\;\;\;\;\;\;\;\;\blacksquare$

\begin{lemma} \label{dvir} Let $\mathbb{F}$ be any field. For any set $\mathcal{P}$ of $m$ points in $\mathbb{F}^n$, there exists a non-zero polynomial in $\mathbb{F}[x_1,...,x_n]$, of degree $\lesssim_n m^{1/n}$, which vanishes at each point of $\mathcal{P}$.
\end{lemma}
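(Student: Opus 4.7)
The plan is to prove the lemma by a pure dimension-counting argument in linear algebra, which has the advantage of being entirely insensitive to the nature of the field $\mathbb{F}$ (in particular, it does not care whether $\mathbb{F}$ is finite, infinite, ordered, or of positive characteristic, and we never need to worry about a non-zero polynomial vanishing identically on $\mathbb{F}^n$, as we only need the polynomial to have a non-zero coefficient).

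First, I would fix a positive integer $d$ to be chosen later and consider the $\mathbb{F}$-vector space $V_d$ of polynomials in $\mathbb{F}[x_1,\ldots,x_n]$ of degree at most $d$. Since the monomials $x_1^{a_1}\cdots x_n^{a_n}$ with $a_1+\cdots+a_n\leq d$ form an $\mathbb{F}$-basis of $V_d$, we have $\dim_{\mathbb{F}} V_d = \binom{d+n}{n}$. Next, I would introduce the evaluation map
\[
\varphi : V_d \longrightarrow \mathbb{F}^m, \qquad p \longmapsto \bigl(p(x)\bigr)_{x \in \mathcal{P}},
\]
which is $\mathbb{F}$-linear into an $m$-dimensional target. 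Whenever $\dim_{\mathbb{F}} V_d > m$, the rank--nullity theorem forces $\ker \varphi$ to be non-trivial, producing a polynomial of $V_d$ with at least one non-zero coefficient that vanishes at every point of $\mathcal{P}$.

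It then remains to choose $d$ as small as possible subject to $\binom{d+n}{n}>m$. Using the elementary bound $\binom{d+n}{n}\geq d^n/n!$, it suffices to take $d:=\lceil (n!\,m)^{1/n}\rceil$, which plainly satisfies $d \lesssim_n m^{1/n}$ with an implicit constant depending only on $n$. The resulting non-zero element of $\ker \varphi$ is the polynomial required by the lemma.

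Since every step reduces to linear algebra over $\mathbb{F}$, there is essentially no obstacle to overcome; the only subtle point is the conceptual one already flagged in the preceding remark, namely that the polynomial we produce is guaranteed to have a non-zero coefficient but could, over a finite field, still vanish on the entirety of $\mathbb{F}^n$. This is harmless here because the lemma only asserts the existence of a non-zero polynomial vanishing on $\mathcal{P}$, and is in fact exactly the form in which the statement is needed for the subsequent application to joints in $\mathbb{F}^n$ via the Quilodr\'an-type argument sketched above.
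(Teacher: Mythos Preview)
Your proof is correct and is essentially the same as the paper's: both count dimensions, observing that the $\binom{d+n}{n}$-dimensional space of polynomials of degree at most $d$ maps linearly into $\mathbb{F}^m$ under evaluation, and both use the bound $\binom{d+n}{n}>d^n/n!$ to choose $d\sim_n m^{1/n}$. The only difference is cosmetic---you phrase it via the evaluation map and rank--nullity, while the paper speaks of a homogeneous linear system with more unknowns than equations---and your remark about finite fields is apt and matches the paper's own discussion.
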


\begin{proof} Given a set $\mathcal{P}$ of points in $\mathbb{F}^n$, we want to find a polynomial 
\begin{displaymath} f(x_1,...,x_n)=\sum_{\{(a_1,...,a_n) \in \N^n:\;a_1+...+a_n\leq d\}}c_{a_1,...,a_n}x_1^{a_1}\cdot ... \cdot x_n^{a_n},
\end{displaymath}
for some $d\in \N$, which vanishes on $\mathcal{P}$.

We notice that each equation of the form $f(\xi_1,...,\xi_n)=0$, for some $(\xi_1,...,\xi_n) \in \mathcal{P}$, is a linear equation with unknowns the coefficients $c_{a_1,...,a_n}$, for $(a_1,...,a_n) \in \N^n$ such that $a_1+...+a_n\leq d$; therefore, the fact that we want $f$ to vanish at each point of $\mathcal{P}$ gives rise to a system of $|\mathcal{P}|=m$ linear equations, with $\binom{d+n}{n}$ unknowns. And if the unknowns are more than the equations, i.e. $\binom{d+n}{n} >m $, then the system has a non-trivial solution, which means that there exists a polynomial $f \in \mathbb{F}[x_1,...,x_n]$, of degree at most $d$, which vanishes at each point of $\mathcal{P}$ and is non-zero.

Now, $\binom{d+n}{n}> \frac{1}{n!} d^n$, so, for the above to hold, it suffices to have $\frac{1}{n!} d^n \geq m$, or, equivalently,  $d \geq (n!)^{1/n}  m^{1/n}$. Therefore, by setting $d =\left\lfloor{(n!)^{1/n}  m^{1/n}}\right\rfloor+1 \lesssim_n m^{1/n}$, it follows that there exists a non-zero polynomial in $\mathbb{F}[x_1,...,x_n]$, of degree $\lesssim _n m^{1/n}$, which vanishes at each point of $\mathcal{P}$.

\end{proof}

Now, let us present the proof of Quilodr\'an for Theorem \ref{carberyjoints} when $\mathbb{F}=\R$.

\begin{theorem}\emph{\textbf{(Quilodr\'an, \cite{MR2594983})}}\label{quilodran} Let $\mathfrak{L}$ be a finite collection of $L$ lines in $\R^n$, and $J$ the set of joints formed by $\mathfrak{L}$. Then,
\begin{displaymath} |J| \lesssim_n L^{\frac{n}{n-1}}. \end{displaymath}
\end{theorem}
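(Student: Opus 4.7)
The plan is to prove the statement by contradiction, using a pruning step followed by Lemma \ref{dvir} and a minimal-degree polynomial argument. Suppose that $|J| > C_n L^{n/(n-1)}$ for a constant $C_n$ to be fixed at the end. First I would perform a pruning step on $\mathfrak{L}$: iteratively remove any line incident to fewer than $\tau := |J|/(2L)$ joints of the \emph{current} joint set (the joint set is updated after each removal, since removing a line can destroy joints whose $n$ spanning directions are no longer present). Each removal destroys at most $\tau$ joints, so after at most $L$ removals at most $L\tau = |J|/2$ joints are lost. The surviving subcollection $\mathfrak{L}' \subseteq \mathfrak{L}$ thus forms a set $J'$ of joints with $|J'| \geq |J|/2$, and every line of $\mathfrak{L}'$ meets at least $\tau$ points of $J'$.

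Next I would apply Lemma \ref{dvir} to produce a non-zero polynomial $p \in \R[x_1,\ldots,x_n]$ vanishing on $J'$ with $\deg p \leq c_n |J'|^{1/n} \leq c_n |J|^{1/n}$, and I would take $p$ to have \emph{minimal} degree among non-zero polynomials vanishing on $J'$. Choosing $C_n$ sufficiently large guarantees
\[
\tau \;=\; \frac{|J|}{2L} \;>\; c_n |J|^{1/n} \;\geq\; \deg p,
\]
so each line $\ell \in \mathfrak{L}'$ contains strictly more zeros of the univariate restriction $p|_\ell$ than the degree of $p|_\ell$. Therefore $p$ vanishes identically on every line of $\mathfrak{L}'$.

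Finally, at each joint $x \in J'$ there exist $n$ lines of $\mathfrak{L}'$ through $x$ whose directions $v_1,\ldots,v_n$ span $\R^n$, and $p$ vanishes on each of them. Differentiating along these $n$ linearly independent directions forces the directional derivatives $v_i \cdot \nabla p(x)$ to vanish for $i=1,\ldots,n$, hence $\nabla p(x) = 0$ for every $x \in J'$. In particular, each partial derivative $\partial_i p$ is a polynomial of degree strictly less than $\deg p$ vanishing on $J'$. By the minimality of $\deg p$, every $\partial_i p$ must be the zero polynomial, so $p$ is a non-zero constant; but then $p$ cannot vanish on the non-empty set $J'$, a contradiction. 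This yields $|J| \leq C_n L^{n/(n-1)}$, proving the theorem.

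The one point requiring care is the pruning accounting: one must justify that when a line $\ell$ carrying at most $\tau$ current joints is removed, it is precisely (a subset of) those at most $\tau$ incident joints that can be destroyed, which follows because a point $x$ only ceases to be a joint when one of the $n$ lines through $x$ witnessing the spanning condition is removed, in which case $x$ lay on the removed line. Everything else is bookkeeping: fixing $C_n := (2c_n)^{n/(n-1)}$ delivers the required implication $\tau > \deg p$ whenever $|J| > C_n L^{n/(n-1)}$.
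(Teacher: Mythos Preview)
Your proof is correct and rests on the same polynomial argument as the paper's: take a minimal-degree polynomial vanishing on the joints, force it to vanish on every line once each line carries more than $\deg p$ joints, deduce $\nabla p=0$ at every joint from the $n$ spanning directions, and contradict minimality. The only difference is organizational: the paper shows that \emph{some} line must carry $\lesssim_n |J|^{1/n}$ joints, removes it, and iterates to obtain $|J|\lesssim_n L\cdot|J|^{1/n}$, whereas you prune upfront to a subconfiguration in which \emph{every} surviving line is rich and then run the polynomial argument once for a direct contradiction. These two wrappers are standard and interchangeable; your version has the mild advantage of applying the polynomial method a single time rather than inside an induction, at the cost of the extra pruning bookkeeping you correctly flagged.
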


\begin{proof} Let $\mathfrak{L}$ be a finite collection of $L$ lines in $\R^n$, and $J$ the set of joints formed by $\mathfrak{L}$. We will show that there exists a line in $\mathfrak{L}$, containing $\lesssim _n |J|^{1/n}$ joints of $J$. 

Indeed, by Lemma \ref{dvir}, there exists a non-zero polynomial $f \in \R[x_1,...,x_n]$, of degree $d \lesssim_n |J|^{1/n}$, that vanishes at each point of $J$; in fact, we consider $f$ to be a polynomial of minimal degree with that property. Let us assume that each line in $\mathfrak{L}$ contains more than $d$ joints of $J$. Then, there exist $b$, $v \in \mathbb{F}^n$, such that $l=\{v+bt:t \in \mathbb{F}\}$. Now, the polynomial $f|_{l}:=f(v+bt) \in \mathbb{F}[t]$ is a polynomial in one variable with coefficients in the field $\mathbb{F}$, of degree at most $\deg f$, which has more than $\deg f$ roots (since $f$ vanishes on $l\cap J$), i.e. more roots than its degree. Therefore, $f|_{l}$ is the zero polynomial, for all $l \in \mathfrak{L}$. Now, let $x_0 \in J$. We know that there exist at least $n$ lines of $\mathfrak{L}$, $l_1$, ..., $l_n$, passing through $x_0$, whose directions $b_1$, ..., $b_n$, respectively, span $\R^n$. The polynomial $(f|_{l_i})'$ is the zero polynomial for all $i=1,...,n$, so $\nabla{f}(x_0) \cdot b_i=0$, for all $i=1,...,n$. Now, for each $i=1,...,n$, $\nabla{f}(x_0) \cdot b_i=0$ is a linear equation with unknowns the entries of the vector $\nabla{f}(x_0)$. So, since the set $\{b_1,...,b_n\}$ is linearly independent, it follows that $\nabla{f}(x_0)=0$. However, $x_0$ was an arbitrarily chosen joint from the set $J$. Therefore, $\nabla{f}$ vanishes at each point of $J$, which means that the polynomial $\frac{\partial{f}}{\partial{x_i}}$ vanishes at each point of $J$, for all $i=1,...,n$. However, for all $i=1,...,n$, $\frac{\partial{f}}{\partial{x_i}}$ is a polynomial of degree strictly smaller than $\deg f$, it therefore is the zero polynomial. Consequently,
\begin{displaymath} \nabla{f}=0. \end{displaymath} 
So, $f$ is a constant polynomial. However, $f$ vanishes on $J$, thus $f$ is the zero polynomial, which is a contradiction.

This means that our initial assumption that each line in $\mathfrak{L}$ contains more than $\deg f$ joints of $J$ was wrong, and thus there exists a line $l$ in $\mathfrak{L}$, containing fewer than $\deg f$, i.e. $\lesssim _n |J|^{1/n}$, joints of $J$.

We now take the line $l$ and the joints it contains out of our collection of lines and joints, ending up with a smaller collection $\mathfrak{L}'$ of lines, as well as a subset of $J$, which is contained in the set $J'$ of joints formed by $\mathfrak{L}'$. From what we have already shown, there exists some line $l' \in \mathfrak{L}'$, containing $\lesssim_n|J'|^{1/n} \lesssim_n |J|^{1/n}$ joints of $J'$. We take the line $l'$ and the points of $J \cap J'$ it contains out of our collection of lines and joints, and we continue in the same way, until we eliminate all the joints of our original collection $J$, something which is achieved in at most $L$ steps. In each step, we take $\lesssim_n |J|^{1/n}$ joints out of our original collection $J$, and thus
\begin{displaymath}|J| \lesssim_n L \cdot |J|^{1/n},
\end{displaymath}
which gives
\begin{displaymath}|J| \lesssim_n L^{\frac{n}{n-1}}
\end{displaymath}
after rearranging.
\end{proof}

We see that, in his proof of the joints problem in \cite{MR2594983}, Quilodr\'an used the fact that, if a polynomial $p \in \R[x_1,...,x_n]$ vanishes on a line in $\R^n$, and its gradient at a point of the line is a non-zero vector, then that vector is, in fact, perpendicular to the direction of the line. Of course we do not generally have a notion of perpendicularity in an arbitrary field setting, but we will manage to essentially follow Quilodr\'an's argument for the proof of Theorem \ref{carberyjoints}, by introducing a notion of derivative in the general field situation. This will be the Hasse derivative, as it appears in \cite{MR2648400}.

\begin{definition} Let $\mathbb{F}$ be a field and $f \in \mathbb{F}[x_1,...,x_n]$. For all $i \in \N^n$, the \emph{$i$-th Hasse derivative} $f^{(i)}$ of $f$ is defined as the element of $\mathbb{F}[x_1,...,x_n]$ that is the coefficient of $z^i$ in the expression of $f(x+z)$ as a polynomial in $z$.
\end{definition}

\textbf{Remark.} We would like to clarify here the difference between the Hasse derivative and the usual derivative. Note that from the definition of the Hasse derivative it follows that, for $f \in \mathbb{F}[x_1,...,x_n]$ and any $a \in \mathbb{F}^n$,
\begin{displaymath}f(x)=\sum_{i \in \N^n} f^{(i)}(a)(x-a)^i.
\end{displaymath}
On the other hand, for $f \in \R[x_1,...,x_n]$ and any $a \in \R^n$, it is known that
\begin{displaymath}f(x)=\sum_{i=(i_1,...,i_n) \in \N^n} \frac{1}{i_1! \cdots i_n!}\cdot\frac{\partial^{i_1+...+i_n} f}{\partial x_1^{i_1} \cdots \partial x_n^{i_n}}(a)(x-a)^i.
\end{displaymath}
Therefore, if $f \in \R[x_1,...,x_n]$, 
\begin{displaymath}f^{(i)}= \frac{1}{i_1! \cdots i_n!}\cdot\frac{\partial^{i_1+...+i_n} f}{\partial x_1^{i_1} \cdots \partial x_n^{i_n}}.
\end{displaymath}

Consequently, one could claim that we would acquire a more natural notion of derivative if we defined as the $i$-th derivative of $f \in \mathbb{F}[x_1,...,x_n]$, for any $i=(i_1,...,i_n) \in \N^n$, as $(i_1! \cdots i_n!) \cdot f^{(i)}\;\Big(:=\sum_{j=1}^{(i_1! \cdots i_n!)} f^{(i)}\Big)$. However, in a field of non-zero characteristic, this could lead to $f$ having zero $i$-th derivative, for some $i \in \N^n$ such that $f^{(i)}$ is a non-zero polynomial. Therefore, the Hasse derivative of a polynomial in $\mathbb{F}[x_1,...,x_n]$, where $\mathbb{F}$ is a field of non-zero characteristic, provides more information about the polynomial, and is thus more appropriate for the study of polynomials in that case.

Let us mention, however, that in our study we will be interested in $i=(i_1,...,i_n) \in \N^n$ such that $i_1+...+i_n=1$, in which case the Hasse derivative and the other derivative described above coincide.

$\;\;\;\;\;\;\;\;\;\;\;\;\;\;\;\;\;\;\;\;\;\;\;\;\;\;\;\;\;\;\;\;\;\;\;\;\;\;\;\;\;\;\;\;\;\;\;\;\;\;\;\;\;\;\;\;\;\;\;\;\;\;\;\;\;\;\;\;\;\;\;\;\;\;\;\;\;\;\;\;\;\;\;\;\;\;\;\;\;\;\;\;\;\;\;\;\;\;\;\;\;\;\;\;\;\;\;\;\;\;\;\;\;\;\;\;\;\;\;\;\;\;\;\;\;\;\;\;\;\;\;\;\;\blacksquare$

It is easy to see the following:

(i) If $f,g \in \mathbb{F}[x_1,...,x_n]$, then $(f+g)^{(i)}=f^{(i)}+g^{(i)}$, for all $i \in \N^n$.

(ii) Let $m(x_1,...,x_n)=c_{a_1,...,a_n}x_1^{a_1}\cdots x_n^{a_n}$ be a monomial in $\mathbb{F}[x_1,...,x_n]$. Also, for all $i=1,...,n$, suppose that $e_i$ is the vector $(0,...,0,1,0,...,0)$ in $\N^n$, with $1$ in the $i$-th coordinate. Then, \begin{displaymath}m^{(e_i)}(x_1,...,x_n)=a_i \cdot c_{a_1,...,a_n}x_1^{a_1}\cdots  x_i^{a_i-1} \cdots x_n^{a_n}\end{displaymath}
\begin{displaymath}\Bigg(:=\bigg(\sum_{k=1}^{a_i}c_{a_1,...,a_n}\bigg)x_1^{a_1}\cdots  x_i^{a_i-1} \cdots x_n^{a_i}\Bigg), \end{displaymath} for $a_i \geq 1$, and
\begin{displaymath}m^{(e_i)}(x_1,...,x_n)=0 \end{displaymath}
for $a_i=0$. 

In fact, from now on, we will write \begin{displaymath}m^{(e_i)}(x_1,...,x_n)=a_i \cdot c_{a_1,...,a_n}x_1^{a_1}\cdots  x_i^{a_i-1} \cdots x_n^{a_n}, \end{displaymath} accepting that this is the zero polynomial for $a_i=0$. 

Note that, for all $i=1,...,n$, $$m^{(e_i)}(x_1,...,x_n)=0\text{ if and only if }a_i \cdot c_{a_1,...,a_n}\;\bigg(=\sum_{k=1}^{a_i} c_{a_1,...,a_n}\bigg)=0. $$
It follows from (i) and (ii) that, for the polynomial \begin{displaymath}f(x_1,...,x_n)=\sum_{\{(a_1,...,a_n) \in \N^n:\;a_1+...+a_n\leq d\}}c_{a_1,...,a_n}x_1^{a_1}\cdots x_n^{a_n} \in \mathbb{F}[x_1,...,x_n],\end{displaymath} 
\begin{displaymath}f^{(e_i)}(x_1,...,x_n)=\sum_{\{(a_1,...,a_n) \in \N^n:\;a_1+...+a_n\leq d\}}a_i \cdot c_{a_1,...,a_n}x_1^{a_1}\cdots  x_i^{a_i-1} \cdots x_n^{a_n}, \end{displaymath}
for all $i=1,...,n$. In other words, the $(e_i)$-th Hasse derivative of a polynomial in $\mathbb{F}[x_1,...,x_n]$, for $\mathbb{F}$ an arbitrary field, is the analogue, in the general field situation, of the partial derivative of a polynomial in $\R[x_1,...,x_n]$ with respect to the variable $x_i$. This allows us to define the gradient of a polynomial in $\mathbb{F}[x_1,...,x_n]$ as follows.

\begin{definition} Let $\mathbb{F}$ be a field and $f \in \mathbb{F}[x_1,...,x_n]$. Then, the gradient of $f$ is the element $\nabla{f}$ of $(\mathbb{F}[x_1,...,x_n])^n$, defined as 
\begin{displaymath}\nabla{f}=\Big(f^{(e_1)},...,f^{(e_n)}\Big),
\end{displaymath}
where, for all $i=1,...,n$, $e_i$ is the vector $(0,...,0,1,0,...,0)$ in $\N^n$, with $1$ in the $i$-th coordinate.
\end{definition}

We would now like to be able to derive information about a polynomial $f \in \mathbb{F}[x_1,...,x_n]$ from its gradient. It would be nice to know, for example, that two polynomials in one variable with the same 1st Hasse derivative differ by a constant. However, that is not true in general. For example, the 1st Hasse derivative of the polynomial $x^{p}$ in $\mathbb{F}[x]$, where $\mathbb{F}$ is a field of characteristic $p$, is equal to $\big(\sum_{k=1}^{p}1\big) x^{p-1}=0$, i.e. it is the zero polynomial. On the other hand, the 1st Hasse derivative of the zero polynomial is also the zero polynomial, but $x^p$ and 0 do not differ by a constant as polynomials.

However, the following still holds.

\begin{lemma} \label{carbery3}Let $f \in \mathbb{F}[x_1,...,x_n]$, where $\mathbb{F}$ is a field. Suppose that $\nabla{f}=0$. Then:

\emph{(i)} If the characteristic of $\mathbb{F}$ is zero, then $f$ is a constant polynomial.

\emph{(ii)} If the characteristic of $\mathbb{F}$ is $p$, for some (prime) $p\neq 0$, then $f$ is of the form
\begin{displaymath} f(x_1,...,x_n)=\sum_{\{(a_1,...,a_n)\in \N^n:\;a_1+...+a_n\leq d\text{ and }p \mid a_i,\; \forall i=1,...,n\}} c_{a_1,...,a_n} x_1^{a_1}\cdots x_n^{a_n}, \end{displaymath} for some $d \in \N$.

\end{lemma}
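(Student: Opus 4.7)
The plan is to extract the conclusion directly from the monomial expansion of the gradient. First I would write $f(x_1,\ldots,x_n)=\sum_{a} c_a x_1^{a_1}\cdots x_n^{a_n}$, where $a=(a_1,\ldots,a_n)$ ranges over multi-indices in $\N^n$ with $a_1+\cdots+a_n\leq\deg f$, and then compute each component of $\nabla f$ using the explicit formula already recorded in observation (ii) immediately preceding this lemma, namely
\begin{displaymath}
f^{(e_i)}=\sum_{\{a\in\N^n:\,a_i\geq 1\}}(a_i\cdot c_a)\,x_1^{a_1}\cdots x_i^{a_i-1}\cdots x_n^{a_n},
\end{displaymath}
where $a_i\cdot c_a$ denotes the sum of $a_i$ copies of $c_a$ taken in $\mathbb{F}$. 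The key observation is that the reindexing $a\mapsto (a_1,\ldots,a_i-1,\ldots,a_n)$ is injective on $\{a:a_i\geq 1\}$, so distinct multi-indices produce distinct monomials on the right-hand side. Consequently, the hypothesis $f^{(e_i)}=0$ in $\mathbb{F}[x_1,\ldots,x_n]$ forces $a_i\cdot c_a=0$ in $\mathbb{F}$ for every $a$ with $a_i\geq 1$.

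Combining these identities across $i=1,\ldots,n$ gives the full constraint on the coefficients of $f$: for every multi-index $a$ and every $i\in\{1,\ldots,n\}$, either $a_i=0$ as an integer or $a_i\cdot c_a=0$ in $\mathbb{F}$. The two parts of the lemma then fall out by case analysis on the characteristic. In characteristic zero, the canonical ring map $\mathbb{Z}\to\mathbb{F}$ is injective, so $a_i\cdot c_a=0$ with $a_i\geq 1$ forces $c_a=0$; the only surviving multi-index is $a=(0,\ldots,0)$, giving (i). In characteristic $p$, the identity $a_i\cdot c_a=0$ with $c_a\neq 0$ forces $p\mid a_i$, so every monomial appearing in $f$ with nonzero coefficient must satisfy $p\mid a_i$ for each $i=1,\ldots,n$, which is exactly the form asserted in (ii).

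There is no real obstacle here; the lemma is essentially a one-line consequence of the Hasse derivative formula. The only point deserving a moment of care is the aforementioned injectivity of the reindexing used to pass from ``$f^{(e_i)}$ is the zero polynomial'' to ``$a_i\cdot c_a=0$ for every $a$ with $a_i\geq 1$'' --- without it, one could worry that nonzero coefficients might cancel. Since the original $a$ is recovered from its image by adding $1$ to the $i$-th coordinate, this injectivity is transparent, and the remainder of the argument is bookkeeping in the monomial basis together with the appropriate interpretation of the symbol $a_i\cdot c_a$ inside $\mathbb{F}$.
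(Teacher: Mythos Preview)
Your proposal is correct and follows essentially the same approach as the paper: both expand $f$ in the monomial basis, read off from the explicit formula for $f^{(e_i)}$ that $a_i\cdot c_a=0$ for every multi-index $a$, and then split into the characteristic-zero and characteristic-$p$ cases. The only cosmetic difference is that the paper phrases each part as a proof by contradiction (pick a bad coefficient and derive a contradiction) whereas you argue directly; your explicit remark about injectivity of the reindexing is a point the paper simply takes for granted.
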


\begin{proof} (i) Suppose that $f$ is not a constant polynomial. Then, there exists some $d \geq 1$, $d \in \N$, such that
\begin{displaymath} f(x_1,...,x_n)=\sum_{\{(a_1,...,a_n) \in \N^n:\;a_1+...+a_n\leq d\}} c_{a_1,...,a_n} x_1^{a_1}\cdots x_n^{a_n}, \end{displaymath}where $c_{a_1',...,a_n'} \neq 0$, for some $(a_1',...,a_n')\neq (0,...,0)$ in $\N^n$ such that $a'_1+..+a'_n \leq d$. Let $i \in \{1,...,n\}$ be such that $a_i' \neq 0$. Since $\nabla{f}=0$, it follows that the polynomial \begin{displaymath}f^{(e_i)}(x_1,...,x_n)=\sum_{\{(a_1,...,a_n) \in \N^n:\;a_1+...+a_n\leq d\}}a_i \cdot c_{a_1,...,a_n}x_1^{a_1}\cdots  x_i^{a_i-1} \cdots x_n^{a_n}\end{displaymath} is the zero polynomial, and thus $a_i \cdot c_{a_1,...,a_n}=0$, for all $(a_1,...,a_n)\in \N^n$ such that $a_1+...+a_n \leq d$. In particular,
\begin{displaymath} a_i' \cdot c_{a_1',...,a_n'}=0,
\end{displaymath}
from which we obtain
\begin{displaymath}a_i'=0\text{ or }c_{a_1',...,a_n'}=0,
\end{displaymath}as the characteristic of $\mathbb{F}$ is zero. However, this is a contradiction, and thus $f$ is a constant polynomial.

(ii) We know that
\begin{displaymath} f(x_1,...,x_n)=\sum_{\{(a_1,...,a_n) \in \N^n:\;a_1+...+a_n\leq d\}} c_{a_1,...,a_n} x_1^{a_1}\cdots x_n^{a_n}, \end{displaymath}
for some $d \in \N$.

Let us assume that $c_{a_1',...,a_n'} \neq 0$, for some $(a_1',...,a_n')$ in $\N^n$, such that $a_1+...+a_n\leq d$ and $p \nmid a_i'$ for some $i \in \{1,...,n\}$. Since $\nabla{f}=0$, it follows that the polynomial \begin{displaymath}f^{(e_i)}(x_1,...,x_n)=\sum_{\{(a_1,...,a_n) \in \N^n:\;a_1+...+a_n\leq d\}}a_i \cdot c_{a_1,...,a_n}x_1^{a_1}\cdots  x_i^{a_i-1} \cdots x_n^{a_n}\end{displaymath} is the zero polynomial, and thus $a_i \cdot c_{a_1,...,a_n}=0$, for all $(a_1,...,a_n)\in \N^n$ such that $a_1+...+a_n \leq d$. In particular,
\begin{displaymath} a_i' \cdot c_{a_1',...,a_n'}=0,
\end{displaymath}
from which we obtain
\begin{displaymath}p \mid a_i'\text{ or }c_{a_1',...,a_n'}=0,
\end{displaymath}as the characteristic of $\mathbb{F}$ is $p$. However, this is a contradiction, and thus the statement of the lemma is proved.

\end{proof}

Now, as we have already mentioned, there does not necessarily exist a notion of positivity in an arbitrary field $\mathbb{F}$, that would allow us to define an inner product on $\mathbb{F}^n\times \mathbb{F}^n$. However, for $a=(a_1,...,a_n)$ and $b=(b_1,...,b_n)$ in $\mathbb{F}^n$, we still denote by $a\cdot b$ the element $a_1b_1+...+a_nb_n$ of $\mathbb{F}$, and prove the following.

\begin{lemma}\label{nabla} Let $\mathbb{F}$ be a field and $f$ a polynomial in $\mathbb{F}[x_1,...,x_n]$. Let $l$ be a line in $\mathbb{F}^n$ with direction $b=(b_1,...,b_n) \in \mathbb{F}^n$, i.e. the set $\{v+tb:t \in \mathbb{F}\}$, for some $v=(v_1,...,v_n) \in \mathbb{F}^n$. Then, if $f|_l (t):=f(v+tb)\in \mathbb{F}[t]$ is the restriction of $f$ on $l$, we have that
\begin{displaymath} (f|_l)^{(1)}(t)=b \cdot \nabla{f}(v+tb).
\end{displaymath}
\end{lemma}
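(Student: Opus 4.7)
The plan is to unwind both sides using only the definitions of the Hasse derivative and of the restriction $f|_l$, and to show the identity by extracting the coefficient of $z^1$ in the polynomial $f|_l(t+z) \in \mathbb{F}[t][z]$ in two different ways.

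First I would set $g(t) := f|_l(t) = f(v+tb) \in \mathbb{F}[t]$ and note that by the very definition of the Hasse derivative in one variable applied to $g$, the coefficient of $z^1$ in $g(t+z) \in \mathbb{F}[t][z]$ equals $g^{(1)}(t) = (f|_l)^{(1)}(t)$. Next I would rewrite $g(t+z) = f(v+(t+z)b) = f\bigl((v+tb) + zb\bigr)$, which is a polynomial identity in $\mathbb{F}[t][z]$.

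Now I would apply the $n$-variable Hasse expansion
\begin{displaymath}
f(x+w) = \sum_{i \in \mathbb{N}^n} f^{(i)}(x)\, w^i, \qquad w^i := w_1^{i_1}\cdots w_n^{i_n},
\end{displaymath}
with $x = v+tb \in \mathbb{F}^n$ and $w = zb = (zb_1,\ldots,zb_n)$, this being a legitimate substitution because the expansion is a polynomial identity valid for any assignment of the $w_j$. This yields
\begin{displaymath}
f\bigl((v+tb)+zb\bigr) = \sum_{i \in \mathbb{N}^n} f^{(i)}(v+tb)\, b^i\, z^{i_1+\cdots+i_n}.
\end{displaymath}
Collecting the coefficient of $z^1$ on the right, only the multi-indices $i = e_k$ for $k=1,\ldots,n$ contribute, each producing a term $f^{(e_k)}(v+tb)\,b_k$. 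Summing gives
\begin{displaymath}
\sum_{k=1}^n b_k\, f^{(e_k)}(v+tb) = b\cdot \nabla f(v+tb).
\end{displaymath}
Equating the two expressions for the coefficient of $z^1$ in $g(t+z)$ completes the proof.

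I do not anticipate a genuine obstacle: the only point requiring a moment's care is the justification of substituting $w = zb$ into the $n$-variable Hasse expansion, which is purely formal because $f(x+w) = \sum_i f^{(i)}(x) w^i$ holds as an identity in $\mathbb{F}[x_1,\ldots,x_n][w_1,\ldots,w_n]$, and therefore remains valid after any $\mathbb{F}$-algebra homomorphism into $\mathbb{F}[t][z]$ sending $x_j \mapsto v_j + tb_j$ and $w_j \mapsto zb_j$.
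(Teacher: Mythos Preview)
Your proof is correct and follows the same core idea as the paper: both compute $(f|_l)^{(1)}(t)$ by extracting the coefficient of $z^1$ in $f\bigl((v+tb)+zb\bigr)$. The only difference is one of presentation: the paper writes $f$ explicitly as a sum of monomials $c_{a_1,\ldots,a_n}x_1^{a_1}\cdots x_n^{a_n}$, expands each factor $\bigl((v_j+tb_j)+zb_j\bigr)^{a_j}$ by hand, and reads off the $z^1$-coefficient term by term, whereas you invoke the multivariate Hasse expansion $f(x+w)=\sum_i f^{(i)}(x)w^i$ as a black box and substitute $w=zb$. Your route is cleaner and avoids the bookkeeping with monomials; the paper's route has the minor advantage of being entirely self-contained from first principles, but both arrive at exactly the same conclusion by the same mechanism.
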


\begin{proof} We know that $(f|_l)^{(1)}\in \mathbb{F}[t]$ is the coefficient of $z^1$ in the expansion of $(f|_l)(t+z)$ as a polynomial of $z$. The polynomial $f$ is of the form
\begin{displaymath} f(x_1,...,x_n)= \sum_{\{(a_1,...,a_n) \in \N^n:\;a_1+...+a_n\leq d\}}c_{a_1,...,a_n}x_1^{a_1}\cdots x_n^{a_n}
\end{displaymath}
for some $d \in \N$, and thus
\begin{displaymath} (f|_l)(t+z)=f \big(v+(t+z)b\big)=f\big(v_1+(t+z)b_1,...,v_n+(t+z)b_n\big)= 
\end{displaymath}
\begin{displaymath} =\sum_{\{(a_1,...,a_n) \in \N^n:\;a_1+...+a_n\leq d\}}c_{a_1,...,a_n}\big(v_1+(t+z)b_1\big)^{a_1}\cdots \big(v_n+(t+z)b_n\big)^{a_n}=
\end{displaymath}
\begin{displaymath}=\sum_{\{(a_1,...,a_n) \in \N^n:\;a_1+...+a_n\leq d\}}c_{a_1,...,a_n}\big((v_1+tb_1)+zb_1\big)^{a_1}\cdots \big((v_n+tb_n)+zb_n\big)^{a_n},
\end{displaymath}

from which we easily see that
\begin{displaymath}(f|_l)^{(1)}(t)=
\end{displaymath}
\begin{displaymath}=\sum_{\{(a_1,...,a_n) \in \N^n:\;a_1+...+a_n\leq d\}}c_{a_1,...,a_n}\Bigg(a_1\cdot b_1(v_1+tb_1)^{a_1-1}(v_2+tb_2)^{a_2}\cdots (v_n+tb_n)^{a_n}+ \end{displaymath}
\begin{displaymath} +a_2\cdot b_2(v_1+tb_1)^{a_1}(v_2+tb_2)^{a_2-1}\cdots (v_n+tb_n)^{a_n}+...\end{displaymath}
\begin{displaymath} ...+a_n\cdot b_n(v_1+tb_1)^{a_1}(v_2+tb_2)^{a_2}\cdots (v_n+tb_n)^{a_n-1}\Bigg)=
\end{displaymath}
\begin{displaymath}=b_1\Bigg(\sum_{\{(a_1,...,a_n) \in \N^n:\;a_1+...+a_n\leq d\}}a_1\cdot c_{a_1,...,a_n}(v_1+tb_1)^{a_1-1}(v_2+tb_2)^{a_2}\cdots (v_n+tb_n)^{a_n}\Bigg)+
\end{displaymath}
\begin{displaymath}+b_2\Bigg(\sum_{\{(a_1,...,a_n) \in \N^n:\;a_1+...+a_n\leq d\}}a_2 \cdot c_{a_1,...,a_n}(v_1+tb_1)^{a_1}(v_2+tb_2)^{a_2-1}\cdots (v_n+tb_n)^{a_n}\Bigg)+...
\end{displaymath}
\begin{displaymath}...+b_n\Bigg(\sum_{\{(a_1,...,a_n) \in \N^n:\;a_1+...+a_n\leq d\}}a_n \cdot c_{a_1,...,a_n}(v_1+tb_1)^{a_1}(v_2+tb_2)^{a_2}\cdots (v_n+tb_n)^{a_n-1}\Bigg)=
\end{displaymath}
\begin{displaymath}=(b_1,...,b_n) \cdot \nabla{f}(v+tb)=b \cdot \nabla{f}(v+tb).
\end{displaymath}

\end{proof}

We are now ready to prove Theorem \ref{carberyjoints}.

\textit{Proof of Theorem \ref{carberyjoints}}. Let $\mathfrak{L}$ be a finite set of $L$  lines in $\mathbb{F}^n$, and $J$ the set of joints that they form. 

Let $\overline{\mathbb{F}}$ be the algebraic closure of $\mathbb{F}$. Since $J \subseteq \mathbb{F}^n$ and thus $J \subseteq\overline{\mathbb{F}}^n$, by Lemma \ref{dvir} there exists a non-zero polynomial
\begin{displaymath} f(x_1,...,x_n)=\sum_{\{(a_1,...,a_n) \in \N^n:\;a_1+...+a_n\leq d\}}c_{a_1,...,a_n}x_1^{a_1}\cdots x_n^{a_n} \end{displaymath} in $\overline{\mathbb{F}}[x_1,...,x_n]$, for some $d \lesssim _n |J|^{1/n}$, which vanishes at each point of $J$. We assume that $f$ is a polynomial of minimal degree with that property.

We will now show that there exists a line in $\mathfrak{L}$, containing $\lesssim _n |J|^{1/n}$ joints of $J$. 

Indeed, let us assume that each line in $\mathfrak{L}$ contains more than $\deg f$ joints of $J$. Let $l \in \mathfrak{L}$. Then, there exist $b$, $v \in \mathbb{F}^n$, such that $l=\{v+bt:t \in \mathbb{F}\}$. Now, for the line $\bar{l}=\{v+bt:t \in \overline{\mathbb{F}}\}$, which contains $l$ and therefore $l \cap J$, the polynomial $f|_{\bar{l}}:=f(v+bt) \in \overline{\mathbb{F}}[t]$ is a polynomial in one variable with coefficients in the field $\overline{\mathbb{F}}$, of degree at most $\deg f$, which has more than $\deg f$ roots (since $f$ vanishes on $l\cap J$), i.e. more roots than its degree. Therefore, $f|_{\bar{l}}$ is the zero polynomial, and thus $(f|_{\bar{l}})^{(1)}$ is the zero polynomial, for all $l \in \mathfrak{L}$. Now, let $x_0 \in J$. We know that there exist at least $n$ lines $l_1$, ..., $l_n$ in $\mathfrak{L}$ passing through $x_0$, whose directions $b_1$, ..., $b_n$, respectively, span $\mathbb{F}^n$. The polynomial $(f|_{\bar{l_i}})^{(1)}$ is the zero polynomial for all $i=1,...,n$, so, by Lemma \ref{nabla}, $\nabla{f}(x_0) \cdot b_i=0$, for all $i=1,...,n$. Now, for each $i=1,...,n$, $\nabla{f}(x_0) \cdot b_i=0$ is a linear equation with unknowns the entries of the vector $\nabla{f}(x_0)$. So, since the set $\{b_1,...,b_n\}$ is linearly independent in $\mathbb{F}^n$, and thus also linearly independent in $\overline{\mathbb{F}}^n$, it follows that $\nabla{f}(x_0)=0$. However, $x_0$ was an arbitrarily chosen point from the set $J$. Therefore, $\nabla{f}$ vanishes at each point of $J$, which means that the polynomial $f^{(e_i)}$ vanishes at each point of $J$, for all $i=1,...,n$. However, $f^{(e_i)}$ is a polynomial of degree strictly smaller than $\deg f$, therefore $f^{(e_i)}$ is the zero polynomial, for all $i=1,...,n$. Therefore,
\begin{displaymath} \nabla{f}=0. \end{displaymath} 
Now, if the characteristic of $\overline{\mathbb{F}}$ is zero, Lemma \ref{carbery3} implies that $f$ is a constant polynomial. However, $f$ vanishes on $J$, thus $f$ is the zero polynomial, which is a contradiction. On the other hand, if the characteristic of $\overline{\mathbb{F}}$ is not zero, then it is equal to some prime $p$, and thus Lemma \ref{carbery3} implies that 
\begin{displaymath} f(x_1,...,x_n)=\sum_{\{(a_1,...,a_n)\in \N^n:\;a_1+...+a_n\leq d\text{ and }p \mid a_i,\; \forall i=1,...,n\}} c_{a_1,...,a_n} x_1^{a_1}\cdots x_n^{a_n}. \end{displaymath}
Now, note that, since $\overline{\mathbb{F}}$ is an algebraically closed field, the polynomial equation $x^p-c=0$ has a solution in $\overline{\mathbb{F}}$, for all $c \in \overline{\mathbb{F}}$. Therefore, for all $(a_1,...,a_n) \in \N^n$ such that $a_1+...+a_n\leq d$ and $p\mid a_i$ $\forall i=1,...,n$, we denote by $c_{a_1,...,a_n}^{1/p}$ one of the solutions of the polynomial equation $x^p-c_{a_1,...,a_n}=0$ in $\overline{\mathbb{F}}$, and thus the expression 
\begin{displaymath}c_{a_1,...,a_n}^{1/p} x_1^{a_1/p}\cdots x_n^{a_n/p}
\end{displaymath} is a polynomial in $\overline{\mathbb{F}}[x_1,...,x_n]$. 

On the other hand, if $g$, $h \in \overline{\mathbb{F}}[x_1,...,x_n]$, then
\begin{displaymath}\big(g(x)+h(x)\big)^p=\sum_{k=0}^{p}\binom{p}{k}\cdot g(x)^kh(x)^{p-k}=g(x)^p+h(x)^p,
\end{displaymath}
as $p$ is the characteristic of $\overline{\mathbb{F}}$. Inductively,
\begin{displaymath}\big(f_1(x)+...+f_k(x)\big)^p=f_1(x)^p+...+f_k(x)^p, \end{displaymath}
for all $f_1$, ..., $f_k \in \overline{\mathbb{F}}[x_1,...,x_n]$, $k \in \N$.

Therefore,
\begin{displaymath}f(x_1,...,x_n)=\sum_{\{(a_1,...,a_n):\;a_1+...+a_n\leq d\text{ and }p \mid a_i,\; \forall i=1,...,n\}} c_{a_1,...,a_n} x_1^{a_1}\cdots x_n^{a_n}=\end{displaymath}
\begin{displaymath}=\sum_{\{(a_1,...,a_n):\;a_1+...+a_n\leq d\text{ and }p \mid a_i,\; \forall i=1,...,n\}}\big(c_{a_1,...,a_n}^{1/p} x_1^{a_1/p}\cdots x_n^{a_n/p}\big)^p=
\end{displaymath}
\begin{displaymath}=\Bigg(\sum_{\{(a_1,...,a_n):\;a_1+...+a_n\leq d\text{ and }p \mid a_i,\; \forall i=1,...,n\}}c_{a_1,...,a_n}^{1/p} x_1^{a_1/p}\cdots x_n^{a_n/p}\Bigg)^p=\end{displaymath}
\begin{displaymath}=g(x_1,...,x_n)^p,
\end{displaymath}
where
\begin{displaymath}g(x_1,...,x_n):=\sum_{\{(a_1,...,a_n):\;a_1+...+a_n\leq d\text{ and }p \mid a_i,\; \forall i=1,...,n\}}c_{a_1,...,a_n}^{1/p} x_1^{a_1/p}\cdots x_n^{a_n/p}
\end{displaymath}
is a polynomial in $\overline{\mathbb{F}}[x_1,...,x_n]$.

Now, for each $x_0 \in \overline{\mathbb{F}}^n$, $f(x_0)$ and $g(x_0)$ belong to the field $\overline{\mathbb{F}}$, so $f(x_0)=0$ if and only if $g(x_0)=0$. Therefore, since $f$ vanishes at each point of $J$, so does $g$. However, $f$ is a non-zero polynomial in $\overline{\mathbb{F}}[x_1,...,x_n]$ of minimal degree that vanishes on $J$, and $g$ is a non-zero polynomial in $\overline{\mathbb{F}}[x_1,...,x_n]$, whose degree is strictly smaller than $\deg f$, unless $f$ is a constant polynomial. Therefore, $f$ is a constant polynomial, and since it vanishes on $J$, it is the zero polynomial, which is a contradiction. 

This means that our initial assumption that each line in $\mathfrak{L}$ contains more than $\deg f$ joints of $J$ was wrong, and thus there exists a line $l$ in $\mathfrak{L}$, containing fewer than $\deg f$, i.e. $\lesssim _n |J|^{1/n}$, joints of $J$.

We now proceed in exactly the same way as in the solution of the joints problem in $\R^n$ by Quilodr\'an (see \cite{MR2594983}). 

Indeed, we take the line $l$ and the joints it contains out of our collection of lines and joints, ending up with a smaller collection $\mathfrak{L}'$ of lines, as well as a subset of $J$, which is contained in the set $J'$ of joints formed by $\mathfrak{L}'$. From what we have already shown, there exists some line $l' \in \mathfrak{L}'$, containing $\lesssim_n|J'|^{1/n} \lesssim_n |J|^{1/n}$ joints of $J'$. We take the line $l'$ and the points of $J \cap J'$ it contains out of our collection of lines and joints, and we continue in the same way, until we eliminate all the joints of our original collection $J$, something which is achieved in at most $L$ steps. In each step, we take $\lesssim_n |J|^{1/n}$ joints out of our original collection $J$, and thus
\begin{displaymath}|J| \lesssim_n L \cdot |J|^{1/n},
\end{displaymath}
which gives
\begin{displaymath}|J| \lesssim_n L^{\frac{n}{n-1}}
\end{displaymath}
after rearranging.

\qed

We now prove another statement, hoping to shed some light on the combinatorial nature of the joints problem in any field setting and for all dimensions. 

Indeed, let $\mathbb{F}$ be a field and $n \geq 2$. For a collection $\mathfrak{L}$ of lines in $\mathbb{F}^n$, we define as $J_N$ the set of joints formed by $\mathfrak{L}$, of multiplicity at least $N$ and smaller than $2N$. Then, the following holds.

\begin{lemma} \label{furth} Let $\mathfrak{L}$ be a finite collection of $L$ lines in $\mathbb{F}^n$, where $\mathbb{F}$ is a field and $n \geq 2$. Suppose that, whenever $n$ lines of $\mathfrak{L}$ meet at a point, they form a joint there. Then, for any constant $a_n \geq n$, there exists some positive constant $c_n$, depending only on $n$ and $a_n$, with the property that, for all $N\in \N$ such that $J_N \neq \emptyset$, at least $c_n \cdot |J_N|$ of the joints in $J_N$ are joints for some subcollection of $\mathfrak{L}$ consisting of $a_n \cdot L/N^{1/n}$ lines. \end{lemma}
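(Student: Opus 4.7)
The plan is to establish the lemma via a random sampling argument on the set of lines. First I would observe that, for each joint $x\in J_N$, writing $k(x)$ for the number of lines of $\mathfrak{L}$ through $x$, the inequality $N\leq N(x)\leq\binom{k(x)}{n}\leq k(x)^n/n!$ gives the combinatorial bound $k(x)\geq(n!\,N)^{1/n}$. Moreover, the transversality hypothesis that any $n$ lines of $\mathfrak{L}$ meeting at a point form a joint tells us that every $n$-tuple of lines through $x$ has directions spanning $\mathbb{F}^n$; thus $x$ remains a joint of any subcollection $\mathfrak{L}'\subseteq\mathfrak{L}$ that retains at least $n$ of the $k(x)$ lines through $x$.

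Next, setting $m:=\lfloor a_n L/N^{1/n}\rfloor$ (the case $m\geq L$ is trivial, as then $\mathfrak{L}'=\mathfrak{L}$ works directly), I would choose $\mathfrak{L}'$ uniformly at random among all $m$-subsets of $\mathfrak{L}$. For each $x\in J_N$, writing $\mathfrak{L}_x$ for the set of lines of $\mathfrak{L}$ through $x$, the random variable $Z_x:=|\mathfrak{L}_x\cap\mathfrak{L}'|$ is hypergeometric with parameters $(L,k(x),m)$ and satisfies
\begin{displaymath}
E[Z_x]\;=\;\frac{m\,k(x)}{L}\;\geq\;a_n(n!)^{1/n}\quad\text{and}\quad\mathrm{Var}(Z_x)\;\leq\;E[Z_x].
\end{displaymath}
Crucially, since $a_n\geq n$ and $(n!)^{1/n}>1$ for every $n\geq 2$, the expectation strictly exceeds $n$, with a positive gap $E[Z_x]-n\geq n((n!)^{1/n}-1)>0$ depending only on $n$ and $a_n$.

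Paley--Zygmund applied with threshold $\theta=(n-1)/E[Z_x]$, together with the variance estimate above, then yields
\begin{displaymath}
P(Z_x\geq n)\;\geq\;\left(1-\frac{n-1}{E[Z_x]}\right)^{2}\frac{E[Z_x]}{1+E[Z_x]}\;\geq\;c_n(a_n),
\end{displaymath}
for some positive constant $c_n(a_n)$ depending only on $n$ and $a_n$. By the structural observation of the first paragraph, the event $\{Z_x\geq n\}$ guarantees that $x$ is a joint for $\mathfrak{L}'$, so letting $Y$ count those $x\in J_N$ that remain joints of $\mathfrak{L}'$, linearity of expectation gives $E[Y]\geq c_n(a_n)|J_N|$. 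The probabilistic method then produces a specific $\mathfrak{L}'\subseteq\mathfrak{L}$ of size $m\leq a_n L/N^{1/n}$ for which at least $c_n(a_n)|J_N|$ joints of $J_N$ remain joints for $\mathfrak{L}'$.

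The main obstacle lies in the Paley--Zygmund step in the regime where $a_n$ is close to its minimum allowed value $n$: there $E[Z_x]$ only barely exceeds $n$, so Chebyshev-type concentration is powerless to produce a useful bound. The decisive feature is the strictness of the combinatorial bound $k(x)\geq(n!\,N)^{1/n}$, whose factor $(n!)^{1/n}>1$ (valid for every $n\geq 2$) guarantees the strictly positive gap $E[Z_x]>n$ required to convert Paley--Zygmund into a positive lower bound for $P(Z_x\geq n)$.
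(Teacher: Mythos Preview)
Your argument is correct and follows the same high-level strategy as the paper: sample a uniformly random $m$-subset of $\mathfrak{L}$ with $m\approx a_n L/N^{1/n}$, show that each $x\in J_N$ survives as a joint with probability bounded below by a positive constant, and conclude by linearity of expectation. The divergence is in how the hypergeometric tail bound is established. The paper expands $1-P'_N(x)=\sum_{k=0}^{n-1}\cdots$ explicitly and bounds each summand by direct algebraic manipulation together with the inequality $(1-1/y)^y\le e^{-1}$, ultimately comparing to a truncated Poisson sum; it then treats the extreme regimes $K_N(x)\lesssim_n 1$ and $L/K_N(x)\lesssim_n 1$ separately via pigeonhole. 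Your second-moment route is considerably cleaner: the hypergeometric variance bound $\mathrm{Var}(Z_x)\le E[Z_x]$, combined with the strict inequality $E[Z_x]\ge a_n(n!)^{1/n}>n$ coming from $k(x)\ge (n!\,N)^{1/n}$, feeds directly into Paley--Zygmund to give the uniform lower bound in one stroke, with no case analysis. What the paper's more laborious computation buys is an explicit dependence of the constant on $a_n$ (essentially $1-e^{-a_n}\sum_{k<n}a_n^k/k!$), making it transparent that $c_n$ can be taken close to $1$ for large $a_n$; your bound is less sharp in $a_n$ but entirely adequate for the application to Proposition~\ref{last}.
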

\begin{proof}

Fix $a_n\geq n$. 

For every $N \in \N$ such that $J_N \neq \emptyset$, and for every $x \in J_N$, we define as $P_N(x)$ the probability that we choose at least $n$ lines through $x$, when we choose $a_n \cdot L/N^{1/n}$ lines from $\mathfrak{L}$ at random. In other words, $P_N(x)$ is the probability that $x$ is a joint for a randomly chosen subcollection of $a_n \cdot L/N^{1/n}$ lines of $\mathfrak{L}$.

We now prove the Lemma in two steps.

\textbf{Step 1:} \textit{We show that, if there exists a positive constant $c_n$, depending only on $n$, such that $P_N(x) \geq c_n$ for all $x \in J_N$ and $N \in \N$ such that $J_N \neq \emptyset$, then the Lemma is proved}. 

Indeed, fix $N \in \N$ such that $J_N \neq \emptyset$. We define as $\mathfrak{L}^N$ the set of all the subcollections of $\mathfrak{L}$ with cardinality $a_n \cdot L/N^{1/n}$. In addition, for all $x\in J_N$, let $\mathfrak{L}^N_x$ be the set of the subcollections of $\mathfrak{L}$ which have cardinality $a_n \cdot L/N^{1/n}$ and for which $x$ is a joint.

Now, the fact that $P_N(x) \geq c_n$ for all $x \in J_N$ means that $|\mathfrak{L}^N_x| \geq c_n \cdot |\mathfrak{L}^N|$ for all $x \in J_N$, so \begin{equation}\label{eq:sums} \sum_{x\in J_N}\sum_{\mathfrak{L}^N_x}1 =\sum_{x \in J_N} |\mathfrak{L}_x^N|\geq \sum_{x\in J_N} c_n \cdot |\mathfrak{L}^N| = c_n \cdot |J_N| \cdot |\mathfrak{L}^N|. \end{equation} 
However, the left-hand side of \eqref{eq:sums} is equal to \begin{displaymath} \sum_{\mathcal{L}\in \mathfrak{L}^N} \sum_{J^{\mathcal{L}}\cap J_N}1, \end{displaymath} where, for all $\mathcal{L} \in \mathfrak{L}^N$, $J^{\mathcal{L}}$ is the set of joints formed by $\mathcal{L}$, so \begin{displaymath} \sum_{\mathcal{L} \in \mathfrak{L}^N}\sum_{J_{\mathcal{L}}\cap J_N}1 \geq c_n \cdot |J_N| \cdot |\mathfrak{L}^N|,\end{displaymath} from which it follows that there exists an element of $\mathfrak{L}^N$, i.e. a subcollection of $\mathfrak{L}$ with cardinality $a_n \cdot L/N^{1/n}$, with the property that at least $c_n \cdot |J_N|$ of the elements of $J_N$ are joints for the subcollection. 

Therefore, in order to prove Lemma \ref{furth}, we only need to complete Step 2 below.

\textbf{Step 2:} \textit{We show that there exists a positive constant $c_n$, depending only on $n$, such that $P_N(x) \geq c_n$, for all $N \in \N$ such that $J_N \neq \emptyset$, and all $x \in J_N$.}

Indeed, for all $N \in \N$ such that $J_N \neq \emptyset $, and for all $x\in J_N$, we denote by $K_N(x)$ the number of lines of $\mathfrak{L}$ passing through $x$ (note that $\binom{K_N(x)}{n} \in [N,2N)$, and thus $K_N(x) \geq N^{1/n}$ -- in fact, $K_N(x) \sim N^{1/n}$ -- and $a_n \cdot L/N^{1/n}\geq n$ for $a_n \geq n$). In addition, we define $P'_N(x)$ as the probability that we choose at least $n$ lines through $x$, when we choose $a_n \cdot L/K_N(x)$ lines from $\mathfrak{L}$ at random; in other words, $P_N'(x)$ is the probability that $x$ is a joint for a randomly chosen subcollection of $a_n \cdot L/K_N(x)$ lines of $\mathfrak{L}$. 

Note that, for all $N \in \N$ such that $J_N \neq \emptyset$, and for all $x \in J_N$, $N^{1/n}\leq K_N(x)$, thus $a_n \cdot L/N^{1/n} \geq a_n \cdot L/K_N(x)$, and so, by the definition of $P_N(x)$ and $P_N'(x)$, $P_N(x)\geq P'_N(x)$. Therefore, in order to complete Step 2, it suffices to show that there exists a positive constant $c_n$, depending only on $n$, such that $P_N'(x) \geq c_n$, for all $N \in \N$ such that $J_N \neq \emptyset$, and all $x \in J_N$.

This means that it suffices to show that there exists a constant $b_n \lneq 1$, depending only on $n$, such that, for all $N \in \N$ such that $J_N \neq \emptyset$, and for all $x \in J_N$, it holds that $1-P'_N(x) \leq b_n \lneq 1$.

Indeed, let $N \in \N$ such that $J_N \neq \emptyset$, and $x \in J_N$. 

It holds that \begin{displaymath} P'_N(x)=\sum_{k=n}^{\min\Big\{K_N(x),a_n \cdot \frac{L}{K_N(x)}\Big\}} \frac{\binom{K_N(x)}{k} \cdot \binom{L-K_N(x)}{a_n \cdot \frac{L}{K_N(x)}-k}}{\binom{L}{a_n \cdot \frac{L}{K_N(x)}}},\end{displaymath} so 
\begin{equation} 1-P'_N(x)= \sum_{k=0}^{n-1}\frac{\binom{K_N(x)}{k} \cdot \binom{L-K_N(x)}{a_n \cdot \frac{L}{K_N(x)}-k}}{\binom{L}{a_n \cdot \frac{L}{K_N(x)}}}. \label{eq:sum} \end{equation}

Now, for $k\in \{0,1,...,n-1\}$, \begin{displaymath}\frac{\binom{K_N(x)}{k} \cdot \binom{L-K_N(x)}{a_n \cdot \frac{L}{K_N(x)}-k}}{\binom{L}{a_n \cdot \frac{L}{K_N(x)}}}= \end{displaymath}
\begin{displaymath}=\frac{K_N(x)!}{k! \cdot (K_N(x)-k)!}\cdot \frac{\big(L-K_N(x)-(a_n \cdot L/K_N(x)-k)+1\big)\cdots (L-K_N(x))}{(a_n \cdot L/K_N(x)-k)!}\cdot \end{displaymath} \begin{displaymath} \cdot \frac{(a_n \cdot L/K_N(x))!}{(L-a_n \cdot L/K_N(x) +1) \cdots (L-k)} \cdot \frac{1}{(L-k+1) \cdots L}= \end{displaymath}
\begin{displaymath} =\frac{1}{k!} \cdot \frac{K_N(x)!}{(K_N(x)-k)!} \cdot \frac{(a_n \cdot L/K_N(x))!}{(a_n \cdot L/K_N(x)-k)!} \cdot \frac{1}{(L-k+1) \cdots L}\cdot \end{displaymath} \begin{displaymath} \cdot \frac{L-(K_N(x)-k)-a_n \cdot L/K_N(x) +1}{L-a_N \cdot L/K_N(x) +1} \cdot \frac{L-(K_N(x)-k)-a_n \cdot L/K_N(x) +2}{L-a_n \cdot L/K_N(x) +2} \cdot\end{displaymath} \begin{equation} \cdots \frac{L-(K_N(x)-k)-a_n \cdot L/K_N(x)+(a_n \cdot L/K_N(x)-k)}{L-a_n \cdot L/K_N(x)+ (a_n \cdot L/K_N(x)-k)}. \label{eq:product} \end{equation}

Now:

a) The function \begin{displaymath} f(y)=\frac{L-(K_N(x)-k)-a_n \cdot L/K_N(x)+y}{L-a_n \cdot L/K_N(x)+y} \end{displaymath} is increasing in $y$, therefore \begin{displaymath} \frac {L-(K_N(x)-k)-a_n \cdot L/K_N(x)+1}{L-a_n \cdot L/K_N(x)+1} \cdots \end{displaymath} \begin{displaymath}\cdots \frac{L-(K_N(x)-k)-a_n \cdot L/K_N(x)+(a_n \cdot L/K_N(x)-k)}{L-a_n \cdot L/K_N(x)+ (a_n \cdot L/K_N(x)-k)} \leq \end{displaymath}
\begin{displaymath} \leq \Bigg(\frac{L-(K_N(x)-k)-a_n \cdot L/K_N(x)+(a_n \cdot L/K_N(x)-k)}{L-a_n \cdot L/K_N(x)+ (a_n \cdot L/K_N(x)-k)}\Bigg)^{a_n \cdot L/K_N(x) -k}=\end{displaymath}
\begin{displaymath} =\Bigg(\frac{L-K_N(x)}{L-k}\Bigg)^{a_n \cdot L/K_N(x) -k}= \Bigg(1-\frac{K_N(x)-k}{L-k}\Bigg)^{a_n \cdot L/K_N(x) -k}= \end{displaymath}
\begin{displaymath} =\Bigg(1-\frac{1}{\frac{L-k}{K_N(x)-k}}\Bigg)^{a_n \cdot L/K_N(x) -k}. \end{displaymath}
On the other hand, if $c_{n,k}$ is any constant larger than $k$, $a'_n$ is any positive constant and $a_n>\frac{a'_n}{1-\frac{k}{c_{n,k}}}$, then, for \begin{displaymath}K_N(x)>c_{n,k} \end{displaymath} and \begin{equation}\frac{L}{K_N(x)} \geq \frac{k}{a_n-\frac{a'_n}{1-\frac{k}{c_{n,k}}}}:=c'_{n,k},\label{eq:constraint}\end{equation} we have that \begin{displaymath} a_n \cdot \frac{L}{K_N(x)}-k \geq a'_n \cdot \frac{L-k}{K_N(x)-k}, \end{displaymath} because, under these constraints,
\begin{displaymath} \frac{k}{K_N(x)}< \frac{k}{c_{n,k}} \Rightarrow -\frac{k}{K_N(x)}>-\frac{k}{c_{n,k}} \Rightarrow 1-\frac{k}{K_N(x)}>1-\frac{k}{c_{n,k}}\;(\gneq 0), \end{displaymath}
and thus \begin{displaymath} a'_n \cdot \frac{L-k}{K_N(x)-k}=a'_n \cdot \frac{L/K_N(x)-k/K_N(x)}{1-k/K_N(x)} \leq a'_n \cdot \frac{L/K_N(x)}{1-k/K_N(x)} < a'_n \cdot \frac{L/K_N(x)}{1-k/c_{n,k}}= \end{displaymath}
\begin{displaymath} =\frac{a'_n}{1-\frac{k}{c_{n,k}}} \cdot \frac{L}{K_N(x)}, \end{displaymath} 
a quantity which is $\leq a_n \cdot \frac{L}{K_N(x)} -k$ when \eqref{eq:constraint} holds.
Therefore,
\begin{displaymath}\Bigg(1-\frac{1}{\frac{L-k}{K_N(x)-k}}\Bigg)^{a_n \cdot L/K_N(x) -k} \leq \Bigg(1-\frac{1}{\frac{L-k}{K_N(x)-k}}\Bigg)^{a'_n \cdot \frac{L-k}{K_N(x)-k}}. \end{displaymath}

And, as the function $\Big(1-\frac{1}{y}\Big)^y$ is increasing in $y$ to $e^{-1}$, the above quantity is equal to at most $e^{-a'_n}$. But $a_n>\frac{a'_n}{1-\frac{k}{c_{n,k}}}$, so, for all $M >1$, we can consider $c_{n,k}$ appropriately large, depending on $M$, so as to achieve $e^{-a'_n} \leq M \cdot e^{-a_n}$. 

Therefore, for any $M >1$ and for $K_N(x)$, $L/K_N(x)$ larger than appropriate constants depending only on $n$ and $M$, it holds that, for all $ k$ in the finite range $\{0,1,...,n-1\}$,
\begin{displaymath} \frac {L-(K_N(x)-k)-a_n \cdot L/K_N(x)+1}{L-a_n \cdot L/K_N(x)+1} \cdots \end{displaymath}
\begin{displaymath} \cdots \frac{L-(K_N(x)-k)-a_n \cdot L/K_N(x)+(a_n \cdot L/K_N(x)-k)}{L-a_n \cdot L/K_N(x)+ (a_n \cdot L/K_N(x)-k)} \leq \end{displaymath} $$ \leq M \cdot  e^{-a_n}\label{eq:result1}.$$

b) We now consider the quantity \begin{displaymath}\frac{K_N(x)!}{(K_N(x)-k)!} \cdot \frac{(a_n \cdot L/K_N(x))!}{(a_n \cdot L/K_N(x)-k)!} \cdot \frac{1}{(L-k+1) \cdots L}= \end{displaymath}
\begin{displaymath} =\frac{(K_N(x)-k+1)\cdots K_N(x)\cdot(a_n\cdot L/K_N(x)-k+1)\cdots (a_n \cdot L/K_N(x))}{(L-k+1)\cdots L}. \end{displaymath}
We see that, for all $\lambda=0,1,...,k-1$,
\begin{displaymath}\frac{(K_N(x)-\lambda)\cdot(a_n\cdot L/K_N(x)-\lambda)}{L-\lambda}\leq a_n, \end{displaymath}
as this is equivalent to
\begin{displaymath} (K_N(x)-\lambda)\cdot(a_n\cdot L/K_N(x)-\lambda)\leq a_n \cdot L-a_n \cdot \lambda \Leftrightarrow \end{displaymath}\begin{displaymath} \Leftrightarrow K_N(x)+a_n\cdot L/K_N(x)-a_n\geq\lambda, \end{displaymath} 

which is true because $K_N(x)\geq n$ (as $x$ is a joint formed by $\mathfrak{L}$ in $\R^n$) and $L/K_N(x)\geq 1$, while $\lambda <n$ (since $k \in \{0,...,n-1\}$).

Therefore,\begin{equation} \frac{(K_N(x)-k+1)\cdots K_N(x)\cdot(a_n\cdot L/K_N(x)-k+1)\cdots (a_n \cdot L/K_N(x))}{(L-k+1)\cdots L}\leq {a_n}^{k}, \label{eq:result2}\end{equation}
for all $k\in \{0,1,...,n-1\}$.

So, it follows from (a) and (b) that, for all $M >1$ and for $K_N(x)$, $L/K_N(x)$ larger than a constant, say, $\lambda_{n,M}$, which depends only on $n$ and $M$, \begin{displaymath} 1-P'_N(x)\leq M \cdot \sum_{k=0}^{n-1} \frac{{a_n}^k}{k!} \cdot\frac{1}{e^{a_n}}=:b_{n,M},\end{displaymath} which is a constant strictly smaller than 1 for $M < \frac{1}{\sum_{k=0}^{n-1} \frac{{a_n}^k}{k!} \cdot\frac{1}{e^{a_n}}}$ (in fact, $b_{n,M}$ can be taken arbitrarily small, because $a_n$ can be taken arbitrarily large and $\frac{{a_n}^k}{k! \cdot e^{a_n}} \rightarrow 0$ as $a_n \rightarrow \infty$, for all $k$ in the finite range $\{0,1,...,n-1\}$). 

This means that, for $M_0:=\frac{1}{2}\Bigg(1+\frac{1}{\sum_{k=0}^{n-1} \frac{{a_n}^k}{k!} \cdot\frac{1}{e^{a_n}}}\Bigg)$, a quantity that depends only on $n$, $b_{n,M_0}$ is a constant $\lneq 1$, depending only on $n$, such that, for all $x \in J_N$ with the property that $K_N(x)$ and $L/K_N(x)$ are larger than $\lambda_{n,M_0}$ (which is another constant depending only on $n$), \begin{displaymath} P'_N(x)\geq 1-b_{n,M_0}\gneq 0, \end{displaymath} which is exactly what we wanted to show.

For $x \in J_N$ such that $K_N(x)$ or $L/K_N(x)$ are smaller than $\lambda_{n,M_0}$, we get a similar result in the following way.

Suppose that $x \in J_N$ is such that $L/K_N(x)\leq \lambda_{n,M_0}$. Let $A_1$, ..., $A_{L/K_N(x)}$ be disjoint sets, each of cardinality $K_N(x)$. For any fixed $m \in \{1,...,L/K_N(x)\}$, we see that $P'_N(x)$ is equal to the probability that we choose at least $n$ elements of $A_m$, when we choose $a_n \cdot L/K_N(x)$ elements of $A_1\cup ... \cup A_{L/K_N(x)}$ at random. However, by the pigeon-hole principle, whenever we choose $a_n \cdot L/K_N(x)$ elements of $A_1\cup ... \cup A_{L/K_N(x)}$, there exists some $i \in \{1,...,L/K_N(x)\}$, such that at least $a_n\geq n$ elements of $A_i$ have been chosen, so
\begin{displaymath} \sum_{m=1}^{L/K_N(x)}P'_N(x)\geq 1 \Leftrightarrow L/K_N(x) \cdot P'_N(x)\geq 1, \end{displaymath}
which implies that \begin{displaymath} P'_N(x)\geq K_N(x)/L\geq 1/\lambda_{n,M_0},
\end{displaymath}
which is a positive constant, depending only on $n$.

Finally, suppose that $x \in J_N$ is such that $K_N(x) \leq \lambda_{n,M_0}$. It follows that $N\leq \binom{K_N(x)}{n} \leq K_N(x)^n \leq \lambda_{n,M_0}^n$, a constant that depends only on $n$. Therefore, $|J_N| \cdot N^{\frac{1}{n-1}}\lesssim_n |J| \lesssim_n  L^{\frac{n}{n-1}}$, which is the result whose proof we were aiming for when we introduced the Lemma. However, let us prove that, in this case as well, $P'_N(x)$ is larger than a positive constant which depends only on $n$.

Indeed, it is easy to see that, for all $k \in \big\{0,1,...,\min\{K_N(x),a_n\cdot L/K_N(x)\}\big\}$, \begin{displaymath}\frac{\binom{K_N(x)}{k} \cdot \binom{L-K_N(x)}{a_n \cdot \frac{L}{K_N(x)}-k}}{\binom{L}{a_n \cdot \frac{L}{K_N(x)}}}=\frac{\binom{a_n \cdot \frac{L}{K_N(x)}}{k} \cdot \binom{L-a_n \cdot \frac{L}{K_N(x)}}{K_N(x)-k}}{\binom{L}{K_N(x)}}, \end{displaymath} thus
\begin{displaymath} P'_N(x)=\sum_{k=n}^{\min\{K_N(x),a_n \cdot L/K_N(x)\}}\frac{\binom{K_N(x)}{k} \cdot \binom{L-K_N(x)}{a_n \cdot \frac{L}{K_N(x)}-k}}{\binom{L}{a_n \cdot \frac{L}{K_N(x)}}}=\end{displaymath} 
\begin{displaymath}=\sum_{k=n}^{\min\{K_N(x),a_n \cdot L/K_N(x)\}}\frac{\binom{a_n \cdot \frac{L}{K_N(x)}}{k} \cdot \binom{L-a_n \cdot \frac{L}{K_N(x)}}{K_N(x)-k}}{\binom{L}{K_N(x)}}.
\end{displaymath}

In other words, if $B_1$, ..., $B_{K_N(x)/a_n}$ be disjoint sets, each of cardinality $a_n \cdot L/K_N(x)$, then, for any fixed $m \in \{1,...,K_N(x)/a_n\}$, $P'_N(x)$ is equal to the probability that we choose at least $n$ elements of $B_m$, when we choose $K_N(x)$ elements of $B_1\cup ... \cup B_{K_N(x)/a_n}$ at random. However, by the pigeon-hole principle, whenever we choose $K_N(x)$ elements of $B_1\cup ... \cup B_{K_N(x)/a_n}$, there exists some $i \in \{1,...,K_N(x)/a_n\}$, such that at least $a_n\geq n$ elements of $B_i$ have been chosen, so
\begin{displaymath} \sum_{m=1}^{K_N(x)/a_n}P'_N(x)\geq 1 \Leftrightarrow K_N(x)/a_n \cdot P'_N(x)\geq 1,\end{displaymath} which implies that \begin{displaymath} P'_N(x)\geq a_n/K_N(x)\geq a_n/\lambda_{n,M_0},
\end{displaymath}
which is a positive constant, depending only on $n$.

Therefore, the second step of the proof of the Lemma is complete, and thus the statement of the Lemma is true.

\end{proof}

We now combine Theorem \ref{carberyjoints} and Lemma \ref{furth}, to obtain the following result regarding the cardinality of the set of joints of mutliplicity $\sim N$, that are formed by a finite collection $\mathfrak{L}$ of lines in $\mathbb{F}^n$, where $\mathbb{F}$ is an arbitrary field and $n \geq 2$, in the ``generic" case, i.e. in the case where, whenever $n$ lines of $\mathfrak{L}$ meet at a point of $\mathbb{F}^n$, they form a joint there.

\begin{proposition} \label{last} Let $\mathfrak{L}$ be a finite collection of $L$ lines in $\mathbb{F}^n$, where $\mathbb{F}$ is a field and $n \geq 2$. Suppose that, whenever $n$ lines of $\mathfrak{L}$ meet at a point, they form a joint there. Then,
\begin{displaymath}|J_N| \cdot N^{\frac{1}{n-1}} \lesssim_n L^{\frac{n}{n-1}},
\end{displaymath}
for all $N \in \N$.
\end{proposition}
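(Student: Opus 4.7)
The plan is to combine the two main ingredients developed immediately before the proposition: the uniform joints estimate of Theorem~\ref{carberyjoints}, which applies in any field and any dimension, and the random subsampling result of Lemma~\ref{furth}. The idea is that a joint of multiplicity $\sim N$ ``survives'' random thinning with high probability, so we can produce a subcollection with far fewer lines that still sees many of the joints in $J_N$, and then pay a cheaper price via Theorem~\ref{carberyjoints}.

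More precisely, I would first dispose of the trivial range: if $J_N=\emptyset$ there is nothing to prove, and if $N$ is bounded by some constant depending only on $n$, then $N^{1/(n-1)}\lesssim_n 1$ and the bound $|J_N|N^{1/(n-1)}\lesssim_n L^{n/(n-1)}$ follows immediately from Theorem~\ref{carberyjoints} applied to the full collection $\mathfrak{L}$. So I may assume $N$ is large enough that the quantity $a_n L/N^{1/n}$ (with $a_n:=n$, say) is a positive integer at least $n$; otherwise we can just enlarge $L$ or adjust by a constant, using the trivial range to absorb the boundary case.

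Next, I apply Lemma~\ref{furth} with this choice of $a_n\ge n$. It supplies a constant $c_n>0$, depending only on $n$, and a subcollection $\mathfrak{L}'\subseteq\mathfrak{L}$ of cardinality $a_n L/N^{1/n}$ such that at least $c_n|J_N|$ points of $J_N$ are joints for $\mathfrak{L}'$. Crucially, the hypothesis of the proposition---that any $n$ lines of $\mathfrak{L}$ which meet at a point already form a joint there---is inherited by every subcollection, so Lemma~\ref{furth} is indeed applicable to $\mathfrak{L}$, and the joints produced for $\mathfrak{L}'$ genuinely are joints (with spanning direction set) of the smaller family.

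Finally, Theorem~\ref{carberyjoints} applied to $\mathfrak{L}'$ gives
\begin{equation*}
c_n\,|J_N|\ \le\ \bigl|J(\mathfrak{L}')\bigr|\ \lesssim_n\ |\mathfrak{L}'|^{\,n/(n-1)}\ =\ \left(\frac{a_n L}{N^{1/n}}\right)^{\!n/(n-1)}\ \lesssim_n\ \frac{L^{n/(n-1)}}{N^{1/(n-1)}}.
\end{equation*}
Rearranging and absorbing all constants into the $\lesssim_n$ symbol yields $|J_N|\,N^{1/(n-1)}\lesssim_n L^{n/(n-1)}$, which is the desired bound. I do not foresee a substantive obstacle here: the only subtle point is verifying that the genericity assumption passes to subcollections (which it clearly does) and handling the small-$N$ and non-integer issues, both of which are cosmetic and can be swept into the implicit constant depending on $n$.
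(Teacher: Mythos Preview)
Your proposal is correct and follows essentially the same route as the paper: apply Lemma~\ref{furth} (with $a_n=n$) to pass to a subcollection of $\lesssim_n L/N^{1/n}$ lines that still sees $\gtrsim_n |J_N|$ of the joints in $J_N$, then invoke Theorem~\ref{carberyjoints} on that subcollection and rearrange. The paper's proof is terser---it does not separately discuss the trivial range or the inheritance of the genericity hypothesis---but the argument is identical.
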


\begin{proof} Let $N \in \N$ such that $J_N \neq \emptyset$. By Lemma \ref{furth}, there exists a positive constant $c_n$, depending only on $n$, such that, for all $N \in \N$, at least $c_n\cdot |J_N|$ of the joints in $J_N$ are joints for some subcollection $\mathfrak{L}'$ of $\mathfrak{L}$, consisting of $n \cdot L/N^{1/n}$ lines. It follows by Theorem \ref{carberyjoints} that
$$|J_N| \lesssim_n \bigg(\frac{L}{N^{1/n}}\bigg)^{\frac{n}{n-1}},
$$
which gives
$$|J_N| \cdot N^{\frac{1}{n-1}} \lesssim_n L^{\frac{n}{n-1}}
$$
after rearranging.

\end{proof}

The following is an immediate consequence of Proposition \ref{last}.

\begin{corollary}Let $\mathfrak{L}$ be a finite collection of $L$ lines in $\mathbb{F}^n$, where $\mathbb{F}$ is a field and $n \geq 2$. Suppose that, whenever $n$ lines of $\mathfrak{L}$ meet at a point, they form a joint there. Then,
\begin{displaymath}\sum_{x \in J} N(x)^{\frac{1}{n-1}} \lesssim_n \log L \cdot L^{\frac{n}{n-1}}.
\end{displaymath}

\end{corollary}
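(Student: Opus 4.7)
The plan is to derive the stated inequality from Proposition \ref{last} by a straightforward dyadic decomposition of the set $J$ according to the size of the multiplicity $N(x)$, in exact analogy with the way Theorem \ref{1.1} was deduced from Proposition \ref{1.2} at the end of Chapter \ref{3}. First I would observe that for any $x\in J$ the multiplicity $N(x)$ is bounded above by the total number of unordered $n$-tuples of lines in $\mathfrak{L}$, hence $N(x)\leq\binom{L}{n}\leq L^n$. This means that if for each $\lambda\in\N$ we let $J_{2^\lambda}=\{x\in J:2^\lambda\leq N(x)<2^{\lambda+1}\}$, then $J$ is the disjoint union of the non-empty sets $J_{2^\lambda}$ as $\lambda$ ranges over those natural numbers with $2^\lambda\leq L^n$.

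Next I would bound $\sum_{x\in J}N(x)^{1/(n-1)}$ dyadically by
\begin{displaymath}\sum_{x\in J}N(x)^{1/(n-1)}\;\leq\;\sum_{\{\lambda\in\N\,:\,2^\lambda\leq L^n\}}|J_{2^\lambda}|\cdot(2^{\lambda+1})^{1/(n-1)}\;\lesssim_n\;\sum_{\{\lambda\in\N\,:\,2^\lambda\leq L^n\}}|J_{2^\lambda}|\cdot(2^\lambda)^{1/(n-1)}.\end{displaymath}
The hypothesis that any $n$ concurrent lines of $\mathfrak{L}$ form a joint is inherited unchanged by the problem at every dyadic level, so Proposition \ref{last} applies with $N=2^\lambda$ and yields $|J_{2^\lambda}|\cdot(2^\lambda)^{1/(n-1)}\lesssim_n L^{n/(n-1)}$ for each relevant $\lambda$.

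Finally I would estimate the number of dyadic scales. The constraint $2^\lambda\leq L^n$ gives $\lambda\leq n\log_2 L$, so the number of terms in the sum is $\lesssim_n\log L$. Combining this with the uniform bound from Proposition \ref{last} produces
\begin{displaymath}\sum_{x\in J}N(x)^{1/(n-1)}\;\lesssim_n\;\log L\cdot L^{n/(n-1)},\end{displaymath}
which is the desired conclusion. There is no real obstacle here: the entire content of the corollary lies in Proposition \ref{last}, and the only mild loss is the logarithmic factor produced by summing the geometric series across the $\sim\log L$ dyadic scales, exactly as in the passage from Proposition \ref{1.2} to Theorem \ref{1.1}, where the analogous dyadic argument in fact absorbed the logarithm through the faster decay in $\lambda$ provided by a finer estimate; here no such improvement is available from Proposition \ref{last} alone, so the $\log L$ factor is the honest output of this elementary reduction.
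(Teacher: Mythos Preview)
Your proof is correct and is exactly the argument the paper has in mind: the corollary is stated there as ``an immediate consequence of Proposition \ref{last}'', and the intended deduction is precisely this dyadic decomposition over the $\lesssim_n \log L$ scales $2^\lambda\leq L^n$, applying Proposition \ref{last} at each scale.
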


Let us mention, however, that this is a considerably weaker result than the one we expect. Indeed, we believe that, as in euclidean space, if $\mathfrak{L}$ is a finite collection of lines in $\mathbb{F}^n$, where $\mathbb{F}$ is an arbitrary field and $n \geq 2$, and $J$ is the set of joints formed by $\mathfrak{L}$, then 
\begin{displaymath}\sum_{x \in J}N(x)^{\frac{1}{n-1}} \lesssim_n  L^{\frac{n}{n-1}},
\end{displaymath}
without the additional assumption that we are in the ``generic" case.

\newpage
\thispagestyle{plain}
\cleardoublepage

\addcontentsline{toc}{chapter}{Bibliography}

\end{document}